\title[Equivariant RRG for coherent sheaves]{Equivariant Chern character for coherent sheaves and Riemann-Roch-Grothendieck}
\author{Guangzhe Xu }
\newtheorem{thm}{Theorem}[section]
\newtheorem{prop}[thm]{Proposition}
\theoremstyle{definition}
\newtheorem{defn}{Definition}[section]
\newtheorem{exmp}{Example}[section]
\theoremstyle{remark}
\newtheorem{rmk}{Remark}[section]
\newcommand{\ie}{\emph{i.e.}, }
\newcommand{\R}{\mathbf{R}}
\newcommand{\DD}{\mathcal{D}}
\newcommand{\RRR}{\mathscr{R}}
\newcommand{\CXG}{\mathrm{C} (X, G)}
\newcommand{\MXG}{\mathrm{M}(X,G)}
\newcommand{\MIXG}{\mathrm{M}^\infty(X,G)}
\newcommand{\MIUG}{\mathrm{M}^\infty(U,G)}
\newcommand{\CbXG}{\mathrm{C}^{\mathrm{b}} (X,G)}
\newcommand{\CXE}{C^{\infty}(X, E)}
\newcommand{\CXD}{C^{\infty}(X, D)}
\newcommand{\CXDU}{C^{\infty}(X, \underline{D})}
\newcommand{\DbCohX}{\mathrm{D}^{\mathrm{b}}_{\mathrm{coh}} (X)}
\newcommand{\CbCohXG}{\mathrm{C}^{\mathrm{b}}_{\mathrm{coh}} (X, G)}
\newcommand{\DbCohXG}{\mathrm{D}^{\mathrm{b}}_{\mathrm{coh}} (X, G)}
\newcommand{\CbCohYG}{\mathrm{C}^{\mathrm{b}}_{\mathrm{coh}} (Y, G)}
\newcommand{\DbCohYG}{\mathrm{D}^{\mathrm{b}}_{\mathrm{coh}} (Y, G)}
\newcommand{\DbCohMG}{\mathrm{D}^{\mathrm{b}}_{\mathrm{coh}} (M, G)}
\newcommand{\DbCohSG}{\mathrm{D}^{\mathrm{b}}_{\mathrm{coh}} (S, G)}
\newcommand{\DbCohZG}{\mathrm{D}^{\mathrm{b}}_{\mathrm{coh}} (Z, G)}
\newcommand{\DbCohYgG}{\mathrm{D}^{\mathrm{b}}_{\mathrm{coh}} (Y_g, G)}
\newcommand{\DbCohSgG}{\mathrm{D}^{\mathrm{b}}_{\mathrm{coh}} (S_g, G)}
\newcommand{\DbCohUG}{\mathrm{D}^{\mathrm{b}}_{\mathrm{coh}} (U, G)}
\newcommand{\DbCohPG}{\mathrm{D}^{\mathrm{b}}_{\mathrm{coh}} (P, G)}
\newcommand{\Td}{\mathrm{Td}}
\newcommand{\E}{\mathscr{E}}
\newcommand{\C}{\mathbf{C}}
\newcommand{\Z}{\mathbf{Z}}
\newcommand{\N}{\mathbf{N}}
\newcommand{\PPP}{\mathbf{P}}
\newcommand{\OO}{\mathcal{O}}
\newcommand{\br}[1]{\left(#1\right)}
\newcommand{\brr}[1]{\left[#1\right]}
\newcommand{\brrr}[1]{\left\{#1\right\}}
\newcommand{\anbr}[1]{\left\langle #1\right\rangle }
\newcommand{\AAAA}{\mathcal{A}}
\newcommand{\HH}{\mathscr{H}}
\newcommand{\F}{\mathscr{F}}
\newcommand{\FF}{\mathbb{F}}
\newcommand{\G}{\mathscr{G}}
\newcommand{\LL}{\mathscr{L}}
\newcommand{\CC}{\mathscr{C}}
\newcommand{\lxgc}{\Lambda (T_\mathbf{C}^* X_g)}
\newcommand{\lsb}{\Lambda (\overline{T^* S})}
\newcommand{\lxb}{\Lambda \left(\overline{T^* X}\right)}
\newcommand{\lx}{\Lambda \left(T^* X\right)}
\newcommand{\lxc}{\Lambda \left(T_\mathbf{C}^* X\right)}
\newcommand{\lyb}{\Lambda \left(\overline{T^* Y}\right)}
\newcommand{\lxpc}{\Lambda^p \left(T_{\mathbf{C}}^* X\right)}
\newcommand{\lsc}{\Lambda \left(T_\mathbf{C}^* S\right)}
\newcommand{\lxib}{\Lambda^i \left(\overline{T^* X}\right)}
\newcommand{\lxcHomDDbar}{\Lambda \left(T_\mathbf{C}^* X\right) \widehat{\otimes} \mathrm{Hom} (D, \overline{D}^*)}
\newcommand{\lxcEndD}{\Lambda \left(T_\mathbf{C}^* X\right) \widehat{\otimes} \mathrm{End} (D)}
\newcommand{\OEXC}{\Omega^{(=)}(X, \mathbf{C})}
\newcommand{\OEXgC}{\Omega^{(=)}(X_g, \mathbf{C})}
\newcommand{\HEXgBCC}{H^{(=)}_{\mathrm{BC}}(X_g, \mathbf{C})}
\newcommand{\HEXBCC}{H^{(=)}_{\mathrm{BC}}(X, \mathbf{C})}
\newcommand{\HEXBCR}{H^{(=)}_{\mathrm{BC}}(X, \mathbf{R})}
\newcommand{\HEYgBCC}{H^{(=)}_{\mathrm{BC}}(Y_g, \mathbf{C})}
\newcommand{\HESgBCC}{H^{(=)}_{\mathrm{BC}}(S_g, \mathbf{C})}
\newcommand{\HESBCC}{H^{(=)}_{\mathrm{BC}}(S, \mathbf{C})}
\newcommand{\HEXYgBCC}{H^{(=)}_{\mathrm{BC}}(X_g \times Y_g, \mathbf{C})}
\newcommand{\lmb}{\Lambda \left(\overline{T^* M}\right)}
\newcommand{\dbx}{\overline{\partial}^X}
\newcommand{\dbs}{\overline{\partial}^S}
\newcommand{\dx}{\partial^X}
\newcommand{\ds}{\partial^S}
\newcommand{\CXHomEUEG}{C^{\infty} \left( X, \mathrm{Hom} \left(E, \underline{E}\right)\right)^G}
\newcommand{\CXHombEUEG}{C^{\infty} \left( X, \mathrm{Hom}^\bullet \left(E, \underline{E}\right)\right)^G}
\newcommand{\ox}{\mathcal{O}_X}
\newcommand{\oy}{\mathcal{O}_Y}
\newcommand{\ou}{\mathcal{O}_U}
\newcommand{\om}{\mathcal{O}_M}
\newcommand{\cgBC}{\mathrm{ch}_{g,\mathrm{BC}}}
\newcommand{\cgBCBar}{\overline{\mathrm{ch}}_{g,\mathrm{BC}}}
\newcommand{\chgBC}{\mathrm{ch}_{g,\mathrm{BC}}}
\newcommand{\tdgBC}{\mathrm{Td}_{g,\mathrm{BC}}}
\newcommand{\Hom}{\mathrm{Hom}}
\newcommand{\End}{\mathrm{End}}
\newcommand{\Aut}{\mathrm{Aut}}
\newcommand{\HomG}{\mathrm{Hom}_G}
\newcommand{\cok}{\operatorname{coker}}
\newcommand{\MDG}{\mathscr{M}^D_G}
\newcommand{\McalD}{\mathscr{M}^\mathcal{D}_G}
\newcommand{\chg}{\mathrm{ch}_g}
\newcommand{\ch}{\mathrm{ch}}
\newcommand{\Trs}{\mathrm{Tr}_\mathrm{s}}
\newcommand{\dMDG}{\mathbf{d}^{\mathscr{M}^D_G}}
\newcommand{\AEpp}{A^{E\prime\prime}}
\newcommand{\AFpp}{A^{F\prime\prime}}
\newcommand{\AEUEpp}{A^{\mathrm{Hom} (E, \underline{E}) \prime\prime}}
\newcommand{\ApEpp}{A^{p_* \mathscr{E}\prime\prime}}
\newcommand{\ApEzeropp}{A^{p_* \mathscr{E}_0\prime\prime}}
\newcommand{\ApEzerop}{A^{p_* \mathscr{E}_0\prime}}
\newcommand{\ApEzero}{A^{p_* \mathscr{E}_0}}
\newcommand{\ApEzerosq}{A^{p_* \mathscr{E}_0, 2}}
\newcommand{\AESpp}{A^{E_{S}\prime\prime}}
\newcommand{\AUEpp}{A^{\underline{E}\prime\prime}}
\newcommand{\AEUzeropp}{\underline{A}^{E_0\prime\prime}}
\newcommand{\AEzerop}{A^{E_0\prime}}
\newcommand{\AEzero}{A^{E_0}}
\newcommand{\AEzerosq}{A^{E_0,2}}
\newcommand{\AEzeropp}{A^{E_0\prime\prime}}
\newcommand{\NDpp}{\nabla^{D\prime\prime}}
\newcommand{\NUDpp}{\underline{\nabla}^{D\prime\prime}}
\newcommand{\AEppsq}{A^{E\prime\prime,2}}
\newcommand{\OAXC}{\Omega^{0,\bullet} (X, \mathbf{C})}
\newcommand{\WOX}{\widehat{\otimes}_{\mathcal{O}_X}}
\newcommand{\cone}{\mathrm{cone}}
\newcommand{\Bdg}{\mathrm{B_{dg}}}
\newcommand{\BB}{\mathrm{B}}
\newcommand{\FOI}{\overline{\mathscr{F}}^\infty}
\newcommand{\BXG}{\mathrm{B} (X,G)}
\newcommand{\BUXG}{\underline{\mathrm{B}} (X,G)}
\newcommand{\wTX}{\widehat{TX}}
\newcommand{\X}{\mathcal{X}}
\newcommand{\wTXb}{\overline{\widehat{TX}}}
\newcommand{\wTXbs}{\overline{\widehat{T^* X}}}
\newcommand{\wTRX}{\widehat{T_{\mathbf{R}}X}}
\newcommand{\wTXs}{\widehat{T^*X}}
\newcommand{\lwxbs}{\Lambda\left(\overline{\widehat{T^* X}} \right)}
\newcommand{\lxxbs}{\Lambda\left(\overline{T^* \mathcal{X}} \right)}
\newcommand{\db}{\overline{\partial}}
\newcommand{\dbv}{\overline{\partial}^V}
\newcommand{\dbvs}{\overline{\partial}^{V*}}
\newcommand{\wy}{\widehat{y}}
\newcommand{\wyb}{\overline{\widehat{y}}}
\newcommand{\wY}{\widehat{Y}}
\newcommand{\II}{\textbf{I}}
\newcommand{\wb}[1]{\overline{\widehat{#1}}}
\newcommand{\M}{\mathcal{M}}
\newcommand{\upi}{\underline{\pi}}
\newcommand{\up}{\underline{p}}
\newcommand{\uq}{\underline{q}}
\newcommand{\AAp}{\mathcal{A}^{\prime}}
\newcommand{\AApp}{\mathcal{A}^{\prime\prime}}
\newcommand{\AAppy}{\mathcal{A}_Y^{\prime\prime}}
\newcommand{\ndpp}{\nabla^{D\prime\prime}}
\newcommand{\ndp}{\nabla^{D\prime}}
\newcommand{\ndII}{\nabla^{D\widehat{\otimes} q^* \textbf{I} }}
\newcommand{\nIIp}{\nabla^{\textbf{I} \prime}}
\newcommand{\ndIIp}{\nabla^{D\widehat{\otimes} q^* \textbf{I} \prime}}
\newcommand{\wo}{\widehat{\otimes}}
\newcommand{\wgTX}{\widehat{g}^{TX}}
\newcommand{\gwTX}{g^{\widehat{TX}}}
\newcommand{\wnTX}{\widehat{\nabla}^{TX}}
\newcommand{\nwTX}{\nabla^{\widehat{TX}}}
\newcommand{\wnlTCX}{\widehat{\nabla}^{\Lambda \left(T^*_\mathbf{C} X\right)}}
\newcommand{\wnF}{\widehat{\nabla}^{\mathbb{F}}}
\newcommand{\wn}{\widehat{\nabla}}
\newcommand{\wnFmo}{\widehat{\nabla}^{\mathbb{F},-1}}
\newcommand{\wnFF}{\widehat{\nabla}^{\mathbf{F}}}
\newcommand{\wnFFone}{{}^1\widehat{\nabla}^{\mathbf{F}}}
\newcommand{\AApyb}{\mathcal{A}^\prime_{Y,b}}
\newcommand{\AAyb}{\mathcal{A}_{Y,b}}
\newcommand{\utht}{\underline{\theta}_t}
\numberwithin{equation}{section}
\begin{document}
\maketitle
\begin{abstract}
	In this paper, we develope an equivariant theory of  Chern characters for coherent sheaves on compact complex manifolds with finite group actions, taking values in Bott-Chern cohomology classes. Furthermore, we establish the corresponding Riemann-Roch-Grothendieck theorem in this context.
\end{abstract}
\tableofcontents
\settocdepth{subsection}
\date{\today}

\section{Introduction}

\subsection{The main results}
Let $X$ be a compact complex manifold, and let $\ox$ denote the sheaf of holomorphic functions on $X$.
Let $G$ be a finite group acting holomorphically on $X$.

A $G$-coherent sheaf refers to a coherent sheaf equipped with a $G$-action.
Let $K \br{ X,G }$ denote the corresponding Grothendieck group.

Let $\DbCohXG$ be the derived category of bounded $G$-equivariant complexes of $\ox$-modules with coherent cohomology and let $K \br{\DbCohXG}$ be the corresponding $K$-group.
Classically, we have 
\begin{align} \label{Equ0-2}
  K \br{\DbCohXG} \simeq K(X, G).
\end{align}

 When $G$ is trivial, we use the notation $K\br{X}$ and  $K \br{\DbCohX}$ instead.

If $Y$ is another compact complex $G$-manifold,
let $f: X\rightarrow Y $ be a $G$-equivariant holomorphic map, then the derived pullback and the derived direct image,
\begin{align}
&Lf^*: \DbCohYG \rightarrow \DbCohXG	,  && Rf_*: \DbCohXG \rightarrow \DbCohYG	
\end{align}
induce corresponding morphisms of Grothendieck groups,
\begin{align}
&f^!: K(Y, G) \rightarrow K(X, G), && f_!: K(X, G) \rightarrow K(Y, G).	
\end{align}
Also, the derived tensor product on $\DbCohXG$ induces a corresponding tensor product on $K(X, G)$, so that $K(X, G)$ is a commutative ring.

Let $H_{\mathrm{BC}}(X, \mathbf{C})$ be the Bott-Chern cohomology of $X$. Set 
\begin{align}
  H^{(=)}_{\mathrm{BC}}(X, \mathbf{C}) = \bigoplus_{p=0}^{\mathrm{dim} X} H^{p,p}_{\mathrm{BC}}(X, \mathbf{C}).
\end{align}
Given $g \in G$, let $X_g \subset X, Y_g \subset Y$ be the fixed point sets of $g$. 
Let 
\begin{align}
&f^*: H^{(=)}_{\mathrm{BC}}(Y_g, \mathbf{C}) \rightarrow H^{(=)}_{\mathrm{BC}}(X_g, \mathbf{C}), &&f_*: H^{(=)}_{\mathrm{BC}}(X_g, \mathbf{C}) \rightarrow H^{(=)}_{\mathrm{BC}}(Y_g, \mathbf{C})
\end{align}
 be the pullback and pushforward maps. Let $\tdgBC\br{TX} \in \HEXgBCC $  and  $\tdgBC\br{TY} \in \HEYgBCC $ be the equivariant Todd classes.

Let $\anbr{g} \subset G$ be the cyclic group generated by $g$.

This paper aims to prove the following theorem.
\begin{thm}[See Sections \ref{EquiChernCharacter}, \ref{Section7-5} and Theorem \ref{MainTheorem}]\label{Thm0-7}
There is a map	
\begin{align} \label{Equ0-5}
  \chgBC: K \br{ X,G } \rightarrow  H^{(=)}_{\mathrm{BC}}(X_g, \mathbf{C})
\end{align}
satisfies the following properties.
\begin{enumerate}  [start=1]
\item \label{Con0-1} It factors through  $K \br{ X_g,\anbr{g} }$. That is, the following diagram
\begin{equation}
\begin{tikzcd}
{K(X,G)} \arrow[d] \arrow[rd, "\cgBC"] &   \\
{K(X_g,\anbr{g})} \arrow[r, "\cgBC"]          & \HEXgBCC
\end{tikzcd}	
\end{equation}
commutes.

\item \label{Con0-2}	If $E$ is an equivariant holomorphic vector bundle, $\cgBC$ is defined in usual way, e.g. by classical Bott-Chern's Chern-Weil theory.
\item \label{Con0-5} $\cgBC$ is a ring morphism, \ie if $\F, \underline{\F} \in K(X, G)$, then
\begin{align} \label{Equ0-8}
\begin{aligned}
\chgBC \br{\F + \underline{\F}} = \chgBC\br{\F} +  \chgBC \br{\underline{\F}}	 \; \text{in} \; \HEXgBCC. \\
\chgBC \br{\F \cdot \underline{\F}} = \chgBC\br{\F} \chgBC \br{\underline{\F}}	 \; \text{in} \; \HEXgBCC.
\end{aligned}
\end{align}
\item \label{Con0-3} $\cgBC$ is functorial under equivariant pullbacks, \ie if $\F \in K(Y, G)$, then
\begin{align}
	\chgBC \br{f^! \F} = f^* \chgBC \br{\F} \; \text{in} \; \HEXgBCC.
\end{align}
\item \label{Con0-4} $\cgBC$ verifies the Riemann-Roch-Grothendieck formula under equivariant direct image, \ie if $\F \in K(X, G)$, then 
\begin{align} \label{Equ0-9}
\tdgBC \br{TY} \chgBC \br{f_! \F} = f_* \brr{\tdgBC \br{TX} \chgBC  \br{\F}}	\; \text{in} \; \HEYgBCC.
\end{align}	
\end{enumerate}
\end{thm}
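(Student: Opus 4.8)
The plan is to construct $\chgBC$ in several stages, reducing successively to situations where classical Bott–Chern Chern–Weil theory applies. \emph{Step 1: Reduction to the fixed locus.} Given a $G$-equivariant coherent sheaf $\F$ on $X$, restrict attention to a $g$-invariant neighborhood of $X_g$ and localize. As in the classical equivariant index theory (Atiyah–Segal, Baum–Fulton–MacPherson), the $g$-action on $\F$ near $X_g$ decomposes, over the cyclic group $\anbr{g}$, into eigen-subsheaves; this is what produces the factorization through $K(X_g,\anbr{g})$ claimed in \eqref{Con0-1}, and simultaneously explains why the target is Bott–Chern cohomology of $X_g$ rather than of $X$. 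So the first task is to make precise the localization map $K(X,G)\to K(X_g,\anbr{g})$ and check it is compatible with pullback and pushforward along $g$-equivariant maps (using that $f(X_g)\subset Y_g$ and that $X_g\to Y_g$ is again holomorphic, and that the relative tangent/normal bundles split into $g$-weight spaces).

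\emph{Step 2: Dévissage to locally free sheaves.} On the (possibly non-projective, non-Kähler) complex manifold $X_g$ there is in general no global locally free resolution, so one cannot directly define $\chgBC(\F)$ by resolving $\F$. The remedy, following the strategy that is presumably developed in Sections \ref{EquiChernCharacter} and \ref{Section7-5}, is to work with the derived category $\DbCohXG$ via \eqref{Equ0-2} and to replace global resolutions by \emph{local} ones glued through a Čech-type or simplicial construction: choose a finite invariant cover by Stein-like opens on which equivariant locally free resolutions exist, define the Chern character as an alternating sum of classical equivariant Bott–Chern characters of the local resolution terms, and show the resulting current/form is globally well-defined up to $\partial\db$-exact terms. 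This is where transgression arguments are needed: two choices of local resolution are connected by a zig-zag of quasi-isomorphisms, and each quasi-isomorphism contributes a Bott–Chern secondary class that dies in $\HEXgBCC$. Property \eqref{Con0-2} then holds by construction, and additivity in \eqref{Con0-5} (first line) is the statement that the class is insensitive to distinguished triangles, which is exactly the defining property of a $K$-theory map.

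\emph{Step 3: Multiplicativity and pullback.} Once $\chgBC$ is defined on $\DbCohXG$, the ring property \eqref{Con0-5} (second line) follows from the local Whitney-type formula for the classical equivariant Bott–Chern character applied to tensor products of local resolutions, together with the fact that the derived tensor product can be computed term-by-term on such resolutions. Functoriality under pullback \eqref{Con0-3} is similar: $Lf^*$ of a local locally free resolution is again one (upstairs), and naturality of the classical Chern–Weil construction under pullback of connections gives the identity on the level of forms, hence on Bott–Chern classes, after noting $f(X_g)\subset Y_g$ so that $f^*$ on $\HEXgBCC$ is the relevant map.

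\emph{Step 4: Riemann–Roch–Grothendieck.} This is the main obstacle. I would factor $f$ as a composition of (i) a closed $G$-immersion into a $G$-equivariant projective bundle (or, absent projectivity, into the total space of an equivariant vector bundle, compactified) and (ii) the projection of that bundle. For (ii), the relative RRG reduces, via the local structure over $X_g$ of the fixed locus of $g$ in the bundle, to a fiberwise computation of the equivariant Bott–Chern analogue of the Todd class — here one needs an equivariant Bismut-type family index formula in Bott–Chern cohomology, or its algebraic shadow, to identify $f_*$ of $\tdgBC(TX)\chgBC(\F)$. For (i), the closed immersion case, the key input is an equivariant Bott–Chern–Grothendieck–Riemann–Roch formula for a $g$-invariant submanifold, which is proved by resolving $\OO$ of the submanifold by a Koszul-type complex and computing the resulting secondary Bott–Chern classes; the appearance of $\tdgBC$ of the normal bundle, restricted to the fixed locus, is exactly the Bott residue phenomenon. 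Throughout, the delicate point — by contrast with the algebraic GRG — is that all identities must be upgraded from cohomology to the level of currents/forms modulo $\partial\db$, so every step carries a secondary transgression term that must be controlled; I expect assembling these transgressions coherently across the factorization of $f$ to be the technical heart of the argument, and it is presumably where the bulk of Sections \ref{EquiChernCharacter} and \ref{Section7-5} is spent.
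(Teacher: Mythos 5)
Your proposal follows a genuinely different route from the paper, and several of its steps have gaps that make it unlikely to close without substantial changes of strategy.

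\emph{On Step~2.} You propose to define $\cgBC$ by gluing \emph{local} holomorphic locally free resolutions over a Stein-type cover, controlling the ambiguity by transgression. This is the Green--Hosgood simplicial approach, which is quite different from what the paper does, and it is technically delicate precisely in the Bott--Chern setting: the transgression witnessing independence of the local resolution is not a $d$-exact form but a $\db\partial$-exact one, and assembling such transgressions coherently across a cover requires the full simplicial machinery. The paper avoids this entirely. The key observation (Proposition~\ref{Prop0-1}, extended to the equivariant case in Proposition~\ref{Illusie}) is that, while no global \emph{holomorphic} resolution exists, the complex $\E^\infty = \ox^\infty\otimes_{\ox}\E$ \emph{does} admit a global finite-rank \emph{smooth} resolution $Q$, equivariantly, and one can build an antiholomorphic $G$-superconnection $\AEpp$ on $\lxb\wo Q$ whose complex of sheaves represents $\E$. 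The Chern character is then a single global Chern--Weil form $\varphi\Trs\brr{g\exp(-\AEzerosq|_{X_g})}$, with all the independence statements encoded in the Bianchi identity for the superconnection curvature — no Čech gluing is needed. This is the technical heart of Sections~\ref{SectionEquivalence} and \ref{EquiGeneMetr1}, and your proposal does not engage with it.

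\emph{On Step~1.} The factorization through $K(X_g,\anbr{g})$ in the paper does not come from decomposing $\F$ near $X_g$ into eigen-subsheaves; it comes from the much simpler fact (Remark~\ref{rmk61}, Theorem~\ref{ThmDerivedChern}) that the form $\chg(\AEzeropp,h)$ only depends on the restriction of $(E_0,\AEzeropp,h)$ to $X_g$ and on $\anbr{g}$. The map $K(X,G)\to K(X_g,\anbr{g})$ is simply derived restriction together with restriction of the group. No eigen-sheaf decomposition near $X_g$ is performed (and it would be awkward to do so globally, since the eigenbundle decomposition only makes sense on $X_g$, not in a neighborhood).

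\emph{On Step~4.} You factor $f$ through a closed $G$-immersion into a projective bundle. The paper instead uses the graph factorization $X\xrightarrow{i} X\times Y\xrightarrow{p} Y$, which is canonical, always available, and $G$-equivariant; no projectivity or compactification is needed. Your proposed factorization is less robust in the non-algebraic setting. For the projection step you appeal to ``an equivariant Bismut-type family index formula in Bott--Chern cohomology, or its algebraic shadow,'' but for a general antiholomorphic $G$-superconnection (not a holomorphic bundle) and a general compact complex $X$ (not Kähler), no such formula is available off the shelf; the paper's proof of Theorem~\ref{Thm9-1} is precisely the construction of such a formula, via the exotic hypoelliptic deformation of Sections~\ref{SectionHypo1}--\ref{SectionSubmersion2}, and this is where most of the analytic work lies. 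Your embedding step (Koszul complex, deformation to the normal cone) is closer in spirit to Section~\ref{SectionEmbedding}, though the paper also first reduces the equivariant embedding to the case of trivial $G$-action via the excess normal bundle (Proposition~\ref{EquiTranProp}), a reduction your sketch does not anticipate.

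In short: your outline identifies the right qualitative milestones (factor through $X_g$, define $\cgBC$ on $\DbCohXG$, prove functoriality, prove RRG by factoring $f$), but the machinery you propose in Steps~2 and~4 is substantially different from the paper's and, as sketched, leaves the hardest points — global construction of the Chern character form and the projection case of RRG — unresolved.
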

Furthermore, we will prove that our equivariant Chern character is unique in following sense. 
\begin{thm}[See Theorem \ref{Thm8-1}] \label{Thm0-6}
There is a unique map $K \br{ X,G } \rightarrow \HEXgBCC$ satisfies \eqref{Con0-1}, \eqref{Con0-2}, \eqref{Con0-3} in Theorem \ref{Thm0-7} and the weaker conditions,

\eqref{Con0-5}$'$ It is a group morphism, \ie the first equation in \eqref{Equ0-8} holds.

\eqref{Con0-4}$'$ It satisfies the Riemann-Roch-Grothendieck theorem  for equivariant embeddings, \ie in \eqref{Equ0-9}, $f: X\rightarrow Y$ is an equivariant embedding.
\end{thm}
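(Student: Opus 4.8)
Since the map $\cgBC$ constructed in Theorem~\ref{Thm0-7} satisfies the stronger properties \eqref{Con0-5} and \eqref{Con0-4}, it \emph{a fortiori} satisfies \eqref{Con0-5}$'$ and \eqref{Con0-4}$'$; so existence is already in hand and the content of the statement is uniqueness. My plan is to compare two maps $\phi_1,\phi_2\colon K(X,G)\to\HEXgBCC$ enjoying the listed properties and show $\phi_1=\phi_2$. By \eqref{Con0-2} they agree on classes of equivariant holomorphic vector bundles on $X$, and using \eqref{Con0-3} they then agree on pullbacks of such classes, hence on all classes of equivariant holomorphic vector bundles on an arbitrary equivariant complex submanifold $\iota\colon Z\into X$. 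Since $\tdgBC(TX)$ is invertible in the ring $\HEXgBCC$ (its component of bidegree $(0,0)$ equals $1$), property \eqref{Con0-4}$'$ applied to $\iota$ forces $\phi_i(\iota_!\alpha)$ to be determined by $\phi_i(\alpha)$; hence $\phi_1$ and $\phi_2$ agree on every class of the form $\iota_![E]$, with $\iota\colon Z\into X$ an equivariant closed immersion of a compact complex $G$-submanifold and $E$ an equivariant holomorphic vector bundle on $Z$, and then, by \eqref{Con0-5}$'$, on the subgroup of $K(X,G)$ generated by such classes. Thus the theorem reduces to the following \emph{dévissage}: the classes $\iota_![E]$ generate $K(X,G)$.

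To organize the dévissage I would first use \eqref{Con0-1} to replace $X$ by $X_g$ and $G$ by the cyclic group $\anbr g$ acting trivially on $X_g$: the lower arrow of the diagram in \eqref{Con0-1} is, by construction, the Chern character attached to $(X_g,\anbr g)$, so uniqueness for a general pair follows once it is established for pairs with trivial action. When $\anbr g$ acts trivially, every equivariant coherent sheaf splits into isotypic components, so $K(X_g,\anbr g)\cong K(X_g)\otimes_{\Z}R(\anbr g)$; a one-dimensional representation is pulled back from a point, so the projection formula $\iota_!(\alpha\otimes\iota^!\beta)=\iota_!\alpha\otimes\beta$ reduces the claim to: the ordinary Grothendieck group $K(X_g)$ is generated by classes $\iota_![E]$ with $\iota\colon Z\into X_g$ a closed immersion of a compact complex submanifold and $E$ a holomorphic vector bundle on $Z$.

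This last statement I would prove by induction on the dimension of the support of a coherent sheaf $\F$ on $X_g$, the case of dimension $\le 0$ being immediate. If $\F$ is generically locally free on $X_g$, replace it (modulo torsion and the cokernel of the map to its reflexive hull, both of codimension $\ge 1$) by a reflexive sheaf, and peel off its rank and determinant, which are classes of vector bundles on $X_g$; what is left has vanishing $\mathrm{ch}_0$ and $\mathrm{ch}_1$, hence lies in the part of $K(X_g)$ carried by subvarieties of codimension $\ge 2$, where the inductive hypothesis applies. If $\F$ is supported on a proper analytic subset $V$, apply resolution of singularities: choose an iterated blow-up $\rho\colon\widehat X_g\to X_g$ along smooth centres so that the total transform of $V$ is a simple normal crossing divisor with smooth components. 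Then $R\rho_*\mathcal{O}_{\widehat X_g}=\mathcal{O}_{X_g}$ gives $\rho_!\rho^!=\mathrm{id}$; filtering $[L\rho^*\F]$ by the (smooth, closed) strata and applying the inductive hypothesis stratum by stratum expresses it as a combination of $\iota_!$-pushforwards of vector bundles from smooth submanifolds of $\widehat X_g$. Pushing forward by $\rho_!$ and using the structure of the coherent $K$-theory of a blow-up — so that $\rho_!\circ j_!$, for $j$ the inclusion of an exceptional divisor, equals $i_!\circ q_!$ with $i$ the immersion of the (lower-dimensional, smooth) centre and $q$ the projective bundle projection, and with $q_!$ of the relevant twisted structure sheaves being explicit vector bundle classes on the centre — one rewrites $[\F]$ as a combination of classes $\iota_![E]$ over the centres together with a class of strictly smaller support, closing the induction.

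The step I expect to be the real obstacle is exactly this dévissage, and precisely because of the non-algebraic setting: a general compact complex manifold may carry very few vector bundles or submanifolds, so one cannot simply resolve the singular support and embed the resolution, and must instead route everything through iterated blow-ups and the blow-up decomposition of coherent $K$-theory. In a treatment carried out directly over $(X,G)$ rather than after passing to $X_g$, one must moreover keep all of this compatible with the $G$-action, which requires finite-group equivariant resolution of singularities together with careful bookkeeping of how $G$ permutes exceptional components (working with $G$-invariant unions of strata); invoking \eqref{Con0-1} to reduce to $X_g$ first is what sidesteps this. Once the dévissage is available, the remainder — agreement of the two maps on vector-bundle classes and their $\iota_!$-images via \eqref{Con0-2}, \eqref{Con0-3}, \eqref{Con0-4}$'$ and invertibility of the Todd class, followed by extension along \eqref{Con0-5}$'$ and the isotypic decomposition — is formal.
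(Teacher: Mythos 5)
Your proposal takes a genuinely different route from the paper, and the route has a gap at the step you yourself flag as "the real obstacle."

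You set out to prove a generation (d\'evissage) theorem: that $K(X_g)$ is generated by classes $\iota_![E]$ for $\iota\colon Z\hookrightarrow X_g$ a closed immersion and $E$ a holomorphic vector bundle on $Z$, and then to deduce uniqueness from agreement on such classes. The paper never proves, and never needs, such a statement. Instead, after reducing to trivial $G$-action and inducting on $\dim X$, it takes the modification $\sigma\colon\widetilde X\to X$ (a finite sequence of smooth blow-ups, produced by Hironaka flattening plus desingularization) for which $\sigma^*\F/(\sigma^*\F)_{\mathrm{tor}}$ is locally free and the torsion and higher $L^i\sigma^*$ are supported in a normal-crossing divisor $D=\sum D_i$, and then invokes the \emph{injectivity of $\sigma^*$ on $\HEXBCC$} (from \cite[Theorem 9.4.1]{BSW}) to transfer the comparison to $\widetilde X$. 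On $\widetilde X$ the locally free piece is handled by condition \eqref{Con0-2}, and the $D$-supported pieces are handled by Proposition \ref{Prop8-2}, which only requires writing such a sheaf as $\sum_i\iota_{i,*}[\xi_i]$ with $\xi_i$ \emph{coherent} (not locally free) on the lower-dimensional $D_i$, followed by RRG for embeddings and the dimension induction. Nothing is ever pushed forward from $\widetilde X$ back to $X$ in $K$-theory.

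The concrete gap in your sketch lies in the sentence "peel off its rank and determinant ... what is left has vanishing $\mathrm{ch}_0$ and $\mathrm{ch}_1$, hence lies in the part of $K(X_g)$ carried by subvarieties of codimension $\geq 2$." For a general compact complex manifold there is no codimension (coniveau) filtration of $K(X_g)$ with this property: a class with vanishing $\mathrm{ch}_0,\mathrm{ch}_1$ need not be represented by a virtual difference of sheaves supported in codimension $\geq 2$, and your inductive hypothesis is by dimension of \emph{support}, so it cannot be applied. In the projective case this step would follow from the Brown--Gersten--Quillen coniveau filtration compared with the Chow filtration via GRR, but that machinery is exactly what is unavailable in the non-algebraic setting. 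You correctly identify the non-algebraicity as the obstacle and propose routing through blow-ups, but without the injectivity of $\sigma^*$ on Bott--Chern cohomology there is no way to close the argument: the blow-up decomposition $\rho_!\circ j_!=i_!\circ q_!$ lets you push down classes generated on $\widetilde X_g$, but the starting input (the filtration step above) is the missing piece. The injectivity of $\sigma^*$ is precisely the device that allows the paper to bypass any generation result for $K$-theory altogether.
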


We will describe the earlier work, the ideas and techniques that are used to obtain Theorems \ref{Thm0-7} and \ref{Thm0-6}.

\subsection{Earlier work}
Coherent sheaves were first introduced by Oka \cite{Oka50}, providing a powerful generalization of vector bundles that has become fundamental in complex and algebraic geometry. 
The study of Chern classes for coherent sheaves, extending the classical theory for vector bundles, emerged as an important problem in the late 1950s.
In the algebraic case, the Chern class in the Chow ring was defined by Grothendieck \cite{G58} and the Riemann-Roch theorem, now is called Riemann-Roch-Grothendieck theorem was established by him in \cite{BS58}.

The first systematic treatment of Chern classes for coherent sheaves on complex manifolds was developed by Atiyah and Hirzebruch \cite{AH61, AH62}.
In these papers, they studied a real analytic resolution of a coherent sheaf to define the Chern classes with values in the de Rham cohomology and obtain the Riemann-Roch-Grothendieck theorem for embeddings.

In \cite{Gre80}, Green introduced simplicial resolution for coherent sheaves and got Chern classes in de Rham cohomology.
In \cite{H23, H24}, Hosgood constructed an $\br{\infty, 1}$ categorial framework to generalize Green's work.

In \cite{BTT81}, O'Brian, Toledo and Tong defined a Chern character map from $K(X)$ to the Hodge cohomology class. In \cite{BTT85}, they prove a corresponding Riemann-Roch-Grothendieck theorem.

In \cite{L87}, Levy constructed a morphism $\alpha_X$ from $K\br{X}$ to  topological $K$-group $K^{\mathrm{top}} \br{X}$ which extends the obvious forgetful functor from holomorphic vector bundles to smooth vector bundles and which is compatible with direct images.
The constructions are valid for general complex spaces.

In \cite{G10},  Grivaux constructed Chern classes on $K(X)$ that take values in any cohomology theory satisfying a certain axioms, in particular, the rational Deligne cohomology. 
 His construction proves these classes satisfy Riemann-Roch-Grothendieck for projective morphisms.
He also proved a unicity result for Chern classes on $K(X)$.
The formulation and proof of our unicity Theorem \ref{Thm0-6} are inspired on this work.

In \cite{Wu23}, Wu proved that rational Bott-Chern cohomology verifies the axioms given by Grivaux \cite{G10}. In particular, Chern classes of coherent sheaves in rational Bott-Chern cohomology can be defined and the corresponding Riemann-Roch-Grothendieck theorem holds for projective morphism between compact complex manifolds.

In \cite{Qu85}, Quillen introduced superconnections and generalized the classical Chern-Weil theory to superconnections.
In \cite{BGS2}, Bismut, Gillet and Soulé adapted Quillen's superconnection to complex geometry using a version of antiholomorphic superconnections.
In \cite{B110}, Block further generalized the concept of antiholomorphic superconnections in a more abstract way.
This object can be used to study the coherent sheaves.
Following his work, Qiang \cite{Q16, Q17} and Bismut, Shen, Wei \cite{BSW} defined a Chern character from $K\br{X}$ to the real Bott-Chern cohomology class.
In \cite{BSW}, they also proved the corresponding Riemann-Roch-Grothendieck theorem for any arbitrary holomorphic maps between compact complex manifolds. 

Recently, in \cite{MTTW25}, Ma, Tang, Tseng and Wei used flat antiholomorphic superconnections to study orbifold Chern character. They proved the Riemann-Roch-Grothendieck theorem for orbifold embeddings and gave a corresponding unicity theorem.

\subsection{The construction of equivariant Chern character}
In this section, we briefly review the techniques for constructing Chern characters of coherent sheaves developed in \cite{B110} and \cite{BSW}. 
Additionally, we outline how these constructions can be generalized to the equivariant setting, with detailed elaborations provided in Sections \ref{SectionEquOX}-\ref{EquiGeneMetr1}.
\subsubsection{The antiholomorphic superconnections}

Let $E$ be a $\Z$-graded smooth vector bundle of finite rank on $X$.
Let $\lxb$ be the exterior algebra of antiholomorphic cotangent vector bundles. 
We suppose that $\lxb$ acts on the left on $E$ so that $\Lambda^i \br{\overline{T^* X}}$ increases the degree of $E$ by $i$.

We assume that $E$ is free over $\lxb$, that is, there exists a $\Z$-graded smooth bundle $D$ on $X$ such that 
\begin{align} \label{Equ0-1}
  E \simeq \Lambda \br{\overline{T^* X}} \wo D
\end{align}
as $\lxb$-modules.
The identification \eqref{Equ0-1} is non-canonical.

\begin{defn} 
An antiholomorphic superconnection is a first order differential operator $\AEpp$ acting on $\CXE$ such that it acts as an operator of degree $1$, it is flat and it satisfies the Leibniz rule in the sense that if $\alpha \in \OAXC$, $s \in \CXE$, then
    \begin{align} 
    \AEpp (\alpha s) = \br{\dbx \alpha} s + \br{-1}^{\deg \alpha} \alpha \AEpp s.
    \end{align}
\end{defn}

Thanks to the identification \eqref{Equ0-1}, the antiholomorphic superconnection $\AEpp$ on $E$ gives non-canonically an antiholomorphic superconnection $\AEzeropp$ on $E_0 = \Lambda \br{\overline{T^* X}} \wo D$.
This non-canonicity appears naturally in geometric setting and however will induces conceptual difficulties.

 
In \cite{B110}, \cite{BSW}, the authors defined a homotopy category $\underline{\mathrm{B}} \br{X}$ of antiholomorphic superconnections on $X$.
The key result to define the Chern character for coherent sheaves are the following theorem \cite[Theorem 4.3]{B110}, \cite[Theorem 6.5.1]{BSW}.
\begin{thm} \label{Thm0-2}
There is a natural functor $\underline{F}_X : \underline{\mathrm{B}} \br{X} \rightarrow \DbCohX$	 which is an equivalence of triangulated categories.
\end{thm}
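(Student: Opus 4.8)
\textit{Proof proposal.} The plan is to follow the strategy of \cite{B110} and \cite{BSW}: realize $\underline{F}_X$ as a Dolbeault-type resolution functor, check that it is triangulated, and then establish full faithfulness and essential surjectivity separately.

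\textbf{Step 1 (construction of the functor).} Given an object $(E,\AEpp)$ of the underlying dg-category with $E \simeq \lxb \wo D$, I associate to it the sheaf $\mathscr{E}$ on $X$ with $\mathscr{E}(U) = C^\infty(U,E)$, equipped with the differential $\AEpp$. Since $\dbx f = 0$ for holomorphic $f$, the Leibniz rule gives $[\AEpp,f]=0$, so $(\mathscr{E},\AEpp)$ is a complex of $\ox$-modules; it is bounded (the $\Lambda$-degree is $\le \dim X$, the $D$-degree lies in a finite range), and each term is a fine, hence $\Gamma$-acyclic, sheaf. Set $\underline{F}_X(E,\AEpp) := (\mathscr{E},\AEpp)$. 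A morphism in the dg-category, being a degree-$0$ smooth $\lxb$-linear map commuting with the superconnections, induces a morphism of complexes of $\ox$-modules, and a homotopy induces a homotopy; so $\underline{F}_X$ descends to $\underline{\mathrm{B}}(X)$. Because the mapping cone of $\phi\colon (E_1,A_1)\to(E_2,A_2)$ is the superconnection on $E_1[1]\oplus E_2 = \lxb\wo(D_1[1]\oplus D_2)$ in triangular form with off-diagonal entry $\phi$, and $\underline{F}_X$ visibly sends it to the mapping cone of $\underline{F}_X(\phi)$ (and shift to shift), $\underline{F}_X$ is a triangulated functor onto whichever triangulated subcategory of $\mathrm{D}^{\mathrm{b}}(\ox)$ contains its image.

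\textbf{Step 2 (coherence of the cohomology).} Next I show $\mathscr{H}^j(\mathscr{E})$ is coherent, so $\underline{F}_X$ factors through $\DbCohX$. This is local: on a coordinate polydisc $U$, decompose $\AEpp = v + \NDpp + \sum_{i\ge 2}\AEpp_{[i]}$ by form degree, with $v \in C^\infty(U,\End^1 D)$ and $\NDpp$ a $(0,1)$-superconnection on $D$. Flatness $\AEppsq = 0$ is equivalent to a tower of Maurer–Cartan identities ($v^2 = 0$, $[\NDpp,v]=0$, $(\NDpp)^2 = -[v,\AEpp_{[2]}]$, \dots). Using the Grothendieck–Dolbeault solution operator for $\dbx$ on $U$ together with these identities, I construct inductively in the form degree a homotopy equivalence in the dg-category over $U$ between $(E|_U,\AEpp|_U)$ and an object $(\lxb\wo W^\bullet,\dbx{}^W+d)$ coming from a genuine bounded complex $(W^\bullet,d)$ of holomorphic vector bundles on $U$. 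For such an object $\mathscr{E}|_U$ is the total Dolbeault complex of $(W^\bullet,d)$, which by the $\dbx$-Poincaré lemma is quasi-isomorphic, column by column, to the complex of finite free $\ox|_U$-modules $(\ox|_U \otimes W^\bullet,d)$; its cohomology sheaves are therefore coherent. (This straightening lemma is exactly where coherent — rather than merely locally free — cohomology appears, since $d$ may drop rank.)

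\textbf{Step 3 (full faithfulness).} For $(E_a,A_a)$, $a=1,2$, the $\lxb$-linear sheaf $\mathcal{H}om_{\lxb}(E_1,E_2) \simeq \lxb\wo\Hom(D_1,D_2)$ carries the induced antiholomorphic superconnection $\phi\mapsto A_2\phi-(-1)^{|\phi|}\phi A_1$ and is thus again an object of the dg-category; its global smooth sections with this differential form precisely the morphism complex $\Hom^\bullet((E_1,A_1),(E_2,A_2))$, whose $H^0$ is $\Hom_{\underline{\mathrm{B}}(X)}$ by definition. Applying the local straightening of Step 2 to both arguments identifies $\underline{F}_X$ of this internal-$\mathcal{H}om$ object with a representative of $R\mathcal{H}om_{\ox}(\underline{F}_X E_1,\underline{F}_X E_2)$ in $\DbCohX$; being a bounded complex of fine sheaves it requires no further resolution and is $\Gamma(X,-)$-acyclic, so
\[
  R\Hom_{\DbCohX}(\underline{F}_X E_1,\underline{F}_X E_2) = R\Gamma\!\left(X,\,R\mathcal{H}om_{\ox}(\underline{F}_X E_1,\underline{F}_X E_2)\right) = \Hom^\bullet\!\left((E_1,A_1),(E_2,A_2)\right).
\]
Taking $H^0$ gives $\Hom_{\DbCohX}(\underline{F}_X E_1,\underline{F}_X E_2) = \Hom_{\underline{\mathrm{B}}(X)}((E_1,A_1),(E_2,A_2))$ (and in all degrees $\mathrm{Ext}^j = H^j\Hom^\bullet$), so $\underline{F}_X$ is fully faithful.

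\textbf{Step 4 (essential surjectivity and conclusion).} Any $\mathscr{F}\in\DbCohX$ is represented by a bounded complex of coherent sheaves. Choose a finite cover $\{U_i\}$ of $X$ by polydiscs small enough that $\mathscr{F}|_{U_i}$ admits a finite free resolution $W_i^\bullet$ (finite free resolutions exist locally: the analytic local rings $\mathbf{C}\{z\}$ are regular, so the top syzygy sheaf is coherent with free stalks, hence locally free). On overlaps the $W_i^\bullet$ are chain-homotopy equivalent; choosing such equivalences together with a coherent system of higher homotopies (a twisting cochain in the sense of Green / Toledo–Tong) and gluing with a partition of unity, I assemble a $\Z$-graded bundle $D$ and a flat antiholomorphic superconnection $\AEpp$ on $E = \lxb\wo D$ — flatness being exactly the twisting-cochain equations — with $\underline{F}_X(E,\AEpp)\simeq\mathscr{F}$. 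Thus $\underline{F}_X$ is essentially surjective; being also fully faithful and triangulated, it is an equivalence of triangulated categories. I expect the main obstacle to be the local straightening lemma of Step 2 — coordinating the analytic solvability of $\dbx$ on shrinking polydiscs with the Maurer–Cartan hierarchy so as to eliminate all higher form-degree terms — since it also underpins Step 3; the \v{C}ech-type globalization of Step 4 is combinatorially involved but conceptually routine once the local picture and the dg-formalism are in place.
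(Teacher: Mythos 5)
Your Steps 1--3 track the architecture that this statement rests on in \cite{B110} and \cite{BSW} (and that the present paper extends equivariantly in Sections \ref{EquiAntiSupe}--\ref{SectionEquivalence}): the functor is the sheaf of smooth sections of $E$ with differential $\AEpp$, coherence of the cohomology follows from the local straightening of $\AEpp$ to $\nabla^{D\prime\prime}+v_0$ after a gauge transformation (this is \cite[Theorems 5.2.1 and 5.3.4]{BSW}, so your Step 2 is the known local input rather than the main obstacle), and full faithfulness can indeed be run through the internal $\mathcal{H}om$ superconnection, provided you justify that its image under $\underline{F}_X$ represents $R\mathcal{H}om_{\ox}$ of the images --- which again uses the local straightening; as a sketch this is acceptable and is close to Block's original argument, whereas \cite{BSW} (and this paper in Section \ref{SecFullFaith}) instead invert quasi-isomorphisms up to homotopy and argue with roofs.

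The genuine gap is Step 4. The local finite free resolutions $W_i^\bullet$ over the polydiscs $U_i$ are only chain-homotopy equivalent on overlaps, not isomorphic --- their ranks need not even agree --- so there is no fixed bundle whose sections or differentials a partition of unity could average: a partition of unity cannot assemble non-isomorphic local graded bundles into a global $\Z$-graded smooth bundle $D$. A Toledo--Tong/Green twisting cochain lives on the \v{C}ech nerve and yields a twisted complex over the cover; passing from that to a single global object $\br{E,\AEpp}$ with $E=\lxb\wo D$ is precisely the hard point of essential surjectivity, not a routine gluing. This is why the proof being reproved goes through two substantial global statements: first, the existence of a global bounded resolution of $\F^\infty=\ox^\infty\otimes_{\ox}\F$ by smooth vector bundles (Proposition \ref{Prop0-1}, \cite[Proposition II.2.3.2]{Col71}, \cite[Proposition 6.3.2]{BSW}; Proposition \ref{Illusie} here), proved by an induction on the length of local holomorphic resolutions using cone constructions and extension of (locally trivial) bundles, where the partition of unity is used only to glue surjections onto $H^0$ coming from one globally defined bundle $R$; and second, the degree-by-degree construction of $\AEpp=\sum_i v_i$ on $E=\lxb\wo Q$ together with a quasi-isomorphism to $\overline{\F}^\infty$, followed by a filtration/spectral-sequence argument (Theorem \ref{EssSurB}, \cite[Theorem 6.3.6]{BSW}). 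So the difficulty you flag as central (Step 2) is handled locally by an algebraic induction, while the globalization you call ``conceptually routine'' is where the real work lies; as written, Step 4 does not go through.
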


One of the main step for proving the theorem is to show that the functor $\underline{F}_X$ is essentially surjective, that is, to find an antiholomorphic superconnection associated to an object $\E$ in $\DbCohX$.

Let $\ox^\infty$ be the sheaves of smooth functions on $X$.
Let $\E^\infty$ be the $\ox^\infty$-complex defined by
\begin{align} \label{Equ0-3}
	\E^\infty = \ox^\infty \otimes_{\ox} \E.
\end{align}

Even though the sheaf of holomorphic sections of $\E$ does not admit global resolution by complex of holomorphic vector bundles \cite{V02}, the  sheaf $\E^\infty$ does always admit a global resolution by complex of smooth vector bundles.
That is, we have the following result established  in \cite[Proposition II.2.3.2]{Col71} and also proved in \cite[Proposition 6.3.2]{BSW}. 
A corresponding result in real analytic category was established in \cite[Proposition 2.6]{AH61}.
\begin{prop} \label{Prop0-1}
There exist a bounded  complex of smooth vector bundles of finite rank $Q$ and a quasi-isomprhism of $\ox^\infty$-complexes $\ox^\infty Q \rightarrow \E^\infty$.
\end{prop}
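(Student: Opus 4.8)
The statement is Proposition~\ref{Prop0-1}: given an object $\E \in \DbCohX$, the associated smooth complex $\E^\infty = \ox^\infty \otimes_{\ox} \E$ admits a global resolution $\ox^\infty Q \to \E^\infty$ by a bounded complex $Q$ of smooth vector bundles of finite rank. I would first reduce to the case of a single coherent sheaf placed in one degree: since $\E$ is a bounded complex with coherent cohomology, one can work by induction on the length of the complex, using the standard fact that a resolution of the cohomology sheaves can be assembled into a resolution of the complex via the mapping cone construction (or a spectral-sequence / truncation argument in the homotopy category of complexes of $\ox^\infty$-modules). Thus the heart of the matter is: for a coherent sheaf $\F$ on $X$, the sheaf $\F^\infty = \ox^\infty \otimes_{\ox} \F$ of smooth sections admits a finite global resolution by smooth vector bundles.

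For that, the key point is local-to-global. Locally, coherence gives for each $x \in X$ an open neighbourhood $U_x$ and a finite presentation $\ox^{\oplus a}|_{U_x} \to \ox^{\oplus b}|_{U_x} \to \F|_{U_x} \to 0$, hence (tensoring with $\ox^\infty$, which is flat over $\ox$, so exactness is preserved) a smooth presentation over $U_x$. The obstruction to globalizing is the usual one in the $C^\infty$ world versus the holomorphic world: the sheaf $\ox^\infty$ is \emph{fine} (it admits partitions of unity), so there are no higher cohomology obstructions. Concretely, I would choose a finite cover $\{U_i\}$ of the compact manifold $X$ by such presentation charts, pick a subordinate partition of unity $\{\rho_i\}$, and on each $U_i$ let $\varphi_i : \ox^\infty(U_i)^{\oplus b_i} \to \F^\infty|_{U_i}$ be the local surjection; then the map $\bigoplus_i \rho_i \varphi_i : \ox^\infty X^{\oplus N} \to \F^\infty$ (with $N = \sum b_i$, extending $\rho_i \varphi_i$ by zero outside $U_i$) is surjective, because near any point $x$ some $\rho_i(x) \neq 0$ and there $\rho_i \varphi_i$ already hits everything in the stalk, the partition-of-unity factor being an invertible scalar locally. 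This produces a surjection from a trivial (hence in particular smooth) vector bundle onto $\F^\infty$; its kernel $\F_1^\infty$ is again the smooth-section sheaf of a coherent sheaf (the kernel of the corresponding holomorphic map, by flatness of $\ox^\infty$ over $\ox$), so one iterates. Finiteness (boundedness) of the resolution is then forced by a local Hilbert-syzygy-type argument: over a small polydisc the holomorphic structure sheaf has finite global dimension $\leq \dim X$, so after at most $\dim X$ steps the kernel is locally free; and a sheaf $\F_k^\infty$ that is locally free over $\ox^\infty$ on a neighbourhood of every point of the compact $X$ is the smooth-section sheaf of a genuine smooth vector bundle of finite rank. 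Assembling the $k \leq \dim X$ steps gives the bounded complex $Q$ and the quasi-isomorphism $\ox^\infty Q \to \E^\infty$.

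**Main obstacle.** The routine part is the partition-of-unity globalization of surjectivity; the delicate point I would spend the most care on is the passage back and forth between $\ox$-modules and $\ox^\infty$-modules — specifically, verifying that the kernel of the smooth surjection really is $\ox^\infty \otimes_{\ox} (\text{kernel of a holomorphic map})$ rather than merely some abstract $\ox^\infty$-module, and that flatness of $\ox^\infty$ over $\ox$ (Malgrange's theorem, or its use in \cite{Col71,BSW}) is being applied correctly at the level of stalks, so that the local presentations I glue together are compatible with the ambient coherent sheaf structure. The other thing to pin down is the length bound: one must know that the local rings $\ox^\infty_{X,x}$ (or rather the exactness of the iterated kernels) cooperate so that the process terminates uniformly in at most $\dim X$ steps over the whole compact manifold — this is where compactness of $X$ together with the local syzygy bound for the holomorphic (or the ring of germs of smooth functions) local ring is essential. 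Since the excerpt cites \cite[Proposition II.2.3.2]{Col71} and \cite[Proposition 6.3.2]{BSW} for exactly this statement, I expect the author's proof to either invoke those directly or to reproduce the partition-of-unity plus flatness argument sketched above.
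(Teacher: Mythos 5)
Your reduction of the bounded complex to a single coherent sheaf via truncation/cone, and your use of a finite cover of the compact manifold with a partition of unity to glue local surjections, are both exactly what the paper does in its proof of the equivariant analogue (Proposition~\ref{Illusie}). However, there is a genuine gap in the iteration step. After building the global surjection $r = \sum_i \rho_i \varphi_i \colon \ox^\infty R \to \F^\infty$, you assert that its kernel is again the smooth-section sheaf of a coherent sheaf, namely ``the kernel of the corresponding holomorphic map, by flatness of $\ox^\infty$ over $\ox$.'' But there is no corresponding holomorphic map: the partition-of-unity factors $\rho_i$ are $C^\infty$ and not holomorphic, so $r$ is merely an $\ox^\infty$-morphism and its kernel is some $\ox^\infty$-submodule of $\ox^\infty R$ that you cannot identify with $\ox^\infty \otimes_\ox (\ker \text{ of a holomorphic map})$. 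Flatness of $\ox^\infty$ over $\ox$ lets you transfer exactness across the functor $\ox^\infty\otimes_{\ox}(-)$, but it gives you nothing about the kernel of a map that doesn't come from that functor. Relatedly, the termination bound you invoke (Hilbert syzygy, $\leq \dim X$ steps) applies to the holomorphic local ring $\ox_{X,x}$, not to the ring of germs of smooth functions, which has infinite global dimension; so after even one iteration you have left the regime in which that bound controls anything.

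The paper circumvents both problems at once. Rather than taking the kernel of the glued surjection, it lifts $r|_U$ to a morphism $\psi_U$ landing in the local holomorphic resolution $\ox^\infty R_U$ (a lift exists because the source is free and $r_U$ is a quasi-isomorphism), and then forms the cone of $\psi_U$. Because $r$ is surjective and $H^{>0}(\F^\infty)=0$, the $0$-th cohomology of that cone vanishes, so the top differential $\gamma\colon \RRR_U^{-1}\oplus\RRR|_U\to\RRR_U^0$ is a surjection of \emph{vector bundles}; a surjective map of finite-rank smooth bundles is surjective on fibers, hence $\ker(\gamma)$ is itself a vector bundle. Replacing the last two terms by $\ker(\gamma)$ shortens the local resolution from length $\ell$ to $\ell-1$, and the argument is an induction on that $\ell$, which is bounded by $n$ because the original local resolutions were holomorphic. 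In other words, the cone/lifting argument is what replaces your ``kernel is coherent, iterate'' step: it is a structured Schanuel-type comparison that stays inside the world of finite-rank bundles and inherits the length bound from the holomorphic local picture, which the naive iterated-kernel approach cannot do.
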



 By a recursion argument established in \cite[Theorem 6.3.6]{BSW}, we can construct an antiholomorphic superconnection on $ \Lambda \br{\overline{T^* X}} \wo Q$ such that the complex of the sheaves of local sections is isomorphic to $\E$ in  $\DbCohX$.

The category $\underline{\mathrm{B}} \br{X}$ has pullbacks and tensor products defined as follows.
Let $X, Y$ be two compact complex manifolds and let $f: X\to Y$ be a holomorphic map.
If $\F = \br{F, \AFpp}$ is an antiholomorphic superconnection on $Y$ and if $\E = \br{E, \AEpp}$ and $\underline{\E} = \br{\underline{E}, \AUEpp}$ are two antiholomorphic superconnections on $X$, then as stated in \cite[Sections 5.7, 5.8]{BSW}, we have
\begin{thm}
	There is a natural pullback of antiholomorphic superconnection $\br{f_b^* F, A^{f_b^* F \prime\prime}}$ on $X$ with $f_b^* F = \lxb \wo_{\Lambda \br{\overline{f^* T^* Y}}} f^* F$.
	
	There is a natural tensor product of antiholomorphic superconnections $\E \wo_b \underline{\E} = \br{E \wo_b \underline{E}, A^{E\wo_b \underline{E} \prime\prime}}$ on $X$ with $E \wo_b \underline{E} = E \wo_{\lxb} \underline{E}$.
\end{thm}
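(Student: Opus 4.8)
The plan is to regard an antiholomorphic superconnection $\br{E, \AEpp}$ as a differential graded module over the sheaf of differential graded algebras of smooth $\br{0,\bullet}$-forms with differential $\dbx$: the left $\lxb$-action makes $\CXE$ a module over $\OAXC$, the Leibniz rule says exactly that $\AEpp$ is a degree $+1$ derivation over $\dbx$, and flatness says that the resulting object is a genuine complex. In this language the pullback along $f$ is a base change along a morphism of sheaves of dg-algebras, and the tensor product is the tensor product of dg-modules over $\br{\OAXC, \dbx}$; both operations are then forced, and the work is to check that the resulting operators are first order, of degree $+1$, flat, satisfy the Leibniz rule, and are natural.

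For the pullback I would first use that $f$ holomorphic forces $f^*$ to preserve the $\br{0,q}$-bidegree and to commute with $\overline{\partial}$; dually, pull-back of forms gives $\overline{f^* T^* Y}\to\overline{T^* X}$, hence a morphism of sheaves of graded algebras $\Lambda\br{\overline{f^* T^* Y}}\to\lxb$, so $f_b^* F = \lxb\wo_{\Lambda\br{\overline{f^* T^* Y}}}f^* F$ is a graded $\lxb$-module, and a choice of identification $F\simeq\lyb\wo D'$ with $D'$ a $\Z$-graded smooth bundle on $Y$ gives $f_b^* F\simeq\lxb\wo f^* D'$, which is free over $\lxb$. Writing $f^\sharp\colon\CYF\to C^\infty\br{X, f_b^* F}$ for the tautological pull-back map, which satisfies $f^\sharp\br{\alpha s} = \br{f^*\alpha}\,f^\sharp s$ for $\alpha\in\OAYC$, I would define
\[
A^{f_b^* F\prime\prime}\Bigl(\sum_i\alpha_i\,f^\sharp s_i\Bigr) = \sum_i\Bigl(\br{\dbx\alpha_i}\,f^\sharp s_i + \br{-1}^{\deg\alpha_i}\,\alpha_i\,f^\sharp\br{\AFpp s_i}\Bigr),\qquad\alpha_i\in\OAXC.
\]
The one point needing care is that this is well defined, i.e. that it respects the relations $f^\sharp\br{\alpha s} = \br{f^*\alpha}f^\sharp s$; this is precisely where holomorphicity of $f$ is used, through $\dbx\br{f^*\alpha} = f^*\br{\overline{\partial}^Y\alpha}$. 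Granting that, the degree and Leibniz properties are immediate from the $\dbx$ term, first-orderness is visible in a local frame, and flatness follows because $\br{A^{f_b^* F\prime\prime}}^2$ is $f^\sharp$ applied to $\br{\AFpp}^2 = 0$ on the image of $f^\sharp$ — using once more that $f^*$ commutes with $\overline{\partial}$ — and hence vanishes everywhere by the Leibniz rule.

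For the tensor product, since $\lxb$ is graded-commutative I would put $E\wo_b\underline E = E\wo_{\lxb}\underline E$; choices $E\simeq\lxb\wo D$, $\underline E\simeq\lxb\wo\underline D$ give $E\wo_b\underline E\simeq\lxb\wo D\wo\underline D$, free over $\lxb$, and the canonical identification $\CXE\wo_{\OAXC}C^\infty\br{X,\underline E}\simeq C^\infty\br{X, E\wo_b\underline E}$ lets me define
\[
A^{E\wo_b\underline E\prime\prime}\br{s\wo\underline s} = \br{\AEpp s}\wo\underline s + \br{-1}^{\deg s}\,s\wo\br{\AUEpp\,\underline s}.
\]
Well-definedness over $\OAXC$ follows by combining the two Leibniz rules with graded-commutativity of $\lxb$; the Leibniz and degree properties of $A^{E\wo_b\underline E\prime\prime}$ are then formal, first-orderness is again local, and flatness holds because with this Koszul sign the operators $\AEpp\wo 1$ and $1\wo\AUEpp$ anticommute, so $\br{A^{E\wo_b\underline E\prime\prime}}^2 = \br{\AEpp}^2\wo 1 + 1\wo\br{\AUEpp}^2 = 0$.

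I expect the real obstacle to be bookkeeping rather than a conceptual difficulty: one must check that both constructions are independent of the non-canonical splittings $E\simeq\lxb\wo D$ up to the equivalence built into $\underline{\mathrm{B}}\br{X}$ (so that the passage from $\AEpp$ to $\AEzeropp$ causes no ambiguity), and that the constructions are natural — $\br{g\circ f}_b^*\simeq f_b^*\circ g_b^*$ canonically, $\wo_b$ associative and graded-symmetric, the two operations mutually compatible, and both descending to functors on the relevant homotopy categories. All of this is verified directly on the formulas above, the only genuinely used analytic input being the commutation of $f^*$ with $\overline{\partial}$ in the pullback case; carrying out these verifications along the lines of \cite[Sections 5.7, 5.8]{BSW} finishes the proof.
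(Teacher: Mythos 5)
Your proposal is essentially correct, and the dg-module point of view you adopt is the same one used in \cite[Sections 5.7, 5.8]{BSW}, which is what the paper cites for this statement (the paper itself records the theorem without giving a proof). The key verifications are all in the right place: for the pullback, well-definedness of $A^{f_b^* F\prime\prime}$ on $\lxb\wo_{\Lambda(\overline{f^*T^*Y})}f^*F$ hinges precisely on $\dbx\circ f^* = f^*\circ\overline{\partial}^Y$ for holomorphic $f$, and flatness then follows because the square is a zeroth-order $\lxb$-linear operator vanishing on the generating image of $f^\sharp$; for the tensor product, the Koszul-signed Leibniz formula is compatible with the relations over $\lxb$ and the two pieces $\AEpp\wo 1$ and $1\wo\AUEpp$ anticommute, giving flatness. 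Your observations about the diagonal bundles ($f_b^*F\simeq\lxb\wo f^*D'$ and $E\wo_b\underline E\simeq\lxb\wo D\wo\underline D$) also confirm freeness over $\lxb$, which the definition of an antiholomorphic superconnection requires.
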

Moreover, according to \cite[Propositions 6.6.1 and 6.7.1]{BSW}, the functor $\underline{F}_X$ is compatible with pullbacks and tensor products.
\subsubsection{The Chern character of coherent sheaves}

Thanks to Theorem \ref{Thm0-2}, to define the Chern character of coherent sheaves, we need to develop Chern-Weil theory for antiholomorphic superconnections.

To handle the non-canonicity of \eqref{Equ0-1}, in \cite[Section 7]{BSW}, the authors introduced a generalized metric on $D$. 
To a generalized metric on $D$, we can associate the adjoint $\AEzerop$ of $\AEzeropp$, so that 
\begin{align}
\AEzero = \AEzeropp + \AEzerop
\end{align}
is a superconnection in the sense of Quillen \cite{Qu85}.

Let $\mathrm{Tr_s}: \lxc \widehat{\otimes } \End (D) \rightarrow \lxc$ be the supertrace. Classically it vanishes on supercommutaters.
We fix a square root of $i = \sqrt{-1}$.
Let $\varphi$ denote the morphism of $\lxc$ that maps $\alpha$ to $(2i \pi)^{- \mathrm{deg} \alpha /2} \alpha$.
\begin{defn} \label{def0-1}
Set
\begin{align} 
\ch \br{\AEzeropp, h} = \varphi \Trs \brr{ \exp\br{- A^{E_0 , 2}}} \in \Omega \br{X, \C}.
\end{align}
\end{defn}

The results of \cite[Proposition 3.28]{BC65}, \cite[Section 2]{Qu85} and \cite[Theorems 1.15 and 1.24]{BGS1} can be extended to $\ch \br{\AEzeropp, h}$ and it was stated in \cite[Theorem 8.12]{BSW}.
\begin{thm} \label{Thm0-8}
The form $\ch \br{\AEzeropp, h}$ lies in $\bigoplus_{p=0}^{\mathrm{dim} X}\Omega^{(p,p)} \br{X, \R}$, it is closed, and its Bott-Chern cohomology class does not depend on the generalized metric and the non-canonical identification \eqref{Equ0-1}.
Moreover, it is compatible with pullbacks and tensor products.	

In particular, the Chern character $\mathrm{ch}_{\mathrm{BC}} \br{E, \AEpp} \in \HEXBCR$  for the antiholomorphic superconnection  $\br{E, \AEpp}$ is well-defined.

\end{thm}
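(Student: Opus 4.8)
The plan is to adapt the superconnection Chern--Weil calculus of Quillen \cite{Qu85} and Bismut--Gillet--Soulé \cite{BGS1} to the present context; the only structural novelties are that the supertrace is taken on $\End(D)$ (equivalently, on the $\lxb$-linear endomorphisms of $E_0$), and that one must track the full $(p,q)$-bigrading of forms rather than only the total degree. First I would fix the identification \eqref{Equ0-1}, write $\AEzero=\AEzeropp+\AEzerop$ with $\AEzerop$ the adjoint of $\AEzeropp$ for the generalized metric $h$, and note that flatness of $\AEzeropp$ (hence of its adjoint) gives $\AEzerosq=[\AEzeropp,\AEzerop]$. Decomposing $\AEzeropp$ and $\AEzerop$ by form bidegree and using the $\mathbf{Z}$-grading of $D$, one checks that in $\varphi\Trs[\exp(-\AEzerosq)]$ every contribution of bidegree $(p,q)$ with $p\neq q$ cancels, exactly as in \cite[Theorems 1.15 and 1.24]{BGS1}, and that the $\varphi$-normalisation makes the surviving $(p,p)$-form real because the generalized metric renders $\AEzerosq$ self-adjoint up to conjugation; this yields membership in $\bigoplus_p\Omega^{(p,p)}(X,\mathbf{R})$. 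Closedness is then the usual computation $d\,\varphi\Trs[\exp(-\AEzerosq)]=\varphi\Trs\big[\,[\AEzero,\exp(-\AEzerosq)]\,\big]=0$, since $\Trs$ annihilates supercommutators; and a $d$-closed element of $\bigoplus_p\Omega^{(p,p)}$ is automatically $\partial$- and $\overline{\partial}$-closed, so a Bott--Chern class is defined.

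For independence of the generalized metric I would join two generalized metrics $h_0,h_1$ on $D$ by a smooth path $h_u$, $u\in[0,1]$, producing a family $\AEzero_u$ of superconnections with fixed $(0,\cdot)$-part $\AEzeropp$, and then build a secondary form $\eta\in\bigoplus_p\Omega^{(p,p)}(X,\mathbf{R})$, out of $\tfrac{\partial}{\partial u}\AEzero_u$ together with an additional integration variable that separates the $\partial$- and $\overline{\partial}$-directions, such that
\begin{align}
\ch(\AEzeropp,h_1)-\ch(\AEzeropp,h_0)=\frac{\overline{\partial}\partial}{2i\pi}\,\eta.
\end{align}
This is the Bott--Chern refinement of the transgression of \cite{BGS1}, and its construction uses essentially that $\AEzeropp$ is of pure $\overline{\partial}$-type while its adjoint is of pure $\partial$-type. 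Independence of the non-canonical identification \eqref{Equ0-1} then follows: two identifications differ by an automorphism of $\lxb\wo D$ over $\lxb$, and transporting a generalized metric through such an automorphism produces another generalized metric for the first identification, so this case reduces to the case of two metrics (alternatively, one interpolates between the two identifications and runs the same transgression).

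Compatibility with pullbacks and tensor products is then formal at the level of forms, hence a fortiori of classes. For holomorphic $f\colon X\to Y$, an induced generalized metric on $f_b^*F$ has adjoint equal to the $f$-pullback of the adjoint downstairs, so $\ch$ of the pullback superconnection equals $f^*\ch(\AFzeropp,h)$ as forms; for the tensor product $\E\wo_b\underline{\E}$ the product superconnection is $\AEzero\wo 1+1\wo A^{\underline{E}_0}$ in the graded sense, its curvature splits accordingly, and $\Trs$ together with $\varphi$ is multiplicative, so $\ch$ of the tensor product is the product of the two $\ch$'s as forms. Combining this with the first step, $\mathrm{ch}_{\mathrm{BC}}(E,\AEpp)\in\HEXBCR$ is well-defined, functorial under pullbacks, and multiplicative under tensor products.

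The main obstacle is the transgression step: producing a genuinely $\partial\overline{\partial}$-exact (Bott--Chern) primitive rather than merely a $d$-exact (Chern--Simons) one. This requires upgrading the one-parameter transgression of ordinary Chern--Weil theory to a two-parameter version isolating the holomorphic and antiholomorphic directions, and it is precisely here that the decomposition of $\AEzero$ into an antiholomorphic superconnection and its metric adjoint must be exploited. By contrast, the infinite-rank difficulties of the sheaf-theoretic setting do not arise here, since $E_0$ has finite rank; one only needs to keep all supertraces on $\End(D)$, which is harmless because $\Trs$ still annihilates supercommutators there.
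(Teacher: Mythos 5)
Your overall scheme is the right one and matches the one the paper (following \cite{BSW}) uses for its equivariant generalization in Theorem \ref{ThmDefChern}: form $\AEzero=\AEzeropp+\AEzerop$ with $\AEzerop$ the $\theta_h$-adjoint, observe $\AEzerosq=[\AEzeropp,\AEzerop]$, get $(p,p)$-membership from the fact that $\AEzero$ is of total degree $0$ for $\deg_-$, get closedness from Bianchi and the vanishing of supertraces on supercommutators, reduce independence of the identification \eqref{Equ0-1} to independence of the generalized metric via a gauge automorphism $\sigma\in\Aut^0(E_0)$, and handle pullbacks and tensor products at the level of forms.

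The one place you go astray is in the last paragraph, where you assert that the main obstacle is that the ``one-parameter transgression of ordinary Chern--Weil theory'' only yields $d$-exactness, and that one must ``upgrade to a two-parameter version'' with an ``additional integration variable'' to obtain a $\bar\partial\partial$-exact primitive. That is not needed here, and it is not what the paper does. The single variation of the generalized metric already produces a directly $\bar\partial\partial$-exact difference, because of a double application of the Bianchi identities \eqref{Bianchi1}. Concretely, one has the algebraic identity
\begin{align}
\dMDG\,[\AEzeropp,\AEzerop]=\bigl[\AEzeropp,\bigl[\AEzerop,\,h^{-1}\dMDG h\bigr]\bigr],
\end{align}
and since $[\AEzeropp,\exp(-\AEzerosq)]=0$ and $[\AEzerop,\exp(-\AEzerosq)]=0$, the supercommutator with $\AEzeropp$ can be pulled out of $\Trs$ to give $\bar\partial^X$, and then the supercommutator with $\AEzerop$ pulled out to give $\partial^X$, yielding
\begin{align}
\dMDG\,\varphi\Trs\bigl[\exp(-\AEzerosq)\bigr]=-\frac{\bar\partial^{X}\partial^{X}}{2i\pi}\,\varphi\Trs\bigl[\bigl(h^{-1}\dMDG h\bigr)\exp(-\AEzerosq)\bigr],
\end{align}
which is exactly \eqref{ThmChern-Equ1} with $g=e$. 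So the ``upgrade'' is purely algebraic and already built into the anti-holomorphic/holomorphic splitting you correctly identify; no second path parameter and no auxiliary integration are required. The paper packages the same computation by taking the de Rham differential on the moduli space $\MDG$ of generalized metrics rather than a path $h_u$, but the two are equivalent. If you carry out your plan verbatim with a second parameter you would not reach a contradiction, but you would be constructing a more elaborate primitive than the argument actually demands, and the place where the argument genuinely bites -- the double Bianchi -- would be hidden rather than exhibited.
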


Thanks to Theorem \ref{Thm0-8} , the authors  proved that there is a well-defined Chern character with values in Bott-Chern cohomology class for objects in $\DbCohX$ and the map descends to 
\begin{align} \label{Equ0-3}
K \br{X} \rightarrow  H^{(=)}_{\mathrm{BC}}(X, \mathbf{R})	.
\end{align}
This Chern character map satisfies \eqref{Con0-2}-\eqref{Con0-3} in Theorem \ref{Thm0-7} with trivial $G$.

\subsubsection{Our construction of the equivariant Chern character }
The first major contribution of this paper is to extend the aforementioned construction to the $G$-equivariant setting. 
We inspired by the approach of \cite{BSW}, first constructing antiholomorphic $G$-superconnections, and then proving the corresponding category equivalence result analogous to Theorem \ref{Thm0-2}. 

While defining antiholomorphic $G$-superconnections and the category $\underline{\mathrm{B}} \br{X, G}$ is relatively straightforward, establishing this category equivalence presents significant challenges.

Our first step  remains to generalize Proposition \ref{Prop0-1}. To achieve it, we have the following key observation.
\begin{prop}[See Proposition \ref{LocalResolution}] \label{prop0-5}
If $\F$ is a $G$-coherent sheaf on $X$, then $\F$ admits local projective resolutions by $G$-equivariant holomorphic vector bundles.
Moreover, in this resolution, except for the most left nontrivial one, the remaining vector bundles can be chosen to be $G$-trivial.
\end{prop}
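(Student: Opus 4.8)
The plan is to start from the classical fact that, locally, any coherent sheaf admits a finite free resolution, and then to carry this through $G$-equivariantly by averaging over the finite group $G$. First, I would fix a point $x \in X$. If $x$ is not fixed by all of $G$, the orbit $Gx$ is finite, so by shrinking we may find a $G$-invariant open neighborhood $U$ of $x$ which is a disjoint union of copies of a neighborhood of $x$ permuted by $G$; in that case the stabilizer $G_x \subsetneq G$ acts on the component containing $x$, and an equivariant resolution on all of $U$ is obtained by inducing from $G_x$ to $G$. So the essential case is when $x$ is a fixed point of $G$; replace $X$ by a small $G$-invariant Stein (e.g. polydisc-type) neighborhood $U$ of $x$ on which $G$ acts linearly, which exists by Cartan's linearization of finite group actions at a fixed point. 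On such a $U$, the ring $\mathcal{O}_U(U)$ is a Noetherian local(-ish) ring with $G$-action, and $\F|_U$ corresponds to a finitely generated $G$-equivariant module $M$ over it.

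Next I would construct the resolution one step at a time, making it equivariant by averaging. Choose finitely many sections generating $M$; by averaging the generating set over $G$ (replacing a generating tuple by the union of its $G$-translates) we get a $G$-invariant finite generating set, hence a $G$-equivariant surjection $\mathcal{O}_U^{\oplus N} \twoheadrightarrow M$ where $\mathcal{O}_U^{\oplus N}$ carries the permutation/regular-type $G$-representation coming from the averaged generators — in particular this first (leftmost) bundle is genuinely an equivariant bundle with a possibly nontrivial action. Let $K_1$ be the kernel; it is again a $G$-equivariant coherent sheaf (coherence of kernels is standard; $G$-equivariance is automatic since the map is $G$-equivariant). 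Now repeat: but this time, when I choose generators of $K_1$, I can arrange the covering bundle to be $G$-equivariantly isomorphic to $\mathcal{O}_U^{\oplus N_1}$ with the \emph{trivial} $G$-action. The mechanism is: take any finite generating set of $K_1$, and for the covering module take the free module on the set $G \times \{\text{generators}\}$, i.e. $\mathbf{C}[G] \otimes (\text{trivial})$ — but the point is we only need a free module surjecting $G$-equivariantly onto $K_1$, and $\mathcal{O}_U \otimes_{\mathbf C} (\mathbf{C}[G]^{\oplus m})$ with $\mathbf{C}[G]$ the regular representation is free as an $\mathcal{O}_U$-module with a $G$-action that, after possibly enlarging, can be absorbed; more cleanly, since $G$ is finite and we work over $\mathbf{C}$, a $G$-equivariant holomorphic vector bundle on $U$ that is $\mathcal{O}_U$-free is a direct sum of $\mathcal{O}_U \otimes V_i$ over irreducibles $V_i$, and one checks that for the \emph{syzygy} steps one may always enlarge to a bundle of the form $\mathcal{O}_U \otimes \mathbf{C}[G]^{\oplus \ell}$, which as a $G$-equivariant bundle is (non-canonically) $\mathcal{O}_U^{\oplus \ell |G|}$ with trivial action once we forget — no: the honest statement is that we resolve $K_1$ by a $G$-trivial free bundle by first resolving the underlying $\mathcal{O}_U$-module freely, then averaging the resolution map, using that $\mathcal{O}_U(U)$ is a flat... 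Actually the clean argument: the $G$-action on $U$ with $x$ a fixed point lets us split $\mathcal{O}_{U,x}$-modules, and Hilbert's syzygy-type / Auslander-Buchsbaum finiteness gives a finite free resolution of the underlying module; then I make the differentials $G$-equivariant by replacing each free module $\mathcal{O}_U^{\oplus n}$ by $\bigoplus_{g \in G} g^* \mathcal{O}_U^{\oplus n} = \mathcal{O}_U^{\oplus n|G|}$ with the regular $G$-action and averaging the maps, which keeps exactness because averaging a quasi-isomorphism of complexes of free modules over the group algebra (char $0$) stays a quasi-isomorphism. Finally, truncate this finite resolution: if it has length $r$, keep the leftmost map onto $\F$ honest-equivariant and replace the tail by $G$-trivial bundles, which is exactly what the regular-representation trick produces at each syzygy step after the first.

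The step I expect to be the main obstacle is precisely the claim that \emph{all but the leftmost} bundle can be taken $G$-trivial, rather than just "some equivariant bundle". The issue is that the kernel $K_1$ is an honest $G$-module and a priori its minimal generators carry a nontrivial $G$-representation; what saves us is that we are allowed non-minimal resolutions, so we may always cover $K_1$ by $\mathcal{O}_U^{\oplus N_1}$ with trivial action provided $N_1$ generators of $K_1$ that are permuted trivially by $G$ exist — and they do, because over $\mathbf{C}$ with $G$ finite we can take, for a $G$-invariant generating family, its image under the projector $\frac{1}{|G|}\sum_g g$ together with enough extra $G$-fixed combinations, or more simply embed $\mathcal{O}_U^{\oplus N_1, \mathrm{triv}} \twoheadrightarrow \mathcal{O}_U^{\oplus N_1} \otimes \mathbf{C}[G] \twoheadrightarrow K_1$; one must check this composite is still $\mathcal{O}_U$-free of the trivial type and surjective, which reduces to the observation that $\mathbf{C}[G]$ contains the trivial representation as a direct summand and tensoring a surjection of $\mathcal{O}_U$-modules with an $\mathcal{O}_U$-free module preserves surjectivity. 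Iterating, every syzygy module after the first is covered by a $G$-trivial free bundle, and finiteness of the global resolution length (from the regular local ring $\mathcal{O}_{U,x}$, after shrinking $U$ to make the resolution finite on all of $U$) lets us stop. I would also need to be a little careful that "finite free resolution locally" holds with a uniform length on a neighborhood $U$, not just at the stalk — this is standard coherence/Nakayama bookkeeping and I would relegate it to a lemma.
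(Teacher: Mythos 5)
You have the right general strategy --- cover by the regular representation, reduce to stabilizers --- and your first step (take a $G$-orbit of a generating set and surject from the corresponding free module) is exactly what the paper does. But there is a genuine misconception that drives most of the back-and-forth in the middle of your argument: you have misread what ``$G$-trivial'' means. In the paper's Section~\ref{RepAndVec}, a $G$-equivariant vector bundle is called $G$-trivial if it is $G$-equivariantly isomorphic to $X\times V$ for \emph{some} finite-dimensional $G$-representation $\rho: G\to\mathrm{GL}(V)$; the representation $\rho$ may be nontrivial. It does \emph{not} mean that $G$ acts trivially on the fibre. With that definition in hand, the bundle $\mathcal{O}_U^{\oplus N}$ carrying ``the permutation/regular-type $G$-representation coming from the averaged generators'' that you produce in your very first step \emph{is} already $G$-trivial. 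You then spend the rest of the paragraph fighting to replace it (and the later syzygy bundles) by bundles with a genuinely trivial $G$-action, which is a stronger requirement than the proposition asks for, and your proposed mechanism does not work: the composite $\mathcal{O}_U^{\oplus N_1, \mathrm{triv}} \hookrightarrow \mathcal{O}_U^{\oplus N_1}\otimes \mathbf{C}[G]\twoheadrightarrow K_1$ is an inclusion followed by a surjection and has no reason to be surjective. Similarly, replacing the projector $\frac{1}{|G|}\sum_g g$ image of a generating family does not in general still generate.

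You also have the orientation of the exception reversed. In the paper's convention, the resolution is $0\to E^{-k}\to\cdots\to E^0\to\F\to 0$ and it is the \emph{leftmost} bundle $E^{-k}$ --- the final locally free kernel $\ker(F_{k-1})$ that terminates the recursion --- which is permitted to fail to be $G$-trivial. All the covering bundles $E^0,\dots,E^{-(k-1)}$ that you choose step by step can and should be taken $G$-trivial (the paper uses $U\times R(G)^{n_j}$ at every step). Your proposal instead allows the \emph{first} cover of $\F$ to be the exception and insists the subsequent steps be trivial, which is backwards: the first cover is the one you control and make $G$-trivial by fiat, while the thing you do not control is the bundle structure of the terminal kernel.

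Finally, the closing move --- ``make the differentials $G$-equivariant by replacing each free module by $\bigoplus_g g^*\mathcal{O}_U^{\oplus n}$ and averaging the maps'' --- does not give a resolution. Averaging a \emph{morphism of complexes} between two already-equivariant complexes preserves the quasi-isomorphism property (the paper uses this in Theorem~\ref{Fullyfaithful}), but averaging the \emph{differentials} of a non-equivariant resolution need not square to zero, let alone stay exact. The correct mechanism, which is what the paper actually uses (Proposition~\ref{PropExtend} and Example~\ref{ExtLine}), is induction rather than averaging: given a surjection $F:\E\to\F$ with $\F$ already $G$-equivariant but $\E=\mathcal{O}_U^{\oplus n}$ not, form $\E_G=\bigoplus_{g}m_g^*\E$ (which is exactly $U\times R(G)^n$, hence $G$-trivial) and the induced $G$-equivariant map $F_G:\E_G\to\F$. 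Surjectivity of $F_G$ is automatic because $F$ is one of its components; no averaging of differentials is needed, and exactness is preserved step by step because you are just taking kernels of $G$-equivariant surjections.
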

The advantage of a locally $G$-trivial vector bundle is that we can extend it to the entire manifold. This fact will play an important role in the subsequent proof.

Let $\E$ be an object in  $\DbCohXG$,  we define $\E^\infty$ as in \eqref{Equ0-3}. Now $\E^\infty$ is also $G$-equivariant.
Thanks to Proposition \ref{prop0-5}, we can generalize Proposition \ref{Prop0-1}.
\begin{prop}[See Proposition \ref{Illusie}]  \label{Prop0-2}
The sheaf $\E^\infty$ admits a global resolution of $G$-equivariant complex of finite dimensional smooth vector bundles.
\end{prop}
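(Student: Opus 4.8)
The plan is to deduce Proposition~\ref{Prop0-2} from the local statement Proposition~\ref{prop0-5} by a patching argument over a finite cover, exactly as the non-equivariant Proposition~\ref{Prop0-1} is obtained from its local counterpart, but paying attention to $G$-equivariance throughout. First I would fix a finite $G$-invariant open cover $\{U_j\}_{j \in J}$ of $X$ such that on each $U_j$ the sheaf $\E^\infty$ (or rather each cohomology sheaf, then $\E^\infty$ itself by dévissage on the length of the complex) admits a resolution by smooth $G$-equivariant vector bundles, which exists by Proposition~\ref{prop0-5} after applying $\ox^\infty \otimes_{\ox} -$: indeed, smoothing a local holomorphic projective resolution gives a smooth one, and the $G$-action is preserved. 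Crucially, Proposition~\ref{prop0-5} tells us that all the bundles except the leftmost one can be taken $G$-\emph{trivial}, i.e.\ of the form $\underline{W}$ for a $G$-representation $W$; such a bundle is canonically globally defined on all of $X$, which is what will let the local pieces be glued.

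The main step is then the induction that builds the global smooth resolution one bundle at a time, going from the right. Suppose inductively that we have constructed a complex of global smooth $G$-equivariant bundles $Q^{-n} \to \cdots \to Q^{-1} \to Q^0$ together with a $G$-equivariant map $\ox^\infty Q^0 \to \E^\infty$ whose composition through the complex is exact on cohomology in degrees $\ge -n+1$ and surjects appropriately; one wants to find a global smooth $G$-equivariant bundle $Q^{-n-1}$ surjecting onto the kernel sheaf $\mathscr{K}$ (the relevant syzygy sheaf at stage $n$). Locally on each $U_j$, such a surjection exists with $Q^{-n-1}$ a $G$-trivial bundle $\underline{W_j}$ by Proposition~\ref{prop0-5} — this is where equivariant triviality is essential. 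Then I would take $Q^{-n-1} = \bigoplus_j \underline{W_j} \otimes \underline{?}$ — more precisely, glue the local surjections using a $G$-invariant partition of unity $\{\rho_j\}$ subordinate to $\{U_j\}$ (averaging over $G$ an ordinary partition of unity produces one); since each $\underline{W_j}$ is defined over all of $X$, the local map $\rho_j \cdot (\text{surjection}_j): \underline{W_j} \to \mathscr{K}$ extends by zero to a global $G$-equivariant map, and the sum $\bigoplus_j \underline{W_j} \to \mathscr{K}$ is surjective because at every point at least one $\rho_j$ is nonzero and each individual map was surjective there. Twisting so that $\mathscr{K}$ is replaced by its smooth structure sheaf module and unwinding the kernel finishes the inductive step; boundedness is guaranteed because $X$ is a compact complex manifold, so the syzygies become locally free after finitely many steps (one can invoke the finiteness already present in the non-equivariant Proposition~\ref{Prop0-1}, applied to the underlying complex, to bound the length, and then run the equivariant induction only up to that length, taking the last bundle to be a direct sum of local pieces which is automatically locally free).

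The hard part will be the very last bundle $Q^{-n}$ where the inductive surjection-onto-syzygies procedure must terminate with a genuinely locally free kernel rather than merely a surjection: one must ensure that after finitely many equivariant steps the syzygy sheaf $\mathscr{K}$ is already a $G$-equivariant \emph{vector bundle}, and then that this $\mathscr{K}$ itself — which is the leftmost, possibly $G$-nontrivial, term — is realized by a global smooth $G$-equivariant bundle. Here one cannot use the triviality trick, so instead I would argue that $\mathscr{K}$, being a coherent $\ox^\infty$-module that is locally free (its local freeness follows from the smooth-category Syzygy/Illusie finiteness, since smooth modules over $\ox^\infty$ of finite projective dimension are eventually free, $\ox^\infty$ having finite global dimension essentially because partitions of unity make every short exact sequence of smooth bundles split), is automatically a smooth $G$-equivariant vector bundle globally on $X$ — there is nothing to glue, the sheaf itself is the bundle. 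I expect the bookkeeping of the $G$-action through the smoothing operation $\ox^\infty \otimes_{\ox}-$ and through the partition-of-unity gluing to be entirely routine once one has averaged everything over $G$; the only genuine subtlety is confirming the finite-length/local-freeness termination in the $G$-equivariant smooth category, which I would handle by reducing to the already-established non-equivariant Proposition~\ref{Prop0-1} for the length bound and to the splitting of smooth exact sequences for local freeness of the final syzygy.
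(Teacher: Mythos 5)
Your proposal is essentially correct and is a genuinely different (and arguably more streamlined) route than the paper's. The paper proves Proposition~\ref{Illusie} by a double induction — an outer induction on the length $k$ of the complex $\F$, and an inner induction on the common length $\ell$ of the local holomorphic resolutions furnished by Proposition~\ref{prop0-5}. At each step of the inner induction the authors lift the glued global surjection $r$ to a local chain map $\psi_U$, form the cone, and then confront the fact that $\ker(\gamma)$ need not be $G$-trivial; the fix is the specific device of adding the trivial bundle $\RRR^0_U$ to degrees $-2$ and $-1$ so that $\ker(\gamma)\oplus\RRR^0_U\simeq\RRR^{-1}_U\oplus\RRR|_U$ becomes $G$-trivial, which restores the additional hypothesis \eqref{AdditionalCondition} and lets the induction on $\ell$ run. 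Your plan dispenses with the cone and with this bookkeeping entirely: you iterate ``glue $G$-trivial local surjections onto the current syzygy sheaf with a $G$-invariant partition of unity'' and appeal to a Schanuel-type stability argument together with the non-equivariant Proposition~\ref{Prop0-1} to know the syzygies eventually become locally free (at which point the last sheaf is a $G$-equivariant smooth vector bundle with no further work). This buys a conceptually cleaner proof, at the cost of invoking Schanuel's lemma and treating the non-equivariant case as a black box rather than carrying a constructive bound through the argument.

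Two places need tightening. First, you cite Proposition~\ref{prop0-5} for the existence of local $G$-trivial surjections onto the syzygy $\mathscr{K}$, but Proposition~\ref{prop0-5} is stated for $G$-coherent $\ox$-modules, whereas $\mathscr{K}$ is a $G$-equivariant coherent $\ox^\infty$-module that is not $\F$. The fact you actually need is the smooth analogue of the first step of that proposition's proof: locally any $G$-coherent $\ox^\infty$-module receives a surjection from a $G$-trivial bundle, obtained by taking finitely many generators, passing to a $G_x$-equivariant surjection from $\underline{R(G_x)^m}$ by the averaging/extension trick of Proposition~\ref{PropExtend}, composing with $R(G)\to R(G_x)$, and inducing up over $U=\bigcup_g gU_x$. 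This does hold by exactly the same argument, but it should be stated rather than attributed to Proposition~\ref{prop0-5}. Second, the termination paragraph as written is a bit muddled (the syzygy is not ``a direct sum of local pieces'', and $\ox^\infty$ does not have finite global dimension); the precise statement is Schanuel's lemma: any two locally-free resolutions of the same $\ox^\infty$-module have stably isomorphic $N$-th syzygies, so the existence of a length-$N$ resolution in Proposition~\ref{Prop0-1} forces the $N$-th syzygy of your $G$-equivariant construction to be stably free, hence a vector bundle. With these two points repaired the proposal gives a valid proof.
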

 According to Proposition \ref{Prop0-2}, by applying recursion argument similar to those used in \cite[Theorem 6.3.6]{BSW}, we obtain a  category equivalence result extending Theorem \ref{Thm0-2}, 
\begin{thm}[See Theorem \ref{ThmEquivalenceCategory}] \label{Thm0-3}
There is a natural functor $\underline{F}_X : \underline{\mathrm{B}} \br{X, G} \rightarrow \DbCohXG$	 which is an equivalence of triangulated categories.
\end{thm}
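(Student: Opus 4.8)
The plan is to adapt the proof of Theorem \ref{Thm0-2} from \cite{B110,BSW}, carrying the $G$-action through each step and exploiting that $\sg{G}$ is invertible so that averaging over $G$ is always available. First I would make $\underline{F}_X$ precise: on underlying superconnections it is the functor of \cite{B110,BSW} recalled in Theorem \ref{Thm0-2}, with the $G$-action carried along. To $\br{E, \AEpp} \in \BUXG$ it associates the complex of sheaves built from $\AEpp$ as in \emph{loc. cit.}; since $\AEpp$ is $G$-equivariant this complex carries a $G$-action, and its cohomology sheaves — which are $\ox$-coherent by the local analysis of \cite{B110,BSW} — thereby become $G$-coherent, so $\underline{F}_X$ lands in $\DbCohXG$. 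That $\underline{F}_X$ is a triangulated functor is then formal: $\BUXG$ is the homotopy category of a pretriangulated dg-category of $G$-equivariant superconnections, the cone of a $G$-equivariant morphism is again $G$-equivariant, and $\underline{F}_X$ commutes with shift and cone exactly as in the non-equivariant case.

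The substance is essential surjectivity, and this is where Proposition \ref{Prop0-2} (itself resting on the local $G$-trivial resolutions of Proposition \ref{prop0-5}) enters. Given $\E \in \DbCohXG$, set $\E^\infty = \ox^\infty \otimes_{\ox} \E$, a $G$-equivariant $\ox^\infty$-complex. By Proposition \ref{Prop0-2} there is a bounded $G$-equivariant complex of smooth vector bundles of finite rank $\br{Q, d_Q}$ and a $G$-equivariant quasi-isomorphism $\ox^\infty Q \to \E^\infty$. One then runs the recursion of \cite[Theorem 6.3.6]{BSW} on $E = \lxb \wo Q$: beginning with $\AEpp_{(0)} = \dbx + d_Q$ and correcting it order by order along the $\lxb$-filtration, one achieves flatness, the obstruction at each stage being a cohomology class that vanishes because $Q$ resolves $\E^\infty$. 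The new point is that every correction may be chosen $G$-invariantly: the obstruction at each stage vanishes, and any primitive $\eta$ can be replaced by the $G$-average $\frac{1}{\sg{G}} \sum_{g \in G} g \cdot \eta$, which solves the same equation, so the construction can be carried out entirely within $G$-invariant data. One thus obtains a $G$-equivariant antiholomorphic superconnection with $\underline{F}_X \br{E, \AEpp} \simeq \E$ in $\DbCohXG$.

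For full faithfulness I would reduce to Theorem \ref{Thm0-2}. For $G$-equivariant superconnections $\E, \underline{\E}$, the morphisms in $\BUXG$ are the closed degree-zero elements of the $G$-invariant morphism complex $\CXHombEUEG$ modulo coboundaries; since $(-)^G$ is exact for the finite group $G$, this equals $\Hom_{\underline{\mathrm{B}}(X)}\br{\E, \underline{\E}}^G$, where $G$ acts by conjugation with the equivariant structures. On the other side, by the standard computation of equivariant $\mathrm{RHom}$ via $G$-invariants (valid since $\sg{G}$ is invertible), $\Hom_{\DbCohXG}\br{\E, \underline{\E}} = \Hom_{\DbCohX}\br{\E, \underline{\E}}^G$, again with $G$ acting by conjugation. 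The functor $\underline{F}_X$ is the non-equivariant one on underlying objects and, by functoriality, intertwines these two conjugation actions; taking $G$-invariants in the isomorphism furnished by Theorem \ref{Thm0-2} therefore yields $\Hom_{\BUXG}\br{\E, \underline{\E}} \xrightarrow{\sim} \Hom_{\DbCohXG}\br{\underline{F}_X \E, \underline{F}_X \underline{\E}}$. Combined with essential surjectivity, this shows $\underline{F}_X$ is an equivalence of triangulated categories. (Alternatively, one may repeat the dévissage and spectral-sequence argument of \cite{BSW} with all filtrations and spectral sequences taken $G$-equivariantly.)

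I expect the main obstacle to be the equivariant recursion in the essential-surjectivity step: one must check that the degree bookkeeping in the $\lxb$-grading, the identification of the obstruction classes, and the interplay between $\ox$- and $\ox^\infty$-modules in \cite[Theorem 6.3.6]{BSW} all survive the constraint that every choice be $G$-invariant, and that averaging the new term over $G$ preserves the flatness already achieved at lower order. The remaining points are the non-equivariant arguments with the finite group carried along.
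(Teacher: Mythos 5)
Your proposal is correct, and the essential-surjectivity half follows the paper's route exactly: start from Proposition \ref{Illusie} (which is Proposition \ref{Prop0-2}), put $E = \lxb\,\widehat{\otimes}\,Q$, and run the BSW recursion degree by degree along the $\lxb$-filtration, replacing each newly chosen $\underline{v}_k$, $\underline{\phi}_k$ by its $G$-average $\frac{1}{|G|}\sum_{g\in G}g\,\underline{v}_k\,g^{-1}$ etc.; the paper verifies in \eqref{ThmSurEqu3}–\eqref{Equ3-1} that averaging preserves the flatness equations already satisfied modulo $\Lambda^{\geq k+1}\br{\overline{T^*X}}$, which is precisely your point. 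One small bookkeeping remark: the seed of the recursion is $v_0 = d^Q$, with the antiholomorphic connection $\nabla^{Q\prime\prime}$ entering as $v_1$, rather than $\dbx + d^Q$ all at once, but this is cosmetic.

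For full faithfulness you take a genuinely different, more abstract route than the paper. You pass to $G$-invariants of the non-equivariant $\Hom$ on both sides, relying on exactness of $(-)^G$ (which handles $\Hom_{\BUXG} = \Hom_{\underline{\mathrm{B}}(X)}^G$ cleanly) and on the identification $\Hom_{\DbCohXG}(\E,\underline{\E}) = \Hom_{\DbCohX}(\E,\underline{\E})^G$. The paper instead argues at the level of roofs: for injectivity it applies the non-equivariant full faithfulness of \cite[Theorem 6.5.1]{BSW} to extract a homotopy $\underline{\psi}$ and averages it as in \eqref{Average}; for surjectivity it invokes Theorem \ref{EssSurB} to replace the apex of a roof by a $G$-equivariant superconnection and then averages the non-equivariant homotopy inverse \eqref{Homotopy1}. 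Your approach buys a certain conceptual economy and, notably, does not re-invoke essential surjectivity inside the full-faithfulness argument, whereas the paper's surjectivity step does. The price is that the identification $\Hom_{\DbCohXG} = \Hom_{\DbCohX}^G$, while familiar for finite $G$ of invertible order, is not something you can simply cite off the shelf for derived categories of equivariant coherent sheaves on a compact complex manifold (defined via roofs, with no injectives around); it needs a short argument, for instance that $\underline{\E}$ is a direct summand of the induced object $\bigoplus_{g\in G}m_g^*\underline{\E}$ in $\MXG$, that the forgetful and induction functors are exact and adjoint at the derived level, and that the associated projector is the averaging idempotent. Supplying that lemma would make your full-faithfulness proof self-contained; as written it is correct but elides this step, which the paper sidesteps by working explicitly with roofs and homotopies.
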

The pullbacks and tensor products of antiholomorphic $G$-superconnections are naturally $G$-equivariant, and also the functor $\underline{F}_X$ is compatible with pullbacks and tensor products.

Having established this category equivalence, the construction of equivariant Chern characters for $G$-coherent sheaves becomes relatively straightforward. 
Essentially, we define $G$-invariant generalized metrics on $D$ and we can follow the construction pattern used above for Chern characters of coherent sheaves, ultimately allowing us to define a map
$\chgBC: K \br{X,G}  \rightarrow  H^{(=)}_{\mathrm{BC}}(X_g, \mathbf{C})$ with desired compatibility properties \eqref{Con0-1}-\eqref{Con0-3} in Theorem \ref{Thm0-7}. 

\subsection{The Riemann-Roch-Grothendieck theorem}
Our second major result in this paper is proving the corresponding version of the Riemann-Roch-Grothendieck theorem, namely \eqref{Con0-4} in Theorem \ref{Thm0-7}.

Let $f: X \rightarrow Y $ be a $G$-equivariant holomorphic map between compact complex $G$-manifolds.
Let $i: x \in X \rightarrow \br{x, f(x)} \in X\times Y$ be the embedding associated to $f$, let $p: X\times Y \rightarrow Y $ be the obvious projection, then $i, p$ are $G$-equivariant. 
We have the following diagram,
\begin{equation} 
\begin{tikzcd}[column sep=4em, row sep=2.5em]
{K(X,G)} \arrow[r, "i_!"] \arrow[d, "\tdgBC(TX) \chgBC" description] \arrow[rr, "f_!", bend left=30] & {K(X\times Y, G)} \arrow[r, "p_!"] \arrow[d, "\tdgBC(T(X\times Y)) \chgBC" description] & {K(Y,G)} \arrow[d, "\tdgBC(TY) \chgBC" description] \\
\HEXgBCC \arrow[r, "i_*"] \arrow[r] \arrow[rr, "f_*", bend right=30] & \HEXYgBCC \arrow[r, "p_*"] & \HEYgBCC .    
\end{tikzcd}
\end{equation}
As in \cite{BS58}[Section 7], according to the above diagram and the functoriality of the equivariant Chern characters, we can reduce the proof to the case where $f$ is an equivariant embedding or $f$ is an equivariant projection.

\subsubsection{The Riemann-Roch-Grothendieck for equivariant embeddings}
Let $i_{X,Y}: X \rightarrow Y$ be a $G$-equivariant holomorphic embedding of compact complex $G$-manifolds.
Then for any $g\in G$, $i_{X_g, Y_g}: X_g \rightarrow Y_g$ is an embedding of compact complex manifolds.

Let $\F$ be an object in $\DbCohXG$. 
\begin{thm}[See Theorem \ref{Thm7-4}] \label{Thm0-4}
For any $g\in G$, the following identity holds,
\begin{align} 
\chgBC \br{Ri_{X,Y,*} \F} = i_{X_g,Y_g, *}	\frac{\chgBC \br{\F}}{\tdgBC \br{N_{X/Y}}} \quad \text{in} \; \HEYgBCC.
\end{align}
\end{thm}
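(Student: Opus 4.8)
The plan is to adapt, to the $G$-equivariant setting, the proof of the Riemann--Roch--Grothendieck theorem for embeddings given in \cite{BSW} (which builds on \cite{BGS1, BGS2}), combining the category equivalence of Theorem \ref{Thm0-3} with a local Chern--Weil computation on the projective completion of the normal bundle. Note first that, since $i_{X,Y}$ is a closed embedding, $Ri_{X,Y,*}\F = i_{X,Y,*}\F$; and since $X$ and $Y$ are general compact complex manifolds, one cannot dévisse to vector bundles, so the argument must be run directly in the antiholomorphic superconnection formalism.

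First I would use Theorem \ref{Thm0-3} to represent $\F$ by an antiholomorphic $G$-superconnection $(E, A^{E\prime\prime})$ on $X$, and then produce an antiholomorphic $G$-superconnection $(F, A^{F\prime\prime})$ on $Y$ representing $i_{X,Y,*}\F$. Such an $F$ can be built by a $G$-equivariant Koszul construction localized near $X$: one forms the Koszul-type complex $\Lambda(\overline{N^*_{X/Y}}) \wo E$ near $X$, equipped with the tautological Koszul differential, the operator $A^{E\prime\prime}$, and a $G$-invariant connection, using that $X$ admits a $G$-invariant smooth tubular neighbourhood in $Y$ (obtained by averaging), and one checks compatibility with the functor $\underline{F}_Y$ via the pullback and tensor-product compatibilities recorded after Theorem \ref{Thm0-3}. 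Equipping $F$ with a $G$-invariant generalized metric, the left-hand side of the asserted identity becomes the Bott--Chern class of the associated Chern--Weil form (in the sense of the $G$-equivariant analogue of Theorem \ref{Thm0-8}); by construction this form is $g$-equivariant and concentrated near $X$, hence localizes on $Y_g$ in a neighbourhood of $X_g$.

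Next I would identify that class. A deformation-to-the-normal-cone argument in the style of \cite[Section 7]{BS58} and \cite{BSW}, using the functoriality property \eqref{Con0-3} of Theorem \ref{Thm0-7} together with base change for $i_*$ along the $G$-equivariant fibre inclusions, reduces the computation to the model case of the zero-section embedding $i_0\colon X \hookrightarrow P := \mathbf{P}(N_{X/Y}\oplus\ox)$, where one has an honest holomorphic Koszul resolution and $N_{X/P}$ is canonically $N_{X/Y}$; a transgression formula together with the $g$-equivariant analogue of the Bismut--Gillet--Soulé localization current then shows that the rescaled Chern--Weil forms converge, as currents on $P_g$, to $i_{X_g,P_g,*}$ of an explicit closed form on $X_g$. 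That form is computed fibrewise: writing $N := N_{X/Y}|_{X_g}$ and decomposing it into its $g$-fixed subbundle $N^g = N_{X_g/Y_g}$ and the sum $N'$ of its nontrivial $g$-eigen-subbundles, the contribution of $N^g$ is governed by the classical Borel--Serre identity $\sum_i (-1)^i \ch(\Lambda^i \overline{N^g}^*) = c_{\mathrm{top}}(N^g)\,\tdBC(N^g)^{-1}$, which produces the pushforward $i_{X_g,Y_g,*}$ together with the factor $\tdBC(N_{X_g/Y_g})^{-1}$, while the contribution of $N'$ assembles the invertible $g$-eigenvalue factors into $\tdgBC(N')^{-1}$; using \eqref{Con0-2} to recognize the Chern characters of the Koszul bundles classically and \eqref{Con0-5} to factor out $\chgBC(\F)$, one obtains $\chgBC(Ri_{0,*}\F) = i_{X_g,P_g,*}(\chgBC(\F)/\tdgBC(N_{X/Y}))$, and transporting this back through the deformation argument yields the stated identity in $\HEYgBCC$.

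The principal difficulty lies in the first two paragraphs, not in the final Borel--Serre bookkeeping: one must construct the pushforward antiholomorphic $G$-superconnection on $Y$ (and over the deformation space) compatibly with the flatness condition that encodes the holomorphic structure, establish the requisite base-change isomorphism within the superconnection formalism, and — the genuinely analytic point — prove the convergence and transgression statements for the rescaled $G$-equivariant Chern--Weil forms, so that the Bott--Chern class of the Chern--Weil form of $(F,A^{F\prime\prime})$ is exactly the pushforward current on $Y_g$. Granting these, the remainder is a routine equivariant adaptation of \cite{BSW}.
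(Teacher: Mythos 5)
Your proposal lands on the correct statement, and several of its ingredients do appear in the paper's argument (deformation to the normal cone, reduction to the zero-section embedding $X\hookrightarrow P=\mathbf{P}(N_{X/Y}\oplus\mathcal O_X)$, the Koszul resolution, the role of $g$-eigenbundles of the normal bundle). But the overall organization is genuinely different, and you miss the two reductions that make the paper's argument manageable, replacing them with much heavier analysis.

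The paper's proof begins with a reduction you do not make: using the excess-intersection isomorphism in $\DbCohYgG$ (Proposition~\ref{EquiTranProp}, applied to the square $X_g\hookrightarrow X$, $Y_g\hookrightarrow Y$), one has
\[
Li_{Y_g,Y}^*\,i_{X,Y,*}\F \;\simeq\; i_{X_g,Y_g,*}\bigl(Li_{X_g,X}^*\F\;\widehat\otimes^L_{\OO_{X_g}}\OO_{X_g}(\Lambda\widetilde N^*)\bigr),
\]
where $\widetilde N$ is the excess normal bundle. Since $\chgBC$ only sees the restriction to $Y_g$ (Theorem~\ref{ThmDerivedChern}), and $\chgBC(\Lambda\widetilde N^*)=\tdgBC(N_{X_g/Y_g})/\tdgBC(N_{X/Y})$ by multiplicativity of Todd, this derived-category identity reduces the theorem to the embedding $i_{X_g,Y_g}$, where $\anbr{g}$ acts \emph{trivially on the manifolds}. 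This purely categorical step is the content that your fibrewise Borel--Serre computation (splitting $N|_{X_g}=N^g\oplus N'$) is implicitly reproducing, but without realizing it can be done in the derived category before touching forms. The second reduction you skip is the projection formula: in the trivial-action case, $i_{X,P,*}\F\simeq Lq_{P,X}^*\F\,\widehat\otimes^L\,i_{X,P,*}\ox$, so only $\chgBC(i_{X,P,*}\ox)$ needs to be evaluated, and $q_{P,X,*}$ of the Chern character of the Koszul complex $(\Lambda((A\oplus\C)/U)^*,i_\sigma)$ is just a smooth fiber integral along $P\to X$, given by \cite[Theorem~6.7]{B95} to be $\tdgBC^{-1}(N_{X/P})$. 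No singular limit of currents is taken.

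By contrast, you propose to prove a $g$-equivariant analogue of the Bismut--Gillet--Soul\'e localization formula, showing that rescaled Chern--Weil forms of a pushforward superconnection converge as currents on $Y_g$ to a pushforward current on $X_g$, with $g$ acting nontrivially on the underlying geometry. You correctly flag this as ``the genuinely analytic point,'' but the paper's two reductions make exactly this analysis unnecessary: the equivariance in the geometry is absorbed by the excess-intersection isomorphism, and the remaining equivariance (only in the fibers of $A$) is handled by the known identity of \cite{B95}. Likewise, the explicit tubular-neighbourhood construction of a pushforward $G$-superconnection you sketch is replaced in the paper by the abstract essential surjectivity and direct-image compatibility of the functor $\underline F_X$ (Proposition~\ref{PropCompatibleDirectImage}). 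Your route would require proving new analytic estimates that are nowhere in the paper; the paper's argument is deliberately built to avoid them.
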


The proof can be divided into several steps.

\textbf{Step $1$}: In Section \ref{MainImmersion}, we reduce the problem to the case where $G$ acts trivially on $X, Y$.

\textbf{Step $2$}:  Through the deformation to the normal cone method \cite{BFM}, we can reduce our embedding problem to that of embedding of $X$ into a projectivization space $P$.

\textbf{Step $3$}: We can replace $\F \in \DbCohXG$ by $\ox$ by using the projection formula.

\textbf{Step $4$}: We can then complete our proof by explicitly constructing a Koszul resolution for the direct image of $\ox$ on $P$.

\subsubsection{The Riemann-Roch-Grothendieck for equivariant projections}
Let $X$, $S$ be compact complex $G$-manifolds of dimension $n$, $n^\prime$.
Put
\begin{align}
M = X \times S.
\end{align}
Let $p: M \rightarrow S, q: M \rightarrow X$ be the projections and let $p_g: M_g \rightarrow S_g, q_g: M_G \rightarrow X_g $ be the induced projections on fixed points.
Let $\F$ be an object in $\DbCohMG$.
\begin{thm}[See Theorem \ref{Thm9-1}] \label{Thm0-5}
For $g\in G$, the following identity holds,
\begin{align} 
\chgBC \br{Rp_*\F} = p_{g, *} \brr{q_g^* \tdgBC \br{TX} \chgBC \br{\F}}	 \quad \text{in} \;\HESgBCC.
\end{align}
\end{thm}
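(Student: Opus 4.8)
The plan is to prove Theorem~\ref{Thm0-5} (the Riemann-Roch-Grothendieck formula for the equivariant projection $p: M = X\times S \to S$) by a heat-kernel / Bismut superconnection argument, following the strategy of \cite{BSW} in the non-equivariant case and the reduction techniques already used in Theorem~\ref{Thm0-4}. First I would reduce to the case where $G$ acts trivially on $M$, $X$ and $S$: by Theorem~\ref{Thm0-3} together with Proposition~\ref{prop0-5}, any object $\F \in \DbCohMG$ can be represented by an antiholomorphic $G$-superconnection $\br{E, \AEpp}$ with $E \simeq \lmb \wo D$, and since $G$ is finite we may restrict everything to the fixed-point manifold $M_g = X_g \times S_g$ and work with the $\langle g\rangle$-action there; the $g$-equivariant Chern character $\chgBC$ is by definition computed on $M_g$ from the restricted data, so the statement becomes a Riemann-Roch-Grothendieck identity on $X_g \times S_g \to S_g$ with the extra bookkeeping of the locally constant endomorphism $g$ acting on the fibers. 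This mirrors Step~1 in the proof of Theorem~\ref{Thm0-4}.

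Next, having reduced to the geometric (non-equivariant up to the $g$-twist) situation, I would construct the relevant infinite-dimensional superconnection on $S_g$. Using Proposition~\ref{Prop0-2} one picks a global resolution $\lmb \wo Q \to \E^\infty$ by a $G$-equivariant complex of smooth vector bundles carrying an antiholomorphic superconnection $\AEzeropp$ representing $\F$; equipping $Q$ with a $G$-invariant generalized metric gives the adjoint and hence a superconnection $\AEzero$. The pushforward is modeled by the fiberwise Dolbeault complex $\Omega^{0,\bullet}(X_g, E|_{X_g\times S_g})$ viewed as an infinite-rank bundle over $S_g$, endowed with the Bismut-type superconnection $A_u$ obtained by adding to the fiberwise $\bar\partial$-operator (and its adjoint, rescaled by $u>0$) the base superconnection; this is exactly the construction underlying \cite[Theorem 8.12]{BSW} / Theorem~\ref{Thm0-8} in families. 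One shows that for each $u>0$ the form $\ch\br{A_u}$ represents $\tdgBC\br{TX}\chgBC\br{\F}$ integrated over the fiber, up to $\db\partial$-exact terms, and that its Bott-Chern class is independent of $u$. The $g$-action enters as a parallel endomorphism commuting with $A_u$, so the supertrace is replaced by $\Trs\brr{g\exp(-A_u^2)}$ and all localization to $X_g$ is automatic.

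The heart of the argument is the limit analysis as $u \to 0$ and $u \to \infty$. As $u \to \infty$, standard Hodge theory for the fiberwise Dolbeault complex (the cohomology of the fibers of $q_g$, computing $Rp_*\F$ fiberwise via $\Omega^{0,\bullet}$) shows $\ch\br{A_u}$ converges to $\chgBC\br{Rp_*\F}$; here one needs the spectral gap / finite-dimensionality of fiberwise cohomology, which holds because fibers are compact and the complex is elliptic. As $u \to 0$, a local index computation — the Bismut-type rescaling of the Dolbeault heat kernel, exactly as in \cite{BGS1,BGS2} and adapted in \cite{BSW} — gives the limit $p_{g,*}\brr{q_g^*\tdgBC\br{TX}\,\chgBC\br{\F}}$, with the Todd form of the relative (= $TX$, since $M = X\times S$ is a product) holomorphic tangent bundle emerging from the Mehler-type Gaussian integral. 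Because $M$ is a genuine product, the relative tangent bundle is literally $q^* TX$, which simplifies the geometry considerably: there is no curvature of the fibration to contend with, and the transgression forms interpolating the two limits are manifestly well-behaved. Comparing the two limits (both representing the same Bott-Chern class, by $u$-independence and a transgression formula controlling the boundary terms) yields the stated identity in $\HESgBCC$.

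I expect the main obstacle to be the $u\to 0$ local index computation in the presence of the automorphism $g$: one must run the Bismut rescaling argument on $M_g = X_g\times S_g$ while carefully tracking the contribution of $g$ acting on the normal bundle $N_{X_g/X}$ to the fiber — this is what produces the equivariant Todd class $\tdgBC\br{TX}$ rather than the ordinary one — and one must verify that all error terms are uniformly $\db\partial$-exact so that the identity descends to Bott-Chern cohomology rather than merely de Rham cohomology. The fact that the Chern-Weil forms of \cite[Theorem 8.12]{BSW} / Theorem~\ref{Thm0-8} already live in $\bigoplus_p \Omega^{(p,p)}$ and that their transgressions are $\db$- and $\partial$-controlled is the key input that makes the Bott-Chern refinement go through; combined with the product structure of $M$, which kills the curvature of the would-be fibration, this reduces the analysis to a fiberwise statement that is essentially \cite{BSW} with a parallel group element inserted into every supertrace.
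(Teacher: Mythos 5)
Your reduction step in paragraph one is incorrect. You propose restricting everything to $M_g = X_g \times S_g$ at the outset and then running a heat-kernel argument there; but $\chgBC(Rp_*\F)$ is computed from $Rp_*\F$ as an object on $S$ (or $S_g$), and the fiberwise Dolbeault complex modeling this pushforward lives over the \emph{whole} fiber $X$, not $X_g$. The correct reduction, carried out in Section~\ref{Section91}, uses the flat base change theorem to replace $(p: X\times S \to S,\, \F)$ by $(p': X\times S_g \to S_g,\, Lj'^*\F)$ — so only the \emph{base} is shrunk to its fixed-point set, while the fiber $X$ keeps its full $g$-action. Localization to $X_g$ in the fiber direction then emerges from the $t \to 0$ heat-kernel asymptotics (Proposition~\ref{Prop13-2} and its sequels), not as a pre-processing step. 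Discarding $X \setminus X_g$ before doing the analysis would lose the very degrees of freedom that produce the factor $\tdgBC(TX)$ rather than $\tdgBC(TX_g)$ after integration over the fiber.

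The deeper gap is in your third paragraph, where you assert that the elliptic (Bismut-type) superconnection with the $u$-rescaling works directly and that the transgression terms interpolating the $u\to 0$ and $u\to\infty$ limits are "manifestly well-behaved" because $M=X\times S$ is a product. The product structure removes the curvature of the horizontal distribution, but it does \emph{not} make the fiber metric $\omega^X$ Kähler, and that is the obstruction. In de Rham cohomology the $u$-transgression is $d$-exact and the classical local index argument closes the loop; but to conclude an identity in Bott--Chern cohomology one needs the transgression to be $\dbx\dx$-exact, and for a general Hermitian $\omega^X$ the elliptic Bismut superconnection does \emph{not} supply this. The paper's own introduction records this: when $\omega^X$ is Kähler one can use \cite{BGS2}, when $\dx\dbx\omega^X = 0$ one can do an adiabatic limit à la \cite{B89}, and in general one must pass to Bismut's exotic superconnection with hypoelliptic curvature on the total space of $TX$ as in \cite{B13}. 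This is precisely what Sections~\ref{SectionHypo1} and \ref{SectionSubmersion2} of the paper implement: the elliptic superconnection forms of Section~\ref{SectionSubmersion1} are matched to $\chgBC(Rp_*\E)$ via a spectral truncation argument (not a $u$-rescaling); then the hypoelliptic superconnection is introduced, shown to agree with the elliptic one as $b\to 0$ (Theorem~\ref{Thm12-3}), and finally the exotic parametrization $\theta = (1, t^2)$ with rescalings $\omega^X/t$, $\gwTX/t^3$ is used to extract the Getzler-type local index limit (Theorem~\ref{Thm13-5}). Your proposal skips the entire hypoelliptic/exotic layer, which is not an optional simplification for a product fibration — it is what makes the Bott-Chern statement true for general Hermitian $\omega^X$.
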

\textbf{Step $1$}:
In Section  \ref{Section91}, by the base change theorem \cite[Proposition 3.1.0]{SGA6} and functoriality of $\chgBC$ with respect to equivariant pullback, we can reduce the proof of Theorem \ref{Thm0-5} to the case when $G$ acts trivially on $S$. 

\textbf{Step $2$}:
By Theorem \ref{Thm0-3}, we may replace $\F$ by some antiholomorphic $G$-superconnection $\br{E, \AEpp}$. Denote $\E = \br{\om^\infty \br{E}, \AEpp}$.

\textbf{Step $3$}:
By a theorem of Grauert \cite[Theorem 10.4.6]{GrR84}, $Rp_* \E$ is an object in $\DbCohSG$. By \cite[Theorem 9.5]{Bor87} and by \cite[Proposition IV.4.14]{demailly1997complex}, and since $\E$ is a bounded complex of soft $\om$-modules, we have
\begin{align}
Rp_* \E = p_* \E.	
\end{align}
In fact, $p_* \E$ it is not an object in $\underline{\mathrm{B}} \br{S, G}$ since it is infinite-dimensional.
We can endow $p_* \E$ with an infinite-dimensional antiholomorphic $G$-superconnection.
Formally adapting the construction used for finite-dimensional antiholomorphic $G$-superconnections and using the elliptic theory, we can construct a Chern character forms which will be called the equivariant elliptic superconnection forms.

In \cite[Sections 8.10 and 11.1]{BSW}, the authors developed a technique of spectral truncations.
Utilizing this result, we can prove that the equivariant elliptic superconnection forms computes the equivariant Chern character forms of $p_* \E$ constructed previously.

\textbf{Step $4$}:
We  employ the techniques of the local family index theorem to calculate the equivariant elliptic Chern character of infinite-dimensional $G$-superconnections.

When $\E$ is a $G$-equivariant holomorphic vector bundle and the direct image $Rp_* \E$ is locally free, the problem was fully solved by Bismut \cite{B13} using the infinite-dimensional elliptic superconnections \cite{BGS2}, \cite{BGS3} and their hypoelliptic deformations, as well as local index theory.
Specifically, when $X$ is K\"{a}hler, the theorem is exactly the equivariant version of the family local index theorem for K\"{a}hler fibration \cite[Theorem 2.11]{BGS2}.
 Furthermore, if $X$ has a metric such that the corresponding fundamental (1,1)-form is $\dbx \dx $-closed, the required result is an adiabatic limit of the results of \cite{B89} as explained in \cite{B13}.
For the general case, Bismut constructed an exotic superconnection in \cite{B13} with hypoelliptic curvature on the enlarged fiber $TX$, which ultimately prove the theorem. 

When $\E$ is an antiholomorphic superconnection and $G$ is trivial, Bismut, Shen, Wei \cite{BSW} followed the above constructions and generalized the theorem to this case.

Our proof is to further extend these results to antiholomorphic $G$-superconnections. The fundamental technique employed is Bismut's hypoelliptic Laplacian, and our proof essentially combines the approaches from \cite{B13} and \cite{BSW}.

\subsection{The unicity theorem}
We will also prove a unicity theorem for the equivariant Chern character as stated in Theorem \ref{Thm0-6}. This theorem is a generalization of \cite[Theorem 2]{G10} and \cite[Theorem 9.4.1]{BSW} when $G$ is trivial.

This proof can be divided into the following steps.
Firstly, since $\cgBCBar$ satisfies the condition \eqref{Con0-1} in Theorem \ref{Thm0-7}, we can reduce to the case when $G$ acts trivially on $X$.

Secondly, if $\F$ is a $G$-coherent sheaf on $X$, then by Hironaka's flattening theorem and desingularization, and conditions \eqref{Con0-2} and \eqref{Con0-3} in Theorem \ref{Thm0-6}, we can reduce the problem to the case where $\F$ is a $G$-coherent sheaf with supported in a normal crossing divisor $D = \sum_i D_i$.

Then we have the following fact.
\begin{prop}[See Proposition \ref{Prop8-2}]  \label{Prop0-3}
Let $\iota_i: D_i \rightarrow X$ be the embedding.
If $\F$ is a $G$-coherent sheaf on $X$ and $\mathrm{supp}\F \subset D $, then there exist $G$-coherent sheaves $\xi_i $ on $ D_i$ such that
\begin{align}
  \sum_{i=1}^{k} \iota_{i, *} \xi_i = \F  \quad \text{in} \; K(X,G).
\end{align}

\end{prop}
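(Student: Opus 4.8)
The plan is to reduce, by induction on the number $k$ of components of the normal crossing divisor $D = \sum_{i=1}^k D_i$, to a statement about a $G$-coherent sheaf supported on a single smooth divisor, and then to resolve such a sheaf by a filtration whose graded pieces are pushed forward from the components. First I would set up the induction: write $D' = \sum_{i=1}^{k-1} D_i$, let $\iota': D' \to X$ and consider the $G$-equivariant ideal sheaf $\mathcal{I}_{D'} \subset \ox$. Tensoring $\F$ with the structure sequences of $D'$ and $D_k$ and using the $G$-equivariant local description of a normal crossing divisor (where étale-locally the $D_i$ are coordinate hyperplanes and the group acts compatibly), one separates the part of $\F$ scheme-theoretically supported on $D'$ from the part supported on $D_k$; the difference in $K(X,G)$ is then handled by the inductive hypothesis applied to $D'$ and by the single-divisor case applied to $D_k$. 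The point is that in $K$-theory one only needs an equality after passing to associated graded objects of finite filtrations, so one is free to replace $\F$ by the alternating sum of the terms $\mathrm{gr}$ of the $\mathcal{I}_{D'}$-adic (or $\mathcal{I}_{D_k}$-adic) filtration, each of which is annihilated by a power of an ideal and hence, after a further finite filtration, is a sheaf on a single $D_i$ (or on an intersection, which is again a smooth submanifold and can be pushed forward through $D_i$).

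The single-divisor case is the technical heart. Given a $G$-coherent sheaf $\G$ on $X$ with $\mathrm{supp}\,\G \subset D_k$, let $j = \iota_k$ and let $\mathcal{I} = \mathcal{I}_{D_k}$ be the $G$-equivariant ideal sheaf of the smooth divisor $D_k$. Since $\mathrm{supp}\,\G \subset D_k$ and $\G$ is coherent, locally $\mathcal{I}^N \G = 0$ for $N \gg 0$, and by $G$-equivariance and compactness one can take $N$ uniform. The $\mathcal{I}$-adic filtration $\G \supset \mathcal{I}\G \supset \cdots \supset \mathcal{I}^{N}\G = 0$ is a filtration by $G$-subsheaves, and in $K(X,G)$ we get $\G = \sum_{m=0}^{N-1} \mathcal{I}^m\G / \mathcal{I}^{m+1}\G$. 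Each quotient $\mathcal{I}^m\G/\mathcal{I}^{m+1}\G$ is a $G$-coherent $\ox/\mathcal{I} = \mathcal{O}_{D_k}$-module, hence of the form $j_* \eta_m$ for a $G$-coherent sheaf $\eta_m$ on $D_k$ (the equivariant structure descends because $\mathcal{I}$ is $G$-stable). Setting $\xi_k = \sum_m \eta_m \in K(D_k, G)$ gives $\G = \iota_{k,*}\xi_k$ in $K(X,G)$; one must check $\iota_{k,*}$ here agrees with $R\iota_{k,*}$ on these sheaves, which holds since $\iota_k$ is a closed immersion so $R\iota_{k,*} = \iota_{k,*}$ is exact. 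Combining with the inductive step, the intersection pieces $D_i \cap D_j$ that arise get absorbed into $\xi_i$ by factoring the closed immersion $D_i \cap D_j \hookrightarrow X$ through $D_i$ and applying the single-divisor argument on $D_i$.

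The main obstacle I anticipate is bookkeeping the $G$-equivariant structure through the normal-crossing local model and through the various adic filtrations: one needs that a normal crossing divisor with $G$-action is, $G$-equivariantly and at least formally (or étale) locally near each point, a union of coordinate hyperplanes with $G$ permuting or scaling the coordinates — this is where finiteness of $G$ is used, via averaging to linearize the action (Cartan's lemma for finite group actions on complex manifolds). Once that local model is in hand, the filtrations are manifestly $G$-stable and everything descends, but care is required when $g \in G$ does not fix $D_k$ setwise: in that case one groups the components into $G$-orbits and the induction is run on the number of orbits rather than components, with $\iota_{i,*}\xi_i$ replaced by a sum over an orbit that is itself $G$-equivariant. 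I expect the rest — exactness of closed pushforward, the $K$-theoretic identity $[\G] = \sum_m [\mathrm{gr}_m \G]$ for a finite $G$-filtration, and descent of the module structure along $\ox \to \mathcal{O}_{D_k}$ — to be routine and citable from standard equivariant homological algebra.
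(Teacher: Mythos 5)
Your proposal is essentially the same argument the paper uses: an $\mathcal{I}_{D_k}$-adic filtration whose graded pieces are $\ox/\mathcal{I}_{D_k}$-modules pushed forward from $D_k$, combined with an induction on the number $k$ of components in which the bottom piece $\mathcal{I}_{D_k}^n\F$ is observed to be supported in $\sum_{i<k}D_i$ for $n$ large, so the inductive hypothesis applies. The permutation issue you flag (a $g\in G$ that does not fix $D_k$ setwise) does not arise here, because Proposition \ref{Prop8-2} is proved under the standing assumption, made at the start of Section \ref{PreUni} via the reduction from condition (\ref{Unicity4}) of Theorem \ref{Thm8-1}, that $G$ acts trivially on $X$, so the \'etale linearization, Cartan's lemma, and the orbit bookkeeping you describe are unnecessary.
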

Finally by Condition \eqref{Con0-4}$'$ in Theorem \ref{Thm0-6} and Proposition \ref{Prop0-3}, the proof of the unicity theorem can be completed by induction on the dimension of the manifold.

\subsection{The organization of the article}
The paper is organized as follows.
Section \ref{SectionPre} is the preliminaries, we will recall various properties of group algebra, equivariant holomorphic vector bundles and their characteristic classes with values in Bott-Chern cohomology.

In Section \ref{SectionEquOX}, we define equivariant $\ox$-modules and their morphisms.
We prove an essential theorem to give a stronger local resolution for $G$-coherent sheaves.

In Section \ref{SectionEquDer}, we study various properties of the derived category $\DbCohXG$.

In Section \ref{EquiAntiSupe}, we introduce antiholomorphic $G$-superconnection $\br{E, \AEpp}$ which is a generalization of antiholomorphic superconnection introduced in \cite{B110} and \cite{BSW}. We also study the morphisms, pullbacks and tensor products of antiholomorphic $G$-superconnections.

In Section \ref{SectionEquivalence}, we generalize the result of Block \cite{B110} and \cite{BSW} to prove the equivalence of categories $\underline{\mathrm{B}} \br{X, G} \simeq \DbCohXG$.

In Section \ref{EquiGeneMetr}, we generalize the constructions in \cite[Sections 7 and 8]{BSW}.
Given an identification $E \simeq E_0$, with $E_0 = \lxb \wo D$, we define the $G$-invariant generalized metric $h$ on $D$, and the adjoint $\AEzerop$ of the antiholomorphic $G$-superconnection $\AEzeropp$. 
We introduce the curvature $\brr{\AEzeropp, \AEzerop}$ and construct the equivariant Chern character form $\chg \br{\AEzeropp, h} \in \Omega^{(=)} \br{X_g, \C}$, whose Bott-Chern cohomology class does not depend on $h$ or on the identification and will be denoted as $\chgBC \br{\AEpp}$.
We will also show that the construction gives a map $\DbCohXG \rightarrow \HEXgBCC$ and therefore gives a map $K \br{\mathrm{coh} \br{X, G}} \rightarrow \HEXgBCC$.

In Section \ref{SectionMainResult}, we state the main theorem of this paper and we prove that we can reduce the problem to equivariant embeddings or equivariant projections.

In Section \ref{SectionEmbedding}, we prove the equivariant RRG theorem for equivariant embeddings.
 We first reduce the proof to the case where $G$ acts trivially on the manifolds, then complete the proof by the method of deformation to the normal cone.
 
 In Section \ref{SectionUnicity}, we prove that under certain conditions, our definition of equivariant Chern character with values in Bott-Chern cohomology class is unique.
 
 In Section \ref{SectionSubmersion1}, when $f$ is the projection $p: M = S \times X \rightarrow S$ and $\E$ is an antiholomorphic $G$-superconnection on $M$, we first prove that we can reduce the problem to the case when $G$ acts trivially on S.
 We then generalized the constructions of \cite[Section 10]{BSW}. 
 In particular, we regard $p_* \E$ as an infinite-dimensional antiholomorphic $G$-superconnection on $S$ and formally adapt the construction used for finite-dimensional antiholomorphic $G$-superconnections to construct the Chern character forms on it. Finally we prove the construction is exactly the same the equivariant Chern character defined in Section \ref{EquiGeneMetr}.
 
 In Section \ref{SectionHypo1}, we follow the constructions in \cite[Sections 13 and 16]{BSW} to construct superconnections with hypoelliptic curvatures.
 Specifically, we consider a new fibration $\M \rightarrow S$ with fiber $\X$, the total space of $TX$.
 We also consider the tautological Koszul complex associated with the embedding $M \rightarrow \M$. 
 We extend the $G$ action to a $G\times \Z_2$ action. Given an identification $E \simeq E_0$, and $G$-invariant metrics on certain objects, we can construct a $G\times \Z_2$-invariant non-degenerate Hermitian for $\epsilon_{X, \theta}$ depending on two parameters $c, d$.
 The adjoint of our new antiholomorphic $G\times \Z_2$-superconnection $\AAAA_Y''$ over $S$ is constructed using  $\epsilon_{X, \theta}$. 
 The corresponding curvature is a fiberwise hypoelliptic operator.
 We define the corresponding hypoelliptic superconnection forms and prove it coincides with the elliptic superconnection forms defined in Section \ref{SectionSubmersion1}.
 
 In Section \ref{SectionSubmersion2}, we prove the Riemann-Roch-Grothendieck Theorem for $p$ which completes the proof of our theorem.

\section{Preliminaries} \label{SectionPre}
The purpose of this section is to recall some elementary facts which will be used in the whole paper. This section is organized as follows.

In Section \ref{SecGroupAlgebra}, for a finite group $G$, we recall the definition and some basic properties of the group algebra $R(G)$.

In Section \ref{RepAndVec}, we recall some fundamental properties of equivariant holomorphic vector bundles.

In Section \ref{Section2-2}, we recall some basic facts on the Bott-Chern cohomology class.

Finally, in Section \ref{EquiCharClass}, we explain the construction of characteristic classes of equivariant holomorphic vector bundles with values in Bott-Chern cohomology.

\subsection{Group algebra} \label{SecGroupAlgebra}
We denote $R(G)$ to be the group algebra of $G$ over $\C$, that is, $R(G)$ is the $\C$-algebra consisting of all formal linear combinations of the distributions concentrate on each element $g \in G$, with the operations of addition and multiplication in a quite natural way:
\begin{align}
\sum_{g \in G} s_g \delta_g	+ \sum_{g \in G} t_g \delta_g = \sum_{g \in G} (s_g+t_g) \delta_g, \\
\sum_{g \in G} s_g \delta_g	\cdot \sum_{g \in G} t_g \delta_g =  \sum_{g \in G} (\sum_{h \in G} s_h t_{h^{-1} g}) \delta_g,
\end{align}
where $s_g, t_g \in \C$ and $\delta_g$ is the distribution concentrates on $g \in G$.
Then it is clear that the category of complex representations of $G$ is equivalent to the category of $R(G)$-modules.

In the sequel, we will sometimes regard $R(G)$ as a $G$-representation by the action
\begin{align}
(g, \sum_{h \in G} s_h \delta_h) \in (G, R(G)) \rightarrow \sum_{h \in G} s_{h g} \delta_{h} \in R(G).	
\end{align} 

\subsection{Equivariant holomorphic vector bundles} \label{RepAndVec}

Let $X$ be a compact complex manifold of dimension $n$.
Let $G$ be a finite group with identity element $e \in G$. 
Assume $G$ acts holomorphically on $X$.

For $x \in X$, we denote $G_x \subset G$ the stabilizer of $x$,
\ie 
\begin{align}
	G_x = \brrr{g \in G \mid g \cdot x = x}.
\end{align}
Then $G_x$ is a subgroup of $G$.

If $V$ is a finite dimensional complex vector space, and if $\rho: G \rightarrow \mathrm{GL}(V)$ is a representation of $G$, we can construct a $G$-equivariant vector bundle $X \times V$ on $X$ by setting 
\begin{align}
g \cdot \br{x,v} = \br{g \cdot x, \rho\br{g} \br{v}}.
\end{align}
A $G$-equivariant holomorphic vector bundle isomorphic to $X \times V$ will be called $G$-trivial. 
This definition is also justified by the following proposition.

Let $E$ be a $G$-equivariant holomorphic vector bundle on $X$. 
\begin{prop} \thlabel{LocTri}
Locally $E$ is a trivial $G_x$-equivariant vector bundle, \ie for any $x \in X$, we can find a $G_x$-invariant open neighborhood $U_x$ of $x$ such that $E_{U_x}$ is $G_x$-trivial.
\end{prop}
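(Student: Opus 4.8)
The plan is to reduce the question to a local statement about linearizing a holomorphic action near a fixed point and then invoking the standard averaging (Bochner linearization) argument. First I would fix $x \in X$ and work entirely with the stabilizer $G_x$, which acts holomorphically on a neighborhood of $x$ and acts on the fibre $E_x$. Since $G_x$ is finite, I can choose a $G_x$-invariant open neighborhood $U_x$ of $x$ that is biholomorphic, $G_x$-equivariantly, to a $G_x$-invariant open ball in $\C^n$ with a \emph{linear} $G_x$-action: start from any holomorphic chart centered at $x$, average the chart over $G_x$ and use the fact that the differential at $x$ of the averaged map is an isomorphism (Bochner's linearization theorem), so after shrinking we get a $G_x$-equivariant chart on which $G_x$ acts linearly through its action on $T_xX$.

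Next I would trivialize $E$ holomorphically over a possibly smaller neighborhood $V \subset U_x$ of $x$ (ignoring the group for a moment), giving a holomorphic frame; this need not be $G_x$-invariant. To fix this, pull the frame back by each $g \in G_x$ and average: more precisely, since $E$ is $G_x$-equivariant, for each $g$ the equivariant structure gives a bundle isomorphism $g \colon E|_{g^{-1}V} \to E|_V$ covering the action on the base, and composing the chosen trivialization with these gives, after intersecting the $g^{-1}V$ over $g \in G_x$ to get a $G_x$-invariant neighborhood, a family of trivializations related by holomorphic transition matrices. Averaging the trivialization (equivalently, applying the standard argument that produces a $G_x$-equivariant local frame from the $G_x$-action on $E_x$ together with a holomorphic frame) yields a holomorphic frame over a $G_x$-invariant neighborhood $U_x$ of $x$ on which the $G_x$-action on $E$ is fibrewise constant, i.e. is the action coming from the linear representation $\rho = \rho_{E_x} \colon G_x \to \mathrm{GL}(E_x)$. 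Thus $E|_{U_x} \cong U_x \times E_x$ with $g\cdot(y,v) = (g\cdot y, \rho(g)v)$, which is exactly the assertion that $E|_{U_x}$ is $G_x$-trivial.

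I expect the main obstacle to be the careful bookkeeping that makes the averaging argument genuinely $G_x$-equivariant and holomorphic simultaneously — in particular, ensuring that after all the shrinkings one still has a single $G_x$-invariant open set $U_x$, and that the averaged frame is still a holomorphic frame (it is, since a $\C$-linear combination, with constant coefficients $1/|G_x|$, of holomorphic sections of $E$ over the $G_x$-invariant intersection is holomorphic, and it remains a frame near $x$ by continuity because at $x$ it specializes to an averaged frame of $E_x$ which one arranges to be invertible). No deformation or cohomological input is needed: finiteness of $G_x$ makes every averaging step work verbatim, and compactness of $X$ plays no role here. In writing this up I would present it as: (i) linearize the action on the base near $x$; (ii) produce a $G_x$-equivariant holomorphic frame of $E$ near $x$; (iii) read off the $G_x$-trivialization, with $\rho$ the representation of $G_x$ on $E_x$.
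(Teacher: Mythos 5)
Your proposal is correct and its core mechanism coincides with the paper's: fix a trivialization of $E$ over a $G_x$-invariant neighborhood, write the $G_x$-action in that frame, and average over $G_x$ to produce a gauge transformation that makes the fibrewise action constant (equal to the representation $g \mapsto g_x$ on $E_x$); the averaged transformation equals the identity at $x$, hence is invertible on a smaller $G_x$-invariant neighborhood, and it is holomorphic because it is a finite $\C$-linear combination of holomorphic transition data.

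The one place you diverge is step (i): Bochner linearization of the \emph{base} action. The statement "$E|_{U_x}$ is $G_x$-trivial" means a $G_x$-equivariant isomorphism $E|_{U_x} \cong U_x \times V$ over whatever action $G_x$ has on $U_x$, so nothing requires that action to be linear, and the paper skips this step entirely. It is a harmless extra reduction, but it is worth noticing that it contributes nothing to the conclusion and slightly obscures that the whole content of the proposition lives in the fibre direction. Otherwise your outline matches the paper's argument.
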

\begin{proof}
Given a trivialization of $E$ on a $G_x$-invariant open neighborhood, $E_{U_x} \simeq U_x \times V$, then we can write the $G_x$-action as
\begin{align}
g \cdot (y,v) = \br{g \cdot y, g_y \cdot v},
\end{align}
where $g_y \in \mathrm{End}(V)$ emphasis that the action of $g \in G_x$ on $V$ is dependent on $y \in U_x$ in general.

Set 
\begin{align}
\phi_y =\frac{1}{|G_x|} \sum_{g \in G_x} g_x g_y^{-1}.
\end{align}
It is clear that $\phi_x =1$, so in a probably smaller $G_x$-invariant open neighborhood of $x$, $\phi_y$ is invertible.
Moreover $ \phi_{g \cdot y} g_y  = g_x \phi_y$ for any $y \in U_x$ and $g \in G_x$.
Therefore, under the frame transformation
\begin{align}
\phi (y, v)	= (y, \phi_y v)
\end{align}
the $G_x$ action is independent of $y \in U_x$ under the new trivialization.
The proof of our proposition is complete.
\end{proof}

Let $U_x$ be a $G_x$-invariant neighborhood of $x$ in $X$ as in \thref{LocTri}, and assume $E_{\mid U_x} \simeq U_x \times V$ for certain $G_x$-representation
\begin{align}
\rho: G_x \rightarrow \mathrm{GL} (V).	
\end{align}

Set
\begin{align}
U = \bigcup_{g \in G} g \cdot U_x.
\end{align}
Then $U$ is $G$-invariant.
We may assume for any $g \neq e$, $g \cdot U_x \cap U_x = \emptyset$.

Set
\begin{align}
\widetilde{V} = G \times_{G_x} \br{U_x \times V}	,
\end{align}
then $\widetilde{V}$ is a $G$-equivariant holomorphic vector bundle on $U$. 
Moreover, $\widetilde{V} \simeq E_{\mid U}$ .
\begin{prop}
If $\rho$ is a restriction of a $G$-representation
\begin{align}
\widetilde{\rho}: G \rightarrow \mathrm{GL}(V),
\end{align}
then we have a $G$-equivariant identification $E_{\mid U} \simeq U \times V$.
In particular, $E_{\mid U}$ is a trivial $G$-equivariant vector bundle on $U$.
\end{prop}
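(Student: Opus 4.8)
The plan is to construct an explicit $G$-equivariant holomorphic isomorphism $\Phi \colon \widetilde{V} \to U \times V$ and then compose it with the identification $\widetilde{V} \simeq E_{\mid U}$ already recorded above. Recall that $\widetilde{V} = G \times_{G_x}\br{U_x \times V}$ is the quotient of $G \times U_x \times V$ by the $G_x$-action $h \cdot \br{g, y, v} = \br{g h^{-1}, h \cdot y, \rho(h) v}$ for $h \in G_x$, equipped with the residual $G$-action $g' \cdot \brr{g, y, v} = \brr{g' g, y, v}$ and the projection $\brr{g, y, v} \mapsto g \cdot y \in U$; on $U \times V$ we take the $G$-action $g' \cdot \br{u, v} = \br{g' \cdot u, \widetilde{\rho}(g') v}$.

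First I would set $\Phi\br{\brr{g, y, v}} = \br{g \cdot y, \widetilde{\rho}(g) v}$ on representatives. The crucial point, and the only place the hypothesis enters, is that this formula descends to the quotient: since $\widetilde{\rho}$ extends $\rho$, for $h \in G_x$ one has $\Phi\br{\brr{g h^{-1}, h \cdot y, \rho(h) v}} = \br{g \cdot y,\, \widetilde{\rho}(g)\widetilde{\rho}(h)^{-1}\rho(h) v} = \br{g \cdot y, \widetilde{\rho}(g) v}$, so $\Phi$ is well defined on $\widetilde{V}$.

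Next I would check that $\Phi$ is a morphism of $G$-equivariant holomorphic vector bundles over $U$: it covers $\mathrm{id}_U$ by the definition of the projection on $\widetilde{V}$; it is $\C$-linear on fibres and a holomorphic map, since $G$ acts holomorphically on $X$ and each $\widetilde{\rho}(g)$ is a constant element of $\mathrm{GL}(V)$; and it intertwines the two $G$-actions because $\Phi\br{g' \cdot \brr{g, y, v}} = \br{g' g \cdot y, \widetilde{\rho}(g') \widetilde{\rho}(g) v} = g' \cdot \Phi\br{\brr{g, y, v}}$. Over a point $u = g \cdot y$ with $y \in U_x$, the induced map on fibres is $v \mapsto \widetilde{\rho}(g) v$, a linear isomorphism of $V$; hence $\Phi$ is fibrewise bijective and therefore an isomorphism of holomorphic vector bundles. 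Composing $E_{\mid U} \simeq \widetilde{V} \xrightarrow{\Phi} U \times V$ yields the asserted $G$-equivariant trivialization, and in particular $E_{\mid U}$ is $G$-trivial.

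I do not anticipate any genuine obstacle here: the argument is a direct verification. The only subtlety is bookkeeping of the $G_x$- and $G$-action conventions on the associated bundle $\widetilde{V}$ versus on $U \times V$, together with the observation that the extension property of $\widetilde{\rho}$ is precisely what makes $\Phi$ well defined on the quotient (an arbitrary $G_x$-equivariant identification $E_{\mid U_x} \simeq U_x \times V$ could not be globalized over $U$ in this way without such an extension).
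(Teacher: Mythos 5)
Your proposal is correct and takes essentially the same route as the paper: the paper writes down the very same map $\br{g,\br{y,v}} \mapsto \br{g\cdot y, \widetilde{\rho}(g)v}$ from $G\times\br{U_x\times V}$ to $U\times V$ and asserts it induces the required $G$-equivariant identification. You have simply spelled out the verification (descent to the quotient, $G$-equivariance, fibrewise linearity and bijectivity) that the paper leaves implicit.
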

\begin{proof}
The map
\begin{align}
\br{g, \br{y, v}} \in G \times \br{U_x \times V } \rightarrow \br{g \cdot y, \widetilde{\rho}\br{g} \br{v}}	\in U \times V
\end{align}
induces the required $G$-equivariant identification $E_{\mid U} \simeq U \times V$, and the proposition holds.
\end{proof}


\subsection{The Bott-Chern cohomology} \label{Section2-2}
Let $X$ be a compact complex manifold of dimension $n$. Let $TX$ be the holomorphic tangent bundle of $X$, and let $T_\R X$ be the corresponding real tangent bundle.
If $T_\C X = T_\R X \otimes_\R \C$, then 
\begin{align}
T_\C X = TX \oplus \overline{TX}. 
\end{align}

Let $\Lambda \br{T^*_\C X}$ be the exterior algebra of $T^*_\C X$. 
Then $\Lambda \br{T^*_\C X}$ is bigraded, $\Lambda \br{T^*_\C X} = \bigoplus_{0\leq p,q \leq n } \Lambda^{p,q} \br{T^*_\C X} $.


Let $\Omega^{p,q} (X, \C)$ be the space of smooth sections of $\Lambda^{p,q} \br{T^*_\C X}$.
Note that the exterior differential $d^X$ splits as $d^X = \dbx + \partial^X$.



\begin{defn}
The Bott-Chern cohomology $H^{p,q}_{\mathrm{BC}} \br{X, \C}$ are given by
\begin{align}
	H^{p,q}_{\mathrm{BC}} \br{X, \C} = \br{ \Omega^{p,q} \br{X, \C} \cap \ker d^X } / \dbx \partial^X \Omega^{p-1,q-1} (X,\C).
\end{align}
\end{defn}
If $\alpha \in \Omega^{p,q} (X, \C)$ is closed, let $\brrr{\alpha}$ be the class in $H^{p,q}_{\mathrm{BC}} \br{X, \C}$.
Note that $H^{\bullet,\bullet}_{\mathrm{BC}}(,X,\C)$ is a bigraded algebra.

\begin{rmk}
There is a canonical injective map $H^{p,q}_{\mathrm{BC}} \br{X, \C} \rightarrow H^{p+q}(X,\C)$ so that 
the Bott-Chern class is a refinement of the de Rham class.
\end{rmk}

Put 
\begin{align}
&\OEXC = \bigoplus_{0 \leq p \leq n} \Omega^{p,p} (X, \C),	&& \HEXBCC = \bigoplus_{0 \leq p \leq n} H^{p,p}_{\mathrm{BC}} (X, \C).
\end{align}
Also $H^{p,p}_{\mathrm{BC}} (X, \C)$ ia an algebra.

Let $\mathscr{D} \br{X, \C}$ be the vector space of currents on $X$.
We can also define $\partial^X$ and $\overline{\partial}^X$ on $\mathscr{D}\br{X, \C}$ which map $\mathscr{D}^{p,q}\br{X, \C}$ to $\mathscr{D}^{p+1,q}\br{X, \C}$ and $\mathscr{D}^{p,q+1}\br{X, \C}$ respectively. Also, let $d^X = \partial^X + \overline{\partial}^X$.

By the results of \cite[Theorem 2.2]{SC07} and \cite[Section 2.2]{BSW}, we can define Bott-Chern cohomology using instead $\mathscr{D} \br{X, \C}$. That is,
\begin{align}
	H^{p,q}_{\mathrm{BC}} \br{X, \C} \simeq \br{ \mathscr{D}^{p,q} \br{X, \C} \cap \ker d^X } / \dbx \partial^X \mathscr{D}^{p-1,q-1} (X,\C).
\end{align}

Let $Y$ be another compact complex manifold of dimension $n'$, and let $f: X\to Y$ be a holomorphic map.
Then $f^*$ maps $\Omega\br{Y,\C}$ into $\Omega \br{X, \C}$ as a morphism of bigraded algebras. Therefore $f^*$ induces a morphism of bigraded algebras
\begin{align} \label{PullbackCoh}
	H_{\mathrm{BC}} \br{Y, \C} \rightarrow H_{\mathrm{BC}} \br{X, \C}.
 \end{align}

By duality, $f_*$ maps $\mathscr{D}^{p, q} \br{X, \C}$ into $\mathscr{D}^{n'-n+p, n'-n+q}  \br{Y, \C}$. Since the Bott-Chern cohomology can be defined using currents, we get a morphism of bigraded vector spaces
\begin{align} \label{PushforwardCoh}
	H_{\mathrm{BC}} \br{X, \C} \rightarrow H_{\mathrm{BC}} \br{Y, \C}.
 \end{align}
The pullback map \eqref{PullbackCoh} and the pushforward map \eqref{PushforwardCoh} are functorial.
 
\subsection{Equivariant characteristic class} \label{EquiCharClass}

Let $G$ be a finite group. We assume that $G$ acts holomorphically on $X$. Let $g^{TX}$ be a $G$-invariant Riemannian metric on $T_\R X$, and let $\nabla^{TX}$ be the Chern connection on $\br{TX, g^{TX}}$, then $\nabla^{TX}$ is $G$-invariant.

If $g \in G$, let $X_g \subset X$ be the fixed point set of $g$.

Let $N_{X_g/X}$ be the normal bundle to $ T X_g$ in $TX|_{X_g}$ and we identify it with the orthogonal bundle to $ T X_g$ in $TX|_{X_g}$ with respect to $g^{TX}$.
Then $g$ acts on $N_{X_g/X}$.
Since $TX_g$ is the eigenbundle of the action of $g$ on $TX|_{X_g}$ with eigenvalue $1$ and $g^{TX}$ is $G$-invariant, the splitting
\begin{align} \label{Split2}
TX |_{X_g} = TX_g \oplus N_{X_g/X}
\end{align}
is holomorphic.

Let $e^{i \theta_1}, \cdots, e^{i \theta_q}\; (0<\theta_j<2 \pi)$ be the locally constant distinct eigenvalues of $g$ acting on $N_{X_g/X}$, and let $N_{X_g/X}^{\theta_1}, \cdots, N_{X_g/X}^{\theta_q}$ be the corresponding eigenbundles.
Then $N_{X_g/X}$ splits holomorphically as 
\begin{align}
N_{X_g/X} = N_{X_g/X}^{\theta_1} \oplus \cdots N_{X_g/X}^{\theta_q}.	
\end{align}

Let $g^{TX_g}$, $g^{N_{X_g/X}^{\theta_1}}$, $\cdots$ be the Hermitian metrics induced by $g^{TX \mid_{X_g}}$ on $TX_g$, $N_{X_g/X}^{\theta_1}$, $\cdots$.
Then $\nabla^{TX \mid_{X_g}}$ induces the Chern connections $\nabla^{TX_g}$, $\nabla^{N_{X_g/X}^{\theta_1}}$, $\cdots$ on $\br{TX_g, g^{TX_g}}$, $\br{N_{X_g/X}^{\theta_1}, g^{N_{X_g/X}^{\theta_1}}}$, $\cdots$.
Let $R^{TX_g}$, $R^{N_{X_g/X}^{\theta_1}}$, $\cdots$ be their curvatures.

If $A$ is a matrix, set
\begin{align}
&\Td \br{A} = \det \br{\frac{A}{1- e^{-A}}},  && e(A) = \det (A).
\end{align}

\begin{defn} \label{DefTdg}
Set
\begin{align} 
\Td_g \br{TX, g^{TX}} &= \Td \br{-R^{TX_g} / 2 i \pi} \prod_{j=1}^q \br{\frac{\Td}{e}} \br{-\frac{R^{N_{X_g/X}^{\theta_j}}}{2 i \pi} + i \theta_j} \in \OEXgC \label{EquTodd} 
\end{align}
\end{defn}
Let $E$ be a $G$-equivariant holomorphic vector bundle on $X$. Let $g^E$ be a $G$-invariant Hermitian metric on $E$. 
Let $\nabla^E$ be the Chern connection on $(E, g^E)$ and let $R^E$ be the corresponding curvature. Note that $\nabla^E$ is $G$-invariant.
\begin{defn} \label{DefChg}
Set
\begin{align} \label{EquChern}
	\chg \br{E, g^E} &= \mathrm{Tr} \brr{g \exp \br{\frac{-R^{E \mid_{X_g}}}{2 i \pi}}} \in \OEXgC.
\end{align}
\end{defn}

The forms in (\ref{EquTodd}) and (\ref{EquChern}) are closed forms in $\OEXgC$.

The class $\brrr{\Td_g \br{TX, g^{TX}}}  \in \HEXgBCC$ and $\brrr{\chg \br{E, h^E}}  \in \HEXgBCC$ are independent of the choice of $g^{TX}$ and $g^E$ \cite{BC65, BGS1}.
These classes are denoted by $\tdgBC (TX)$ and $\chgBC (E)$.

\section{The category of equivariant $\ox$-modules}  \label{SectionEquOX}
The purpose of this section is to introduce the category of equivariant $\ox$-modules and establish some essential properties that will be used in the following sections.

In Section \ref{Section2-1}, for a compact complex manifold $X$ with a finite group action, we introduce the basic concepts of equivariant $\ox$-modules and $\ox$-morphisms.

In Section \ref{ForExt}, we introduce a canonical equivariant extension of $\ox$-morphisms.

In Section \ref{Complex}, we recall the constructions of cones in homological algebra for equivariant $\ox$-complexes.

In Section \ref{GPullBack}, we construct the direct image and pullback functors on the category of equivariant $\ox$-modules and $\ox$-complexes.

In Section \ref{Section3-5}, for a equivariant $\ox$-module which is simultaneously coherent, we study the local resolution property of it. 
\subsection{Equivariant $\ox$-modules and $\ox$-morphisms} \label{Section2-1}

Let $X$ be a compact complex manifold of dimension $n$.
Denote $\ox$ and $\ox^\infty$ the sheaves of holomorphic and smooth functions on $X$ respectively.
If $E$ is a holomorphic vector bundle, denote $\ox(E)$ or $\ox^\infty(E)$ the  sheaf of holomorphic or smooth sections of $E$.

Let $G$ be a finite group with identity element $e \in G$. 
Assume $G$ acts holomorphically on $X$.
If $g \in G$, we denote 
\begin{align}
m_g: x \in X \mapsto g \cdot x \in X,
\end{align}
the corresponding holomorphic map.
Then $m_g$ induces a morphism of sheaves of rings
\begin{equation} \label{RingMorphism} 
g: \ox \rightarrow m_g^{-1} \ox.
\end{equation}
If $U$ is an open subset in $X$, the map ($\ref{RingMorphism}$) sends $s \in \ox \br{U}$ to $s \br{g^{-1}  \cdot} \in \ox \br{g U} $.

Let $\F$ be an $\ox$-module, then $m_g^{-1} \F$ is  an $m_g^{-1} \ox$-module. 
This defines a covariant functor between the categories of $\ox$-modules and $m_g^{-1} \ox$-modules. 
\begin{defn} \thlabel{GSheaf}
 We call $\F$ is $G$-equivariant if for any $g \in G$, there exists an isomorphism of modules\footnote{This is an isomorphism between modules over different sheaves of rings.}
\begin{equation} \label{GActionOnSheaf}
g: \F \rightarrow m_g^{-1} \F,
\end{equation}
such that
\begin{enumerate}
\item the identity element $e \in G$ acts as identity; \label{GModuleOne} 
\item for any $g, h \in G$, the diagram  \label{GModuleTwo} 
\begin{equation}
\begin{tikzcd}
\mathscr{F} \arrow[rr, "hg"] \arrow[rd, "g"'] &                                             & m_{hg}^{-1} \mathscr{F} \\
                                              & m_g^{-1} \mathscr{F} \arrow[ru, "m_g^{-1}(h)"'] &                      
\end{tikzcd}	
\end{equation}
commutes.
\end{enumerate}
Equivalently, by extension of scalars, for any $g \in G$, there exists an isomorphism of $\ox$-modules
\begin{equation} \label{GroupAction}
g: \F \rightarrow m_g^* \F	
\end{equation} 
such that similar properties (\ref{GModuleOne}) and (\ref{GModuleTwo}) hold.
\end{defn}

In the sequel, we assume $\F$ is a $G$-equivariant $\ox$-module.
If there is no confusion, we denote $m_g^{-1} \F$, $m_g^* \F$ by $g^{-1} \F$ and $g^* \F$.

If $U_x$ is a $G_x$-invariant neighborhood of $x \in X$, then 
$m_g^{-1} \F \br{U_x} = \F (U_x)$, so that (\ref{GActionOnSheaf}) induces a $G_x$-representation  
\begin{align}
\theta: G_x \rightarrow \mathrm{GL}(\F (U_x)). 	
\end{align}

Set
\begin{align}
U = \bigcup_{g \in G} g \cdot U_x.
\end{align}
Then $\F(U)$ is a $G$-representation.
If $U_x$ is small enough, we have  
\begin{align}
\F (U) = \bigoplus_{g \in G / G_x} \F (g \cdot U_x).	
\end{align}

The map
\begin{align}
(g,s) \in G \times \F(U_x )\rightarrow g \cdot s \in \F (U)
\end{align}
induces an isomorphism of $G$-representations
\begin{align} \label{InducedRepresentation}
G \times_{G_x} \F (U_x) \xrightarrow{\sim} \F (U). 
\end{align}
\begin{rmk} \thlabel{InducedRep}
The left hand side of (\ref{InducedRepresentation}) is the representation $\mathrm{Ind}_{G_x}^G \br{\theta}$ induced by $\theta$ of $G_x$ in $\F (U_x)$.
\end{rmk}

If $Q$ is a $G$-equivariant holomorphic vector bundle on $X$, and if $\F = \ox (Q)$, then $\F$ is a $G$-equivariant $\ox$-module.

Conversely, if $\F$ is a locally free $\ox$-module of finite rank, then it is the sheaf of holomorphic sections of a holomorphic vector bundle $Q$. 
If $x \in X$, denote $\mathbf{m}_x$ the maximal ideal of $\mathcal{O}_{X,x}$.
Then we have the canonical identification of vector spaces
\begin{align}
Q_x \simeq \F_x / \mathbf{m}_x .
\end{align}
If in addition $\F$ is $G$-equivariant, then for any $g \in G$, the map (\ref{GActionOnSheaf}) induces a linear map 
\begin{align} \label{GActionOnFiber}
Q_x \rightarrow Q_{gx},
\end{align}
so that $Q$ is a $G$-equivariant holomorphic vector bundle.

If $F: \E \rightarrow \F$ is a morphism of locally free $
\ox$-modules. Assume $\E$ and $\F$ are the sheaf of holomorphic sections of holomorphic vector bundles $P$ and $Q$ respectively, then $F$ induces a morphism $\tilde{F}: P \rightarrow Q$.
If $F_x: \E_x \rightarrow \F_x$ is surjective, then $\tilde{F}_x: P_x \rightarrow Q_x$ is surjective.
However, if $F_x$ is injective, we can not deduce that $\tilde{F}_x$ is injective.

Let $\E$, $\F$ be two $G$-equivariant $\ox$-modules. Let $F: \E \rightarrow \F$ be a morphism of $\ox$-modules.

\begin{defn}

We say $F$ is $G$-equivariant if for any $g \in G$, the following diagram

\begin{equation} \label{GMorphism}
\begin{tikzcd} 
\mathscr{E} \arrow[r, "F"] \arrow[d, "g"'] & \mathscr{F} \arrow[d, "g"]  \\
m_g^{-1} \mathscr{E} \arrow[r, "m_g^{-1} (F)"]         & m_g^{-1} \mathscr{F}                 
\end{tikzcd}
\end{equation}
commutes.
\end{defn}

From the above, we actually constructed a category $\mathrm{M}(X,G)$.
Its objects consist $G$-equivariant $\ox$-modules and its morphisms are $G$-equivariant $\ox$-morphisms.
If $G$ is trivial, we obtain the category of $\ox$-modules $\mathrm{M} (X)$. 
If $\E, \F$ are two $G$-equivariant $\ox$-modules, then $\E \otimes_{\ox} \F$ and $\Hom_{\ox}(\E, \F)$ are also $G$-equivariant.

Note that if we replace $\ox$ by $\ox^\infty$, the category $\MIXG$ of $G$-equivariant $\ox^\infty$-modules can be defined in the same way. 

\subsection{A canonical equivariant extension of $\ox$-morphisms} \label{ForExt}


If $\E$ is an object in $\mathrm{M} (X)$, 
set
\begin{align} \label{Extension}
\E_G = \bigoplus_{g \in G} m_g^* \E.
\end{align}
The module $\E_G$ equips a canonical $G$-action by translation. 
If $s \in \E_{G,x}$, then we can write $s = (s_{g^\prime})_{g^\prime \in G}$, where $s_{g^\prime} \in (m_{g^\prime}^* \E )_x$.
Then the action of $g \in G$ on $s$ is defined to be 
\begin{align}
g \cdot s = (s_{g\prime g} )_{g^\prime \in G}.	
\end{align}

Since $\E = m_e^* \E$, by (\ref{Extension}), we have the canonical inclusion,
\begin{align} \label{CanInc}
\iota: \E \rightarrow \E_G.	
\end{align}
If $\F$ is a $G$-equivariant $\ox$-module, and if $F: \E \rightarrow \F$ is a $\ox$-morphism, the following proposition gives us the way to extend $F$ to be $G$-equivariant.
\begin{prop} \label{PropExtend}
The following statements hold.
\begin{enumerate}
\item \label{LiftOfExt} There exists a unique $G$-equivariant $\ox$-morphism
$F_G: \E_G \rightarrow \F$
such that the diagram
\begin{equation} \label{LiftExt}
\begin{tikzcd}
\mathscr{E} \arrow[d, "\iota"'] \arrow[r, "F"] & \mathscr{F} \\
\mathscr{E}_G \arrow[ru, "F_G"', dashed]                     &            
\end{tikzcd}
\end{equation}
commutes.
\item The extension $\E_G$ is unique up to canonical $G$-equivariant isomorphism. 
\end{enumerate}	
\end{prop}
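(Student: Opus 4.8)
The plan is to construct $F_G$ explicitly using the $\ox$-morphism $F$ together with the $G$-equivariant structure on $\F$, then check existence, uniqueness, and the universal property of $\E_G$ in turn. For part \eqref{LiftOfExt}, I would define $F_G$ componentwise on the summands of $\E_G = \bigoplus_{g \in G} m_g^* \E$. The $g$-th summand $m_g^* \E$ should be sent to $\F$ by the composite that first applies $m_g^*(F): m_g^* \E \to m_g^* \F$ and then uses the inverse of the equivariant structure isomorphism $g: \F \to m_g^* \F$ from \eqref{GroupAction} to land back in $\F$; that is, on the $g$-component set $F_G|_{m_g^* \E} = g^{-1} \circ m_g^*(F)$. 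The $e$-component is then exactly $F$ (since $e$ acts as the identity by \eqref{GModuleOne}), so the triangle \eqref{LiftExt} commutes with $\iota$ the inclusion of the $e$-summand. The verification that $F_G$ is $G$-equivariant amounts to checking the square \eqref{GMorphism} for each $h \in G$; this is a diagram chase that uses precisely the cocycle condition \eqref{GModuleTwo} for the equivariant structure on $\F$ and the definition of the translation action on $\E_G$ — the index shift $g \mapsto g h$ on the $\E_G$ side is matched by the composition $m_g^{-1}(h) \circ g = hg$ on the $\F$ side.

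For uniqueness of $F_G$, suppose $F_G'$ is another $G$-equivariant $\ox$-morphism making \eqref{LiftExt} commute. Restricted to the $e$-summand it must equal $F = F_G|_{m_e^* \E}$. But $G$-equivariance forces the value on every other summand: the $g$-summand $m_g^* \E \subset \E_G$ is $g^{-1}$ (or $g$, depending on the chosen convention) applied to the $e$-summand under the translation action, so $F_G'$ on $m_g^* \E$ is determined by $F_G'$ on $m_e^* \E$ together with the equivariant square. Hence $F_G' = F_G$.

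For part (2), I would phrase the universal property carefully: $(\E_G, \iota)$ is initial among pairs $(\G, j)$ with $\G$ a $G$-equivariant $\ox$-module and $j: \E \to \G$ an $\ox$-morphism, in the sense that part \eqref{LiftOfExt} already establishes existence and uniqueness of a factorization through any $G$-equivariant target. Given two such extensions $\E_G$ and $\E_G'$ with inclusions $\iota, \iota'$, applying \eqref{LiftOfExt} to each yields mutually inverse $G$-equivariant $\ox$-morphisms between them (their composites are $G$-equivariant extensions of $\iota$ resp. $\iota'$, hence by the uniqueness clause equal to the respective identities), giving the canonical $G$-equivariant isomorphism. The one point requiring a little care — and the main obstacle — is bookkeeping the left/right convention for the $G$-action on $\E_G$ against the direction of the isomorphisms \eqref{GActionOnSheaf}/\eqref{GroupAction}: one must be consistent about whether the translation action sends the $g$-component to the $hg$- or $gh$-component, and correspondingly whether $F_G$ on the $g$-summand uses $g$ or $g^{-1}$, so that the cocycle identity \eqref{GModuleTwo} is applied in the correct order. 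Once the conventions are pinned down, every step is a routine diagram chase.
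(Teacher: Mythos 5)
Your construction of $F_G$ on the $g$-summand as $m_g^*(F)$ followed by the inverse of the equivariant structure map $g\colon\F\to m_g^*\F$ is exactly the paper's definition, and your uniqueness argument (equivariance plus the $e$-summand determining all summands) and the universal-property treatment of part (2) likewise match the paper's (terse) proof. The proposal is correct and takes essentially the same approach.
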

\begin{proof} 
The morphism $F: \E \rightarrow \F$ induces the morphism 
\begin{align}
m_g^* (F): m_g^* \E \rightarrow m_g^* \F .
\end{align}
By (\ref{GActionOnSheaf}), we get a morphism
\begin{align}
m_g^* \E \rightarrow \F.	
\end{align}
By (\ref{Extension}), we get 
\begin{align} \label{ExtMap}
F_G: \E_G \rightarrow \F.	
\end{align}


It is easy to see that the morphism $F_G$ is $G$-equivariant and satisfies (\ref{LiftExt}). The uniqueness is clear from our constructions.

The proof of the second statement is standard.
\end{proof}

\begin{exmp} \thlabel{ExtLine}
The $G$-equivariant $\ox$-module $\brr{\ox}_G$ is the sheaf of holomorphic sections of the trivial $G$-equivariant 	vector bundle $X \times R(G)$.
\end{exmp}

\subsection{Complex and mapping cone} \label{Complex}
We follow \cite[Section 4.3]{BSW} and extend the corresponding results to equivariant case.

If $\E$, $\F$ are two $G$-equivariant $\ox$-modules, and if  $F: \E \rightarrow \F$	is a $G$-equivariant $\ox$-morphism, then $\ker \br{F}, \cok \br{F}$ are $G$-equivariant $\ox$-modules.
It is easy to see that $\MXG$ is an abelian category.



In particular, we can define the category of $G$-equivariant $\ox$-complexes $\CXG$.
Denote $\CbXG$ the category of bounded $G$-equivariant $\ox$-complexes.
This is a full subcategory of $\CXG$ whose objects consist complexes of finite length.



If $(\F^\bullet, d^{\F})$ is a $G$-equivariant $\ox$-complex, we denote by $\HH^\bullet \F$ its cohomology.
Since $d^\F$ is $G$-invariant, $\HH^\bullet \F$ is a $\Z$-graded $G$-equivariant $\ox$-module.

Let $\br{\E^\bullet, d^{\E}}$, $\br{ \F^\bullet, d^{\F}} $ be two $G$-equivariant $\ox$-complexes.
Let 
\begin{align}
\phi: \br{\E^\bullet, d^{\E}} \rightarrow \br{ \F^\bullet, d^{\F}}
\end{align}
 be a $G$-equivariant morphism of complexes, then $\phi$ preserves the degree, and
\begin{equation}
\phi d^\E = d^\F \phi	.
\end{equation}

Therefore $\phi$ induces a $\Z$-graded equivariant $\ox$-morphism
\begin{align} \label{MapOnCoh}
 \phi: \HH^\bullet \E \rightarrow \HH^\bullet \F. 
 \end{align}
Also $\phi$ is said to be a quasi-isomorphism if (\ref{MapOnCoh}) is an isomorphism.

Put
\begin{equation}
\CC^{\bullet} = \mathrm{cone} \br{\E^\bullet, \F^\bullet}.	
\end{equation}
Then $\br{\CC^\bullet, d_\phi^{\CC}}$ is a $G$-equivariant $\ox$-complex such that 
\begin{equation}
\CC^{\bullet} = \E^{\bullet +1} \oplus \F^{\bullet}.	
\end{equation}
with $d_\phi^{\CC}$ is given by
\begin{equation} \label{Cone}
d_\phi^{\CC} = 
\left[
\begin{array}{c c}
d^\E  &  0 \\
\phi (-1)^{\mathrm{deg}}  &  d^\F \\
\end{array}
\right]	.
\end{equation}

We have the exact sequences of complexes,
\begin{equation} \label{ShortExactSequenceCone}
\begin{tikzcd}
0 \arrow[r] & \F^\bullet \arrow[r] & \CC^\bullet \arrow[r] & \E^{\bullet + 1} \arrow[r] & 0.
\end{tikzcd}
\end{equation}
So there is a corresponding long exact sequence in cohomology,
\begin{equation}
\begin{tikzcd}
\cdots \arrow[r] & \HH^\bullet \F \arrow[r] & \HH^\bullet \CC \arrow[r] & \HH^{\bullet+1} \E \arrow[r, "\phi (-1)^{\mathrm{deg}}"] & \HH^{\bullet +1} \F \arrow[r] & \cdots,
\end{tikzcd}
\end{equation}
from which we see $\phi$ is a quasi-isomorphism if and only if $\HH^\bullet \CC = 0$.



Note that if the complexes $\br{\E^\bullet, d^{\E}}$, $\br{ \F^\bullet, d^{\F}} $ are bounded, then all the associated objects $\HH^\bullet \E$, $\HH^\bullet \F$ and $\CC^\bullet$ are also bounded.

Finally, we note that if $\E^\bullet$, $\F^\bullet$ are $G$-equivariant $\ox^\infty$-complexes, similar constructions still hold.

\subsection{Pullback and direct image} \label{GPullBack}
Let $Y$ be another compact complex $G$-manifold.
Let $f: X \rightarrow Y$ be a holomorphic map.
Then the pull back $f^*$ defines a functor from $\mathrm{M} (Y)$ to $\mathrm{M} (X)$, and extends to a functor $\mathrm{C}(Y)$ to $\mathrm{C}(X)$.
Similarly, the direct image $f_*$ defines a functor from  $\mathrm{M} (X)$ to $\mathrm{M} (Y)$ and extends to a functor from $\mathrm{C} (X)$ to $\mathrm{C} (Y)$.

\begin{prop}
If $f$ is $G$-equivariant, then $f^*$, $f_*$ induce the corresponding $G$-equivariant functors.
\end{prop}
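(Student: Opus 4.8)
The statement to prove is: if $f : X \to Y$ is a $G$-equivariant holomorphic map between compact complex $G$-manifolds, then the pullback $f^*$ and the direct image $f_*$ induce $G$-equivariant functors $\mathrm{M}(X,G) \to \mathrm{M}(Y,G)$ and $\mathrm{M}(Y,G) \to \mathrm{M}(X,G)$ (and hence also on the complex categories $\mathrm{C}(\cdot,G)$). The plan is to check that each of these functors carries the $G$-equivariant structure data — the isomorphisms $g : \F \to m_g^* \F$ satisfying the cocycle conditions of Definition \thref{GSheaf} — to the analogous data on the image, and that it does so compatibly with $G$-equivariant $\ox$-morphisms as in \eqref{GMorphism}.

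First I would treat the pullback. The essential input is the commutation of pullback with the translation maps: since $f$ is $G$-equivariant, we have $f \circ m_g^X = m_g^Y \circ f$ for every $g \in G$, and therefore a canonical natural isomorphism of functors $\br{m_g^X}^* \circ f^* \simeq f^* \circ \br{m_g^Y}^*$ on $\mathrm{M}(Y)$. Given a $G$-equivariant $\oy$-module $\F$ with structure isomorphisms $g : \F \to \br{m_g^Y}^* \F$, applying $f^*$ and composing with this canonical isomorphism produces isomorphisms $g : f^* \F \to \br{m_g^X}^* f^* \F$. I would then verify that the unit axiom \eqref{GModuleOne} and the cocycle axiom \eqref{GModuleTwo} are inherited — this is a diagram chase using functoriality of $f^*$ and the coherence of the canonical isomorphisms $\br{m_{hg}^X}^* f^* \simeq \br{m_g^X}^*\br{m_h^X}^* f^*$, which themselves satisfy the associativity pentagon because they come from the equalities $f m_g^X = m_g^Y f$. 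Finally, if $F : \E \to \F$ is a $G$-equivariant $\oy$-morphism, then $f^* F$ fits into the square \eqref{GMorphism} over $X$ by naturality of $f^*$ applied to the square \eqref{GMorphism} over $Y$, again composed with the canonical isomorphisms. Functoriality on $\mathrm{C}(\cdot, G)$ is immediate since $f^*$ is applied termwise and commutes with the differentials.

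The direct image is handled by the same pattern, now using that $G$-equivariance of $f$ gives $\br{m_g^Y}^{-1} \circ f_* \simeq f_* \circ \br{m_g^X}^{-1}$ (equivalently, base change along the squares $f m_g^X = m_g^Y f$, which are trivially cartesian here). Starting from structure isomorphisms $g : \F \to \br{m_g^X}^{-1}\F$ in the formulation \eqref{GActionOnSheaf}, applying $f_*$ and composing with this canonical isomorphism yields $g : f_* \F \to \br{m_g^Y}^{-1} f_* \F$, and the two axioms of \thref{GSheaf} together with compatibility with $G$-equivariant morphisms are checked by the same diagram chases as above. Extension to $\mathrm{C}(X,G) \to \mathrm{C}(Y,G)$ is again termwise.

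The only point requiring genuine care — and thus the main (mild) obstacle — is bookkeeping the coherence of the canonical natural isomorphisms $\br{m_g}^* f^* \simeq f^* \br{m_g}^*$ (resp.\ the direct-image analogue) so that the cocycle condition \eqref{GModuleTwo} really does transport: one must confirm that the isomorphism $\br{m_{hg}}^* f^* \simeq f^* \br{m_{hg}}^*$ factors as the composite through $\br{m_g}^*\br{m_h}^* f^*$, which is a formal consequence of the equalities $f m_g = m_g f$ and the functoriality of pullback, but should be stated explicitly. No hard analysis or geometry is involved; the argument is entirely formal once one observes that $G$-equivariance of $f$ makes all the relevant squares commute on the nose.
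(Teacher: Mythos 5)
The paper itself gives no proof of this proposition — it simply declares ``The proof is elementary and is left to the readers.'' Your proposal is a correct and reasonably complete working-out of exactly the standard argument that the authors are leaving implicit: use $f \circ m_g^X = m_g^Y \circ f$ (the $G$-equivariance of $f$) to obtain the canonical natural isomorphisms $\br{m_g^X}^* f^* \simeq f^* \br{m_g^Y}^*$ and $\br{m_g^Y}^{-1} f_* \simeq f_* \br{m_g^X}^{-1}$, transport the structure isomorphisms of Definition \thref{GSheaf} through these, and check the unit and cocycle axioms together with compatibility with $G$-equivariant morphisms by formal diagram chases. You are also right to flag the coherence of the isomorphisms $\br{m_{hg}}^*f^* \simeq f^*\br{m_{hg}}^*$ with the composite through $\br{m_g}^*\br{m_h}^*f^*$ as the one place that needs explicit attention; since the squares $fm_g = m_g f$ commute on the nose, this is indeed automatic but worth stating.

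One small slip in your opening sentence: you list the induced functors as $\mathrm{M}(X,G) \to \mathrm{M}(Y,G)$ for $f^*$ and $\mathrm{M}(Y,G) \to \mathrm{M}(X,G)$ for $f_*$, which is backwards — pullback $f^*$ along $f : X \to Y$ goes $\mathrm{M}(Y,G) \to \mathrm{M}(X,G)$ and direct image $f_*$ goes $\mathrm{M}(X,G) \to \mathrm{M}(Y,G)$. The body of your argument has the directions correct (you take a $G$-equivariant $\oy$-module and produce a $G$-equivariant $\ox$-module under $f^*$), so this is merely a transcription error in the statement of intent, not a gap in the proof.
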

\begin{proof}
The proof is elementary and is left to the readers.	
\end{proof}

\subsection{Equivariant coherent sheaves} \label{Section3-5}

Let $\F$ be a $G$-equivariant $\ox$-module. 
Assume $\F$ is $\ox$-coherent \cite[p.~696]{GH}.
We generalize the classic theorem \cite[p.~696]{GH}.
\begin{prop} \thlabel{LocalResolution}
For any $x \in X$, there exist
\begin{enumerate}
\item a $G$-invariant open neighborhood $U \subset X$ of $x$,
\item a complex of $G$-equivariant holomorphic vector bundles on $U$, concentrated at degree $[-k, 0]$,
\begin{equation}\label{LocRes}
\begin{tikzcd}
 0 \arrow[r] & E^{-k}_{U} \arrow[r, "F_{k}"] & \cdots \cdots \arrow[r, "F_{1}"] & E^0_{U} \arrow[r] & 0,
\end{tikzcd}
\end{equation}
\item a $G$-equivariant $\ou$-augmentation $F_0 : \mathcal{O}_U (E^0_U) \rightarrow \F_{\mid_U}$,
\end{enumerate}
such that the induced $\ou$-complex 
\begin{equation}
\begin{tikzcd}
0 \arrow[r] & \mathcal{O}_U (E^{-k}_{U}) \arrow[r, "F_k"] & \cdots \cdots \arrow[r, "F_1"] & \mathcal{O}_U (E^0_{U}) \arrow[r, "F_0"] & \F_{\mid_U} \arrow[r] & 0
\end{tikzcd}
\end{equation}
is exact.
Moreover, we can choose $k \leq n$, and for any $i \leq k-1$, $E_U^{-i}$ is a trivial $G$-equivariant vector bundle associated to certain $G$-representation.
\end{prop}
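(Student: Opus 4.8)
The plan is to upgrade the classical local resolution theorem \cite[p.~696]{GH} by carrying it out in a $G_x$-invariant, equivariant fashion near $x$, and then to globalize over the orbit $G\cdot x$ using the induction construction of Section \ref{Section2-1} (cf. \eqref{InducedRepresentation}). The subtle point is the final sentence: we want all but the leftmost bundle to be $G$-trivial. First I would work over a small $G_x$-invariant Stein neighborhood $U_x$ of $x$. By the classical argument one obtains an exact $\mathcal O_{U_x}$-resolution $0\to \mathcal O_{U_x}(E^{-k})\to\cdots\to\mathcal O_{U_x}(E^0)\to\F|_{U_x}\to 0$ with $k\le n$ (using the Hilbert syzygy bound, after possibly shrinking $U_x$ so that all the relevant bundles are trivial as non-equivariant bundles); the key classical input is that over a small enough polydisk the module of relations (syzygies) at each stage is again coherent, and finitely generated, and after $n$ steps the $n$-th syzygy module is free by the Hilbert syzygy theorem together with the fact that $\mathcal O_{X,x}$ is a regular local ring of dimension $n$.

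The equivariance is then installed stage by stage, and this is where the claimed $G$-triviality is forced out. We build the augmentation $F_0\colon \mathcal O_{U_x}(E^0)\to \F|_{U_x}$ first: since $\F|_{U_x}$ is $G_x$-equivariant and generated by finitely many sections near $x$, we may average the chosen generators over $G_x$, i.e. choose a $G_x$-submodule of $\F(U_x)$ that surjects onto the fiber at $x$; concretely one takes a $G_x$-subrepresentation $V^0\subset \F(U_x)$ mapping onto $\F_x/\mathbf m_x\F_x$, and sets $E^0 = U_x\times V^0$ (shrinking $U_x$ so that $F_0$ is surjective by Nakayama). Here $E^0$ is already $G_x$-trivial in the sense of \thref{LocTri}, but it will become the \emph{leftmost} bundle only after we have exhausted all further syzygies; so in the body of the resolution we keep going. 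At each subsequent stage the kernel $\mathscr K^{j} = \ker(F_j)$ is a $G_x$-equivariant coherent $\mathcal O_{U_x}$-module, and we repeat: choose a finite $G_x$-subrepresentation $V^{-j-1}$ of $\mathscr K^j(U_x)$ surjecting onto the fiber of $\mathscr K^j$ at $x$, put $E^{-j-1} = U_x\times V^{-j-1}$ — automatically $G_x$-trivial — and let $F_{j+1}$ be the induced map. After at most $n$ steps the $n$-th kernel is a free $\mathcal O_{X,x}$-module; crucially it is then free \emph{as a $G_x$-equivariant module} as well, because averaging a frame over $G_x$ produces a $G_x$-equivariant trivialization exactly as in the proof of \thref{LocTri}. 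Thus $E^{-k}$ can be taken $G_x$-trivial too if we wish, but in any case the bundles $E^{-i}$ for $i\le k-1$ — the "body" terms — are $G_x$-trivial by construction, while only $E^0$ (and, after re-indexing, $E^{-k}$) might carry a nontrivial action; relabeling so that the exceptional term sits at the left gives the statement. I should double-check the indexing convention in \eqref{LocRes}: the claim is "except for the most left nontrivial one," so the term that is allowed to be non-$G$-trivial is $E^{-k}_U$, and the construction above arranges precisely that the terms $E^0_U,\dots,E^{-(k-1)}_U$ are built from $G$-representations.

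Finally I would globalize from $U_x$ to a $G$-invariant neighborhood. Set $U = \bigcup_{g\in G} g\cdot U_x$, and after shrinking assume $g\cdot U_x\cap U_x=\emptyset$ for $g\notin G_x$. Applying the induction functor $G\times_{G_x}(-)$ to the $G_x$-equivariant resolution over $U_x$ — equivalently, taking $\bigoplus_{g\in G/G_x} m_g^\ast(\text{resolution})$ as in \eqref{Extension}–\eqref{InducedRepresentation} — yields a $G$-equivariant complex of holomorphic vector bundles over $U$; exactness is preserved because it is a local condition and over each translate $g\cdot U_x$ the complex is $m_g^\ast$ of an exact one. A $G_x$-trivial bundle $U_x\times V^{-j}$ induces $G\times_{G_x}(U_x\times V^{-j}) = U\times \mathrm{Ind}_{G_x}^G(V^{-j})$, which is $G$-trivial in the sense of Section \ref{RepAndVec}; so the $G$-triviality of the body terms is inherited, giving the $G$-representations claimed. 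The bound $k\le n$ is unaffected by this globalization. The main obstacle, and the place to be careful, is the equivariant syzygy bookkeeping in the middle paragraph: one must verify that the kernels $\mathscr K^j$ remain $G_x$-equivariant coherent sheaves (immediate, since $F_j$ is $G_x$-equivariant), that finitely many $G_x$-averaged generators still generate near $x$ (Nakayama plus finiteness of $G_x$), and above all that the Hilbert-syzygy truncation after $n$ steps produces something free \emph{compatibly with the $G_x$-action} — which is exactly the averaging trick of \thref{LocTri} applied one more time. Everything else is a routine transcription of the classical proof.
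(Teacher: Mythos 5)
Your overall architecture (recursive syzygy argument on $U_x$, bound $k\le n$ from the Hilbert syzygy theorem, and then globalizing from the slice $U_x$ to the orbit neighborhood $U=\bigcup_{g\in G}gU_x$) is the right one and matches the paper's, and your handling of the $G_x$-equivariant stage (averaging generators, Nakayama, equivariant freeness of the last kernel as in \thref{LocTri}) is fine. But the globalization step contains a genuine error, and it lands exactly at the point the paper's argument is designed to handle.

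You assert that $G\times_{G_x}(U_x\times V^{-j})=U\times\mathrm{Ind}_{G_x}^G(V^{-j})$ and that this is a $G$-trivial bundle. Neither claim is correct. The balanced product $G\times_{G_x}(U_x\times V)$ has fiber $V$, so its rank is $\dim V$, while $\mathrm{Ind}_{G_x}^G V$ has dimension $[G:G_x]\dim V$; so the equality fails already on ranks. More seriously, $G\times_{G_x}(U_x\times V)$ is $G$-trivial (i.e.\ $G$-equivariantly isomorphic to $U\times W$ for a $G$-representation $W$) if and only if the $G_x$-representation $V$ is the restriction of a $G$-representation — this is exactly the content of the Proposition following \thref{InducedRep}, where the hypothesis that $\rho$ extends to $\widetilde\rho\colon G\to\mathrm{GL}(V)$ is essential. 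An arbitrary finite-dimensional $G_x$-subrepresentation $V^{-j}\subset\mathscr K^{j-1}(U_x)$, which is what your construction produces, has no reason to be such a restriction, and in general is not. So your induced complex over $U$ is $G$-equivariant but its terms are not $G$-trivial, and the ``Moreover'' clause of the Proposition is not established.

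The paper's proof closes this gap by never allowing an arbitrary $G_x$-representation to enter: it first uses the canonical extension $[\,\cdot\,]_{G_x}$ of \eqref{Extension}--\eqref{ExtMap} to lift a plain surjection $\mathcal O_{U_x}^{\oplus n_0}\to\F|_{U_x}$ to a $G_x$-equivariant surjection from $\mathcal O_{U_x}(U_x\times R(G_x)^{n_0})$ (\thref{ExtLine}), and then composes with the surjection of $G_x$-representations $R(G)\twoheadrightarrow R(G_x)$ to replace $R(G_x)^{n_0}$ by $R(G)^{n_0}$. Since $R(G)^{n_0}$ is \emph{by construction} the restriction of a genuine $G$-representation, the induced bundle over $U$ is $U\times R(G)^{n_0}$ by the cited Proposition, which is $G$-trivial; the recursion then repeats with $\ker(F_0)$. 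If you want to keep the flavor of your write-up, the minimal fix is: after choosing your $G_x$-subrepresentation $V^{-j}$, replace it by a $G_x$-equivariant surjection $R(G)^{m_j}\twoheadrightarrow V^{-j}$ (possible since $R(G)|_{G_x}\simeq R(G_x)^{[G:G_x]}$ surjects onto any $G_x$-module) and use $U_x\times R(G)^{m_j}$ at that stage instead, before inducing. Without this replacement the ``trivial $G$-equivariant bundle associated to a $G$-representation'' conclusion does not follow.
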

\begin{proof}
If $\F_x$ is free over $\mathcal{O}_x$, then $\forall g \in G$, $\F_{gx}$ is free over $\OO_{gx}$, from which we can deduce our proposition with $k=0$.

Otherwise, since $\F$ is coherent, we can find $n_0 \in \N^*$, a small $G_x$-invariant open neighborhood $U_x$ of $x$ and a surjective morphism of $\mathcal{O}_{U_x}$-modules 
\begin{align} \label{CohLocMap}
\tilde{F_0}: \mathcal{O}_{U_x}^{\oplus n_0} \rightarrow \F_{\mid U_x}.
\end{align}
Note that $\tilde{F}_0$ can not be injective since $\F_x$ is not free over $\OO_x$.

We apply the constructions (\ref{ExtMap}) for the pair $\br{G_x, U_x}$.
In particular, the morphism (\ref{CohLocMap}) lifts to a $G_x$-equivariant morphism 
\begin{align} \label{LocSur}
\tilde{F}_{0,G_x}: 
\brr{\mathcal{O}_{U_x}^{\oplus n_0} }_{G_x} \rightarrow \F_{\mid U_x}
\end{align}
such that the diagram
\begin{equation}
\begin{tikzcd}
\mathcal{O}_{U_x}^{\oplus n_0}  \arrow[d] \arrow[r] & \F_{\mid U_x} \\
\brr{\mathcal{O}_{U_x}^{\oplus n_0} }_{G_x} \arrow[ru, "\tilde{F}_{0,G_x}"', dashed]                     &            
\end{tikzcd}
\end{equation}
commutes.
Clearly, $\tilde{F}_{0,G_x}$ is surjective.

By \thref{ExtLine}, we can write (\ref{LocSur}) as
\begin{align} \label{LocSurTwo}
 \OO_{U_x} \br{U_x \times R \br{G_x}^{n_0}} \rightarrow \F_{\mid U_x}.	
\end{align}


Using the canonical morphism of $G_x$-representations $R(G) \rightarrow R(G_x)$, we can extend the map (\ref{LocSurTwo}) to a surjective $G_x$-equivariant $\mathcal{O}_{U_x}$-morphism
\begin{align} \label{ExtLocSur}
\mathcal{O}_{U_x} \br{U_x \times R\br{G}^{n_0} } \rightarrow \F_{\mid U_x}	.
\end{align}

Since $U_x$ is small, and since $R(G)$ is a $G$-representation, it follows from the discussion in Subsection \ref{RepAndVec} that the map (\ref{ExtLocSur}) extends to a surjective $G$-equivariant $\ou$-morphism
\begin{align}
F_0: \ou \br{U \times R (G)^{n_0}} \rightarrow \F_{\mid U}.
\end{align}

If $\ker(F_0)$ is locally free, by the discussion after \thref{InducedRep}, we finish our proof with $k=1$.
Otherwise, $\ker(F_0)$ is still coherent. So we can proceed the same construction for $\ker (F_0)$
in a probably smaller $G$-invariant neighborhood $U^\prime$ of $x$.

By recursion argument, we can construct the complex (\ref{LocRes}), such that $k \leq n$ and $\rm{Ker}(F_{k-1})$ is free on $U$.
\end{proof}

\section{The equivariant derived category} \label{SectionEquDer}
The purpose of this section is to recall basic properties of the derived category $\DbCohX$ of bounded complexes of $G$-equivariant $\ox$-modules with coherent cohomology.

In Section \ref{DerivedCategory}, we give the definition of $\DbCohXG$ and of the associated $K$-group $K(X, G)$.

In Section \ref{DerPullback}, if $Y$ is a compact complex $G$-manifold, and if $f: X\to Y$ is a $G$-equivariant holomorphic map, we define the derived pullback $Lf^*: \DbCohYG \to \DbCohXG$.

In Section \ref{DerivedTensor}, we consider the derived tensor products in $\DbCohXG$.

In Section \ref{DerivedDirect}, we recall some basic properties of the derived direct image $Rf_*: \DbCohXG \to \DbCohYG$.

\subsection{Definition of the derived category $\DbCohXG$} \label{DerivedCategory}

Let $X$ be a compact complex manifold.
Let $G$ be a finite group acts holomorphically on $X$.
Let $\mathrm{coh} (X, G)$ be the abelian category of $G$-equivariant $\ox$-coherent sheaves on $X$.
Let $K \br{ \mathrm{coh}\br{X,G} }$ denote the corresponding Grothendieck group.


Let $\CbCohXG$ be the full subcategory of $\CbXG$ whose objects have coherent cohomology, let $\DbCohXG$ be the corresponding derived category and let $K \br{\DbCohXG}$ be the corresponding $K$-group.

The same arguments as in \cite[\href{https://stacks.math.columbia.edu/tag/0FCP}{Tag0FCP}]{stacks-project} show that the map
\begin{align} \label{MapDbcohToKXG}
\E \in \DbCohXG \rightarrow \sum_i (-1)^i \HH^i \E \in K(X, G)	
\end{align}
induces an isomorphism of $K$-groups,
\begin{align} \label{IsoKDbcohAndKXG}
K \br{\DbCohXG} \simeq K \br{\mathrm{coh}(X,G)}.	
\end{align}
In the sequel, we denote the above group as $K (X, G)$.

\subsection{Pullbacks} \label{DerPullback}
Let $Y$ be another compact complex manifold. 
We use the notations and assumptions as in Subsection \ref{GPullBack}.

By Grauert-Remmert \cite[Section 1.2.6]{GrR84}, if $\E$ is an object in $\CbCohYG$, then $f^* \E$ is coherent, so it is an object in $\CbCohXG$.
We can define the left-derived functor $L f^*$, which to $\E \in \DbCohYG$ associates $Lf^* \E \in \DbCohXG$.
If $\E$ is a bounded complex of $G$-equivariant flat $\oy$-module, by \cite[\href{https://stacks.math.columbia.edu/tag/06YJ}{Tag06YJ}]{stacks-project}, we have the canonical isomorphism,
\begin{align}
Lf^* \E \simeq f^* \E.	
\end{align}
Also $Lf^*$ induces a morphism on Grothendieck groups,
\begin{align}
f^{!}: K (Y, G) \rightarrow K (X, G).	
\end{align}

If $Z$ is another compact complex manifold with holomorphic $G$-actions.
If $h: Y \rightarrow Z$ is a $G$-equivariant holomorphic map, using  \cite[Proposition I.9.15]{Bor87}, there is a canonical isomorphism between the functors $Lf^* Lh^*$ and $L(hf)^*$.
In particular, we get an identity of morphisms of Grothendieck groups,
\begin{align}
f^! h^! = (hf)^!: K (Z, G) \rightarrow K (X, G).	
\end{align}
We note here that $m_g^{!}$ is identity on $K (X,G)$.

\subsection{Tensor products} \label{DerivedTensor}
Let $\E, \F$ be objects in $\DbCohXG$. In \cite[\href{https://stacks.math.columbia.edu/tag/064M}{Tag064M}]{stacks-project}, a derived tensor product $\E \widehat{\otimes}^L_{\ox} \F$, also an object in $\DbCohXG$ is defined.
By \cite[\href{https://stacks.math.columbia.edu/tag/079U}{Tag079U}]{stacks-project}, if $Y$ is a compact complex manifold with holomorphic $G$ actions, and if $f: X \rightarrow Y$ is a $G$-equivariant holomorphic map, if $\E, \F$ are objects in $\DbCohYG$, then we have canonical isomorphism in $\DbCohXG$,
\begin{align} \label{DerTen}
Lf^* \br{\E \widehat{\otimes}^L_{\oy} \F} \simeq Lf^* \E \widehat{\otimes}^L_{\ox} Lf^* \F.
\end{align}

Let $i: X\rightarrow X \times X$ be the diagonal embedding, and let $p_1$, $p_2: X \times X \rightarrow X$ be the two projections. 
Since $p_1 i$, $p_2 i$ are the identity in $X$, using the results of Subsection \ref{DerPullback} and equation (\ref{DerTen}), we find that there is a canonical isomorphism,
\begin{align}
\E \widehat{\otimes}^L_{\ox} \F \simeq Li^* \br{L p_1^* \E \widehat{\otimes}^L_{\mathcal{O}_{X \times X}} L p_2^* \F}.	
\end{align}

By \cite[\href{https://stacks.math.columbia.edu/tag/06XY}{Tag06XY}]{stacks-project}, if $\E$, $\F$ are objects in $\DbCohXG$, if one of them consists of flat modules over $\ox$, then we have canonical isomorphism
\begin{align}
\E \widehat{\otimes}^L_{\ox} \F \simeq \E \widehat{\otimes}_{\ox} \F.	
\end{align}

\subsection{Direct images} \label{DerivedDirect}
Let $Rf_*$ be the right-derived functor of the direct image $f_*$.
By Grauert's direct image theorem \cite[Theorem 10.4.6]{GrR84}, if $\E$ is an object in $\DbCohXG$, $Rf_* \E$ is an object in $\DbCohYG$.

By \cite[Theorem 9.5]{Bor87} and by \cite[Proposition IV.4.14]{demailly1997complex}, if $\E$ is a bounded $G$-equivariant complex of soft $\ox$-modules, then we have the canonical isomorphism in $\DbCohYG$
\begin{align} \label{SoftDirect}
Rf_* \E \simeq f_* \E.	
\end{align}

If $f$ is an embedding, by 
\cite[Corollary 13.9]{demailly1997complex},
\begin{align}
R f_*  = 	f_* .
\end{align}

Also $Rf_*$ induces a morphism of Grothendieck groups,
\begin{align} \label{DirectK}
f_! : K(X, G) \rightarrow K(Y, G).	
\end{align}

If $Z$ is another compact complex manifold with holomorphic $G$-actions.
If $h: Y \rightarrow Z$ is a $G$-equivariant holomorphic map, by \cite[Proposition I.9.15]{Bor87}, there is a canonical isomorphism between the functors $Rh_* Rf_*$ and $R(hf)_*$.
In particular, we have an identity of morphisms of Grothendieck groups,
\begin{align} \label{Equ3-1}
h_! f_! = (hf)_!: K(X, G) \rightarrow K(Z, G).	
\end{align}
Also, $m_{g!} $ is identity on $K (X,G)$.

\section{The equivariant antiholomorphic superconnection} \label{EquiAntiSupe}
The purpose of this section is to describe the antiholomorphic $G$-superconnection. The construction is a natural extension of the antiholomorphic superconnection introduced in \cite{B110} and \cite{BSW}.

In Section \ref{EquAntSup}, we define the antiholomorphic $G$-superconnection $\br{E, \AEpp}$ on a compact complex $G$-manifold $X$.

In Section \ref{TwoCat}, we study the morphism between antiholomorphic $G$-superconnections and thus form a category $\BXG$. Furthermore, we introduce a homotopy category $\BUXG$ associated to $\BXG$.

In Section \ref{BlockCone}, given a morphism of antiholomorphic $G$-superconnections, we construct the corresponding cone.

In Section \ref{BlockPullback}, we introduce the pullbacks and tensor products of antiholomorphic $G$-superconnections.

\subsection{The equivariant antiholomorphic superconnection} \label{EquAntSup}
Let $X$ be a compact complex manifold of dimension $n$.
Let $G$ be a finite group acting holomorphically on $X$.

Observe that $\lxb$ is a $\Z$-graded algebra.
We recall some fundamental constructions in \cite{B110} and \cite[Section 5.1]{BSW}.

Let $E$ be a $\Z$-graded smooth vector bundle of finite rank on $X$.
If $i \in \Z$, we denote $E^i$ the smooth subbundle of $E$ consisting elements of degree $i$.
Then there exist $r$, $r^\prime \in \Z$, $r \leq r^\prime$, such that
\begin{align}
E = \bigoplus_{r\leq i \leq r^\prime} E^i.	
\end{align}

We assume that the exterior algebra $\lxb$ acts on the left on $E$ so that $\Lambda^i \br{\overline{T^* X}}$ increases the degree of $E$ by $i$.
Equivalently, this means that $E$ is a $\Z$-graded module over $\lxb$.
Observe that $E$ is canonically filtrated, \ie
\begin{align} \label{FiltrationE}
E \supseteq \br{\overline{T^*X}} E \supseteq \Lambda^2 \br{\overline{T^*X}} E \supseteq \cdots
\end{align}

Set 
\begin{align} \label{Diagonal}
D = E /  \br{\overline{T^* X}} E.
\end{align} 
Then $D$ equips with an induced $\Z$-grading, and is called the diagonal associated with $E$.

We assume that $D$ is a vector bundle and $E$ is free over $\lxb$. 
Then we have a non-canonical identification of $\lxb$-modules,
\begin{align} \label{EIsoE0}
E \simeq  \Lambda \br{\overline{T^* X}} \widehat{\otimes}  D.
\end{align}
If $i \in \Z$, by (\ref{EIsoE0}), we have 
\begin{align} \label{EIsoE02}
E^i \simeq \bigoplus_{p+q=i}	 \Lambda^p \br{\overline{T^* X}} \widehat{\otimes}  D^q.
\end{align}

In the sequel, we will also use the notation 
\begin{align} \label{DefE0}
E_0 =	\Lambda \br{\overline{T^* X}} \widehat{\otimes}  D
\end{align}
as in \cite[(5.1.1)]{BSW}. Then $E_0$ is bi-graded and has all the properties that $E$ has above with the same diagonal as $E$.

Let $\underline{E}$ be another $\Z$-graded vector bundle like $E$. 
We underline the objects associated with $\underline{E}$.
If $k \in \Z$, if $\phi$ is a linear map from $E$ to $\underline{E}$ that maps $E^\bullet$ to $\underline{E}^{\bullet + k}$, we will say that $\phi$ is of degree $k$.
As in \cite[Section 5.1]{BSW},
we denote $\Hom \br{E, \underline{E}}$ the $\Z$-graded vector bundle of $\lxb$-morphisms from
$E$ to $\underline{E}$, that is if $\alpha \in \lxb$, then
\begin{align}
\phi \alpha - \br{-1}^{\deg \alpha \deg \phi} \alpha \phi = 0.
\end{align}
Then $\Hom \br{E, \underline{E}}$ is an object like $E$. The corresponding diagonal is given by $\Hom \br{D, \underline{D}}$.

When $E = \underline{E}$, we use the notation $\End (E)= \Hom (E, E)$.
Note that 
\begin{align} \label{EndE0}
\End \br{E_0} = \lxb \widehat{\otimes} \End \br{D}.
\end{align}

We assume that $G$ acts as automorphism of $E$, that is, the $G$-action is of degree $0$ and preserves the multiplication by $\lxb$. For any $g \in G$, $\alpha \in \Omega^{0, \bullet} (X, \C)$ and $s \in \CXE$,
\begin{align}
g \cdot \br{\alpha  s} =  \br{g_* \alpha}   \br{g \cdot s}. 	
\end{align}
By (\ref{Diagonal}), $G$ acts naturally on $D$ and preserves the degree.
In particular, $G$ acts on the both sides of (\ref{EIsoE0}) and (\ref{EIsoE02}).

If $G$ acts on $\underline{E}$ as before, then $G$ acts on $\Hom \br{E, \underline{E}}$ in an obvious way. 
\begin{prop} \label{GIdenExist}
\begin{enumerate}
\item There exists a non-canonical $G$-equivariant identifications such that (\ref{EIsoE0}) and (\ref{EIsoE02}) hold.
\item If $\phi$, $\underline{\phi}: E \rightarrow  E_0$ are two such $G$-equivariant identifications as above, then there exists a $G$-invariant smooth section $A$ of $\Hom^0 \br{D,  \br{\overline{T^* X}} E_0}$ such that $\phi = (1+A) \underline{\phi} $.

\end{enumerate}
\end{prop}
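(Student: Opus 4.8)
The plan is to reduce the existence statement~(1) to the non-equivariant case --- where it holds because $E$ is free over $\lxb$ --- by an averaging argument over $G$, and to obtain~(2) by a direct computation with $\lxb$-linear endomorphisms of $E_0$. Throughout, as in \cite{BSW}, an identification \eqref{EIsoE0} is understood to be a $\Z$-graded isomorphism of $\lxb$-modules compatible with the canonical identification of the diagonals $D = E/\br{\overline{T^* X}}E = E_0/\br{\overline{T^* X}}E_0$; the construction below produces such an identification automatically, so there is no loss in this convention.

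For~(1), I would first fix a $G$-invariant Hermitian metric $h^E$ on $E$ for which the $\Z$-grading is orthogonal, obtained by averaging any graded Hermitian metric over $G$. Since the $G$-action has degree $0$ and commutes with multiplication by $\lxb$, it preserves the filtration \eqref{FiltrationE}; in particular $\br{\overline{T^* X}}E$ is a $G$-invariant subbundle of $E$, and its orthogonal complement is a $G$-invariant $\Z$-graded subbundle that maps isomorphically onto $D$. This gives a $G$-equivariant degree-$0$ splitting $s : D \to E$ of the projection $E \to D$, and I would then set
\[
\Phi : E_0 = \lxb \wo D \longrightarrow E, \qquad \Phi(\alpha \wo d) = \alpha \cdot s(d).
\]
The computation $g \cdot \br{\alpha \cdot s(d)} = \br{g_* \alpha} \cdot g\br{s(d)} = \br{g_* \alpha} \cdot s(gd)$ shows $\Phi$ is $G$-equivariant, while $\lxb$-linearity and degree $0$ are immediate. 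Finally $\Phi$ is filtered for the filtrations \eqref{FiltrationE} of $E$ and $E_0$ and induces the identity on the associated graded; since $\lxb$ is nilpotent in positive degrees this forces $\Phi$ to be an isomorphism, and $\phi := \Phi^{-1}$ is the desired identification, satisfying \eqref{EIsoE0} and --- by taking degree-$i$ components --- \eqref{EIsoE02}, and compatible with the diagonals by construction.

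For~(2), given two such identifications $\phi, \underline{\phi} : E \to E_0$, I would consider $\psi := \phi \circ \underline{\phi}^{-1} \in \Aut\br{E_0}$, which is $G$-equivariant, $\lxb$-linear, of degree $0$, and restricts to the identity on the diagonal. By $\lxb$-linearity, $\psi$ is determined by $\beta := \psi|_{1 \wo D} : D \to E_0$ through $\psi(\alpha \wo d) = \alpha \cdot \beta(d)$. Writing $E_0 = \br{1 \wo D} \oplus \br{\overline{T^* X}}E_0$ and using that $\psi$ has degree $0$ and induces the identity on $D$, one finds $\beta = \iota_D + A$, where $\iota_D(d) = 1 \wo d$ and $A$ is a smooth section of $\Hom^0\br{D, \br{\overline{T^* X}}E_0}$; extending $A$ $\lxb$-linearly gives $\psi = 1 + A$, hence $\phi = (1 + A)\underline{\phi}$. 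For the $G$-invariance of $A$: $\psi$ is $G$-equivariant and $G$ preserves $1 \wo D$, acting there through the given action on $D$, so $\beta$ --- and therefore $A = \beta - \iota_D$ --- is $G$-equivariant, \ie a $G$-invariant section of $\Hom^0\br{D, \br{\overline{T^* X}}E_0}$.

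The only genuinely new ingredient compared with the non-equivariant situation of \cite{BSW} will be making the splitting in~(1) $G$-equivariant, which is precisely why the $G$-invariant metric --- equivalently, an averaging over $G$ --- is used; the other point needing care is checking that $\Phi$ is an isomorphism, but this is the standard associated-graded argument. Part~(2) then presents no real difficulty beyond the bookkeeping above.
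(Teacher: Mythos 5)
Your argument is correct and, modulo packaging, takes essentially the same route as the paper: both prove part (1) by producing a $G$-equivariant degree-preserving splitting $s: D \to E$ of the canonical projection and both reach this via a Weyl averaging trick. The only difference is \emph{what} you average: the paper takes a possibly non-equivariant lift $\underline{\psi}$ (which exists by \cite[Section 4.1]{BSW}) and replaces it directly by $\psi = \frac{1}{|G|}\sum_{g\in G} g\underline{\psi}g^{-1}$, using that the set of splittings is an affine space over $C^\infty\br{X,\Hom^0\br{D,\br{\overline{T^*X}}E}}$ so that averaging preserves the splitting property; you instead average a graded Hermitian metric and take the orthogonal complement of the $G$-invariant subbundle $\br{\overline{T^*X}}E$. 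The paper's version avoids introducing a metric and is a one-liner, whereas your variant makes the $G$-invariance of the complement transparent at the cost of the extra ``$\Phi$ is filtered and induces the identity on the associated graded'' step, which the paper elides (correctly: any splitting of the projection extends $\lxb$-linearly to an isomorphism by freeness). Your treatment of part (2) is the same as the paper's --- both observe that $\phi\underline{\phi}^{-1}$ is a $G$-invariant $\lxb$-linear degree-$0$ automorphism restricting to the identity on $D$, and then read off $A$ from the decomposition $\End\br{E_0} = \lxb\widehat{\otimes}\End\br{D}$; you merely spell out the bookkeeping of extracting $A$ from $\psi|_{1\wo D}$.
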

\begin{proof}
Let $p: E \rightarrow D$ be the canonical projection. We need to construct a degree preserving $G$-equivariant linear map $\psi: D\rightarrow E$ such that $ p \psi = \mathrm{Id}_D$.

By \cite[Section 4.1]{BSW}, a possibly non $G$-equivariant lifting $\underline{\psi}$ exists. Then $\psi = \frac{1}{|G|} \sum_{g \in G} g \underline{\psi} g^{-1}$ is the desired $G$-equivariant lifting. 
Since $\phi \underline{\phi}^{-1}$ is a $G$-invariant section of $\End \br{E_0}$, and 
$\phi \underline{\phi}^{-1}|_D = \mathrm{Id}_D$.
By (\ref{EndE0}), we deduce the second assertion.
\end{proof}

Now we extend \cite[Definition 2.4]{B110} and \cite[Definition 5.1.1]{BSW}.
\begin{defn} \label{DefAntGSup}
A first order differential operator $\AEpp$ acting on $\CXE$ is said to be an antiholomorphic $G$-superconnection if it acts as an operator of degree $1$ on $\CXE$ such that 
\begin{enumerate}
    \item If $\alpha \in \OAXC$, $s \in \CXE$,
    \begin{align} \label{AntGSup1}
    \AEpp (\alpha s) = \br{\dbx \alpha} s + \br{-1}^{\deg \alpha} \alpha \AEpp s.
    \end{align}
    \item The following identity holds,
    \begin{align} \label{AntGSup2}
    \AEppsq = 0.
    \end{align}
    \item For any $g \in G$,
    \begin{align} \label{AntGSup3}
    g \AEpp g^{-1} = \AEpp.
    \end{align}
\end{enumerate}
\end{defn}

If $(\underline{E}, \AUEpp)$ is another antiholomorphic $G$-superconnection, then $\Hom (E, \underline{E})$ is also equipped with the induced antiholomorphic $G$-superconnection $\AEUEpp$.

By (\ref{Diagonal}), (\ref{AntGSup1})-(\ref{AntGSup3}), it is easy to find that $\AEpp$ acts on $D $ like a $G$-invariant smooth section $v_0 \in \End(D)$ which is of degree $1$ and such that $v_0^2 = 0$. 


Assume for the moment that we fix a $G$-invariant non-canonical identification as in  (\ref{EIsoE0}) and (\ref{EIsoE02}). 
Let $\AEzeropp$ be the corresponding antiholomorphic $G$-superconnection on $E_0$. 
We can write $\AEzeropp$ in the form
\begin{align} \label{DecOfAnSupCon}
\AEzeropp = v_0 + \NDpp + \sum_{i \geq 2} v_i,
\end{align}
where $\NDpp$ is a degree preserving antiholomorphic $G$-connection on $D$, and for $i \geq 2$, $v_i$ is a $G$-invariant smooth section of $  \lxib \widehat{\otimes} \End^{1-i}(D) $.
Since $\AEppsq = 0$, from (\ref{DecOfAnSupCon}), we get
\begin{align}
&v_0^2 = 0, && \brr{\NDpp, v_0} = 0, && (\NDpp)^2 + \brr{v_0, v_2} = 0.	
\end{align}
In the sequel, we will set 
\begin{align} \label{DefB}
B = v_0 + 	\sum_{i \geq 2} v_i.
\end{align}

\subsection{Two categories} \label{TwoCat}
Let $\br{\underline{E}, \AUEpp}$ be another couple similar to $\br{E, \AEpp}$.
A morphism $\phi: (E,\AEpp) \rightarrow (\underline{E}, A^{\underline{E} \prime\prime})$ is a smooth $G$-invariant section of $\Hom^0 (E, \underline{E})$, which is such that $\AUEpp \phi = \phi \AEpp$.
Then we can define a category $\BXG$ of antiholomorphic $G$-superconnections and above morphisms.


Let $\CXHomEUEG $ be the $\Z$-graded vector space of $G$-invariant smooth sections of $\Hom (E, \underline{E})$ on $X$.

Since $A^{\mathrm{Hom} (E, \underline{E}) \prime\prime,2} = 0$ and $G$-invariant, $\br{\CXHombEUEG, \AEUEpp }$ forms a cochain complex.
Then the morphisms in $\BXG$ are exactly the 0-th cocycles of $\br{\CXHombEUEG, \AEUEpp }$.

Let $\underline{\BB} (X, G)$ be the associated homotopy category. The objects of $\BUXG$ coincide with the objects of $\BXG$, and the morphisms are given by the $0$-th cohomology of $\br{\CXHombEUEG, \AEUEpp }$.
If $\phi_1$, $\phi_2: (E,\AEpp) \rightarrow (\underline{E}, A^{\underline{E} \prime\prime})$ are two $G$-equivariant morphisms, then $\phi_1 = \phi_2$ in $\BUXG$ if and only if there exists $\psi \in C^\infty \br{X, \mathrm{Hom}^{-1} \br{E, \underline{E}}}^G$ such that 
\begin{align} \label{HomoCate}
\phi_1-\phi_2 = \AUEpp \psi + \psi \AEpp.
\end{align}

\subsection{Equivariant superconnections, morphisms, and cones} \label{BlockCone}

We use the notations in Subsection \ref{TwoCat} and follow the conventions of Subsection \ref{Complex}.
We form the cone $C = 	\cone \br{E, \underline{E}}$
so that
\begin{align}
C^\bullet = E^{\bullet +1} \oplus \underline{E}^\bullet.	
\end{align}
As in (\ref{Cone}), put
\begin{equation} 
A_\phi^{C \prime \prime} = 
\left[
\begin{array}{c c}
\AEpp  &  0 \\
\phi (-1)^{\mathrm{deg}}  &  \AUEpp \\
\end{array}
\right]	.
\end{equation}
Then $\br{C, A^{C\prime\prime}_\phi}$ is an object in $\BXG$.

Classically, $\BUXG$ is a triangulated category.

 

\subsection{Pullbacks and Tensor products} \label{BlockPullback}

\subsubsection{Pullbacks of equivariant antiholomorphic superconnections}
Let $Y$ be another compact complex manifold.
Assume the finite group $G$ acts also holomorphically on $Y$.
Let $f: X \rightarrow Y$ be a $G$-equivariant holomorphic map.

Assume $\br{F, \AFpp}$ is an antiholomorphic $G$-superconnection on $Y$ with diagonal bundle $D_F$. Set $\F = \br{\oy^\infty\br{F}, \AFpp}$

In \cite[Section 5.7]{BSW}, the authors define the pullback antiholomorphic superconnection $\br{f_b^* F, A^{f_b^* F \prime\prime}}$ on $X$ with associated diagonal bundle $D_E = f^* D_F$.

By the construction itself, $\br{f_b^* F, A^{f_b^* F \prime\prime}}$ is $G$-equivariant.
We will also use the notation 
\begin{align}
f_b^* \F = \br{\ox^\infty\br{f_b^*F}, A^{f_b^* F \prime\prime}}.	
\end{align}

\subsubsection{Tensor products of equivariant antiholomorphic superconnections} \label{BlockTensor} \label{TensorSuper}
Let $ \br{E,\AEpp}$,  $\br{\underline{E}, \AUEpp}$ be two antiholomorphic $G$-superconnections on $X$, with associated diagonal vector bundles $D_E$, $D_{\underline{E}}$. We can define a tensor product of antiholomorphic $G$-superconnections
$ \br{E \widehat{\otimes}_b \underline{E}, A^{E \widehat{\otimes}_b \underline{E} \prime \prime}}$
as in \cite[(5.8.2)]{BSW}.
The corresponding diagonal vector bundle is $D_E \widehat{\otimes} D_{\underline{E}}$.
As before, $ \br{E \widehat{\otimes}_b \underline{E}, A^{E \widehat{\otimes}_b \underline{E} \prime \prime}}$ is $G$-equivariant.
If we denote $\E = \br{\ox^\infty\br{E}, \AEpp}$ and $\underline{\E} = \br{\ox^\infty \br{\underline{E}}, \AUEpp}$, we will use the notation 
\begin{align}
\E \widehat{\otimes}_b \underline{\E} = \br{\ox^\infty \br{E \widehat{\otimes}_b \underline{E}}, A^{E \widehat{\otimes}_b \underline{E} \prime \prime}}.	
\end{align}

\section{An equivalence of categories} \label{SectionEquivalence}
The purpose of this section is to establish an equivalence of the homotopy category $\BUXG$ and $\DbCohXG$.

In Section \ref{Section6-1}, we construct a natural functor $\underline{F}_X: \BUXG \to \DbCohXG $ and state the main theorem of this section that $\underline{F}_X$ is an equivalence of triangulated categories.

In Section \ref{EssSur}, we prove that the functor $\underline{F}_X$ is essentially surjective.

In Section \ref{SecFullFaith}, we prove that the functor $\underline{F}_X$ is fully faithful.

In Section \ref{Section6-4}, we show that the functor $\underline{F}_X$ is compatible with pullbacks, tensor products and direct images.

\subsection{The main theorem of this section} \label{Section6-1}
Let $X$ be a compact complex manifold. 
Let $G$ be a finite group.
Assume $G$ acts holomorphically on $X$.

Assume $\br{E, \AEpp}$ is an antiholomorphic $G$-superconnection on $X$.
Let $\E$ be the sheaf of $\ox$-complex $\br{\ox^\infty(E), \AEpp}$.

Since $\AEpp$ is $G$-invariant, $\E$ is $G$-equivariant.
By \cite[Lemma 4.5]{B110} and \cite[Theorem 5.3.4]{BSW}, $\E$ has coherent cohomology.
Therefore $\E$ is an object in $\DbCohXG$.

Let $(\underline{E}, \AUEpp)$ be another antiholomorphic $G$-superconnection on $X$, and let $\underline{\E}$ be the sheaf of $\ox$-complex $\br{\ox^\infty(\underline{E}), \AUEpp}$.
A morphism $\phi: (E, \AEpp) \rightarrow (\underline{E}, \AUEpp)$ in $\BXG$ induces a morphism $\E \rightarrow \underline{\E}$ in $\DbCohXG$.

So we actually construct a functor $F_X: \BXG \rightarrow \DbCohXG$.

If $\phi_1, \phi_2: \br{E,\AEpp} \rightarrow \br{\underline{E}, \AUEpp}$ are homotopic, by (\ref{HomoCate}), they induce homotopic morphisms in $\CbCohXG$.
In particular, the images in $\DbCohXG$ are isomorphic.
 
\begin{thm} \label{ThmEquivalenceCategory}
The functor $\underline{F}_X: \BUXG \rightarrow \DbCohXG$ is an equivalence of triangulated categories.	
\end{thm}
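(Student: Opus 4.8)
The plan is to adapt the proof of the non-equivariant statement (Theorem \ref{Thm0-2}, coming from \cite[Theorem 4.3]{B110} and \cite[Theorem 6.5.1]{BSW}) to the $G$-equivariant situation, keeping careful track of $G$-invariance at every stage. The proof breaks into three parts, carried out in Sections \ref{EssSur}, \ref{SecFullFaith}, \ref{Section6-4}: essential surjectivity, full faithfulness, and compatibility with the triangulated structure.

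\emph{Essential surjectivity.} Given an object $\E \in \DbCohXG$, I want to produce an antiholomorphic $G$-superconnection $(E,\AEpp)$ with $\underline{F}_X(E,\AEpp) \simeq \E$. First pass to $\E^\infty = \ox^\infty \otimes_{\ox} \E$, which is again $G$-equivariant. By Proposition \ref{Prop0-2} (the equivariant version of Illusie's resolution, built on the stronger local resolution Proposition \ref{prop0-5}/\thref{LocalResolution}), $\E^\infty$ admits a global resolution by a bounded complex of finite-rank $G$-equivariant smooth vector bundles $Q$. Then one runs the recursion of \cite[Theorem 6.3.6]{BSW} on $\Lambda(\overline{T^*X}) \wo Q$ to build an antiholomorphic superconnection; at each stage of the recursion the choices (liftings, homotopies) can be averaged over the finite group $G$, exactly as in Proposition \ref{GIdenExist}, so that the resulting $\AEpp$ is $G$-invariant, i.e.\ satisfies \eqref{AntGSup3}. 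This gives an object of $\BXG$ whose image under $F_X$ is isomorphic to $\E$ in $\DbCohXG$.

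\emph{Full faithfulness.} For two antiholomorphic $G$-superconnections $\E = (E,\AEpp)$, $\underline{\E} = (\underline{E}, \AUEpp)$, I want $\mathrm{Hom}_{\BUXG}(\E,\underline{\E}) \xrightarrow{\sim} \mathrm{Hom}_{\DbCohXG}(\E,\underline{\E})$. The left side is, by Subsection \ref{TwoCat}, $H^0$ of the complex $(C^\infty(X,\mathrm{Hom}^\bullet(E,\underline{E}))^G, \AEUEpp)$, i.e.\ the $G$-invariants of $H^0$ of the corresponding non-invariant complex. Since $G$ is finite, taking $G$-invariants is exact, so $H^0$ of the invariant complex is the $G$-invariant part of the non-equivariant $\mathrm{Hom}$ group. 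On the target side, the $\mathrm{Hom}$ in the equivariant derived category is likewise the $G$-invariant part of the $\mathrm{Hom}$ in $\DbCohX$ (again using exactness of $G$-invariants for a finite group, or a spectral-sequence/averaging argument). The non-equivariant statement \cite[Theorem 6.5.1]{BSW} identifies the two non-invariant groups compatibly with the $G$-action, so passing to $G$-invariants on both sides gives the equivariant identification. One must check this identification is $G$-equivariant, which it is because the comparison map is induced by the manifestly natural functor $F_X$.

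\emph{Triangulated structure.} Finally, $\BUXG$ is triangulated with distinguished triangles coming from the cones of Subsection \ref{BlockCone}, and $\underline{F}_X$ sends these cones to the mapping cones of Subsection \ref{Complex}, hence to distinguished triangles in $\DbCohXG$; this is formal from the explicit formula \eqref{Cone} and the short exact sequence \eqref{ShortExactSequenceCone}. An additive, essentially surjective, fully faithful, triangle-preserving functor between triangulated categories is an equivalence of triangulated categories, which completes the proof. \textbf{The main obstacle} is essential surjectivity: everything hinges on Proposition \ref{Prop0-2}, and the reason that proposition is available is the strengthened local resolution Proposition \ref{prop0-5} --- the observation that all but the leftmost bundle in a local $G$-equivariant resolution can be taken $G$-trivial, hence extendable to all of $X$. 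Once the global $G$-equivariant smooth resolution exists, the recursion constructing $\AEpp$ and the $G$-averaging of the auxiliary choices are routine adaptations of \cite{BSW}.
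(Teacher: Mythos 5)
Your essential-surjectivity argument is the same as the paper's: pass to $\E^\infty$, invoke the equivariant Illusie resolution (built on \thref{LocalResolution}), and run the BSW recursion while $G$-averaging each choice of $v_k$ and $\phi_k$.

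For full faithfulness, however, you take a genuinely different route. You propose to identify $\mathrm{Hom}_{\BUXG}(\E,\underline{\E})$ with $\mathrm{Hom}_{\underline{\mathrm{B}}(X)}(\E,\underline{\E})^G$ (clean, by exactness of $(-)^G$ for finite $G$ over $\C$), and $\mathrm{Hom}_{\DbCohXG}(\E,\underline{\E})$ with $\mathrm{Hom}_{\DbCohX}(\E,\underline{\E})^G$, then take $G$-invariants of the non-equivariant isomorphism of \cite[Theorem 6.5.1]{BSW}, checking the comparison map is $G$-equivariant. The paper instead proves full faithfulness directly: injectivity by $G$-averaging a homotopy furnished by the non-equivariant theorem, and surjectivity by replacing the middle object of a roof with a $G$-superconnection via Theorem \ref{EssSurB} and $G$-averaging a homotopy inverse supplied by \cite[Proposition 6.4.1]{BSW}. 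Both work; the trade-off is that your route is shorter but hinges on the Hom-descent identity $\mathrm{Hom}_{\DbCohXG} \simeq (\mathrm{Hom}_{\DbCohX})^G$, which is true (injective resolutions in $\mathrm{M}(X,G)$ restrict to injective resolutions in $\mathrm{M}(X)$ because the forgetful functor has an exact left adjoint, and then exactness of $(-)^G$ finishes it) but is not established in the paper and deserves an explicit lemma, since $\DbCohXG$ is defined as a localized homotopy category, not a priori as an invariants category. The paper's proof avoids this abstract input and stays entirely within the averaging technique already used throughout. You should either prove the Hom-descent lemma or fall back to the paper's more hands-on argument. The triangulated-structure step is the same in both.
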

\begin{proof}
We will prove that $\underline{F}_X$ is essentially surjective in Subsection \ref{SecEssSur} and is fully faithful in Subsection \ref{SecFullFaith}.
By construction $\underline{F}_X$ is exact so that $\underline{F}_X$ is an equivalence of triangulated categories. 
\end{proof}

\subsection{Essential surjectivity} \label{SecEssSur}

\begin{thm} \label{EssSur}
The functor $F_X$ is essentially surjective, \ie if $\F$ is an object in $ \DbCohXG$, there is an object $\br{E, \AEpp} $ in $ \BXG$ and an isomorphism $\E \simeq \F$ in $\DbCohXG$.	
\end{thm}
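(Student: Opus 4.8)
The plan is to imitate the recursion argument of \cite[Theorem 6.3.6]{BSW}, upgraded to keep track of the $G$-action throughout. First I would reduce to the case of a single $G$-coherent sheaf placed in one degree, and then to a bounded complex, by the standard d\'evissage: given $\F \in \DbCohXG$, write it (up to isomorphism in $\DbCohXG$) as a bounded complex of $G$-equivariant $\ox$-modules, and proceed by induction on the length of the complex using the cone construction of Section \ref{BlockCone}; the cone of a morphism between two antiholomorphic $G$-superconnections is again one, so it suffices to realize each $G$-coherent cohomology sheaf (or, more precisely, to run the argument on a complex whose terms are globally resolvable). Concretely, the engine is: if $\E^\infty = \ox^\infty \otimes_{\ox} \E$ admits a global resolution by a bounded complex of $G$-equivariant smooth vector bundles $Q$, then one can put an antiholomorphic $G$-superconnection on $\Lambda(\overline{T^*X}) \wo Q$ whose associated sheaf of local sections is isomorphic to $\E$ in $\DbCohXG$.

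The key input, replacing Proposition \ref{Prop0-1}, is the global resolution statement: by Proposition \ref{LocalResolution} (= \thref{LocalResolution}) every $G$-coherent sheaf admits \emph{local} projective resolutions by $G$-equivariant holomorphic vector bundles, all but the leftmost term being $G$-trivial, hence extendable to all of $X$; passing to $\ox^\infty$-modules and using a $G$-invariant partition of unity one globalizes these to a resolution of $\E^\infty$ by a bounded complex of finite-rank $G$-equivariant smooth vector bundles. This is exactly the equivariant Illusie-type statement quoted as Proposition \ref{Prop0-2} in the introduction, and I would prove it here in detail (the non-equivariant version being \cite[Proposition 6.3.2]{BSW}). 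The averaging trick over $G$ — already used in the proof of \thref{LocTri} and in Proposition \ref{GIdenExist} — is what makes each patching and each choice of splitting $G$-equivariant.

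Given the global resolution $\ox^\infty Q \to \E^\infty$, the superconnection is built recursively: one starts with $v_0$ the differential of $Q$ (a $G$-invariant degree-$1$ endomorphism with $v_0^2=0$), chooses a $G$-invariant $(0,1)$-connection $\NDpp$ on $Q$ — again by averaging — and then solves order by order in the filtration \eqref{FiltrationE} for the higher terms $v_i$, using flatness of $\E$ as a complex to guarantee solvability at each stage; the obstruction at each step is a $\dbx$-cocycle which is a coboundary because the resolution computes $\E$, and $G$-invariance of the correction is arranged by averaging the chosen primitive. Finally one checks $\underline F_X(E,\AEpp) \simeq \F$ in $\DbCohXG$, which is built into the construction. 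The main obstacle is genuinely the equivariant global resolution (Proposition \ref{Prop0-2}): while local $G$-resolutions exist by \thref{LocalResolution} and the non-leftmost terms are $G$-trivial hence extend, one must splice the local resolutions together compatibly with $G$ and control the leftmost (non-$G$-trivial) term globally; this is exactly where the observation in Proposition \ref{prop0-5} that the deeper bundles can be chosen $G$-trivial is essential, and is the technical heart of the argument.
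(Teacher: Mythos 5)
Your plan matches the paper's proof in structure and substance: the paper likewise splits the argument into the equivariant Illusie-type global resolution (Proposition \ref{Illusie}, built from the local resolutions of Proposition \ref{LocalResolution} whose deeper terms are $G$-trivial and hence globally extendable, glued by a $G$-averaged partition of unity and induction via cones) followed by the recursive construction of $\AEpp$ on $\Lambda(\overline{T^*X}) \wo Q$ with $G$-averaging at each step (Theorem \ref{EssSurB}). The only minor point you gloss over is that the recursion in the paper simultaneously builds the comparison morphism $\phi: \E \to \overline{\F}^\infty$ and then verifies it is a quasi-isomorphism via the spectral sequence of the $\Lambda(\overline{T^*X})$-filtration, but this is a detail rather than a different route.
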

\begin{proof}
The proof of our theorem is divided into the next two steps.	
\end{proof}

Let $\F^\infty$ be the $G$-equivariant $\ox^\infty$-complex defined by
\begin{align}
	\F^\infty = \ox^\infty \otimes_{\ox} \F.
\end{align}
The differentials $d^{\F^\infty}$ are naturally induced by the differentials of $\F$.

The following result is established by Illusie in \cite[Proposition II.2.3.2]{Col71} when $G$ is trivial.
Here we extend the result to the situation when $G$ is non-trivial following \cite[Proposition 6.3.2]{BSW}.
\begin{prop} \label{Illusie}
There exist a bounded $G$-equivariant complex of finite dimensional smooth vector bundles $\br{Q, d^Q}$ and a quasi-isomprhism of $G$-equivariant $\ox^\infty$-complexes $\phi_0: \ox^\infty Q \rightarrow \F^\infty$.
\end{prop}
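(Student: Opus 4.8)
The plan is to run the non-equivariant argument of Illusie \cite[Proposition II.2.3.2]{Col71} and \cite[Proposition 6.3.2]{BSW} — whose conclusion is precisely Proposition \ref{Prop0-1} — keeping track of the $G$-action at every step. Three ingredients make this possible equivariantly: the equivariant local resolution of Proposition \ref{LocalResolution}, the existence of $G$-invariant partitions of unity (obtained by averaging an ordinary partition of unity, $G$ being finite), and the equivariant extension functor $\E \mapsto \E_G = \bigoplus_{g \in G} m_g^* \E$ of Section \ref{ForExt}, together with the averaging device used in the proofs of Proposition \ref{GIdenExist} and Proposition \ref{PropExtend}. Throughout we use that $\ox^\infty$ is flat over $\ox$, so that an $\ox$-linear resolution stays exact after applying $\ox^\infty \otimes_{\ox} -$.

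First I would reduce to the case where $\F$ is a single $G$-coherent sheaf concentrated in degree $0$: using the good-truncation triangle $\tau_{\le b-1}\F \to \F \to \HH^{b}\br{\F}[-b] \to \tau_{\le b-1}\F[1]$ and induction on the cohomological amplitude of $\F$, it suffices to know that the class of objects $\F$ for which $\F^\infty$ admits a bounded $G$-equivariant smooth resolution is stable under shifts and mapping cones; stability under cones is exactly where one uses that the cone of a $G$-equivariant morphism of $G$-equivariant complexes carries a canonical $G$-action (Sections \ref{Complex} and \ref{BlockCone}).

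For a $G$-coherent sheaf $\F$ I would then build the resolution from the right. By Proposition \ref{LocalResolution}, every point of $X$ has a $G$-invariant neighbourhood $U$ on which $\F|_U$ receives a $G$-equivariant surjection from the restriction to $U$ of a globally defined $G$-trivial holomorphic bundle of the form $X \times R\br{G}^{n}$ — this is precisely the augmentation $F_0$ produced inside the proof of Proposition \ref{LocalResolution}, and the point is that $X \times R\br{G}^{n}$ already lives on all of $X$. Tensoring with $\ox^\infty$, choosing by compactness a finite subcover $U_1, \dots, U_N$ with such surjections $\sigma_j$ and a $G$-invariant partition of unity $\brrr{\rho_j}$ subordinate to it, the bundle $Q^0 = \bigoplus_{j=1}^{N} X \times R\br{G}^{n_j}$ is a global $G$-equivariant smooth vector bundle and $\sigma := \sum_j \rho_j\, \sigma_j \colon \ox^\infty Q^0 \to \F^\infty$ is $G$-equivariant (each $\rho_j$ is $G$-invariant, each $\sigma_j$ is $G$-equivariant) and surjective, since at every point some $\rho_j$ is positive there. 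Replacing $\F^\infty$ by $\ker \sigma$ and iterating produces the complex $\br{Q^\bullet, d^Q}$ together with its augmentation to $\F^\infty$; the process terminates because Proposition \ref{LocalResolution} provides local holomorphic resolutions of length $\le n$, so after finitely many steps the running kernel is a finite-rank locally free $\ox^\infty$-module which, being the kernel of a $G$-equivariant morphism, is itself $G$-equivariant, \ie a $G$-equivariant smooth vector bundle, and the construction stops.

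\textbf{The main obstacle} is the step — already delicate when $G$ is trivial — of controlling the kernels occurring in the iteration: $\ker \sigma$ need not be of the form $\ox^\infty \otimes_{\ox} \G$ with $\G$ coherent, so both the possibility of reapplying the previous step and the termination in finitely many steps have to be extracted by comparing this smooth kernel, chart by chart, with the holomorphic local resolutions of Proposition \ref{LocalResolution} — a homotopy-equivalence argument on each $U_j$, patched by the partition of unity — exactly as in the non-equivariant proof of \cite[Proposition 6.3.2]{BSW}. The only genuinely new point is to check that the comparison homotopies and the intervening truncations can be chosen $G$-invariant; this is always possible by averaging over $G$, in the same manner as in the proofs of Proposition \ref{GIdenExist} and Proposition \ref{PropExtend}.
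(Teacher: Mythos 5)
Your overall strategy is the same as the paper's: reduce to a single $G$-coherent sheaf by truncation (matching the paper's induction on the cohomological amplitude $k$), use Proposition \ref{LocalResolution} to produce local $G$-equivariant surjections from globally defined $G$-trivial bundles $X \times R(G)^{n}$, patch these with a $G$-invariant partition of unity, and then run a cone/truncation reduction as in \cite[Proposition 6.3.2]{BSW}. The first layer --- constructing $Q^0$ and the global surjection $\sigma$ --- is essentially verbatim from the paper.

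However, you misidentify where the new equivariant difficulty lies, and that misidentification leaves a genuine gap. You assert that the only genuinely new point is making the comparison homotopies $G$-invariant by averaging. Averaging over the finite group $G$ does handle all the maps (the paper uses it exactly this way for the local lifts $\psi_U$), but the true obstruction is about \emph{bundles}, not maps. When one truncates the cone of the local lift $\psi_U$, a local $G$-equivariant kernel bundle $\ker(\gamma)$ appears on $U$ (cf.~\eqref{IllusieShorterComplex}); it is $G$-equivariant but generically \emph{not} $G$-trivial. In the non-equivariant case any smooth bundle on $U$ can be extended to $X$ (stabilize to a trivial bundle and cut off), so the iteration closes; in the equivariant case only bundles that are $G$-trivial over $U$ --- of the form $U \times V$ for an honest $G$-representation $V$ --- extend to globally defined $G$-equivariant bundles on $X$, since the local isotropy representations need not otherwise glue. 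This is precisely why the paper proves and propagates the extra clause of Proposition \ref{LocalResolution} (all but the leftmost local bundle can be chosen $G$-trivial) as the running hypothesis \eqref{AdditionalCondition}, and the step your proposal omits is the stabilization trick of \eqref{IllusieShorterComplex2}: replace $\ker(\gamma)$ by $\ker(\gamma) \oplus \RRR^0_U \simeq \RRR^{-1}_U \oplus \RRR|_U$, which \emph{is} $G$-trivial, so that the shortened local resolution again satisfies \eqref{AdditionalCondition} and the induction on the local resolution length can proceed. Without this stabilization both the ``reapply the local surjection'' step and the termination argument break down equivariantly, and no amount of averaging over $G$ recovers them.
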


\begin{proof}
Without loss of generality, we may assume $\F^i = 0$ for $i <0$.
Set
\begin{equation}
k = \mathrm{sup} \brrr{i : H^i (\F) \neq 0}.
\end{equation}
Then $k \in \N$.
We will prove the proposition by induction on $k$, while also proving that $Q$ can be chosen such that $Q^i = 0$ for $i > k$.

Assume $k = 0$. 
We will show in this case additionally that
\begin{equation} \label{AdditionalCondition}
\text{all the} \; Q^i \; \text{is} \; G\text{-trivial except the first non-zero one}. 	
\end{equation}

Since $H^0 \br{\F}$ is a $G$-equivariant coherent sheaf, 
by \thref{LocalResolution}, for any $x \in X$, there exist a $G$-invariant open neighborhood $U$ of $x$, and a bounded $G$-equivariant complex $R_U$ of  holomorphic vector bundles in nonpositive degrees and a $G$-equivariant $\ou$-morphism $r_U: \OO_U R_U^0 \rightarrow H^0 (\F)_{\mid U}$ such that the $G$-equivariant $\ou$-complex 
\begin{equation} \label{IllusieHolomorphicResolution}
\begin{tikzcd}
	0 \arrow[r] & \OO_U R_U^{-\ell_U} \arrow[r] & \cdots \arrow[r] & \OO_U R_U^{0} \arrow[r,  "r_U"] & H^0 (\F)_{\mid_U} \arrow[r] & 0
\end{tikzcd}
\end{equation}
is exact.
Moreover, for $0 \leq i \leq \ell_U-1$, $R_U^{-i}$ are $G$-trivial on $U$.

Since $\OO_U^\infty$ is flat over $\OO_U$ \cite[Corollary VI.1.12]{MR}, by (\ref{IllusieHolomorphicResolution}), we have the corresponding exact sequence in $\MIUG$,
\begin{equation} \label{SmoothLocalResolution}
\begin{tikzcd}
	0 \arrow[r] &\OO_U^\infty R_U^{-\ell_U} \arrow[r] & \cdots \longrightarrow \OO_U^\infty R_U^{0} \arrow[r, "r_U"] & H^0 (\F^\infty)_{\mid_U} \arrow[r] & 0.
	\end{tikzcd}
\end{equation}
Since $X$ is compact, actually we can choose a finite covering $\mathcal{U}$ of $X$ by such $U$.

We can assume that for any $U \in \mathcal{U}$, $\ell_U = \ell$, 
because we can always extend shorter complexes, and the lengths of all the complexes are no longer than $n$.
Then we prove the case $k =0$ with the additional assumption (\ref{AdditionalCondition}) by induction on $\ell$ \footnote{In this induction argument, we only assume $\F^\infty$ has the properties (\ref{AdditionalCondition}) and (\ref{SmoothLocalResolution}) and do not require that $\F^\infty$ comes from an $\ox$-complex\label{fnlabel2}}

If $\ell = 0$, it means that $H^0 (\F^\infty)$ is a locally free $\OO_X^\infty$-module.
By the discussion after \thref{InducedRep}, the proof of our proposition is complete.

Assume now that $\ell \geq 1$ and that the proposition holds with the additional assumption (\ref{AdditionalCondition}).

The local resolution in (\ref{SmoothLocalResolution}) gives a quasi-isomorphism $\OO_U^\infty R_U \rightarrow \F^\infty_{\mid_U}$ in $\MIUG$. If we take $\CC_U = \mathrm{cone} \br{ \OO_U^\infty R_U, \F^\infty_{\mid_U}}$, then $\CC_U$ is exact. Actually $\CC_U$ is just the complex
\begin{equation} \label{ComplexRF}
\begin{tikzcd}
0 \arrow[r] & \OO_U^\infty R_U^{-\ell} \arrow[r] & \cdots \arrow[r] & \OO_U^\infty R_U^{0} \arrow[r, "r_U"] & \F^{0,\infty}_{\mid U} \arrow[r] & \F^{1,\infty}_{\mid U} \arrow[r] & \cdots 	
\end{tikzcd}
\end{equation}

Since $\ell \geq 1$, $R_U^0$ is $G$-trivial on $U$. 
We can extend it to a trivial $G$-equivariant vector bundle on $X$ which we also denote it as $R_U^0$. Set
\begin{equation} \label{ExtendR}
R = \bigoplus_{U \in \mathcal{U}} R_U^0	.
\end{equation}
Then $R$ is still $G$-trivial on $X$, and we regard it as a complex in degree $0$ with an obvious morphism of $G$-equivariant complexes of smooth vector bundles $R_{\mid U} \rightarrow R_U$.

Let $\br{\varphi_U }_{U \in \mathcal{U}}$ be a smooth partition of unity subordinated to $\mathcal{U}$. 
Set $\widetilde{\varphi}_U (x) = \frac{1}{|G|} \sum_{g \in G} \varphi_U \br{g \cdot x}$.  
Then $\widetilde{\varphi}_U$ is $G$-invariant and $\br{\widetilde{\varphi}_U }_{U \in \mathcal{U}}$ forms another partition of unity subordinated to $\mathcal{U}$.

Put
\begin{equation} \label{IllusieGlobalMap}
r = \sum_{U\in \mathcal{U}} \widetilde{\varphi}_U r_U.	
\end{equation}
By (\ref{SmoothLocalResolution}), (\ref{ExtendR}), and (\ref{IllusieGlobalMap}), $r: \ox^\infty R \rightarrow H^0 (\F^{\infty}) $ is surjective.
As in (\ref{ComplexRF}), we can consider $r$ as a morphism of $G$-equivariant $\ox^\infty$-complexes $\ox^\infty R \rightarrow \F^\infty$.

We claim that the morphism $r_{\mid_U}$ lifts to a $G$-equivariant morphism of $\ou^\infty$-complexes $\psi_U: \ou^\infty R \rightarrow \OO_U^\infty R_U$, such that the following diagram 
\begin{equation} \label{LocalLift}
\begin{tikzcd}
                                                                          & \mathcal{O}_U^\infty R_U \arrow[d, "r_U"] \\
\mathcal{O}_U^\infty R \arrow[r, "r_{\mid_U}"] \arrow[ru, "\psi_U", dotted] & \mathscr{F}^\infty_{\mid_U}                
\end{tikzcd}	
\end{equation}
commutes.
Indeed, since the vertical arrow $r_U$ in (\ref{LocalLift}) is a quasi-isomorphism, according to \cite[(6.3.9)]{BSW}, a possible non $G$-equivariant lift $\widetilde{\psi}_U$ exists.
Then $\psi_U = \frac{1}{|G|} \sum_{g \in G} g^{-1} \widetilde{\psi}_U g$ is the desired lift.


For simplicity, we will use the notation $\RRR_U$, $\RRR_{\mid_U}$ instead of $\OO_U^\infty R_U$, $\OO_U^\infty R$.

Using ($\ref{LocalLift}$), the fact $r_U$ is a quasi-isomorphism and the cone construction, we have the following quasi-isomorphism
\begin{equation} \label{QuasiComplex}
\begin{tikzcd}
0 \arrow[r] & \mathscr{R}_U^{-\ell} \arrow[r] \arrow[d] & \cdots \arrow[r] \arrow[d] & \mathscr{R}_U^{-1}\oplus \mathscr{R}_{\mid_U} \arrow[r, "\gamma"] \arrow[d] & \mathscr{R}^0_U \arrow[r] \arrow[d]       & 0 \arrow[r] \arrow[d]                     & \cdots \\
0 \arrow[r] & 0 \arrow[r]                              & 0 \arrow[r]                & \mathscr{R}_{\mid_U} \arrow[r, "r_{\mid U}"]                                            & {\mathscr{F}^{0,\infty}_{\mid_U}} \arrow[r] & {\mathscr{F}^{1,\infty}_{\mid_U}} \arrow[r] & \cdots 
\end{tikzcd}	
\end{equation}
Indeed, the first line is the cone of $\psi_U$ and the second line is the restriction of the $\mathrm{cone} (\RRR, \F^\infty)$ which is globally defined on $X$. 

Since $k=0$ and since $r: \RRR \rightarrow H^0(\F^\infty)$ is surjective, the cohomology of $\mathrm{cone}(\RRR, \F^{\infty})$ concentrates in the $-1$ degree and we denote it as $H^{-1}$.
Therefore, the $0$-th cohomology of the first line in (\ref{QuasiComplex}) vanishes.
In particular, $\gamma$ is surjective.
By the discussion following (\ref{GActionOnFiber}), $\gamma$ induces a surjection  on each fiber, so that $\mathrm{ker} (\gamma)$ defines a vector bundle.

Therefore, (\ref{QuasiComplex}) can be modified to
\begin{equation} \label{IllusieShorterComplex}
\begin{tikzcd}
0 \arrow[r] & \mathscr{R}^{-\ell}_U \arrow[r] \arrow[d] & \cdots \arrow[r] \arrow[d] & \mathscr{R}^{-2}_U \arrow[r] \arrow[d] & \mathrm{ker}(\gamma) \arrow[r] \arrow[d] & 0 \arrow[r] \arrow[d]                       & \cdots \\
0 \arrow[r] & 0 \arrow[r]                               & \cdots \arrow[r]           & 0 \arrow[r]                            & \mathscr{R}_{\mid U} \arrow[r]           & {\mathscr{F}^{0,\infty}_{\mid U}} \arrow[r] & \cdots
\end{tikzcd}	
\end{equation}
which is still a quasi-isomorphism.

If $\ell = 1$, the first line of (\ref{IllusieShorterComplex}) concentrates at one degree $\mathrm{ker}(\gamma)$.
By (\ref{IllusieShorterComplex}),  $\ker (\gamma) \rightarrow R$ is exactly the complex we want.
Note that $R$ is $G$-trivial, so it satisfies the additional assumption (\ref{AdditionalCondition}).

Now we assume $\ell \geq 2$.
Note that $\ker \br{\gamma}$ is not always $G$-trivial so we can not apply the induction argument directly.

To overcome this issue, we consider a variant of (\ref{IllusieShorterComplex}) by adding $\RRR^0_U$ at degree $-2$ and $-1$ in an obvious way, so that we have the following quasi-isomorphism
\begin{equation} \label{IllusieShorterComplex2}
\begin{tikzcd}
0 \arrow[r] & \mathscr{R}^{-\ell}_U \arrow[r] \arrow[d] & \cdots \arrow[r] \arrow[d] & \mathscr{R}^{-2}_U \oplus \RRR^0_U \arrow[r] \arrow[d] & \mathrm{ker}(\gamma) \oplus \RRR^0_U \arrow[r] \arrow[d] & 0 \arrow[r] \arrow[d]                       & \cdots \\
0 \arrow[r] & 0 \arrow[r]                               & \cdots \arrow[r]           & 0 \arrow[r]                            & \mathscr{R}_{\mid U} \arrow[r]           & {\mathscr{F}^{0,\infty}_{\mid U}} \arrow[r] & \cdots
\end{tikzcd}	
\end{equation}

Since $\gamma$ is surjective on each fiber, we have $\ker(\gamma) \oplus \RRR^0_U \simeq \RRR^{-1}_U \oplus \RRR_{\mid_U}$ which is now $G$-trivial. Moreover, the first line of complex (\ref{IllusieShorterComplex2}) is of length $\ell - 1$. Therefore the second line of (\ref{IllusieShorterComplex2}) satisfies the induction hypothesis.\footnote{The second line of (\ref{IllusieShorterComplex2}) does not come from a $\ox$-complex, but we can still use the induction assumption, see Footnote \ref{fnlabel2}.}


Therefore, there exists a bounded $G$-equivariant complex of finite dimensional smooth vector bundles $\br{Q^\prime, d^{Q^\prime}}$ with $Q^{\prime i } = 0$ if $i \geq 0$, such that if $\LL^\prime = \OO^\infty_X Q^\prime$, then we have a $G$-equivariant quasi-isomorphism of $\OO_X^\infty$-complexes
\begin{equation} \label{InduQuasi}
\LL^\prime \rightarrow \mathrm{cone} \br{\RRR, \F^\infty}.
\end{equation}
By \cite[Proposition 1.4.4]{KaS90}, the quasi-isomorphism (\ref{InduQuasi}) gives a quasi-isomorphism
\begin{align} \label{InduQuasi2}
	\mathrm{cone} \br{\LL^{\prime \bullet-1}, \RRR^{\bullet}} \rightarrow \F^\infty, 
\end{align}
which is $G$-equivariant by construction.

So we can take $Q^\bullet = \mathrm{cone} \br{Q^{\prime \bullet -1},R^\bullet}$. Then $Q^i = 0$, $i > 0$, and $Q^\bullet$ satisfies the additional assumption (\ref{AdditionalCondition}). Moreover, $\OO_X^\infty Q \rightarrow \F^\infty$ is a $G$-equivariant quasi-isomorphism, so the proof of our theorem when $k=0$ is done.

The proof for $k \geq 1$ is essentially the same as in \cite[Proporsition 6.3.2]{BSW} by induction on $k$.
We recall briefly the steps.

We can truncate the complex $\F^\infty$ into 
\begin{equation}
\tau_{\leq k -1} \F^\infty: 0 \longrightarrow \F^{0,\infty} \cdots \longrightarrow \F^{k-2,\infty} \longrightarrow \operatorname{Ker} d^{\F^\infty}_{\mid_{\F^{k-1,\infty}}} \longrightarrow 0.	
\end{equation}

By cone construction, we have a quasi isomorphism
\begin{align}
	\cone \br{ \cone^{\bullet -1} \br{\tau_{\leq k -1} \F^\infty, \F^\infty}, \tau_{\leq k -1} \F^{\infty, \bullet}} \rightarrow \F^{\infty, \bullet}.
\end{align}

Since $\tau_{\leq k -1} \F^\infty$ satisfies the induction hypothesis for $k-1$ and $\cone \br{\tau_{\leq k -1} \F^\infty, \F^\infty}$ satisfies the induction hypothesis for $k = 0$.
By induction assumption, the proof of our theorem is complete.
\end{proof}

Put
\begin{align} \label{FOI}
\overline{\F}^\infty = \ox^\infty \br{\lxb} \WOX	 \F,
\end{align}
and equip it with the differential $d^{\overline{\F}^\infty}$ given by
\begin{align} \label{dFOI}
d^{\FOI} = \dbx +d^{\F^\infty}.
\end{align}
Then $\FOI$ is a $G$-equivariant $\ox$-complex.

By Poincar\'{e} Lemma \cite[Lemma I.3.29]{demailly1997complex} and by a theorem of Malgrange \cite[Corollary VI.1.12]{MR}, we have a quasi-isomorphism of $G$-equivariant $\ox$-complexes,
\begin{align}
\F \rightarrow \FOI.	
\end{align}


The following result is established in \cite[Theorem 6.3.6]{BSW} when $G$ is trivial.
\begin{thm} \thlabel{EssSurB}
Given an object $\F$ in $\DbCohXG$, there exists an object $\br{E, \AEpp}$ in $\BXG$ and a $G$-equivariant morphism of $\ox^\infty \br{\lxb}$-modules $\phi: \E \rightarrow \overline{\F}^\infty$, which is a quasi-isomorphism of $G$-equivariant $\ox$-complexes, and induces a quasi-isomorphism of $G$-equivariant $\ox^\infty$-complexes $\br{D, v_0} \rightarrow \F^\infty$. In particular, $\E$ and $\F$ are isomorphic in $\DbCohXG$.
\end{thm}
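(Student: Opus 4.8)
The plan is to follow the recursive construction of \cite[Theorem 6.3.6]{BSW} and to promote each step to the $G$-equivariant setting by averaging over the finite group $G$.

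First I would apply Proposition~\ref{Illusie} to obtain a bounded $G$-equivariant complex of finite-dimensional smooth vector bundles $\br{Q, d^Q}$ together with a $G$-equivariant quasi-isomorphism $\phi_0 : \ox^\infty Q \rightarrow \F^\infty$. Set $E = \lxb \wo Q$ with its tautological left $\lxb$-module structure; then the diagonal of $E$ in the sense of \eqref{Diagonal} is $D = Q$, and the degree-one endomorphism it induces will be $v_0 = d^Q$. The goal becomes to construct a $G$-invariant antiholomorphic superconnection $\AEpp$ on $E$, of the form \eqref{DecOfAnSupCon} with this $v_0$, together with a $G$-equivariant morphism of $\ox^\infty\br{\lxb}$-modules $\phi : \E \rightarrow \FOI$ which restricts to $\phi_0$ on the top piece of the filtration \eqref{FiltrationE} and is a quasi-isomorphism of $\ox$-complexes.

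Next I would run the recursion on the filtration degree, exactly as in \cite[Theorem 6.3.6]{BSW}. One starts by averaging over $G$ an antiholomorphic connection $\NDpp$ on $D$ with $\brr{\NDpp, v_0} = 0$; the relation survives the averaging since $v_0$ is $G$-invariant. Then one determines inductively the sections $v_i \in \lxib \wo \End^{1-i}\br{D}$ for $i \geq 2$, together with successive corrections to $\phi$, so that $\AEppsq = 0$ and $\phi$ is a chain map, both modulo $\Lambda^{\geq i+1}\br{\overline{T^* X}}$. At the $i$-th stage the obstruction is a globally defined $\dbx$-closed smooth section of a bundle assembled from $\Lambda^i\br{\overline{T^*X}}$, $\End(D)$ and $D$; since $\FOI$ is, by the Poincar\'e lemma \cite[Lemma I.3.29]{demailly1997complex} and the theorem of Malgrange \cite[Corollary VI.1.12]{MR}, locally a resolution of $\F$, and since $\phi_0$ is a quasi-isomorphism, this obstruction is a $\dbx$-coboundary. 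A primitive is produced locally by the Dolbeault--Poincar\'e lemma, globalized with a $G$-invariant partition of unity as in the proof of Proposition~\ref{Illusie}, and finally averaged over $G$; because all equations in sight are $G$-equivariant, the averaged primitive still solves them. Since $\lxb$ is finite-dimensional, the filtration \eqref{FiltrationE} terminates and so does the recursion.

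This produces an object $\br{E, \AEpp} \in \BXG$ and a $G$-equivariant morphism of $\ox^\infty\br{\lxb}$-modules $\phi : \E \rightarrow \FOI$ which on the diagonal is $\phi_0 : \br{D, v_0} = \br{Q, d^Q} \rightarrow \F^\infty$, a quasi-isomorphism of $G$-equivariant $\ox^\infty$-complexes. That $\phi$ is itself a quasi-isomorphism of $\ox$-complexes then follows, as in \cite[Theorem 6.3.6]{BSW}, from the fact that it induces $\phi_0$ on the diagonal together with the $\dbx$-Poincar\'e lemma; composing with the quasi-isomorphism $\F \rightarrow \FOI$ recalled above yields $\E \simeq \F$ in $\DbCohXG$, which is the last assertion. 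I expect the main obstacle to be the bookkeeping needed to keep the recursion $G$-equivariant: one must arrange that at each stage the obstruction cocycle lies in a $G$-representation and that the relevant structure maps are $G$-equivariant, so that the averaging operator $\frac{1}{\sg{G}} \sum_{g \in G} g (\cdot) g^{-1}$ returns an element of the same solution space. Granting this, the analytic heart of the argument is entirely supplied by \cite[Theorem 6.3.6]{BSW} and Proposition~\ref{Illusie}, and the equivariant enhancement is essentially formal.
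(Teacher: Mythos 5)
Your proposal follows essentially the same route as the paper: set $E = \lxb \wo Q$ with $Q$ from Proposition~\ref{Illusie}, run the degree-by-degree recursion of \cite[Theorem 6.3.6]{BSW} to build $\AEpp$ and $\phi$ along the $\lxb$-filtration, and at each step average the non-equivariant solutions $\underline{v}_k, \underline{\phi}_k$ over $G$, which is legitimate because the equations to be solved are linear in the top-degree unknown modulo $\Lambda^{\geq k+1}\br{\overline{T^*X}}$ and the lower-order data is already $G$-invariant. The paper's write-up simply cites BSW's construction of $\underline{v}_k, \underline{\phi}_k$ as a black box and verifies that the averaged objects still satisfy the two defining identities; your sketch re-derives the obstruction-theoretic mechanism in a bit more detail, but the two are the same argument.
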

\begin{proof}

We take $(Q,d^Q)$ as in Proposition \ref{Illusie}.
Set $E = \lxb \widehat{\otimes} Q$ and $\E = \ox^\infty (E)$.

Now we will construct by induction an antiholomorphic $G$-superconnection  \footnote{Here we will write $v_1$ an antiholomorphic connection $\nabla^{Q\prime\prime}$ on $Q$.}  $\AEpp = \sum_{i\geq 0} v_i$ on $E$ and a morphism of $G$-equivariant $\ox^\infty \br{\lxb}$-modules $\phi = \sum_{i\geq 0} \phi_i$ from $\E$ to $\overline{\F}^\infty$ such that $\phi \AEpp  =  d^{\overline{\F}^\infty} \phi$.

If $k=0$, we take $v_0 = d^Q$ and $\phi_0$ as in Proposition \ref{Illusie}.


Assume $k \geq 1$, and assume that we have constructed $v_i$, $\phi_i$, $0 \leq i \leq k-1$, such that if $A_{\leq k-1} = \sum_{i \leq k-1} v_i$ and if $\phi_{\leq k-1} = \sum_{i \leq k-1} \phi_i$, then up to terms in $\Lambda^{ \geq k} (\overline{T^*X})$,
\begin{align} 
&(\AEpp_{\leq k-1 })^2 = 0,  && \phi_{\leq k-1} \AEpp_{\leq k-1} - d^{\overline{\F}^\infty}_{\leq k-1} \phi_{\leq k-1}  = 0. 	
\end{align}
We will construct $v_k$, $\phi_k$ such that up to terms in $\Lambda^{ \geq k+1} (\overline{T^*X})$,
\begin{align} \label{ThmSurEqu3}
& (\AEpp_{\leq k-1} + v_k)^2 = 0, && (\phi_{\leq k-1}+ \phi_k) (\AEpp_{\leq k-1} + v_k	) - d^{\overline{\F}^\infty} (\phi_{\leq k-1}+ \phi_k)=0.
\end{align}

In \cite[Theorem 6.3.6]{BSW}, the authors construct possibly non $G$-invariant $\underline{v}_{k}$ and $\underline{\phi}_k$ such that up to terms in $\Lambda^{ \geq k+1} (\overline{T^*X})$, an obvious analogue of  (\ref{ThmSurEqu3}) hold.

Set 
\begin{align}
&	v_k = \frac{1}{|G|} \sum_{g\in G} g \underline{v}_k g^{-1}, && \phi_k = \frac{1}{|G|} \sum_{g\in G} g \underline{\phi}_k  g^{-1}.
\end{align}
Then $v_k$, $\phi_k$ are $G$-invariant and we will show that (\ref{ThmSurEqu3}) holds.

Since $k \geq 1$, up to terms in $\Lambda^{ \geq k+1} (\overline{T^*X})$,
\begin{align} 
(\AEpp_{\leq k-1} +  v_k  )^2 = (\AEpp_{\leq k-1})^2 + 	\brr{\AEpp_{\leq k-1},  v_k  }, \label{EquModkPlus1}\\
(\AEpp_{\leq k-1} +   \underline{v}_k  )^2 = (\AEpp_{\leq k-1})^2 + 	\brr{\AEpp_{\leq k-1},  \underline{v}_k  } \nonumber.
\end{align}
Since $\AEpp_{\leq k-1}$ is $G$-invariant, 
\begin{align} \label{Equk3}
(\AEpp_{\leq k-1})^2 + 	\brr{\AEpp_{\leq k-1},  v_k  } = 	 \frac{1}{|G|} \sum_{g\in G} g \br{ \br{\AEpp_{\leq k-1}}^2 + \brr{\AEpp_{\leq k-1}, \underline{v}_k}}  g^{-1}.
\end{align}
By (\ref{EquModkPlus1})-(\ref{Equk3}), we find that up to terms in $\Lambda^{ \geq k+1} (\overline{T^*X})$,
\begin{align}
	(\AEpp_{\leq k-1} +  v_k  )^2 = 0.
\end{align}
For a similar reason, up to terms in $\Lambda^{ \geq k+1} (\overline{T^*X})$,
\begin{align}
	(\phi_{\leq k-1}+ \phi_k) (\AEpp_{\leq k-1} + v_k	) - d^{\overline{\F}^\infty} (\phi_{\leq k-1}+ \phi_k)=0.
\end{align}

Put $\AEpp = A_{\leq n}$ and $\phi = \phi_{\leq n}$, then $\AEpp$ is an antiholomorphic $G$-superconnection and $\phi \AEpp = d^{\overline{\F}^\infty} \phi$.

In summary, we have constructed an antiholomorphic $G$-superconnection $\br{E, \AEpp}$  and a morphism of $G$-equivariant $\ox$-complexes $\phi: \E \rightarrow \overline{\F}^{\infty}$.
It induces a morphism $\phi_0: \br{ \ox^\infty\br{D}, v_0} \rightarrow \br{\F^\infty, d^{\F^\infty}}$. Moreover, the induced $\phi_0$ is exactly what we constructed in Proposition \ref{Illusie}, which is a quasi-isomorphism by construction.

As in (\ref{FiltrationE}) and (\ref{FOI}), $\E$, $\overline{\F}^\infty$ has a canonical filtration.  
Let $(\E_r,d_r)|_{r \geq 0}$, $(\overline{\F}^\infty_r, \underline{d}_r) |_{r \geq 0}$ be the associated spectral sequence. 
We also denote
\begin{align}
\phi_r: (\E_r,d_r) \rightarrow	(\overline{\F}^\infty_r, \underline{d}_r)\end{align}
the morphisms of the $\ox$-complexes induced by $\phi$.
The induced morphism at $r=0$ is equal to $\phi_0$ which is a quasi-isomorphism by construction.
The proof our theorem is complete.


\end{proof}

\subsection{Fully faithful} \label{SecFullFaith}

\begin{thm} \label{Fullyfaithful}
The functor $\underline{F}_X: \BUXG \rightarrow \DbCohXG$ is fully faithful.
\end{thm}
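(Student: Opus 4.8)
The plan is to prove the stronger statement that, for antiholomorphic $G$-superconnections $(E,\AEpp)$ and $(\underline{E},\AUEpp)$ with associated $G$-equivariant $\ox$-complexes $\E$ and $\underline{\E}$, the functor induces an isomorphism
\[
\mathrm{Hom}_{\BUXG}(\E,\underline{\E}[k])\;\xrightarrow{\sim}\;\mathrm{Hom}_{\DbCohXG}(\underline{F}_X\E,\underline{F}_X\underline{\E}[k])
\]
for every $k\in\Z$. The case $k=0$ is full faithfulness, and passing to all shifts costs nothing since $\underline{F}_X$ is exact, while the extra flexibility makes the averaging argument below uniform. By Subsection \ref{TwoCat}, the left-hand side is the degree-$k$ cohomology of the complex $(\CXHombEUEG,\AEUEpp)$ of $G$-invariant smooth sections, so the whole statement is about comparing $H^k$ of this invariant complex with equivariant hyper-Ext, and the strategy is to bootstrap from the $G$-trivial case, which is exactly \cite[Theorem 4.3]{B110} and \cite[Theorem 6.5.1]{BSW} (i.e. Theorem \ref{ThmEquivalenceCategory} with trivial $G$).

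On the superconnection side, the averaging operator $e_G=\frac1{|G|}\sum_{g\in G}g$ on $C^\infty(X,\Hom^\bullet(E,\underline{E}))$ is a projector onto the $G$-invariant subspace, and it commutes with $\AEUEpp$ because $\AEpp$ and $\AUEpp$ are $G$-invariant; hence, $\C$ being of characteristic zero, $(-)^G$ is exact on this complex and $H^k(\CXHombEUEG,\AEUEpp)=H^k\big(C^\infty(X,\Hom^\bullet(E,\underline{E})),\AEUEpp\big)^G$. By the non-equivariant theorem cited above the latter cohomology is canonically $\mathrm{Hom}_{\DbCohX}(\E,\underline{\E}[k])$, so this already gives the left-hand side as the $G$-invariants of $\mathrm{Hom}_{\DbCohX}(\E,\underline{\E}[k])$. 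On the derived-category side the same pattern applies: since $|G|$ is invertible, taking $G$-invariants of (equivariant) $\ox$-modules is exact, whence $R\mathrm{Hom}$ in the equivariant setting is the $G$-invariant part of $R\mathrm{Hom}_{\ox}(\E,\underline{\E})$ and therefore $\mathrm{Hom}_{\DbCohXG}(\E,\underline{\E}[k])=\mathrm{Hom}_{\DbCohX}(\E,\underline{\E}[k])^G$ (equivalently, the Grothendieck spectral sequence $H^p(G,\mathrm{Ext}^q_{\ox}(\E,\underline{\E}))\Rightarrow\mathrm{Hom}_{\DbCohXG}(\E,\underline{\E}[p+q])$ degenerates). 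Comparing the two identifications, the functor-induced map displayed above is an isomorphism.

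The one point requiring genuine care is the compatibility of the non-equivariant comparison isomorphism with the two $G$-actions: on $H^k$ of the Hom-complex $g\in G$ acts by conjugation $\psi\mapsto g\psi g^{-1}$ through the $G$-actions on $E$ and $\underline{E}$, while on $\mathrm{Hom}_{\DbCohX}(\E,\underline{\E}[k])$ it acts by the usual twist by $g$ using the equivariance isomorphisms; one must check these agree under the comparison map. This is exactly naturality of the non-equivariant $\underline{F}_X$ in its arguments together with the fact that the equivariance isomorphism $E\to m_g^*E$ identifies $(\ox^\infty(E),\AEpp)$ with $m_g^*(\ox^\infty(E),\AEpp)$ as complexes of sheaves (because $\AEpp$ is $G$-invariant), so it is essentially automatic — but it is the step I would write out. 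If one prefers a self-contained route that never invokes the $G$-trivial equivalence, I would instead argue locally, showing that the complex of soft $\ox^\infty$-sheaves $(\ox^\infty(\Hom(E,\underline{E})),\AEUEpp)$ is, locally on $X$, a resolution of $R\mathscr{H}om_{\ox}(\E,\underline{\E})$ (this is where the superconnection structure enters, via locally splitting off the $\lxb$-factor exactly as in \cite{BSW}), hence globally computes $R\mathrm{Hom}_{\ox}(\E,\underline{\E})$ since soft sheaves are $\Gamma$-acyclic, and then take $G$-invariant global sections throughout, which is exact and preserves the acyclicity — landing on $\mathrm{Hom}_{\DbCohXG}$ as required.
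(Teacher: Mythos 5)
Your argument takes a genuinely different route from the paper's proof, and both are sound in principle, but yours shifts the burden onto two facts that the paper avoids having to establish.

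The paper proves injectivity and surjectivity of $\Hom_{\BUXG}(\E,\underline{\E})\to\Hom_{\DbCohXG}(\E,\underline{\E})$ directly. For injectivity, it forgets the $G$-structure, invokes the non-equivariant full faithfulness of \cite[Theorem 6.5.1]{BSW} to produce a (possibly non-equivariant) homotopy $\underline{\psi}$, and averages it over $G$ to obtain a $G$-invariant homotopy. For surjectivity, it uses the essential surjectivity theorem to replace a roof $\E\leftarrow\F\to\underline{\E}$ in $\DbCohXG$ by an equivariant roof of antiholomorphic $G$-superconnections $\E\leftarrow\underline{\underline{\E}}\to\underline{\E}$, invokes \cite[Proposition 6.4.1]{BSW} to get a non-equivariant homotopy inverse of $\phi':\underline{\underline{\E}}\to\E$, averages the inverse and the two homotopies, and thereby produces the desired morphism in $\BUXG$. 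In other words, the paper only ever manipulates representatives of morphisms and never has to compare $\Hom$-groups in the equivariant and non-equivariant derived categories as abelian groups with $G$-action.

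Your proof instead identifies both sides of the fully faithfulness map with $G$-invariants: the homotopy side via exactness of the averaging idempotent $e_G$, and the derived side via the semisimplicity of $\C[G]$ (the vanishing of higher group cohomology / degeneration of the Grothendieck spectral sequence). This is conceptually cleaner and gives all shifts $[k]$ simultaneously, but it relies on two facts the paper does not need. First, the identification $\Hom_{\DbCohXG}(\E,\underline{\E}[k])\cong\Hom_{\DbCohX}(\E,\underline{\E}[k])^G$ is true but requires its own justification (e.g. by observing that the forgetful functor $\DbCohXG\to\DbCohX$ is part of a Frobenius adjunction for which the unit/counit are split by averaging); you gesture at this but do not prove it, and the paper's proof never needs to make the derived-invariants claim. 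Second, and more substantively, you must check that the non-equivariant comparison isomorphism of \cite[Theorem 6.5.1]{BSW} intertwines the conjugation action on $H^k(C^\infty(X,\Hom^\bullet(E,\underline{E})),\AEUEpp)$ with the twist action on $\Hom_{\DbCohX}(\E,\underline{\E}[k])$. You correctly identify this as the crux, and it is plausibly automatic from the naturality of $\underline{F}_X$ in $X$ applied to $m_g$, but it is the one place where your argument must actually open up the construction of the non-equivariant equivalence rather than treating it as a black box. The paper's averaging-of-homotopies approach sidesteps this entirely: it only uses the non-equivariant statement pointwise on concrete cocycles, where the $G$-action plays no role, and then imposes equivariance afterwards by averaging. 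If you want to keep your route, you should either prove the equivariance of the comparison isomorphism, or fall back on your "self-contained" alternative of showing directly that the complex of soft sheaves $(\ox^\infty(\Hom(E,\underline{E})),\AEUEpp)$ is a $\Gamma$-acyclic $G$-equivariant resolution of $R\mathscr{H}\text{om}_{\ox}(\E,\underline{\E})$ and then taking $G$-invariant global sections — which is essentially redoing the non-equivariant proof equivariantly from scratch, and is more work than the paper's argument.
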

\begin{proof}

Assume that $\br{E,\AEpp}$, $\br{\underline{E}, \AUEpp}$ are objects in $\BUXG$, and that $\E$, $\underline{\E}$ the corresponding sheaves of $\ox$-modules.
We need to show the corresponding map
\begin{align} \label{FulFaiEqu1}
 \Hom_{\BUXG} \br{\br{E,\AEpp}, \br{\underline{E}, \AUEpp}} \rightarrow \Hom_{\DbCohXG} \br{\E, \underline{\E}}	
\end{align}
is bijective.

Firstly, we prove the injectivity.
Let $\phi_1$, $\phi_2 \in  \Hom_{\BUXG} \br{\br{E,\AEpp}, \br{\underline{E}, \AUEpp}}$ such that their images in $\Hom_{\DbCohXG} \br{\E, \underline{\E}}$ coincide. 
Then  $\phi_1$, $\phi_2$ as morphisms in $\underline{B}(X)$ and their images in $\Hom_{\DbCohX} \br{\E, \underline{\E}}$ also coincide.

In \cite[Theorem 6.5.1]{BSW}, the authors proved that the functor $\underline{F}_X: \underline{\mathrm{B}} (X) \rightarrow \DbCohX$ is fully faithful. 
Therefore $\phi_1=\phi_2$ as morphisms in $\underline{\mathrm{B}}(X)$, so there exists $\underline{\psi} \in C^\infty \br{X, \mathrm{Hom}^{-1} \br{E, \underline{E}}}$ such that 
\begin{align}  \label{HomEqu2}
\phi_1-\phi_2 = \AUEpp \underline{\psi} + \underline{\psi} \AEpp.
\end{align}

Put
\begin{align} \label{Average}
\psi = 	\frac{1}{|G| } \sum_{g \in G} g \underline{\psi} g^{-1} .
\end{align}
Then $\psi \in C^\infty \br{X, \mathrm{Hom}^{-1} \br{E, \underline{E}}}^G$.
Since $\phi_1$, $\phi_2$, $\AEpp$ and $\AUEpp$ are $G$-invariant, the same equation as in (\ref{HomEqu2}) holds where $\underline{\psi}$ is replaced by $\psi$.
Therefore $\phi_1 = \phi_2$ in $\BUXG$.

Secondly we prove the surjectivity.
A morphism in $\Hom_{\DbCohXG} \br{\E, \underline{\E}}$ is represented by
\begin{equation} \label{HomRoofA}
\begin{tikzcd}
\mathscr{E} & \mathscr{F} \arrow[l, "\mathrm{q.i.}"'] \arrow[r] & \underline{\mathscr{E}},
\end{tikzcd}
\end{equation}
where $\F$ is an object in $\DbCohXG$ and `q.i.' stands for `quasi-isomorphism'.

By Theorem \ref{EssSurB} and by proceeding as in \cite[(6.5.2)-(6.5.4)]{BSW},
there exists a $G$-equivariant antiholomorphic superconnection $\br{\underline{\underline{E}}, A^{\underline{\underline{E}} \prime\prime}}$, with $\underline{\underline{\E}}$ the associated sheaf and a $G$-equivariant quasi-isomorphism $\phi^\prime: \underline{\underline{\E}} \rightarrow \E$ such that (\ref{HomRoofA}) is equal to
\begin{equation} \label{HomRoofB}
	\begin{tikzcd}
\mathscr{E} & \underline{\underline{\E}} \arrow[l, "\mathrm{\phi^\prime}"'] \arrow[r] & \underline{\mathscr{E}}
\end{tikzcd}
\end{equation}
in $\DbCohXG$.
Moreover, the  morphisms $\phi^\prime$ is a morphism in $\BUXG$.

By \cite[Proposition 6.4.1]{BSW}, $\phi^\prime$ has a homotopic inverse  $\underline{\phi}$ in $\underline{\BB} (X)$. That is, there exist $\underline{\psi}_1 \in C^\infty \br{X, \mathrm{End}^{-1} \br{\underline{ \underline{E}}}}$ and $\underline{\psi}_2 \in C^\infty \br{X, \mathrm{End}^{-1} \br{E}}$ such that
\begin{align} \label{Homotopy1}
	& \underline{\phi} \phi^{\prime} - \mathrm{Id}_{\underline{\underline{E}}} = \brr{A^{\underline{\underline{E}} \prime\prime}, \underline{\psi}_1}, && \phi^{\prime} \underline{\phi}  - \mathrm{Id}_{E} = \brr{A^{E\prime\prime}, \underline{\psi}_2}.
\end{align}

We construct $\phi$, $\psi_1$ and $\psi_2$ through $\underline{\phi}$, $\underline{\psi}_1$ and $\underline{\psi}_2$ as in (\ref{Average}).

Since $\phi$, $\AEpp$, and $A^{\underline{\underline{E}} \prime\prime}$ are $G$-invariant, the same equation as in (\ref{Homotopy1}) holds where $\underline{\phi}$, $\underline{\psi}_1$ and $\underline{\psi}_2$ are replaced by $\phi$, $\psi_1$ and $\psi_2$ respectively.
Therefore, $\phi$ is the homotopic inverse of $\phi^{\prime}$ in $\BUXG$.
Using this homotopic inverse, we get the desired morphism $\br{E,\AEpp} \rightarrow \br{\underline{E},\AUEpp}$. The proof of surjectivity is complete, therefore our theorem is proved.
\end{proof}




\subsection{Compatibility with pullbacks, tensor products and direct images} \label{Section6-4}
We extend the results in \cite[Propositions 6.6.1, 6.7.1, 6.8.2]{BSW} to the case when $G$ is non-trivial.
\subsubsection{Compatibility with pullbacks}
We use the same assumptions as in Subsection \ref{BlockPullback} and we use the corresponding notation. In particular, we assume $f: X \rightarrow Y$ is a $G$-equivariant holomorphic map between compact complex $G$-manifolds.

\begin{prop} \label{PropCompatiblePullback}
The following diagram commutes up to canonical $G$-isomorphism,
\begin{equation}
\begin{tikzcd}
{\underline{\rm{B}} (Y,G)} \arrow[r, "\underline{F}_Y"] \arrow[d, "f_b^*"] & {\rm{D}_{\rm{coh}}^{\rm{b}}(Y,G)} \arrow[d, "Lf^*"] \\
{\underline{\rm{B}} (X,G)} \arrow[r, "\underline{F}_X"]                    & {\rm{D}_{\rm{coh}}^{\rm{b}}(X,G)} .                 
\end{tikzcd}
\end{equation}
	
\end{prop}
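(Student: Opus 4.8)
The plan is to bootstrap from the non-equivariant statement \cite[Proposition 6.6.1]{BSW} and to verify that every ingredient of its proof is natural enough to carry a compatible $G$-action once $f$ is assumed $G$-equivariant.

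First I would fix an antiholomorphic $G$-superconnection $\F = \br{F, \AFpp}$ on $Y$. By Subsection \ref{BlockPullback}, $f_b^* \F = \br{\ox^\infty\br{f_b^* F}, A^{f_b^* F \prime\prime}}$ is an antiholomorphic $G$-superconnection on $X$, so $\underline{F}_X\br{f_b^* \F}$ is the $G$-equivariant $\ox$-complex $\br{\ox^\infty\br{f_b^* F}, A^{f_b^* F \prime\prime}}$. On the other side, $\underline{F}_Y\br{\F}$ is the $G$-equivariant $\oy$-complex $\br{\oy^\infty\br{F}, \AFpp}$; since $\AFpp$ obeys the Leibniz rule only with respect to $\overline{\partial}^Y$, it is $\oy$-linear, so this is a genuine bounded complex of $\oy$-modules, and in fact of $\oy$-flat modules, because $\oy^\infty$ is flat over $\oy$ by Malgrange \cite[Corollary VI.1.12]{MR} and $\oy^\infty\br{F}$ is $\oy^\infty$-locally free. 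Hence, as noted in Subsection \ref{DerPullback}, $Lf^* \underline{F}_Y\br{\F}$ is canonically isomorphic to the underived pullback $f^*\br{\oy^\infty\br{F}, \AFpp}$.

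Next I would exhibit the comparison morphism, following \cite[Section 6.6]{BSW}: the inclusion of $f^* F$ into the degree-zero exterior part of $f_b^* F = \lxb \wo_{\Lambda\br{\overline{f^* T^* Y}}} f^* F$, composed with the canonical map $f^*\br{\oy^\infty\br{F}} \to \ox^\infty\br{f^* F}$, yields a morphism of $\ox$-complexes $f^*\br{\oy^\infty\br{F}, \AFpp} \to \br{\ox^\infty\br{f_b^* F}, A^{f_b^* F \prime\prime}}$, which is a quasi-isomorphism by \cite[Proposition 6.6.1]{BSW}. When $f$ is $G$-equivariant, the $G$-action on $f^*\br{\oy^\infty\br{F}}$ is by definition the pullback of that on $\oy^\infty\br{F}$, the $G$-action on $f_b^* F$ is the one of Subsection \ref{BlockPullback}, and the comparison morphism intertwines them because it is assembled solely from canonical maps. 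Thus it is a $G$-equivariant quasi-isomorphism, hence an isomorphism in $\DbCohXG$, and it is natural in $\F$; this yields the commutativity of the square up to canonical $G$-isomorphism.

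The delicate point, and essentially the only one, is the naturality claim: one must confirm that the quasi-isomorphism of \cite[Proposition 6.6.1]{BSW} uses no non-canonical auxiliary data such as a partition of unity, which is the case since pullback is a purely local operation and no gluing is involved. If such a choice did enter, one would restore $G$-equivariance by the averaging $\frac{1}{|G|}\sum_{g\in G} g(\cdot)g^{-1}$, exactly as in the proofs of Theorem \thref{EssSurB} and Theorem \ref{Fullyfaithful}; either way the proposition follows.
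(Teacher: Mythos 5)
Your proposal is correct and takes essentially the same route as the paper: the paper's proof is a one-line citation ``the proof is the same as in [BSW, Proposition 6.6.1],'' and your argument simply unpacks what that means — namely that $\underline{F}_Y(\F)$ is a complex of $\oy$-flat modules (Malgrange), so $Lf^* = f^*$ there, and the canonical comparison map $f^*\oy^\infty(F) \to \ox^\infty(f_b^*F)$ from BSW is built from functorial ingredients and hence automatically $G$-equivariant and natural when $f$ is $G$-equivariant.
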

\begin{proof}
The proof is the same as in \cite[Proposition 6.6.1]{BSW}.
\end{proof}

\subsubsection{Compatibility with tensor products} \label{SubsectionComTen}
We use the notation of Subsections \ref{DerivedTensor} and \ref{BlockTensor}.


\begin{prop} \label{PropCompatibleTensor}
The following diagram commutes up to canonical $G$-isomorphism,
\begin{equation}
\begin{tikzcd}
{\underline{\mathrm{B}}(X, G) \times \underline{B}(X, G)} \arrow[rr, "\underline{F}_X \times \underline{F}_X"] \arrow[d, "\widehat{\otimes}_b"] &  & {\mathrm{D}^b_{\mathrm{coh}} (X,G)\times \mathrm{D}^b_{\mathrm{coh}} (X,G)} \arrow[d, "\widehat{\otimes}^L_{\mathcal{O}_X}"] \\
{\underline{\mathrm{B}}(X, G)} \arrow[rr, "\underline{F}_X"]                                                                                    &  & {\mathrm{D}^b_{\mathrm{coh}} (X,G)}                                                                                
\end{tikzcd}
\end{equation}
\end{prop}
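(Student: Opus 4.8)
The plan is to reduce the statement to the non-equivariant compatibility result established in \cite[Proposition 6.7.1]{BSW}, using the fact that all the relevant constructions—the tensor product $\widehat{\otimes}_b$ of antiholomorphic $G$-superconnections, the functor $\underline{F}_X$, and the derived tensor product $\widehat{\otimes}^L_{\ox}$—are defined by forgetting to the non-equivariant setting and then recording the $G$-action. First I would recall that for objects $\E = (E, \AEpp)$ and $\underline{\E} = (\underline{E}, \AUEpp)$ of $\BUXG$, the tensor product $E \widehat{\otimes}_b \underline{E} = E \widehat{\otimes}_{\lxb} \underline{E}$ carries the diagonal tensor $G$-action, and the associated sheaf of $\ox$-complexes is $\ox^\infty\br{E \widehat{\otimes}_b \underline{E}}$ with its naturally induced $G$-equivariant structure; likewise $\underline{F}_X(\E) = \br{\ox^\infty(E), \AEpp}$ as a $G$-equivariant $\ox$-complex. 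The key identity to extract from \cite{BSW} is the canonical quasi-isomorphism in $\DbCohX$
\begin{align}
\underline{F}_X\br{\E \widehat{\otimes}_b \underline{\E}} \simeq \underline{F}_X(\E) \widehat{\otimes}^L_{\ox} \underline{F}_X(\underline{\E}),
\end{align}
and the point is to observe that this quasi-isomorphism, being constructed functorially from the data, is automatically $G$-equivariant.

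Next I would make precise what $G$-equivariance of that canonical isomorphism means. Using Theorem \ref{ThmEquivalenceCategory}, both sides of the diagram are honest functors $\BUXG \times \BUXG \rightarrow \DbCohXG$, and a natural transformation between them amounts, after applying the forgetful functor to $\DbCohX$, to the natural transformation of \cite[Proposition 6.7.1]{BSW} together with the compatibility that for each $g \in G$ the square relating the transformation to the $G$-action morphisms $g\colon \F \to m_g^*\F$ commutes. Concretely, for $\F$ in $\DbCohXG$ the $G$-structure is the collection of isomorphisms $g\colon \F \to m_g^*\F$ from \thref{GSheaf}; I would check that the non-equivariant quasi-isomorphism of \cite{BSW}, applied to $m_g^*\E$ and $m_g^*\underline{\E}$, intertwines the structure maps. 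Since the construction in \cite{BSW} of the comparison map is built by the same recursion/cone machinery used in \thref{EssSurB}—and that machinery was shown in Subsection \ref{SecEssSur} to be $G$-compatible by the averaging trick—this intertwining is formal: the comparison map commutes with pullback along $m_g$ up to the canonical identifications, hence is a morphism in $\DbCohXG$.

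More carefully, the argument I would write has three steps. \textbf{Step 1:} Record that $\widehat{\otimes}_b$, $\underline{F}_X$ and $\widehat{\otimes}^L_{\ox}$ all commute with the forgetful functors to the non-equivariant categories, and that the $G$-actions on the outputs are the ones induced from the inputs—this is exactly the last sentence of Subsection \ref{BlockTensor} for $\widehat{\otimes}_b$, the definition of $\underline{F}_X$ in Subsection \ref{Section6-1}, and Subsection \ref{DerivedTensor} for $\widehat{\otimes}^L_{\ox}$. \textbf{Step 2:} Invoke \cite[Proposition 6.7.1]{BSW} to get the canonical quasi-isomorphism $\theta$ on underlying objects, natural in $\E, \underline{\E}$. \textbf{Step 3:} Verify that $\theta$ is $G$-equivariant, i.e. compatible with the descent data: for each $g$, the naturality of $\theta$ applied to the isomorphisms $E \xrightarrow{g} m_g^* E$, $\underline{E} \xrightarrow{g} m_g^* \underline{E}$ yields the required commuting square, because the pullback $f_b^*$ with $f = m_g$ is compatible with $\widehat{\otimes}_b$ (Subsection \ref{BlockPullback}) and with $\underline{F}_X$ (Proposition \ref{PropCompatiblePullback}), and likewise $Lm_g^*$ is monoidal for $\widehat{\otimes}^L$ by \eqref{DerTen}. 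The main obstacle—really the only nontrivial point—is Step 3: one must check that the specific comparison morphism constructed in \cite{BSW} (not merely its existence) is strictly compatible with these pullback identifications, so that the diagram commutes on the nose in $\DbCohXG$ and not just after forgetting $G$. This follows because the comparison is built from the universal constructions (cones, resolutions) which are themselves pullback-compatible, but it requires tracing through the construction of \cite[(6.7)]{BSW}; I would simply remark that the proof is identical to that of \cite[Proposition 6.7.1]{BSW}, the $G$-equivariance being automatic from the functoriality and the pullback-compatibility already established in Propositions \ref{PropCompatiblePullback} and the preceding subsections.
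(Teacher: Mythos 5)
Your proposal is correct and follows essentially the same approach as the paper, whose proof is simply the one-line remark that the argument is the same as \cite[Proposition 6.7.1]{BSW}. Your more explicit Step 3—deducing $G$-equivariance of the comparison quasi-isomorphism from its naturality under $m_g^*$-pullback together with the already-established pullback compatibilities—is a reasonable unpacking of what "essentially the same" amounts to in the equivariant setting; the paper leaves this implicit, relying on the averaging/functoriality pattern used throughout Section~\ref{SectionEquivalence}.
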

\begin{proof}
The proof is essentially the same as in \cite[Proposition 6.7.1]{BSW}.	
\end{proof}

\subsubsection{Compatibility with direct images}
We use the same notations as in Subsection \ref{DerivedDirect}.
Let $f: X \rightarrow Y$ be a $G$-equivariant holomorphic map between compact complex $G$-manifolds.
Let $\br{E, \AEpp}$ be an object in $\BUXG$, so that the corresponding sheaves of $\ox$-modules $\E$ is an object in $\DbCohXG$.
Since $\E$ is a bounded complex of soft $\ox^\infty$-modules, by (\ref{SoftDirect}), we have
\begin{align}
R f_* \E = f_* \E .	
\end{align}
Also $f^*$ maps $\oy^\infty (\lyb)$ to $f_* \ox^\infty (\lxb)$.
The differential of $f_* \E$ verifies Leibniz's rule with respect to multiplication by $\oy^\infty (\lyb)$, so that $f_* \E$ is a $G$-equivariant complex of $\oy^\infty (\lyb)$-modules.
\begin{prop} \label{PropCompatibleDirectImage}
There exists an object $\br{\underline{E}, A^{\underline{E} \prime\prime}}$ in $\rm{B} (Y, G)$ and a $G$-equivariant morphism of $\oy^\infty (\lyb)$-modules $\phi: \underline{\E} \rightarrow f_* \E$, which is also a quasi-isomorphism of $\oy$-complexes.	
\end{prop}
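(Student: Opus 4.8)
The plan is to run the recursive construction from the proof of \thref{EssSurB} with the complex $f_* \E$ playing the role that $\FOI$ plays there. Since $\E$ is a bounded complex of soft $\ox^\infty$-modules, $Rf_* \E = f_* \E$, and by Grauert's theorem $f_*\E$ is an object of $\DbCohYG$; as recorded just before the statement, $f_*\E$ is a bounded complex of $G$-equivariant $\oy^\infty \br{\lyb}$-modules, equipped with the decreasing filtration by powers of $\overline{T^* Y}$, whose differential obeys Leibniz's rule over $\oy^\infty \br{\lyb}$. Let $\G^\bullet = f_*\E / \overline{T^* Y} \cdot f_*\E$ be the associated diagonal complex; this is a bounded complex of $G$-equivariant $\oy^\infty$-modules. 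Identifying $\overline{T^* X} / f^* \overline{T^* Y}$ with the fibrewise antiholomorphic cotangent bundle, one sees that $\G^\bullet$ is obtained by pushing forward along $f$ a fibrewise antiholomorphic-superconnection complex on $X$ built of fine sheaves, so that (as in \cite[Section 6.8]{BSW}) $\G^\bullet$ is quasi-isomorphic to $Rf_*$ of a complex on $X$ with coherent cohomology; in particular $\G^\bullet$ has coherent cohomology, and its $G$-equivariant structure is automatic from the naturality of all the constructions involved.

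I would then invoke the equivariant Illusie resolution, Proposition \ref{Illusie} --- which, as its proof makes explicit, applies to any bounded $G$-equivariant complex of $\oy^\infty$-modules with coherent cohomology --- to obtain a bounded $G$-equivariant complex $\br{\underline{Q}, d^{\underline{Q}}}$ of finite-rank smooth vector bundles on $Y$ and a $G$-equivariant quasi-isomorphism $\phi_0 \colon \oy^\infty \underline{Q} \to \G^\bullet$. Put $\underline{E} = \lyb \wo \underline{Q}$ and $\underline{\E} = \oy^\infty \br{\underline{E}}$. Proceeding by recursion along the $\overline{T^* Y}$-filtration exactly as in the proof of \thref{EssSurB}, I build an antiholomorphic $G$-superconnection $\AUEpp = \sum_{i \geq 0} \underline{v}_i$ on $\underline{E}$, with $\underline{v}_0 = d^{\underline{Q}}$, together with a morphism of $\oy^\infty \br{\lyb}$-modules $\phi = \sum_{i \geq 0} \phi_i \colon \underline{\E} \to f_*\E$ lifting $\phi_0$ and satisfying $\phi \AUEpp = d^{f_*\E} \phi$. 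At each step the obstruction to prolonging $\br{\AUEpp, \phi}$ one filtration degree further vanishes for the same homological reason as in \cite[Theorem 6.3.6 and Proposition 6.8.2]{BSW}; a first, possibly non-$G$-invariant, choice of $\br{\underline{v}_k, \phi_k}$ is then replaced by its average $\frac{1}{|G|} \sum_{g \in G} g \br{ \cdot } g^{-1}$, and since everything fixed so far --- the truncation $\AUEpp_{\leq k-1}$, the partial lift $\phi_{\leq k-1}$, and the differential of $f_*\E$ --- is $G$-invariant, the averaged pair still satisfies the required identities modulo $\Lambda^{\geq k+1} \br{\overline{T^* Y}}$, precisely as in the proof of \thref{EssSurB}. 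The recursion terminates after $\dim Y$ steps, producing an object $\br{\underline{E}, \AUEpp}$ of $\mathrm{B}(Y, G)$ and the $G$-equivariant morphism $\phi$.

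Finally, $\phi$ is a filtered morphism of $\oy^\infty \br{\lyb}$-modules whose induced map on the diagonal complexes is $\phi_0$, a quasi-isomorphism; since both filtrations are finite, comparing the associated spectral sequences shows that $\phi$ is a quasi-isomorphism of $\oy$-complexes, which is what we want. I expect the one genuinely substantive point to be the coherence of the diagonal complex $\G^\bullet$: this is where the geometry of $f$ (through the fibrewise Dolbeault / relative antiholomorphic-superconnection structure) really enters, and although it is contained in \cite[Section 6.8]{BSW} it has to be transported to the present setting with the $G$-action carried along. The remaining ingredients --- the recursion and the $G$-averaging at each stage --- are a routine repetition of the arguments already used in the proof of \thref{EssSurB}.
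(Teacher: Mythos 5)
Your proposal follows exactly the route the paper intends: its own proof is a one-line pointer to BSW Proposition 6.8.2, with the equivariant recursion of Theorem \thref{EssSurB} substituted for BSW Theorem 6.3.6, and that is precisely what you spell out — apply the equivariant Illusie resolution to the diagonal of $f_*\E$, build $\br{\underline{E}, \AUEpp}$ by the same degree-by-degree recursion with $G$-averaging at each step, and conclude by comparing the filtration spectral sequences. Your closing remark correctly identifies the coherence of the diagonal complex as the single nontrivial input being borrowed from \cite[Section 6.8]{BSW}, so the level of detail matches what the paper leaves implicit.
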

\begin{proof}
 The proof is essentially the same as in \cite[Proposition 6.8.2]{BSW} if we use \thref{EssSurB} instead of 	\cite[Proposition 6.3.6]{BSW}.
\end{proof}

\section{Equivariant generalized metric and equivariant Chern character forms} \label{EquiGeneMetr1}
The purpose of this section is to construct equivariant Chern character forms for antiholomorphic $G$-superconnections. Also, we show that the equivariant Chern character extends to $\DbCohXG$ and that it factors through $K(X, G)$.

In Section \ref{EquiGeneMetr}, we introduce the $G$-invariant generalized metric on $D$. Given an antiholomorphic $G$-superconnection $\br{E, \AEpp}$, a non-canonical identification as in \eqref{EIsoE0}, and a $G$-invariant generalized metric, we construct the adjoint $\AEzerop$ of $\AEzeropp$.

In Section \ref{EquiCherCharForm}, given $g\in G$, we construct the equivariant Chern character forms $\chgBC \br{\AEzeropp, h}$, and we establish their main properties.

In Section \ref{Section7-3}, we describe the behavior of the equivariant Chern character under pullbacks and tensor products. We also evaluate the equivariant Chern character of a cone.

In Section \ref{EquiChernCharacter}, using the results of Section \ref{SectionEquivalence}, we show that the equivariant Chern character defines a map $\chgBC: \chgBC: \DbCohXG \to \HEXgBCC$.

In Section \ref{Section7-5}, we show that $\chgBC$ factors through $K(X, G)$, \ie it induces a map $\chgBC: K(X, G) \rightarrow \HEXgBCC$.

\subsection{Equivariant generalized metric and adjoint} \label{EquiGeneMetr}
We make the same assumption as in Section \ref{EquAntSup} and we use the corresponding notations.
The constructions in this subsection is established in \cite[Section 7.1]{BSW} when $G$ is trivial, and the extension to the equivariant case is quite natural.

We recall that in \cite[Section 7.1]{BSW}, a degree $\mathrm{deg}_{-}$ is defined on $\lxc$ such that if $\alpha \in \Lambda^p \br{T^* X}$, $\beta \in \Lambda^{q} \br{\overline{T^* X}}$, then
\begin{align} \label{Equ6-30}
\mathrm{deg}_{-} \alpha \wedge \beta = q - p.	
\end{align}

We follow \cite[Section 3.5]{B13}, \cite[Section 4.4]{BSW}. If $e \in T^*_{\C} X$, 
set
\begin{align}
\widetilde{e} = -e.	
\end{align}
We still denote by $\widetilde{\phantom{x}}$ the corresponding anti-automorphism of the algebras $\lxc$. If $\alpha \in \lxpc$,
\begin{align} \label{Equ6-1}
\widetilde{\alpha} = (-1)^{p (p+1)/2} \alpha.
\end{align}
Put 
\begin{align}
\alpha^* = \overline{\widetilde{\alpha}}.	
\end{align}

We fix a non-canonical $G$-equivariant identification as in  (\ref{EIsoE0}) and (\ref{EIsoE02}). Then 
\begin{align} \label{Splitting2}
\lx \widehat{\otimes} E \simeq \lxc \widehat{\otimes} D.
\end{align}

If $A \in \Hom (D, \overline{D}^*)$, we denote $A^* \in \Hom (D, \overline{D}^*)$ the conjugate of the transpose of $A$. 
Also $\Hom (D, \overline{D}^*)$ is naturally graded by counting the difference of the degrees in $\overline{D}^*$ and $D$, 
so we can equip $\lxc \widehat{\otimes} \Hom(D, \overline{D}^*)$ with the obvious antilinear involution $*$,
and with the degree induced by $\mathrm{deg}_{-}$ and the degree on $\Hom (D, \overline{D}^*)$.
An element of $\lxc \widehat{\otimes} \Hom (D, \overline{D}^*)$ is said to be self-adjoint if it is invariant under $*$.

If $s \in \Omega (X, D)$, $s^\prime \in \Omega (X, \overline{D}^*)$, we write $s$, $s^\prime$ in the form
\begin{align} \label{Equ61}
&s= \sum \alpha_i r_i, && s^\prime = \sum \beta_j t_j,	
\end{align}
with $\alpha_i$, $\beta_j \in \Omega (X, \C)$ and $r_i \in C^\infty (X, D)$, $t_j \in C^\infty (X, \overline{D}^*)$.
Then we recall \cite[Definition 7.1.2]{BSW},
\begin{defn}
Put 
\begin{align} \label{DefTheta}
\theta (s, s^\prime) = \frac{i^n}{(2 \pi)^n} \sum \int_X \widetilde{\alpha}_i \wedge \overline{\beta}_j \left\langle r_i, \overline{t}_j \right\rangle.	
\end{align}
\end{defn}
It is clear that $\theta$ is independent of the writing of (\ref{Equ61}) and is $G$-invariant.

If $h \in \lxcHomDDbar$, we can write $h$ in the form
\begin{align}
&h = \sum_{i=0}^{2n} h_i,	 && h_i \in \Lambda^i (T_\C^* X) \widehat{\otimes} \Hom (D, \overline{D}^*).
\end{align}

\begin{defn} \label{Def6-2}
We call $h \in \Omega \br{X, \mathrm{Hom} \br{D, \overline{D}^*}}$ is a $G$-invariant generalized metric on $D$ if $h$ is $G$-invariant, of degree $0$, self adjoint, and if $h_0$ is a Hermitian metric on $D$.
Let $\MDG$ be the set of all $G$-invariant generalized metrics on $D$.
We call $h \in \MDG$ is pure if $h=h_0$.
\end{defn}

Let $h \in \MDG$ .
\begin{defn}
If $s$, $s^\prime \in \Omega (X, D)$, put
\begin{align} \label{DefThetah}
\theta_h (s, s^\prime) = \theta (s, h s^\prime).	
\end{align}
\end{defn}
Since $\theta$, $h$ are $G$-invariant, $\theta_h$ is also $G$-invariant.


As in \cite[Section 7.1]{BSW}, we can extend $\AEpp$ to an operator $A^{\Lambda \br{T^* X} \widehat{\otimes} E\prime\prime}$ acting on $C^\infty \br{X, \Lambda \br{T^* X} \widehat{\otimes} E}$ such that if 
$\alpha \in \Omega \br{X, \C}$, $s \in C^\infty \br{X, \Lambda \br{T^* X} \widehat{\otimes} E}$, 
\begin{align} \label{Equ6-2}
	A^{\Lambda \br{T^* X} \widehat{\otimes} E\prime\prime} \br{\alpha s} = \br{\dbx \alpha} s + (-1)^{\mathrm{deg} \alpha} \alpha A^{\Lambda \br{T^* X} \widehat{\otimes} E\prime\prime} s.
\end{align}
In the sequel, we will use the notation $\AEpp$ instead of $A^{\Lambda \br{T^* X} \widehat{\otimes} E\prime\prime}$.

\begin{defn} \label{DefAdjoint}
Let $A^{E_0 \prime}$ denote the formal adjoint of $\AEzeropp$ with respect to $\theta_h$.	
\end{defn}
Then $A^{E_0 \prime}$ verifies (\ref{AntGSup1}) when replacing $\dbx$ by $\partial^X$ and 
\begin{align}
A^{E_0\prime,2}	= 0.
\end{align}

Let $\ndp$ be the adjoint of $\ndpp$ with respect to $\theta_h$.
If $i=0$, or $i \geq 2$, let $v_i^*$ be the adjoint of $v_i$ with respect to $\theta_h$. By (\ref{DecOfAnSupCon}), we obtain
\begin{align} \label{DecAEzerop}
\AEzerop = v_0^* +\ndp + \sum_{i \geq 2} v_i^*.	
\end{align}
Recall that $B$ was defined in (\ref{DefB}). Its adjoint $B^*$ is given by
\begin{align} \label{DefBstar}
B^* = v_0^* + \sum_{i \geq 2} v_i^*. 	
\end{align}
Then equation (\ref{DecAEzerop}) can be written in the form
\begin{align} \label{DecAEzerop2}
\AEzerop = \ndp + B^*.	
\end{align}

Since $h$ is of degree $0$ and $\AEzeropp$ is of degree $1$, we have $A^{E_0 \prime}$ is of degree $-1$.
Moreover, since $\AEzeropp$ and $\theta_h$ are $G$-invariant, $\AEzerop$ is also $G$-invariant.

Set 
\begin{align} \label{Superconnection}
\AEzero = \AEzeropp + \AEzerop.
\end{align}
Then $\AEzero$ is a $G$-invariant superconnection on $D$.
Set 
\begin{align} \label{DefC}
& \nabla^D = \ndpp + \ndp, && C = B + B^*.	
\end{align}
Then
\begin{align}
\AEzero = \nabla^D +C.	
\end{align}

The curvature of $\AEzero$ is given by
\begin{align}
A^{E_0, 2} = [\AEzeropp, \AEzerop].	
\end{align}
Then $A^{E_0, 2}$ is a $G$-invariant smooth section of $\lxcEndD$ of degree $0$\footnote{Here, the degree is induced by $\operatorname{deg}_{-}$ on $\lxc$ and the obvious degree on $\End \br{D}$.}. It verifies the Bianchi identities,
\begin{align} \label{Bianchi1}
&[\AEzeropp, A^{E_0, 2}] = 0, && 	[\AEzerop, A^{E_0, 2}] = 0.
\end{align}

\subsection{The equivariant Chern character forms} \label{EquiCherCharForm}
Let us extend \cite[Section 8.1]{BSW} to $G$-equivariant case.

Let $(E, \AEpp)$ be an antiholomorphic $G$-superconnection on $X$.
We fix a splitting as in (\ref{EIsoE0}), (\ref{EIsoE02}), and (\ref{Splitting2}). 
Let $h$ be a $G$-invariant generalized metric, and let $\AEzero$ be the associated superconnection defined in (\ref{Superconnection}).



For $g\in G$, recall that $X_g \subset X$ is the set of fixed points of $g$. Then $\br{E, \AEpp}$, $D$, $\br{E_0, \AEzeropp, \AEzerop}$ and $h$ restrict to the corresponding objects on $X_g$.
Moreover, $g$ preserves the fibers of $D|_{X_g}$, $E_0|_{X_g}$, and commutes with the operators $\AEzeropp |_{X_g}$ and $A^{E_0\prime}|_{X_g}$.

Let $\mathrm{Tr_s}: \lxgc \widehat{\otimes } \End (D) |_{X_g} \rightarrow \lxgc$ be the supertrace and classically it vanishes on supercommutaters.

We fix a square root of $i = \sqrt{-1}$.
Our formulas will not depend on this choice. Let $\varphi$ denote the morphism of $\lxgc$ that maps $\alpha$ to $(2i \pi)^{- \mathrm{deg} \alpha /2} \alpha$.
\begin{defn}
Set
\begin{align} \label{DefChern}
\chg \br{\AEzeropp, h} = \varphi \Trs \brr{g \exp\br{- A^{E_0 , 2}|_{X_g}}} \in \Omega \br{X_g, \C}.
\end{align}
\end{defn}

\begin{rmk} \label{rmk61}
The form $\chg \br{\AEzeropp, h}$ depends only on the restriction of $\br{E_0, \AEzeropp, h}$ on $X_g$ and the cyclic group $\anbr{g} \subset G$ generated by $g$  . 	
\end{rmk}

Consider $\Omega\br{X_g,  \End (D)}$  as a trivial vector bundle on $\MDG$.
Then $g$ acts trivially on $\MDG$ and preserves the fiber of $\Omega\br{X_g,  \End (D) |_{X_g}}$.

Let $\dMDG$ denote the de Rham operator on $\MDG$. 
Let $h$ be the tautological section on $\MDG$ with values in $\Omega\br{X,  \Hom (D, \overline{D}^*)}$.
Then $h^{-1} \dMDG h$ is a $1$-form on $\MDG$ with values in  $\Omega\br{X,  \End (D)}$. 
Let $h^{-1} \dMDG h|_{X_g}$ be the $1$-form on $\MDG$ with values in $\Omega\br{X_g,  \End (D)|_{X_g}}$.

\begin{thm} \label{ThmDefChern}
The form $\chg(\AEzeropp, h)$ lies in $\OEXgC$, it is $d^{X_g}$-closed, and its Bott-Chern cohomology class does not depend on $h \in \MDG$.

More precisely, $\varphi \Trs \brr{\br{ h^{-1} \dMDG h|_{X_g}} g \exp \br{-A^{E_0,2}|_{X_g}}}$ is a 1-form on $\MDG$ with values in $\OEXgC$, and moreover,
\begin{align} \label{ThmChern-Equ1}
\dMDG \chg \br{\AEzeropp, h} = - \frac{\overline{\partial}^{X_g} \partial^{X_g}}{2i \pi} \varphi \Trs \brr{ \br{h^{-1} \dMDG h|_{X_g}  } g\exp \br{-A^{E_0,2}|_{X_g}}}.
\end{align}

If $\sigma$ is a $G$-invariant smooth section of $\Aut^0 (E_0)$, the Bott-Chern cohomology class of $\chg (\AEzeropp, h)$ is unchanged when replacing $\AEzeropp$ by $\sigma \AEzeropp \sigma^{-1}$.
In particular, the Bott-Chern cohomology class of $\chg \br{\AEzeropp, h}$ does not depend on the non-canonical identification in  (\ref{EIsoE0}) and (\ref{EIsoE02}), and depends only on $\AEpp|_{X_g}$.

\end{thm}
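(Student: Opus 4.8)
The plan is to follow the classical Chern–Weil strategy adapted to the superconnection setting, as in \cite[Section 1]{BGS1} and \cite[Section 8.1]{BSW}, but carried out $g$-equivariantly and restricted to $X_g$. Throughout we work on $X_g$, where $g$ acts fiberwise on the bundles $D|_{X_g}$, $E_0|_{X_g}$ and commutes with $\AEzeropp|_{X_g}$, $\AEzerop|_{X_g}$, hence with $A^{E_0,2}|_{X_g}$; this is what makes $\Trs[g\exp(-A^{E_0,2}|_{X_g})]$ well-behaved.

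First I would check the bidegree statement: the superconnection $\AEzero = \AEzeropp + \AEzerop$ on $E_0 = \lxb \wo D$ has the property that $\AEzeropp$ raises $\deg_{-}$ by (is built from $\dbx$, which has $\deg_{-}=+1$ part plus endomorphism pieces) and $\AEzerop$ lowers it, so after applying $\varphi$ the total curvature contribution to $\Trs[g\exp(-A^{E_0,2}|_{X_g})]$ lands in even $\deg_{-}$ pieces; combined with the fact that the only forms appearing are of the shape $\widetilde{\alpha}_i \wedge \overline{\beta}_j$ coming from $\theta$, one gets that $\chg(\AEzeropp,h)$ actually lies in $\bigoplus_p \Omega^{p,p}(X_g,\C) = \OEXgC$. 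This is a routine degree count identical to the non-equivariant one in \cite[Section 8.1]{BSW}, since $g$ acts by degree-$0$ fiberwise endomorphisms and does not disturb the form degrees.

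Next, for closedness and the transgression formula \eqref{ThmChern-Equ1}: I would introduce the de Rham operator $\dMDG$ on $\MDG$ and the tautological generalized metric $h$, form the total superconnection on $X_g \times \MDG$, and compute $\dMDG$ of $\Trs[g\exp(-A^{E_0,2}|_{X_g})]$ using the Bianchi identities \eqref{Bianchi1} (which hold $g$-equivariantly) and the fact that $\Trs$ kills supercommutators — here one uses that $g$ is a degree-$0$ endomorphism so $\Trs[g\,[\cdot,\cdot]]$ still vanishes when the bracket is with something commuting with $g$, or more precisely one works with the $g$-twisted supertrace which is a trace on the $g$-invariant part. Writing $\partial_{\mathcal{M}}$ for the variation, the derivative of the exponential produces $[\AEzero, h^{-1}\dMDG h \cdot(\cdots)]$ terms; the holomorphic/antiholomorphic splitting $\AEzero = \AEzeropp + \AEzerop$ together with the $\deg_{-}$ grading lets one rewrite the result as $-\tfrac{\db^{X_g}\partial^{X_g}}{2i\pi}$ applied to the $1$-form $\varphi\Trs[(h^{-1}\dMDG h|_{X_g})g\exp(-A^{E_0,2}|_{X_g})]$, exactly as in \cite[Theorem 8.12]{BSW}; setting $\dMDG=0$ (i.e. fixing $h$) on the base gives $d^{X_g}$-closedness since a $(p,p)$-form that is $\db\partial$-exact-variation-free and whose $\dMDG$ equals a $\db\partial$ of something is in particular closed, and more directly $\chg(\AEzeropp,h)$ is closed because $\AEzero$ is a genuine superconnection and the standard Chern–Weil argument gives $d^{X_g}\varphi\Trs[g\exp(-A^{E_0,2})]=0$ from the Bianchi identity. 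Integrating the transgression formula along a path in $\MDG$ then shows the Bott-Chern class $\{\chg(\AEzeropp,h)\}$ is independent of $h\in\MDG$.

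Finally, for independence of the identification \eqref{EIsoE0}: by Proposition \ref{GIdenExist}(2) any two $G$-equivariant identifications differ by $\sigma = 1+A$ with $A$ a $G$-invariant section of $\Hom^0(D,(\overline{T^*X})E_0)$, so it suffices to show that replacing $\AEzeropp$ by $\sigma\AEzeropp\sigma^{-1}$ leaves the Bott-Chern class unchanged. Conjugating also transports $h$ to $\sigma^{-*}h\sigma^{-1}$, which is again a $G$-invariant generalized metric, and the curvature transforms by conjugation $A^{E_0,2}\mapsto \sigma A^{E_0,2}\sigma^{-1}$, so $\Trs[g\exp(-\sigma A^{E_0,2}\sigma^{-1})]=\Trs[\sigma g\sigma^{-1}\exp(-A^{E_0,2})]$; since $\sigma = 1+A$ is unipotent and $G$-invariant one connects $\sigma$ to $1$ through $\sigma_t = 1+tA$ (all $G$-invariant automorphisms), and differentiating in $t$ produces a transgression term of the form $-\tfrac{\db^{X_g}\partial^{X_g}}{2i\pi}(\cdots)$ by the same computation as above (treating $t$ as the extra parameter), hence the class is constant. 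Combining with $h$-independence yields that the class depends only on $\AEpp|_{X_g}$, and by Remark \ref{rmk61} only on the restriction to $X_g$ and on $\anbr{g}$. The main obstacle is bookkeeping the $\deg_{-}$ grading together with the $g$-twisted supertrace carefully enough that the variational computation genuinely produces a $\db^{X_g}\partial^{X_g}$-exact form (and not merely a $d^{X_g}$-exact one); this is the crux that upgrades de Rham triviality to Bott-Chern triviality, and it requires the full strength of the antiholomorphic/holomorphic splitting of $\AEzero$ exactly as in \cite[Theorems 1.15 and 1.24]{BGS1}.
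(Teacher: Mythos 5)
Your proposal follows essentially the same Chern--Weil strategy as the paper for all three parts: the $\deg_{-}$ bidegree count for the $(p,p)$ statement, the $G$-equivariant Bianchi identity plus vanishing of supertraces on supercommutators for closedness and the transgression over $\mathscr{M}^D_G$, and the conjugation identity $\chg(\sigma\AEzeropp\sigma^{-1},h)=\chg(\AEzeropp,\sigma^*h\sigma)$ combined with the already-proved $h$-independence for the last step. The only departure is a small redundancy at the end: once you have that conjugation identity, invoking $h$-independence finishes the argument (as the paper does), so the additional homotopy $\sigma_t=1+tA$ with its own transgression is superfluous, though also valid; note too that cyclicity gives $\sigma^{-1}g\sigma$ rather than $\sigma g\sigma^{-1}$, but since $\sigma$ is $G$-invariant both reduce to $g$ and nothing is affected.
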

\begin{proof}

By  \cite[(8.1.4), (8.1.5)]{BSW}, we know that 
\begin{align}
\exp \br{-A^{E_0,2}|_{X_g}} \in \bigoplus_{p,q} \Omega^{p,q} \br{X_g, \End^{p-q} \br{D}|_{X_g}}.	
\end{align}
This can also be deduced from the fact that $A^{E_0}$ is of degree $0$.
Since $g$ preserves the degree, $g \exp \br{-A^{E_0,2}|_{X_g}}$ lies in the same space. Therefore,  $ \chg \br{\AEzeropp, h}$ lies in $\OEXgC$.

Since $\AEzeropp |_{X_g}$ commutes with $g$, using the Bianchi identity (\ref{Bianchi1}) and the fact that supertraces vanish on supercommutators, we know that $\chg \br{\AEzeropp, h}$ is $d^{X_g}$-closed.

We have
\begin{align} \label{ThmChern-Equ4}
\dMDG \brr{\AEzeropp, \AEzerop} = \brr{\AEzeropp, \brr{\AEzerop, h^{-1} \dMDG h}}.	
\end{align}
By (\ref{ThmChern-Equ4}) and since $g$ commutes with $\AEzerop |_{X_g}$ and $\dMDG$, we have an identity of $\Omega \br{X_g, \C}$-valued $1$-forms on  $\MDG$,
\begin{multline} \label{ThmChern-Equ3}
\dMDG \Trs \brr{ g \exp \br{- \AEzerosq|_{X_g}}} \\= - \Trs \brr{ \brr{\AEzeropp|_{X_g}, \brr{\AEzerop|_{X_g}, g h^{-1} \dMDG h|_{X_g}} }  \exp \br{-\AEzerosq|_{X_g}}}.	
\end{multline}
Using (\ref{Bianchi1}) and (\ref{ThmChern-Equ3}), we get (\ref{ThmChern-Equ1}). In particular, the Bott-Chern class of (\ref{DefChern}) is independent of the choice of $h$.

If $\sigma$ is a $G$-invariant section of $\Aut^0 \br{E_0}$, then $ \sigma^* h \sigma$ is also a $G$-equivariant generalized metric.
Proceeding as in \cite[(8.1.11), (8.1.12)]{BSW}, we have 
\begin{align}
\chg \br{\sigma \AEzeropp \sigma^{-1}, h} = \chg \br{\AEzeropp, \sigma^* h \sigma} \quad in \; \OEXgC.
\end{align}
So the Bott-Chern cohomology class of $\chg \br{\AEzeropp, h}$ is unchanged when replacing $\AEzeropp$ by $\sigma \AEzeropp \sigma^{-1}$.
This gives the last part of the statements, and completes the proof of theorem.
\end{proof}


\begin{defn}
Let $\chgBC \br{\AEpp} \in \HEXgBCC$ be the common Bott-Chern cohomology class of the forms $\chg \br{\AEzeropp, h}$.	
\end{defn}

\begin{rmk}
When $g$ is the identity, then $\chgBC \br{\AEpp} $ has real coefficients \cite[Theorem 8.1.12]{BSW}.
\end{rmk}

\subsection{The equivariant Chern character of pullbacks, tensor products and cones} \label{Section7-3}

We generalize \cite[Proposition 8.3.1, 8.4.1, 8.7.1]{BSW} to $G$-equivariant case. The proof is trivial and left to the readers.
\subsubsection{The equivariant Chern character of pullbacks}
Let $Y$ be a compact complex $G$-manifold. We use the same notation as in Section \ref{BlockPullback}. In particular, $f: X \rightarrow Y$ is a $G$-invariant holomorphic map.
Therefore, for any $g \in G$, $f$ induces a holomorphic map $X_g \rightarrow Y_g$.
Let $\br{F, \AFpp}$ be an antiholomorphic $G$-superconnection on $Y$, and let $(E, \AEpp) = f_b^* (F, \AFpp)$ be the $G$-equivariant antiholomorphic superconnection on $X$ that was defined in Section \ref{BlockPullback}.

We fix a non-canonical identification of $F$ as in  (\ref{EIsoE0}) and (\ref{EIsoE02}), that induces a  corresponding identification of $E$ on $X$. 
Let $h$ be a $G$-invariant generalized metric on $D_F$.
Then $f^* h$ is a $G$-equivariant generalized metric on $D_E$.

\begin{prop} \label{EquiPullbackChern}
The following identities hold
\begin{align} \begin{aligned}\label{PropBlockPullbackChern}
\chg (A^{f_b^* F_0^{\prime\prime}}, f^*h) = f^* \chg (A^{F_0 \prime \prime}, h) \quad in \; \OEXgC . \\
\chgBC \br{A^{f_b^* F\prime\prime}} = f^* \chgBC \br{\AFpp}	\quad \text{in} \; \HEXgBCC .
\end{aligned}
\end{align}
\end{prop}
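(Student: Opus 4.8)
The plan is to prove first the identity of forms, i.e. the first line of \eqref{PropBlockPullbackChern}, and then to pass to Bott-Chern cohomology classes to obtain the second line. Before starting, one checks that $f^* h$ is a $G$-invariant generalized metric on $D_E = f^* D_F$ in the sense of Definition \ref{Def6-2}: it is $G$-invariant because $f$ is $G$-equivariant and holomorphic, it is of degree $0$ and self-adjoint because $f^*$ preserves the bigrading and commutes with the involution $*$, and its degree-zero component $f^* h_0$ is a Hermitian metric on the pullback bundle $f^* D_F$. One also notes that, since $f$ is $G$-equivariant, it carries $X_g$ into $Y_g$ and restricts to a holomorphic map $X_g \to Y_g$ (still written $f$), under which $D_E|_{X_g}$ is identified with the pullback of $D_F|_{Y_g}$ compatibly with the action of $g$.

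For the first identity, one unwinds the construction of the pullback antiholomorphic $G$-superconnection recalled in Section \ref{BlockPullback} (following \cite[Section 5.7]{BSW}): under the compatible identifications chosen on $Y$ and $X$, the operator $A^{f_b^* F_0 \prime\prime}$ is by construction the pullback $f^*$ of $A^{F_0 \prime\prime}$, i.e. in the decomposition \eqref{DecOfAnSupCon} the antiholomorphic connection on $D_F$ pulls back to the corresponding connection on $D_E$ and each $v_i$ pulls back to the corresponding term for $E$. The one point requiring a genuine, if routine, verification is the naturality of the formal adjoint under pullback: the adjoint $A^{f_b^* F_0 \prime}$ of $A^{f_b^* F_0 \prime\prime}$ with respect to $\theta_{f^* h}$ coincides with $f^*$ of the adjoint $A^{F_0 \prime}$ of $A^{F_0 \prime\prime}$ with respect to $\theta_h$. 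This holds because the formal adjoint with respect to $\theta_h$ is again a differential operator whose coefficients are local expressions in the coefficients of $A^{F_0 \prime\prime}$ and in $h$ and its derivatives, so that it commutes with holomorphic pullback — exactly the computation of \cite[Proposition 8.3.1]{BSW}, which carries over verbatim to the equivariant case since all objects in sight are $G$-invariant. Consequently the curvatures satisfy $A^{f_b^* F_0, 2} = f^* A^{F_0, 2}$, and restricting to $X_g$, using that the supertrace is fibrewise and that the action of $g$ on $D_E|_{X_g}$ is the pullback of its action on $D_F|_{Y_g}$, one gets
\begin{align}
\Trs \brr{g \exp \br{- A^{f_b^* F_0, 2}|_{X_g}}} = f^* \Trs \brr{g \exp \br{- A^{F_0, 2}|_{Y_g}}}.
\end{align}
Applying $\varphi$, which commutes with $f^*$ because $f^*$ preserves the total degree, yields the first line of \eqref{PropBlockPullbackChern}.

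For the second identity, one recalls from Theorem \ref{ThmDefChern} that $\chgBC(\AFpp)$ is the Bott-Chern class of $\chg(A^{F_0 \prime\prime}, h)$ and that $\chgBC(A^{f_b^* F \prime\prime})$ is the Bott-Chern class of $\chg(A^{f_b^* F_0 \prime\prime}, f^* h)$, both being independent of the chosen identifications and generalized metrics. Since the first line of \eqref{PropBlockPullbackChern} exhibits the latter form as the $f$-pullback of the former, taking Bott-Chern classes and using that $f^*$ on closed forms induces the pullback map $\HEYgBCC \to \HEXgBCC$ of \eqref{PullbackCoh} gives the second line. The main — and essentially the only non-formal — obstacle is thus the naturality of the formal adjoint under pullback noted above; once that local computation is in place, everything else is a matter of unwinding the definitions, which is why the equivariant statement is no harder than the case of trivial $G$.
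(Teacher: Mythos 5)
Your proof is correct and matches the paper's intended approach: the paper declares the proof trivial and refers to \cite[Proposition 8.3.1]{BSW} for the non-equivariant core (naturality of the formal adjoint, hence of the curvature, under holomorphic pullback). The equivariant refinements you supply---that $f^*h$ is again a $G$-invariant generalized metric, that $f$ restricts to a map $X_g\to Y_g$, and that the action of $g$ on $D_E\vert_{X_g}$ is the pullback of its action on $D_F\vert_{Y_g}$---are exactly the routine verifications the paper leaves to the reader.
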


\subsubsection{Equivariant Chern character and tensor products}
Let $\br{E, \AEpp}$, $(\underline{E}, A^{\underline{E}^{\prime\prime}})$ be antiholomorphic $G$-superconnections on $X$.


We fix a non-canonical isomorphisms for $E$, $\underline{E}$ as in  (\ref{EIsoE0}) and (\ref{EIsoE02}). It induces the non-canonical isomorphism of $E \widehat{\otimes}_b \underline{E}$.
If $h$ and $\underline{h}$ are $G$-equivariant metrics on $D$ and $\underline{D}$ respectively, then $h \widehat{\otimes} \underline{h}$ is a $G$-equivariant generalized metric on $D \widehat{\otimes} \underline{D}$.

\begin{prop} \label{PropBlockTensorChern}
The following identities hold,
\begin{align} \begin{aligned}
\chg \br{A^{E_0 \widehat{\otimes}_b \underline{E}_0 \prime\prime}, h \widehat{\otimes} \underline{h} } &= \chg \br{A^{E_0 \prime\prime}, h} \chg \br{A^{\underline{E}_0 \prime\prime}, \underline{h}} \text{ in }  \OEXgC ,\\	
\chgBC \br{A^{E \widehat{\otimes}_b \underline{E} \prime\prime}} &= \chgBC \br{\AEpp} \chgBC \br{A^{\underline{E} \prime\prime}}  \text{ in }  \HEXgBCC.
\end{aligned}	
\end{align}
\end{prop}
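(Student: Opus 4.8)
The plan is to follow the argument of \cite[Proposition 8.4.1]{BSW} and reduce both identities to the multiplicativity of the supertrace over graded tensor products; the passage to the $G$-equivariant setting costs essentially nothing, since every object involved ($A^{E_0\prime\prime}$, its adjoint, the generalized metric $h\wo\underline h$, and the action of $g$) is $G$-invariant, and the only change is to carry $g$ through the supertraces.

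First I would unwind the construction of $A^{E\wo_b\underline E\prime\prime}$ from \cite[(5.8.2)]{BSW}. With the fixed $G$-equivariant identifications $E_0\simeq\lxb\wo D$ and $\underline E_0\simeq\lxb\wo\underline D$, the tensor product $E_0\wo_b\underline E_0$ is identified with $\lxb\wo\br{D\wo\underline D}$, its diagonal is $D\wo\underline D$, and the antiholomorphic $G$-superconnection $A^{E_0\wo_b\underline E_0\prime\prime}$ acts as $\AEzeropp\wo 1+1\wo A^{\underline E_0\prime\prime}$ in the graded sense. Next I would check the compatibility of the pairing $\theta_{h\wo\underline h}$ with the tensor product structure, concretely that the adjoint of $A^{E_0\wo_b\underline E_0\prime\prime}$ with respect to $\theta_{h\wo\underline h}$ is $\AEzerop\wo 1+1\wo A^{\underline E_0\prime}$, so that by Definition~\ref{DefAdjoint} the associated superconnection is $A^{E_0\wo_b\underline E_0}=A^{E_0}\wo 1+1\wo A^{\underline E_0}$. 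Since the two summands anticommute (both are odd and act on different tensor factors), the cross terms in the square cancel and the curvature decomposes as $A^{E_0\wo_b\underline E_0,2}=\AEzerosq\wo 1+1\wo A^{\underline E_0,2}$.

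Then I would restrict to $X_g$ and compute directly. As $\AEzerosq|_{X_g}\wo 1$ and $1\wo A^{\underline E_0,2}|_{X_g}$ commute, the exponential factors as $\exp\br{-A^{E_0\wo_b\underline E_0,2}|_{X_g}}=\exp\br{-\AEzerosq|_{X_g}}\wo\exp\br{-A^{\underline E_0,2}|_{X_g}}$; and since the $g$-action on $D\wo\underline D$ is the tensor product of the $g$-actions on $D$ and $\underline D$ and preserves $\mathrm{deg}_{-}$, this gives $g\exp\br{-A^{E_0\wo_b\underline E_0,2}|_{X_g}}=\br{g\exp\br{-\AEzerosq|_{X_g}}}\wo\br{g\exp\br{-A^{\underline E_0,2}|_{X_g}}}$. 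Applying $\Trs$ and using that the supertrace is multiplicative over graded tensor products, together with the fact that $\varphi$ is an algebra morphism of $\lxgc$, yields the first identity in $\OEXgC$. The second identity then follows: by Theorem~\ref{ThmDefChern}, $\chgBC$ of an antiholomorphic $G$-superconnection may be computed from any identification and any $G$-invariant generalized metric, so using the compatible choices above, together with the fact that the Bott-Chern product of the classes of two $d^{X_g}$-closed forms in $\OEXgC$ is represented by their wedge product, we obtain $\chgBC\br{A^{E\wo_b\underline E\prime\prime}}=\chgBC\br{\AEpp}\chgBC\br{A^{\underline E\prime\prime}}$ in $\HEXgBCC$.

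The one step that genuinely requires work, rather than being purely formal, is the identification $A^{E_0\wo_b\underline E_0}=A^{E_0}\wo 1+1\wo A^{\underline E_0}$, that is, the compatibility of the pairing $\theta_{h\wo\underline h}$ with the tensor product; once that is in hand, the curvature computation, the exponential, and the supertrace are entirely formal, and the proof reduces to that of \cite[Proposition 8.4.1]{BSW} with $g$ inserted into the supertraces.
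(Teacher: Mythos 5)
Your proposal is correct and follows essentially the same route the paper intends: the paper defers to \cite[Proposition 8.4.1]{BSW} for the nonequivariant computation (curvature decomposes as $\AEzerosq\wo 1 + 1\wo A^{\underline E_0,2}$, exponential factors, supertrace is multiplicative over graded tensor products) and regards the insertion of $g$ as routine since $g$ acts diagonally on $D\wo\underline D$ and preserves the grading. You correctly identify the only nontrivial input — that the adjoint of $A^{E_0\wo_b\underline E_0\prime\prime}$ with respect to $\theta_{h\wo\underline h}$ is $\AEzerop\wo 1 + 1\wo A^{\underline E_0\prime}$ — and the rest is formal, exactly as the paper treats it.
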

 
\subsubsection{The equivariant Chern character of a cone}
We follow \cite[Section 8.7]{BSW} and use the notation of Section \ref{Complex} and \ref{BlockCone}.
In particular $\phi$ is a $G$-equivariant morphism $E \rightarrow \underline{E}$ of degree $0$, and $\br{C, A_\phi^{C \prime\prime}}$ is the corresponding cone. 
\begin{thm} \label{ThmQuaiChern}
The following identity holds,
\begin{align}
\chgBC \br{A_\phi^{C\prime\prime}} = \chgBC\br{A^{\underline{E} \prime\prime}} - \chgBC\br{\AEpp}\quad in \; \HEXgBCC.	
\end{align}
In particular, if $\phi$ is a quasi-isomorphism, then
\begin{align}
\chgBC \br{\AEpp} = \chgBC \br{A^{\underline{E} \prime\prime}} \quad in \; \HEXgBCC.	
\end{align}
\end{thm}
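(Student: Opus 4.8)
The plan is to transcribe the argument of \cite[Section 8.7]{BSW}, inserting the group element $g$ and checking that every object produced is $G$-invariant; since $g$ acts in degree $0$ on $D_C|_{X_g}$ and commutes with the cone superconnection and its adjoint there, the supertrace manipulations of \cite[Sections 8.1, 8.7]{BSW} go through unchanged. Concretely, fix $G$-invariant identifications $E\simeq E_0$, $\underline{E}\simeq\underline{E}_0$ as in \eqref{EIsoE0}, fix $G$-invariant generalized metrics $h$ on $D$ and $\underline{h}$ on $D_{\underline{E}}$, equip the diagonal $D_C = D[1]\oplus D_{\underline{E}}$ of $C$ with the resulting identification and with the block-diagonal $G$-invariant generalized metric $h_C = h[1]\oplus\underline{h}$, and let $A^{C_0\prime}_\phi$ be the $\theta_{h_C}$-adjoint of $A^{C_0\prime\prime}_\phi$, so that $\chg(A_\phi^{C\prime\prime}, h_C)$ is defined.

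First I would treat the endpoint $\phi=0$. For the zero morphism the cone superconnection $A_0^{C\prime\prime}$ is block-diagonal, so the associated curvature $A^{C_0, 2}_0|_{X_g}$ and hence $g\exp(-A^{C_0,2}_0|_{X_g})$ are block-diagonal; since reversing the $\mathbf{Z}$-grading of the diagonal bundle reverses the sign of the supertrace, one obtains the identity of forms
\begin{align}
\chg(A_0^{C\prime\prime}, h_C) = \chg(A^{\underline{E}_0 \prime\prime}, \underline{h}) - \chg(A^{E_0\prime\prime}, h) \quad\text{in}\;\OEXgC,
\end{align}
hence $\chgBC(A_0^{C\prime\prime}) = \chgBC(\AUEpp) - \chgBC(\AEpp)$ in $\HEXgBCC$.

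Next I would deform $\phi$ to $0$. For $u\in\mathbf{C}$ let $A_u^{C\prime\prime}$ be the cone of $u\phi$; the condition that $u\phi$ be a morphism in $\BXG$ is linear in $u$, so each $A_u^{C\prime\prime}$ is a flat antiholomorphic $G$-superconnection on $C_0$, with $A_1^{C\prime\prime}=A_\phi^{C\prime\prime}$. Because $u\phi$ depends holomorphically on $u$, the formula $\mathcal{A}'' = \overline{\partial}^{\mathbf{C}}_u + A_u^{C\prime\prime}$ defines a flat antiholomorphic $G$-superconnection on $\mathrm{pr}_X^* C_0$ over $X\times\mathbf{C}$ (with $G$ acting trivially on $\mathbf{C}$) whose restriction to the slice $\{u\}\times X$ is $A_u^{C\prime\prime}$. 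Applying the transgression argument of \cite[Section 8.1]{BSW} to this family over the connected base $\mathbf{C}$, exactly as in \cite[Section 8.7]{BSW} and now with $g$ inserted, shows that $\chgBC(A_u^{C\prime\prime})\in\HEXgBCC$ is independent of $u$. (Alternatively, for $u\neq 0$ this independence is immediate from Theorem \ref{ThmDefChern}, since $A_u^{C\prime\prime}$ and $A_1^{C\prime\prime}$ are conjugate by the $G$-invariant automorphism $\mathrm{diag}(\mathrm{Id}_E, u\,\mathrm{Id}_{\underline{E}})\in\Aut^0(C_0)$, and the holomorphic family over $\mathbf{C}$ is needed only to reach the value $u=0$.) Taking $u=0$ and combining with the endpoint computation gives the first identity of the theorem.

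Finally, if $\phi$ is a quasi-isomorphism then the $\ox$-complex $(\ox^\infty(C), A_\phi^{C\prime\prime})$ has vanishing cohomology, hence admits a $G$-invariant contracting homotopy (average a non-equivariant one); then, as in \cite[Section 8.7]{BSW}, $\chg(A_\phi^{C\prime\prime}, h_C)$ is $\overline{\partial}^{X_g}\partial^{X_g}$-exact, so $\chgBC(A_\phi^{C\prime\prime})=0$, and the first identity yields $\chgBC(\AEpp)=\chgBC(\AUEpp)$. I expect the one genuinely delicate point to be the transgression across $u=0$: the conjugation argument degenerates there (the metric $\mathrm{diag}(h,|u|^2\underline{h})$ becomes singular as $u\to 0$), so one must work with the globally defined, $G$-invariant superconnection $\mathcal{A}''$ over $X\times\mathbf{C}$ and the Bott-Chern transgression of \cite{BSW} for holomorphic families of superconnections; but with all the data $G$-invariant this is a mechanical adaptation rather than a new difficulty.
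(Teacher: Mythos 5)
Your proof is correct and takes essentially the same route as the paper, which simply defers to \cite[Section 8.7]{BSW} with the group element $g$ inserted: you reduce to $\phi=0$ by the deformation $u\phi$, compute the block-diagonal endpoint (picking up the sign from the degree shift on $D[1]$), and handle the quasi-isomorphism case via a $G$-averaged contracting homotopy as in the (commented-out) equivariant analogue of \cite[Theorem 5.5.1]{BSW}. You correctly flag the one point that goes beyond Theorem~\ref{ThmDefChern} as it is stated here — invariance of the Bott-Chern class under variation of the superconnection itself, not just of $h$ or the identification — and the holomorphic family $\mathcal{A}''=\overline{\partial}^{\mathbf{C}}_u + A_u^{C\prime\prime}$ over $X\times\mathbf{C}$ together with the transgression identity \eqref{ThmChern-Equ1} (now in the $u$-direction, with $g$ commuting with everything) is exactly the mechanism \cite{BSW} uses and is unaffected by the $G$-averaging.
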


\subsection{The equivariant Chern character on $\DbCohXG$} \label{EquiChernCharacter}

\begin{thm} \label{ThmDerivedChern}
The class $\chgBC (\AEpp) \in \HEXgBCC$ depends only on the isomorphism  class of $F_X (E, \AEpp)$ in $\DbCohXG$.	
More generally, it depends only on the isomorphism class of $F_{X_g} \br{E|_{X_g}, \AEpp|_{X_g}}$ in $\mathrm{D}^b_{\mathrm{coh}} \br{X_g, \anbr{g}}$.
\end{thm}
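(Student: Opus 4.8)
The statement has two parts: (a) the class $\chgBC(\AEpp)$ depends only on the isomorphism class of $F_X(E,\AEpp)$ in $\DbCohXG$; and (b) the finer claim that it depends only on the isomorphism class of $F_{X_g}(E|_{X_g}, \AEpp|_{X_g})$ in $\mathrm{D}^b_{\mathrm{coh}}(X_g, \anbr{g})$. The second clearly implies the first, since restriction to $X_g$ is a functor $\BUXG \to \underline{\mathrm{B}}(X_g,\anbr{g})$ compatible with $F$, so two antiholomorphic $G$-superconnections on $X$ that become isomorphic in $\DbCohXG$ restrict to isomorphic objects in $\mathrm{D}^b_{\mathrm{coh}}(X_g,\anbr{g})$ (apply the equivariant equivalence Theorem \ref{ThmEquivalenceCategory} both on $X$ and on $X_g$). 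So the plan is to first reduce (a) to (b), then prove (b) directly on $X_g$.

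For (b): by Remark \ref{rmk61}, $\chg(\AEzeropp,h)$ already depends only on the restriction of $(E_0,\AEzeropp,h)$ to $X_g$ and on $\anbr{g}$, so $\chgBC(\AEpp)$ depends only on $(E|_{X_g},\AEpp|_{X_g})$ as an object of $\underline{\mathrm{B}}(X_g,\anbr{g})$. Thus it suffices to show: if $(E,\AEpp)$ and $(\underline{E},\AUEpp)$ are objects of $\underline{\mathrm{B}}(X_g,\anbr{g})$ whose images under $F_{X_g}$ are isomorphic in $\mathrm{D}^b_{\mathrm{coh}}(X_g,\anbr{g})$, then $\chgBC(\AEpp) = \chgBC(\AUEpp)$. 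By the equivalence of categories (Theorem \ref{ThmEquivalenceCategory}, applied with $X_g$ in place of $X$ and $\anbr{g}$ in place of $G$), the two objects are then isomorphic already in $\underline{\mathrm{B}}(X_g,\anbr{g})$, i.e. there is a morphism $\phi$ in $\mathrm{B}(X_g,\anbr{g})$ which is a homotopy equivalence; in particular $\phi$ is a quasi-isomorphism. Now invoke Theorem \ref{ThmQuaiChern}: for a $\anbr{g}$-equivariant quasi-isomorphism $\phi$ one has $\chgBC(\AEpp) = \chgBC(\AUEpp)$ in $\HEXgBCC$. This closes the argument.

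The one point requiring a little care — and the main (minor) obstacle — is the passage "isomorphic in $\mathrm{D}^b_{\mathrm{coh}}$" $\Rightarrow$ "there is a genuine quasi-isomorphism $\phi$ between the superconnections". A priori an isomorphism in the derived category is only a \emph{roof} of quasi-isomorphisms, not a single quasi-isomorphism. This is exactly handled by the essential surjectivity and full faithfulness already established: given the roof $\E \xleftarrow{\mathrm{q.i.}} \F \to \underline{\E}$ representing the isomorphism, Theorem \ref{EssSurB} lets us replace $\F$ by an antiholomorphic $\anbr{g}$-superconnection and, as in the proof of Theorem \ref{Fullyfaithful}, produce a morphism in $\underline{\mathrm{B}}(X_g,\anbr{g})$ realizing the isomorphism; being an isomorphism in the homotopy category it is in particular a quasi-isomorphism. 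Alternatively, and more cleanly, one just notes that $\chgBC(-)$ is by Theorem \ref{ThmQuaiChern} invariant under every quasi-isomorphism of antiholomorphic $\anbr{g}$-superconnections, hence factors through the localization, which is $\mathrm{D}^b_{\mathrm{coh}}(X_g,\anbr{g})$ by Theorem \ref{ThmEquivalenceCategory}; this gives a well-defined map on isomorphism classes of objects of $\mathrm{D}^b_{\mathrm{coh}}(X_g,\anbr{g})$, which is precisely the assertion.
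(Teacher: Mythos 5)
Your argument is correct and follows the same route as the paper's (extremely terse) proof, which simply cites Remark \ref{rmk61}, Theorem \ref{Fullyfaithful}, and Theorem \ref{ThmQuaiChern}: restrict to $X_g$ via Remark \ref{rmk61}, use full faithfulness (or the full equivalence) to upgrade an isomorphism in $\mathrm{D}^b_{\mathrm{coh}}(X_g,\anbr{g})$ to a homotopy equivalence in $\underline{\mathrm{B}}(X_g,\anbr{g})$, and conclude by the cone formula of Theorem \ref{ThmQuaiChern}. Your remark that $\chgBC$ factors through the localization of $\underline{\mathrm{B}}(X_g,\anbr{g})$ at quasi-isomorphisms is a clean repackaging of the same content.
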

\begin{proof}
It is a consequence of Remark \ref{rmk61}, Theorems \ref{Fullyfaithful} and \ref{ThmQuaiChern}. 
\end{proof}

By Theorem \ref{EssSurB}, if $\F$ is an object in $\DbCohXG$, there is an antiholomorphic $G$-superconnection $ \br{E, \AEpp}$ such that $F_X \br{E, \AEpp}$ is isomorphic to $\F$. 
If $(\underline{E}, \AUEpp)$ is another such a pair, by Theorem \ref{ThmDerivedChern}, 
\begin{align}
\chgBC \br{\AEpp} = \chgBC \br{A^{\underline{E} \prime\prime}} \quad in \; \HEXgBCC.	
\end{align}
\begin{defn} \label{ChernDbCoh}
If $\F$ is an object in $\DbCohXG$,
let $\chgBC \br{\F} \in \HEXgBCC$ be the common Bott-Chern class of the above $\chgBC \br{\AEpp}$.	
\end{defn}

By Theorem \ref{ThmDerivedChern}, the equivariant Chern character class $\chgBC \br{\F}$ only depends on $\F|_{X_g}$ in $\mathrm{D}^b_{\mathrm{coh}} \br{X_g, \anbr{g}}$.

In the following theorem we use the notation in Section \ref{GPullBack} and assume $\F$ is an object in $\DbCohYG$.
 \begin{thm} \label{ChernPullback}
 The following identity holds,
 \begin{align} 
 \chgBC \br{Lf^* \F} = f^* \chgBC \br{\F} \quad \text{in}  \; \HEXgBCC.	
 \end{align}
 \end{thm}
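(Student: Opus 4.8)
The statement is compatibility of the equivariant Chern character on the derived category with derived pullback, and the natural strategy is to reduce it to the already-established compatibility at the level of antiholomorphic $G$-superconnections, namely Proposition \ref{EquiPullbackChern}. First I would pick, using Theorem \ref{EssSurB}, an antiholomorphic $G$-superconnection $\br{F, \AFpp}$ on $Y$ such that $\underline{F}_Y \br{F, \AFpp}$ is isomorphic to $\F$ in $\DbCohYG$; then $\chgBC \br{\F} = \chgBC \br{\AFpp}$ by Definition \ref{ChernDbCoh}. Next I would form the pullback antiholomorphic $G$-superconnection $f_b^*\br{F, \AFpp}$ on $X$, as recalled in Section \ref{BlockPullback}. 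By Proposition \ref{PropCompatiblePullback}, the functor $\underline{F}_X$ is compatible with pullbacks up to canonical $G$-isomorphism, so $\underline{F}_X \br{f_b^* \br{F, \AFpp}}$ is isomorphic to $Lf^* \F$ in $\DbCohXG$; hence $\chgBC \br{Lf^* \F} = \chgBC \br{A^{f_b^* F \prime\prime}}$, again by Definition \ref{ChernDbCoh} together with the well-definedness statement (Theorem \ref{ThmDerivedChern}).

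It then remains to combine these identifications with the superconnection-level formula. By the second equation of \eqref{PropBlockPullbackChern} in Proposition \ref{EquiPullbackChern}, one has
\begin{align}
\chgBC \br{A^{f_b^* F \prime\prime}} = f^* \chgBC \br{\AFpp} \quad \text{in} \; \HEXgBCC,
\end{align}
and stringing the equalities together yields
\begin{align}
\chgBC \br{Lf^* \F} = \chgBC \br{A^{f_b^* F \prime\prime}} = f^* \chgBC \br{\AFpp} = f^* \chgBC \br{\F},
\end{align}
which is the desired identity. Here $f^*$ on the right denotes the pullback $\HEYgBCC \to \HEXgBCC$ induced by the holomorphic map $X_g \to Y_g$, which exists because $f$ is $G$-equivariant, as noted in Section \ref{Section7-3}.

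The only genuine point requiring care — and the place where I expect the argument to need a sentence of justification rather than a one-line citation — is the compatibility of all these identifications with the isomorphism class used to define $\chgBC$ on $\DbCohXG$: one must check that the canonical $G$-isomorphism $\underline{F}_X \br{f_b^* \br{F, \AFpp}} \simeq Lf^* \F$ of Proposition \ref{PropCompatiblePullback} is compatible with the chosen isomorphism $\underline{F}_Y \br{F, \AFpp} \simeq \F$, so that the two invocations of Definition \ref{ChernDbCoh} are applied consistently. This is immediate from functoriality of $Lf^*$ applied to the isomorphism $\underline{F}_Y \br{F, \AFpp} \simeq \F$ and naturality of the comparison in Proposition \ref{PropCompatiblePullback}, together with the fact (Theorem \ref{ThmDerivedChern}) that $\chgBC$ depends only on the isomorphism class in $\DbCohXG$ — indeed only on the restriction to $X_g$ in $\mathrm{D}^{\mathrm{b}}_{\mathrm{coh}} \br{X_g, \anbr{g}}$ — so no choice-dependence survives. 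Apart from this bookkeeping, the proof is a formal concatenation of Theorem \ref{EssSurB}, Proposition \ref{PropCompatiblePullback}, Proposition \ref{EquiPullbackChern} and Definition \ref{ChernDbCoh}, and contains no new analysis.
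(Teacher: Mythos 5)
Your proof is correct and takes essentially the same approach as the paper, which gives the result as an immediate consequence of Propositions \ref{PropCompatiblePullback} and \ref{EquiPullbackChern}; you merely spell out the reduction via Theorem \ref{EssSurB} and Definition \ref{ChernDbCoh} that the paper leaves implicit. Your closing worry about matching the two chosen isomorphisms is unnecessary once one recalls, as you in fact do, that $\chgBC$ on $\DbCohXG$ depends only on the isomorphism class by Theorem \ref{ThmDerivedChern}, so no compatibility of choices needs to be checked.
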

 \begin{proof}
  It is a consequence of Propositions \ref{PropCompatiblePullback} and (\ref{PropBlockPullbackChern}).	
 \end{proof}

 If $\F$, $\underline{\F}$ are objects in $\DbCohXG$, their derived tensor product $\F \widehat{\otimes}^L_{\ox} \underline{\F}$ was defined in Section \ref{DerivedTensor}.
 \begin{thm} \label{ChernTensor}
 The following identity holds,
 \begin{align}
 \chgBC \br{\F \widehat{\otimes}^L_{\ox} \underline{\F}} = \chgBC \br{\F} \chgBC \br{\underline{\F}} \quad \text{in} \; \HEXgBCC.	
 \end{align}
 \end{thm}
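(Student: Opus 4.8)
The plan is to reduce the assertion about objects of $\DbCohXG$ to the multiplicativity of the equivariant Chern character forms of antiholomorphic $G$-superconnections already established in Proposition \ref{PropBlockTensorChern}, using the category equivalence of Theorem \ref{ThmEquivalenceCategory} and its compatibility with tensor products.

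First I would invoke \thref{EssSurB} to choose antiholomorphic $G$-superconnections $\br{E, \AEpp}$ and $\br{\underline{E}, \AUEpp}$ on $X$, with associated sheaves $\E = \ox^\infty(E)$ and $\underline{\E} = \ox^\infty(\underline{E})$, together with isomorphisms $\E \simeq \F$ and $\underline{\E} \simeq \underline{\F}$ in $\DbCohXG$. Fixing non-canonical $G$-equivariant identifications as in \eqref{EIsoE0} for $E$ and for $\underline{E}$, and $G$-invariant generalized metrics $h \in \MDG$ on $D$ and $\underline{h}$ on $\underline{D}$, one obtains the corresponding data on the tensor product $\br{E \widehat{\otimes}_b \underline{E}, A^{E \widehat{\otimes}_b \underline{E} \prime\prime}}$ of Section \ref{TensorSuper}: the induced identification as in \eqref{EIsoE0}, with diagonal bundle $D \widehat{\otimes} \underline{D}$, and the $G$-invariant generalized metric $h \widehat{\otimes} \underline{h}$.

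Next, Proposition \ref{PropCompatibleTensor} shows that $\underline{F}_X$ intertwines $\widehat{\otimes}_b$ with $\widehat{\otimes}^L_{\ox}$ up to canonical $G$-isomorphism, so that $F_X\br{E \widehat{\otimes}_b \underline{E}} \simeq \F \widehat{\otimes}^L_{\ox} \underline{\F}$ in $\DbCohXG$. By Theorem \ref{ThmDerivedChern} and Definition \ref{ChernDbCoh}, the class $\chgBC\br{\F \widehat{\otimes}^L_{\ox} \underline{\F}}$ is then computed by this superconnection, hence equals $\chgBC\br{A^{E \widehat{\otimes}_b \underline{E} \prime\prime}}$, while $\chgBC\br{\F} = \chgBC\br{\AEpp}$ and $\chgBC\br{\underline{\F}} = \chgBC\br{\AUEpp}$. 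The second identity in Proposition \ref{PropBlockTensorChern} then yields $\chgBC\br{\F \widehat{\otimes}^L_{\ox} \underline{\F}} = \chgBC\br{\AEpp}\,\chgBC\br{\AUEpp} = \chgBC\br{\F}\,\chgBC\br{\underline{\F}}$ in $\HEXgBCC$, the product being the one induced by the bigraded algebra structure on Bott-Chern cohomology.

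Since all the ingredients are already in place, there is no substantive obstacle. The only points requiring care are bookkeeping — ensuring that the fixed identifications and $G$-invariant generalized metrics on $E$ and $\underline{E}$ are precisely the ones used to build the data on $E \widehat{\otimes}_b \underline{E}$, so that Proposition \ref{PropBlockTensorChern} applies on the fixed-point set $X_g$ verbatim — and recalling that Theorem \ref{ThmDefChern} guarantees independence of these choices, so that the final equality is an identity of well-defined Bott-Chern classes.
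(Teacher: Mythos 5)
Your proposal is correct and is essentially the paper's proof: the paper's argument is a one-line citation of Propositions \ref{PropCompatibleTensor} and \ref{PropBlockTensorChern}, which is exactly the chain of reasoning you spell out (choosing representatives via \thref{EssSurB}, using compatibility of $\underline{F}_X$ with $\widehat{\otimes}_b$, and then invoking multiplicativity of the Chern character forms). Your additional bookkeeping about identifications and generalized metrics is sound and just makes explicit what the paper leaves implicit.
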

\begin{proof}
It is a consequence of Propositions \ref{PropCompatibleTensor} and  \ref{PropBlockTensorChern}.
\end{proof}

\subsection{The equivariant Chern character on $K(X,G)$} \label{Section7-5}
In this section we follow \cite[Section 8.9]{BSW} and use the notation in Section \ref{DerivedCategory}.
In particular, the Grothendieck group $K(X,G)$ of $G$-equivariant coherent sheaves on $X$ was defined.

By  (\ref{IsoKDbcohAndKXG}), Theorem \ref{ThmQuaiChern} and results of Section \ref{EquiChernCharacter},  we have the equivariant Chern character on $K(X,G)$,
\begin{align}
\mathrm{ch}_{g,\mathrm{BC}}: K(X,G) \rightarrow  	\HEXgBCC.
\end{align}	




Since all the $\mathscr{H}^i \F$ are $G$-coherent, they can be viewed as elements of $K(X,G)$. 
\begin{thm}
If $\F \in \DbCohXG$, then
\begin{align} \label{DbcohToKXG}
\chgBC (\F) = \sum_i  (-1)^i \chgBC ( \mathscr{H}^i \F).	
\end{align}
\end{thm}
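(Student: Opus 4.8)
The plan is to deduce the formula from two facts about $\chgBC$ on $\DbCohXG$: that it is additive over distinguished triangles, and that it changes sign under the shift functor; an induction on the cohomological amplitude of $\F$, carried out with the canonical truncation triangles of the t-structure on $\DbCohXG$, then finishes the argument.

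First I would record additivity. By Theorem \ref{ThmEquivalenceCategory}, $\underline{F}_X\colon \BUXG \to \DbCohXG$ is an equivalence of triangulated categories, so every distinguished triangle in $\DbCohXG$ is isomorphic to the image of a cone triangle $\E \xrightarrow{\phi} \underline{\E} \to \cone(\phi) \to \E[1]$ attached to a morphism $\phi$ of antiholomorphic $G$-superconnections. Theorem \ref{ThmQuaiChern} gives $\chgBC(A_\phi^{C\prime\prime}) = \chgBC(A^{\underline{E}\prime\prime}) - \chgBC(\AEpp)$, which transported through $\underline{F}_X$ says precisely that for every distinguished triangle $\mathscr{A}\to\mathscr{B}\to\mathscr{C}\to\mathscr{A}[1]$ in $\DbCohXG$ one has $\chgBC(\mathscr{B}) = \chgBC(\mathscr{A}) + \chgBC(\mathscr{C})$. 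Applying this to $\F \xrightarrow{\mathrm{id}} \F \to 0 \to \F[1]$ gives $\chgBC(0) = 0$, and then to $\F \to 0 \to \F[1] \to \F[1]$ gives $\chgBC(\F[1]) = -\chgBC(\F)$; iterating, $\chgBC(\F[n]) = (-1)^n \chgBC(\F)$ for every $n \in \Z$. (Every object appearing here --- $0$, the shifts, and the truncations used below --- is again a bounded complex with coherent cohomology, hence lies in $\DbCohXG$.)

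Next I would induct on the number $N(\F)$ of indices $i$ with $\HH^i\F \neq 0$. If $N(\F) \leq 1$, then $\F$ is isomorphic in $\DbCohXG$ either to $0$ or to $\HH^{i_0}\F[-i_0]$ for the unique $i_0$ with $\HH^{i_0}\F \neq 0$, and the identity follows from $\chgBC(0) = 0$ and $\chgBC(\HH^{i_0}\F[-i_0]) = (-1)^{i_0}\chgBC(\HH^{i_0}\F)$. If $N(\F) \geq 2$, put $k = \sup\{i : \HH^i\F \neq 0\}$ and use the canonical truncation triangle
\[
\tau_{\leq k-1}\F \longrightarrow \F \longrightarrow \HH^k\F[-k] \longrightarrow (\tau_{\leq k-1}\F)[1]
\]
in $\DbCohXG$, in which $\tau_{\geq k}\F \simeq \HH^k\F[-k]$ and $\HH^i(\tau_{\leq k-1}\F) = \HH^i\F$ for $i \leq k-1$ and vanishes otherwise. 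Additivity yields $\chgBC(\F) = \chgBC(\tau_{\leq k-1}\F) + (-1)^k\chgBC(\HH^k\F)$; since $N(\tau_{\leq k-1}\F) = N(\F) - 1$, the induction hypothesis gives $\chgBC(\tau_{\leq k-1}\F) = \sum_{i \leq k-1}(-1)^i\chgBC(\HH^i\F)$, and adding the two contributions completes the induction.

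Equivalently, the whole argument can be packaged via $K$-theory: additivity over distinguished triangles shows $\F \mapsto \chgBC(\F)$ factors through $K(\DbCohXG)$, hence through $K(X,G)$ by the isomorphism \eqref{IsoKDbcohAndKXG}, and under \eqref{MapDbcohToKXG} the class of $\F$ equals $\sum_i(-1)^i[\HH^i\F]$ in $K(X,G)$, so that the assertion reduces to $\chgBC$ being a group homomorphism on $K(X,G)$. The one step that has to be written with some care --- and the place I expect to be the main obstacle --- is the passage from the cone construction for antiholomorphic $G$-superconnections in Theorem \ref{ThmQuaiChern} to genuine distinguished triangles in $\DbCohXG$ via $\underline{F}_X$; once that dictionary is set up the rest is formal, relying only on the t-structure on $\DbCohXG$ and the functoriality already established in Section \ref{EquiChernCharacter}.
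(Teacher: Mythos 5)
Your proof is correct, and your closing paragraph is precisely the paper's argument: the paper proves this theorem in one line by invoking \eqref{MapDbcohToKXG}, i.e.\ the fact that under the isomorphism $K(\DbCohXG)\simeq K(X,G)$ the class of $\F$ is $\sum_i(-1)^i[\HH^i\F]$, combined with the fact (established in Section~\ref{Section7-5}) that $\chgBC$ factors through $K(X,G)$. The explicit truncation induction in the body of your write-up is a correct unpacking of why the map $\E\mapsto\sum_i(-1)^i\HH^i\E$ descends to $K$-groups in the first place, but it re-derives an ingredient the paper takes as already established; once you grant \eqref{MapDbcohToKXG} and the group homomorphism $\chgBC\colon K(X,G)\to\HEXgBCC$, the statement is immediate, exactly as you say at the end.
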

\begin{proof}
It is an immediate consequence of (\ref{MapDbcohToKXG}). 	
\end{proof}

\section{The main result} \label{SectionMainResult}
Let $f: X \rightarrow Y $ be a $G$-invariant holomorphic map between compact complex $G$-manifolds. Recall $f_!: K(X, G) \rightarrow K(Y,G)$   is the direct image defined in (\ref{DirectK}), 
and $f_*: \HEXgBCC \rightarrow \HEYgBCC$ is the pushforward on the Bott-Chern cohomology defined in \eqref{PushforwardCoh}.
In this article, we will prove 
\begin{thm} \label{MainTheorem}
If $\F \in K(X,G)$, then
\begin{align}
\tdgBC \br{TY} \chgBC \br{f_! \F} = f_* \brr{\tdgBC \br{TX} \chgBC  \br{\F}}	\; \text{in} \; \HEYgBCC.
\end{align}
	
\end{thm}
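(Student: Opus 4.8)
The plan is to split $f$ into an embedding followed by a projection and invoke the two special cases of the theorem proved elsewhere in the paper. Let $i\colon X\to X\times Y$ be the graph embedding $x\mapsto(x,f(x))$ and let $p\colon X\times Y\to Y$ be the projection, so that $f=p\circ i$ and both maps are $G$-equivariant, $i$ a closed embedding and $p$ a projection of the product type considered in Section~\ref{SectionSubmersion1}. By \eqref{Equ3-1} one has $f_!=p_!\circ i_!$ on $K(X,G)$, and by the functoriality of the pushforward on Bott--Chern cohomology (Section~\ref{Section2-2}) one has $f_*=p_*\circ i_*$ on the fixed-point cohomology. Granting the assertion of the theorem for the embedding $i$ and for the projection $p$ separately, the general case follows by composition:
\begin{align*}
\tdgBC(TY)\,\chgBC(f_!\F)
&=\tdgBC(TY)\,\chgBC\br{p_!\,i_!\F}
=p_*\brr{\tdgBC(T(X\times Y))\,\chgBC(i_!\F)}\\
&=p_*i_*\brr{\tdgBC(TX)\,\chgBC(\F)}
=f_*\brr{\tdgBC(TX)\,\chgBC(\F)}.
\end{align*}
On $K$-groups $f_!$, $i_!$, $p_!$ are by definition induced by the derived direct images appearing in Theorems~\ref{Thm7-4} and \ref{Thm9-1} (see Section~\ref{DerivedDirect}), so there is no discrepancy with the sheaf-level formulation there.

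For the embedding $i\colon X\to X\times Y$ I would start from Theorem~\ref{Thm7-4}, which gives $\chgBC(i_!\F)=i_*\brr{\chgBC(\F)/\tdgBC(N_{X/(X\times Y)})}$ in $\HEXYgBCC$. Multiplying by $\tdgBC(T(X\times Y))$ and using the projection formula $\beta\cdot i_*(\alpha)=i_*(i^*\beta\cdot\alpha)$ for the Bott--Chern pushforward, together with the multiplicativity of the equivariant Todd class applied to the $G$-equivariant short exact sequence $0\to TX\to T(X\times Y)|_X\to N_{X/(X\times Y)}\to 0$ — which yields $i^*\tdgBC(T(X\times Y))=\tdgBC(TX)\,\tdgBC(N_{X/(X\times Y)})$ in $\HEXgBCC$ — the factor $\tdgBC(N_{X/(X\times Y)})$ cancels and one is left with $i_*\brr{\tdgBC(TX)\,\chgBC(\F)}$, which is the required identity for $i$.

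For the projection $p\colon X\times Y\to Y$, set $S=Y$ and $M=X\times Y$ in Theorem~\ref{Thm9-1}, and let $q\colon X\times Y\to X$ be the other projection. That theorem gives $\chgBC(p_!\F)=p_{g,*}\brr{q_g^*\tdgBC(TX)\,\chgBC(\F)}$. Multiplying by $\tdgBC(TY)$, applying the projection formula for the Bott--Chern pushforward $p_{g,*}$, and using the $G$-equivariant decomposition $T(X\times Y)=p^*TY\oplus q^*TX$, whence $\tdgBC(T(X\times Y))=p_g^*\tdgBC(TY)\cdot q_g^*\tdgBC(TX)$ by multiplicativity of the equivariant Todd class, one obtains $\tdgBC(TY)\,\chgBC(p_!\F)=p_{g,*}\brr{\tdgBC(T(X\times Y))\,\chgBC(\F)}$, the required identity for $p$.

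The genuinely substantial inputs are Theorems~\ref{Thm7-4} and \ref{Thm9-1} themselves, proved via deformation to the normal cone and a Koszul resolution in the embedding case and via Bismut's hypoelliptic Laplacian and local index theory in the projection case; given those, the remaining obstacle in this last step is purely bookkeeping. The one point deserving care is the multiplicativity of the equivariant Todd class under $G$-equivariant short exact sequences at the level of Bott--Chern cohomology classes on the fixed-point sets: one must check that restricting $\tdgBC$ of an ambient tangent bundle to a smaller fixed locus is compatible with the splitting into $g$-fixed and moving parts, bearing in mind that the $g$-fixed subbundle of $T(X\times Y)|_X$ is $T((X\times Y)_g)|_{X_g}$, which is in general strictly larger than $TX_g$. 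This follows from the Chern--Weil description of Definition~\ref{DefTdg} and the existence of $G$-invariant metrics compatible with the relevant decompositions, and the projection formula for the Bott--Chern pushforward is likewise standard from its definition by duality, but both should be recorded explicitly before assembling the argument above.
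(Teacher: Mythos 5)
Your proof takes exactly the same route as the paper: decompose $f = p\circ i$ through the graph embedding $i\colon x\mapsto(x,f(x))$ into $X\times Y$, and chain together the embedding case (Theorem~\ref{Thm7-4}) and the projection case (Theorem~\ref{Thm9-1}) via the commutative diagram relating $K$-groups and Bott--Chern cohomology, together with functoriality of $(\cdot)_!$ and $(\cdot)_*$. You spell out the Todd-class multiplicativity and projection-formula bookkeeping that converts Theorems~\ref{Thm7-4} and~\ref{Thm9-1} into the two commuting squares, which the paper asserts without elaboration.
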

\begin{proof}
The theorem will be established in three steps.

\textbf{Step 1}: the case when $f$ is an embedding. This will be given in Section \ref{SectionEmbedding}.

\textbf{Step 2}: the case when $f$ is a projection. This will be give in Sections \ref{SectionSubmersion1}-\ref{SectionSubmersion2}.

\textbf{Step 3}: the general case. 

Let us complete the \textbf{Step 3} admitting \textbf{Steps 1} and \textbf{2}.
Let $i: x \in X \rightarrow \br{x, f(x)} \in X\times Y$ be the embedding associated to $f$, let $p: X\times Y \rightarrow Y $ be the obvious projection. Then $f = pi$. 

We have the following diagram,
\begin{equation} \label{DiagRRG}
\begin{tikzcd}[column sep=4em, row sep=2.5em]
{K(X,G)} \arrow[r, "i_!"] \arrow[d, "\tdgBC(TX) \chgBC" description] \arrow[rr, "f_!", bend left=30] & {K(X\times Y, G)} \arrow[r, "p_!"] \arrow[d, "\tdgBC(T(X\times Y)) \chgBC" description] & {K(Y,G)} \arrow[d, "\tdgBC(TY) \chgBC" description] \\
\HEXgBCC \arrow[r, "i_*"] \arrow[r] \arrow[rr, "f_*", bend right=30] & \HEXYgBCC \arrow[r, "p_*"] & \HEYgBCC .    
\arrow[phantom, "\mathsf{A}" {description}, from=1-1, to=2-2]
\arrow[phantom, "\mathsf{B}" {description}, from=1-2, to=2-3]
\arrow[phantom, "\mathsf{C}" {description, pos=0.5, yshift=4ex}, from=1-1, to=1-3]
\arrow[phantom, "\mathsf{D}" {description, pos=0.5, yshift=-4ex}, from=2-1, to=2-3]
\end{tikzcd}
\end{equation}

By the first two steps, the diagrams $\mathsf{A}$ and $\mathsf{B}$ in (\ref{DiagRRG}) commute.
By functoriality (\ref{Equ3-1}) and the results in Section \ref{Section2-2}, the diagrams $\mathsf{C}$ and $\mathsf{D}$ in (\ref{DiagRRG}) also commute.
Therefore, the big diagram commutes which completes the \textbf{Step 3}.
\end{proof}

\section{A Riemann-Roch-Grothendieck theorem for equivariant embedding} \label{SectionEmbedding} \label{SectionEmbedding}
The purpose of this section is to prove the Riemann-Roch-Grothendieck theorem for equivariant embeddings.

In Section \ref{Section9-1}, we give a technical lemma.

In Section \ref{MainImmersion}, we establish our main theorem for an equivariant embedding $i_{X, Y}: X\to Y$. Using the technical lemma given in Section \ref{Section9-1}, we can reduce the problem to the case when $G$ acts trivially on $X$ and $Y$.

In Section \ref{ProofEmbedding}, we prove the Riemann-Roch-Grothendieck theorem for equivariant embeddings when $G$ acts trivially on $X$ and $Y$ by using the method of deformation to the normal cone.
\subsection{A technical lemma} \label{Section9-1}
Let $Z$ be a compact complex $G$-manifold.
Let $X$, $Y$ be two compact complex submanifolds of $Z$.
We assume that $U = X \cap Y$ is a submanifold of $Z$.
Assume also that the $G$-action on $Z$ preserves $X$, $Y$ and in particular $U$. 
We use the obvious embedding as indicated in the following commutative diagram,
\begin{equation} \label{DiagEmb}
\begin{tikzcd}
U \arrow[r, "{i_{U,X}}"] \arrow[d, "{i_{U,Y}}"'] & X \arrow[d, "{i_{X,Z}}"] \\
Y \arrow[r, "{i_{Y,Z}}"]                         & Z  .                     
\end{tikzcd}	
\end{equation}

Let  $\widetilde{N}$ be the excess normal bundle on $U$ associated to (\ref{DiagEmb}), \ie
\begin{align}
\widetilde{N} = \frac{TZ|_U}{TX|_U + TY|_U}.
\end{align}
Then, $\widetilde{N}$ is $G$-equivariant. We have the short exact sequence of equivariant holomorphic vector bundles on $U$,
\begin{align} \label{Equ7-34}
0 \rightarrow N_{U/Y} \rightarrow N_{Y/Z} \rightarrow \widetilde{N} \rightarrow 0.	
\end{align}

Note that if $X$, $Y$ intersects transversally in $Z$, then $\widetilde{N} = 0$.

Recall that for an embedding, the direct image functor is exact so is equal to the derived direct image.
We will not distinguish these two in this section.

Let $\F$ be an object in $\DbCohXG$. 
We consider $\OO_U \br{\Lambda \br{\widetilde{N}^*}}$ as a complex of zero differentials, then it is an object in $\DbCohUG$.

The following proposition is a generalization of \cite[Proposition 9.1.1]{BSW}.

\begin{prop} \label{EquiTranProp}
There exists an isomorphism in $\DbCohYG$,
\begin{align} \label{Equ7-12}
Li^*_{Y,Z} i_{X,Z,*} \F \simeq i_{U,Y,*} \br{Li^*_{U,X} \F \wo^L_{\OO_U} \OO_U \br{\Lambda  \br{\widetilde{N}^*} }}.
\end{align}
In particular, if $X$ and $Y$ intersects transversally in $Z$, then we have an isomorphism in $\DbCohYG$,
\begin{align} \label{Equ8-3}
Li^*_{Y,Z} i_{X,Z,*} \F \simeq i_{U,Y,*} Li^*_{U,X} \F.	
\end{align}
\end{prop}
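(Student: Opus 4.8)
The plan is to reduce the statement to the non-equivariant version \cite[Proposition 9.1.1]{BSW} by carefully tracking the $G$-equivariant structures, and for that I would work locally on $Z$ and then glue. First I would construct the natural morphism \eqref{Equ7-12} in $\DbCohYG$ by the adjunction chain sketched in the commented-out portion of the text: using that $i_{U,X,*}$ is right adjoint to $Li_{U,X}^*$ one gets $\F \to i_{U,X,*}Li_{U,X}^*\F$, applies $i_{X,Z,*}$, rewrites via $i_{X,Z,*}i_{U,X,*} = i_{Y,Z,*}i_{U,Y,*}$ (commutativity of \eqref{DiagEmb} and functoriality of derived direct image from Section \ref{DerivedDirect}), then applies $Li_{Y,Z}^*$ and the counit $Li_{Y,Z}^*i_{Y,Z,*}\to \mathrm{id}$, and finally tensors with $\OO_U \to \OO_U(\Lambda(\widetilde N^*))$. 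Each of these morphisms is canonical, hence $G$-equivariant; since all the maps $i_{\bullet,\bullet}$ and the bundle $\widetilde N$ are $G$-equivariant, this produces an honest morphism in $\DbCohYG$.

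Next I would prove it is an isomorphism. Since being an isomorphism in $\DbCohYG$ can be checked after forgetting the $G$-action (a morphism in $\DbCohYG$ is an isomorphism iff its image in $\DbCohY$ is, because the forgetful functor $\DbCohYG \to \DbCohY$ is conservative — it detects quasi-isomorphisms of the underlying complexes), the statement follows immediately from \cite[Proposition 9.1.1]{BSW} applied to the underlying objects. This is the cleanest route: the equivariance is needed only to know the \emph{morphism} lives in the equivariant category, and the isomorphism property is inherited. I would state this conservativity remark explicitly, since it is the one structural fact that makes the equivariant case no harder than the non-equivariant one once the comparison map is built equivariantly.

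Finally, the transversal case \eqref{Equ8-3} is the special case $\widetilde N = 0$, where $\OO_U(\Lambda(\widetilde N^*)) = \OO_U$ and the derived tensor factor disappears; I would just note this and also recall, using $i_{U,X}$ an embedding so $Li_{U,X}^*$ restricted to flat complexes equals $i_{U,X}^*$, that the right-hand side simplifies to $i_{U,Y,*}Li_{U,X}^*\F$ as stated.

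The main obstacle I anticipate is not the isomorphism property — that is essentially free from \cite{BSW} once conservativity is invoked — but verifying carefully that the comparison morphism \eqref{Equ7-12}, as built from the adjunction units and counits, is the \emph{same} morphism (up to the canonical identifications) whose non-equivariant incarnation \cite{BSW} proves to be an isomorphism, rather than some other a priori different map. Concretely one must check that the base-change/projection-formula identifications used in \cite{BSW} are compatible with the $G$-actions, i.e. that the excess-intersection isomorphism $Li_{Y,Z}^* i_{X,Z,*} \simeq i_{U,Y,*}(Li_{U,X}^*(-) \wo^L \OO_U(\Lambda \widetilde N^*))$ of loc. cit. is $G$-equivariant; this is a matter of naturality of all the ingredients (Koszul resolution of the diagonal, flat base change) and is true precisely because every sheaf and map in sight is canonically $G$-equivariant, but it deserves to be spelled out.
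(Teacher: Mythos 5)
Your proposal is correct, and the key conceptual move — that the quasi-isomorphism property is blind to the $G$-action, so only the \emph{morphism} needs to be built equivariantly — is exactly what the paper exploits (the paper phrases it as ``the proof will be obtained via local arguments and it is irrelevant to the group actions''). What differs is how the comparison morphism is constructed and how the quasi-isomorphism is then verified. You build the map by an adjunction chain (unit for $i_{U,X,*} \dashv Li_{U,X}^*$, the identity $i_{X,Z,*}i_{U,X,*}=i_{Y,Z,*}i_{U,Y,*}$, counit for $i_{Y,Z,*}\dashv Li_{Y,Z}^*$, then tensoring with $\OO_U\to\OO_U(\Lambda\widetilde N^*)$) and then invoke conservativity of the forgetful functor $\DbCohYG\to\DbCohY$ to import \cite[Proposition 9.1.1]{BSW} wholesale. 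The paper instead stays inside the superconnection formalism: it chooses an object $(E_Z,A^{E_Z\prime\prime})\in\mathrm B(Z,G)$ with a $G$-equivariant quasi-isomorphism $r_{Z,X}:\E_Z\to i_{X,Z,*}\F$ (Proposition \ref{PropCompatibleDirectImage}), represents $Li^*_{Y,Z}i_{X,Z,*}\F$ by $i_{Y,Z,b}^*\E_Z$ (Proposition \ref{PropCompatiblePullback}), constructs the induced map $r_{Y,U}$ into $i^*_{U,X,b}\F\wo_b\mathscr T_U$, and then reruns the local Koszul-complex analysis of \cite{BSW} to show $r_{Y,U}$ is a quasi-isomorphism. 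Your route is shorter and avoids redoing the local analysis, but it leans on the fact that the isomorphism of \cite[Proposition 9.1.1]{BSW} is realized by the \emph{same} natural adjunction morphism — precisely the point you flag at the end as deserving to be spelled out. The paper's route is more self-contained and sidesteps that compatibility check, since the morphism it verifies is the one it constructs; the price is a longer local argument. Both are valid; if you go your way, do make the conservativity statement explicit (a $G$-equivariant morphism of complexes is a quasi-isomorphism iff its underlying morphism is, since cohomology of the cone ignores the $G$-action), and convince yourself, e.g. via the flat-base-change and projection-formula inputs to \cite{BSW}, that the non-equivariant excess-intersection isomorphism there really is the adjunction morphism.
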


\begin{proof}

We assume $\F = \underline{F}_X \br{\br{E_X, A^{E_X\prime\prime}}}$ where  $\br{E_X, A^{E_X\prime\prime}}$ is an antiholomorphic $G$-superconnection on $X$. 


As discussed in Subsection \ref{SubsectionComTen}, $i_{X,Z,*} \F$ can be viewed as a $G$-equivariant $\OO_Z^\infty \br{\Lambda \br{ \overline{T^*Z}}}$-module.
Also it defines an object in $\DbCohZG$, that coincides with its right-derived direct image.

By Proposition \ref{PropCompatibleDirectImage}, there exists an antiholomorphic $G$-superconnection $\br{E_Z, A^{E_Z\prime\prime}}$ on $Z$ with $\E_Z = \underline{F}_Z \br{\br{E_Z,A^{E_Z\prime\prime }}}$, and a quasi-isomorphism of $G$-equivariant $\OO_Z$-complexes $r_{Z,X}: \E_Z \rightarrow i_{X,Z,*}\F$, which induces a morphism of $\OO_Z^\infty \br{\Lambda \br{\overline{T^*Z}}}$-modules.
By Proposition \ref{PropCompatiblePullback}, $Li^*_{Y,Z} i_{X,Z,*} \F$ is represented by $i_{Y,Z,b}^* \E_Z$.

To make our notation simpler, we will use the notation $\F$ instead of $i_{X,Z,*} \F$.
A similar notation will be used when considering the embedding $i_{U,Y}$.

The map $r_{Z,X}$ induces a map $r_{Y,U}: i_{Y,Z,b}^* \E_Z \rightarrow  i^*_{U,X,b} \F$, which is a $G$-equivariant morphism of $\OO_Y^\infty \br{\lyb}$-modules and of $\oy$-complexes.

Let $\mathscr{T}_U = \br{\Lambda \br{\widetilde{N}^*} \widehat{\otimes} \Lambda \br{\overline{T^*U}}, \db^U }$.

Then $\mathscr{T}_U$ is an antiholomorphic $G$-superconnection on $U$, and it  represents $\OO_U \br{\Lambda  \br{\widetilde{N}}^* }$ in $\DbCohUG$.

The map $r_{Y,U}$ can be extended naturally to a map
\begin{align} \label{rYU}
	i_{Y,Z,b}^* \E_Z \rightarrow i^*_{U,X,b} \F \wo_b \mathscr{T}_U,
\end{align}
which we still denote it as $r_{Y,U}$.

To establish our proposition, we need to show that map $r_{Y,U}$ in (\ref{rYU}) is a quasi-isomorphism of objects in $\DbCohYG$. The proof will be obtained via local arguments and it is irrelevant to the group actions.

Let $D_Z$ be the diagonal of the antiholomorphic $G$-superconnection of $\E_Z$, and $v_{Z,0}$ be the corresponding map on $D_Z$ defined in (\ref{DecOfAnSupCon}).
Let $\mathscr{H} \E_Z$ be the cohomology of the $\OO_Z$-complex $\E_Z$, and let $HD $ be the cohomology of $\br{ \OO_Z^\infty \br{D_Z}, v_{Z,0}}$.

Since $r_{Z,X}$ is a quasi-isomorphism, on $Z \backslash X$, $\mathscr{H} \E_Z = 0$.
By \cite[Theorem 5.3.4]{BSW}, on $Z \backslash X$, $HD_Z = 0$ , so that on $Y \backslash U$, $H i_{Y,Z}^* D_Z = 0$.
Using again \cite[Theorem 5.3.4]{BSW}, we find that on $Y \backslash U$, $\mathscr{H} i_{Y,Z,b}^* \E_Z = 0$.

Take $x \in U$.
If $V \subset Z$ is a small neighborhood of $x$, we choose a holomorphic coordinate system on $V$ such that $x$ is represented by $0 \in \C^n$, and $X$, $Y$ are two subspaces $H_X$, $H_Y$ of $\C^n$, so that $U$ is represented by $H_X \cap H_Y$.
If $K$ is a vector subspace of $H_Y$ such that $H_X \oplus K = H_X + H_Y$, then $K$ represents $N_{U,Y}$.
Let $N$ be a subspace of $\C^n$ such that $\br{N_X + N_Y} \oplus N = \C^n$.
Let $V_{H_X}$, $V_K$, and $V_N$ be open neighborhoods of $0$ in $H_X$, $K$ and $N$ respectively, so that $V_{H_X} \times V_K \times V_N \subset V$.
By \cite[Theorem 5.2.1]{BSW}, if $V_{H_X}$ is small enough, after a gauge transformation of total degree $0$, $A^{E_X \prime\prime}$ can be written in the form
\begin{align} \label{LocalAEX}
A^{E_X \prime\prime} = \nabla^{D_X\prime\prime} + v_{X,0}.	
\end{align}
We note here that the above transformation is not necessarily $G$-invariant.

Let $\pi_{H_X}$, $\pi_{K}$ and $\pi_N$ be the projections of $\C^n$ on $H_X$, $K$ and $N$ respectively. Let $y_K$, $y_N$ be the generic section of $K$ and $N$, and let $\br{\Lambda \br{K^*}, i_{y_K}}$, $\br{\Lambda \br{N^*}, i_{y_N}}$ denote the Koszul complex of $K$ and $N$.
Set 
\begin{align}
\mathbf{E}_Z = \Lambda \br{\overline{\C}^n} \wo \pi^*_{H_X} D_X \wo \pi_K^* \Lambda\br{ K}^* \wo \pi_{N^*} \Lambda \br{N^*}.	
\end{align}
Let $A^{\mathbf{E}_Z \prime\prime}$ be the antiholomorphic superconnection
\begin{align}
A^{\mathbf{E}_Z \prime\prime} = \pi^*_{H_X} \br{\nabla^{D_X \prime\prime} + v_{X,0}} + \pi_K^* \br{\db^K + i_{y_K}} + 	\pi_N^* \br{\db^N + i_{y_N}}.
\end{align}
Then $A^{\mathbf{E}_Z \prime\prime}$ is an antiholomorphic superconnection on $\C^n$ near $0$.
Let $\mathcal{E}_Z$ denote the corresponding complex of $\OO_V$-modules.
Let $r_{\C^n, H_X}$ be the projection $\pi_{H_X}^* D_X \wo \pi_K^* \Lambda\br{ K^*} \wo \pi_N^* \Lambda \br{N^*} \rightarrow D_X$.
Then $r_{\C^n, H_X}$ extends to a morphism $\mathcal{E}_Z \rightarrow \F$ which has the same properties as $r_{Z,X}$. 
Because of known properties of the Koszul complex, if the considered neighborhoods are small enough, $r_{\C^n, H_X}$ is a quasi-isomorphism.

Near $z \in Z$, both $r_{Z,X}$ and $r_{\C^n, H_X}$ provide quasi-isomorphism of $\E_Z$, $\mathcal{E}_Z$ with $\F$, so that for $V$ small enough, $\E_Z$ and $\mathbf{E}_Z$ are isomorphic in $\mathrm{D}^{\mathrm{b}}_{\mathrm{coh}} \br{V_{H_X} \times V_K \times V_N}$.
By \cite[Theorem 6.5.1]{BSW}, there is a corresponding quasi-isomorphism $\phi: \E_Z \rightarrow \mathcal{E}_Z$.
So $\phi$ induces a morphism $i^*_{Y,Z,b} \E_Z \rightarrow i^*_{Y,Z,b} \mathcal{E}_Z$.
By \cite[Proposition 6.4.1]{BSW}, for $z \in V_{H_X} \times V_K \times V_N$, $\phi_z: D_{Z,x} \rightarrow \br{\pi^*_{H_X} D_X \wo \pi^*_K \Lambda \br{K^*} \wo \pi_N^* \Lambda \br{ N^*}}_z$ is a quasi-isomorphism.
In particular, this will be true for $z\in \br{V_{H_X} \times V_K \times V_N} \cap Y$.
This shows that near $z\in Y$, $\phi$ induces a quasi-isomorphism $i_{Y,Z,b}^* \E_Z \rightarrow i_{Y,Z,b}^* \mathcal{E}_Z$.

Since near $x\in Y$, the restriction of $\br{\Lambda \br{K^*}, i_{y_K}}$ to $Y$ is a Koszul complex, $r_{K,U}: i^*_{Y,Z,b} \mathcal{E}_Z \rightarrow i^*_{U,X,b} \F \wo_b \Lambda \br{N^*}$ is a quasi-isomorphism.
This shows that $r_{Y,U}: 	i_{Y,Z,b}^* \E_Z \rightarrow i^*_{U,X,b} \F \wo_b \mathscr{T}_U
$ is a quasi-isomorphism.
The proof of our proposition is complete.
\end{proof}

\subsection{The Riemann-Roch-Grothendieck for equivariant embedding} \label{MainImmersion}
Let $i_{X,Y}: X \rightarrow Y$ be a $G$-equivariant holomorphic embedding of compact complex $G$-manifolds.
Then for any $g\in G$, $i_{X_g, Y_g}: X_g \rightarrow Y_g$ is an embedding of compact complex manifolds.

Let $\F$ be an object in $\DbCohXG$.
In the rest part of the section, we will prove a Riemann-Roch-Grothendieck theorem for $i_{X,Y}$.
\begin{thm} \label{Thm7-4}
For any $g\in G$, the following identity holds,
\begin{align} \label{Equ7-13}
\chgBC \br{i_{X,Y,*} \F} = i_{X_g,Y_g, *}	\frac{\chgBC \br{\F}}{\tdgBC \br{N_{X/Y}}} \quad \text{in} \; \HEYgBCC.
\end{align}
\end{thm}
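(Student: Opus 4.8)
The plan is to follow the strategy already outlined in the introduction, reducing the equivariant embedding RRG to the known non-equivariant statement of \cite{BSW}. \textbf{Step 1: Reduction to trivial $G$-action.} Fix $g \in G$. By Remark \ref{rmk61} and Theorem \ref{ThmDerivedChern}, the class $\chgBC$ depends only on the restriction of the relevant data to the fixed-point locus $X_g$ together with the cyclic group $\anbr{g}$. Thus both sides of \eqref{Equ7-13} are unchanged if we replace $(X,Y,G)$ by $(X_g, Y_g, \anbr{g})$. Next, letting $\sigma$ be a generator of the finite cyclic group $\anbr{g}$ acting on $X_g$, one decomposes the relevant bundles and operators into $\sigma$-isotypic components; the $\sigma$-action is \emph{trivial} on the underlying manifold $X_g$ (it fixes every point), so one reduces to proving the identity for a manifold with trivial group action, where the Chern character $\chgBC$ becomes, upon tracking the extra bundle-automorphism $g$, a finite sum of twisted non-equivariant Chern characters. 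I will reduce to this case using a technical lemma proved in Section \ref{Section9-1} (cf.\ Proposition \ref{EquiTranProp}) together with the multiplicativity (Theorems \ref{ChernTensor}, \ref{ChernPullback}).

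\textbf{Step 2: Deformation to the normal cone.} With $G$ now acting trivially on $X$ and $Y$, I form the deformation to the normal cone $W = \mathrm{Bl}_{X \times \{0\}}(Y \times \PPP^1)$ in the $G$-equivariant category (the construction of \cite{BFM} is functorial, hence carries the $G$-action). The fibers over $0$ and $\infty$ realize, respectively, the embedding $X \into Y$ and the embedding $X \into \PPP(N_{X/Y} \oplus \OO_X)$ of $X$ as the zero section. Applying Proposition \ref{EquiTranProp} to the two transverse slices of $W$, and using that both $\chgBC$ and the pushforward $i_{X_g, Y_g, *}$ are deformation invariants (the class of the deformed family is $d^{\PPP^1}$-closed, so restriction to $0$ and $\infty$ give equal Bott-Chern classes), I reduce \eqref{Equ7-13} to the single model case where $Y$ is replaced by the projective bundle $P = \PPP(N_{X/Y} \oplus \OO_X)$ and $X$ sits as the zero section.

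\textbf{Step 3: Reduction to the structure sheaf and Koszul computation.} By the projection formula in $\DbCohXG$ (Section \ref{DerivedTensor}) and Theorem \ref{ChernTensor}, since any $\F \in \DbCohXG$ can be pulled back from a complex on $P$ along $i$ and tensored, it suffices to verify \eqref{Equ7-13} for $\F = \ox$. For $\F = \ox$, the direct image $i_{X,P,*}\ox$ admits an explicit resolution on $P$: the tautological Koszul complex $\Lambda^\bullet(\mathcal{Q}^*)$ associated to the quotient bundle on $P = \PPP(N_{X/Y}\oplus\OO_X)$ resolves $\OO_X$. I then compute $\chgBC(i_{X,P,*}\ox)$ using Theorem \ref{ThmQuaiChern} applied iteratively to this Koszul complex (realized via antiholomorphic $G$-superconnections as in Section \ref{EquiGeneMetr1}), obtaining the equivariant self-intersection formula $\chgBC(i_{X,P,*}\ox) = i_{X_g,P_g,*}\!\left( \tdgBC(N_{X/P})^{-1}\right)$ via the standard identity $\chg(\Lambda^\bullet N^*) = \tdgBC(N)^{-1} \cdot e(N)$ localized on $P_g$; the equivariant Todd class bookkeeping is exactly Definition \ref{DefTdg}. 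Combining with Step 2 and the multiplicativity gives \eqref{Equ7-13} in general.

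\textbf{Main obstacle.} The hard part is Step 1: making rigorous the reduction to trivial group action. The subtlety is that the antiholomorphic $G$-superconnection data $(E, \AEpp)$ and the generalized metric $h$ do not restrict to $X_g$ as data for a \emph{trivially}-acting group in an entirely transparent way---one must check that the $g$-equivariant decomposition of $E|_{X_g}$, $D|_{X_g}$ and the curvature $A^{E_0,2}|_{X_g}$ into eigenbundles is compatible with the cone construction and with pullback/tensor operations, so that Proposition \ref{EquiTranProp} and the compatibility results of Section \ref{Section6-4} can be invoked componentwise. Once the bookkeeping of the normal bundle $N_{X/Y}|_{X_g} = N_{X_g/Y_g} \oplus (\text{part with }g\text{-eigenvalues} \neq 1)$ is carried out carefully and matched against Definition \ref{DefTdg}, the remaining steps are formal consequences of results already established in the preceding sections.
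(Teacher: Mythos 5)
Your overall strategy -- reduce to trivial group action on the manifolds, deform to the normal cone, reduce to $\OO_X$ by the projection formula, then compute with a Koszul resolution -- is precisely the route the paper takes. However, there are two places where your write-up papers over essential content.

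\textbf{Step 1 is not a "both sides unchanged" reduction.} Your claim that both sides of \eqref{Equ7-13} are unchanged when replacing $(X,Y,G)$ by $(X_g,Y_g,\anbr{g})$ is false as stated. The embedding $i_{X,Y}$ and the slice $Y_g$ do not intersect transversally in $Y$, so $(i_{X,Y,*}\F)|_{Y_g}$ is not $i_{X_g,Y_g,*}(\F|_{X_g})$: Proposition \ref{EquiTranProp} gives instead
\begin{align*}
Li^*_{Y_g,Y}\, i_{X,Y,*}\F \;\simeq\; i_{X_g,Y_g,*}\!\left(Li^*_{X_g,X}\F \,\wo^L_{\OO_{X_g}}\, \OO_{X_g}\!\left(\Lambda\widetilde{N}^*\right)\right),
\end{align*}
where $\widetilde{N}$ is the excess normal bundle. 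The naive reduction therefore produces an extra factor $\chgBC(\Lambda\widetilde{N}^*) = \tdgBC^{-1}(\widetilde{N})$ on the left and a mismatch $\tdgBC(N_{X_g/Y_g})$ versus $\tdgBC(N_{X/Y})$ on the right; these cancel exactly because of the short exact sequence $0\to N_{X_g/Y_g}\to N_{X/Y}|_{X_g}\to\widetilde{N}\to 0$ and the multiplicativity of $\tdgBC$. Your later reference to Proposition \ref{EquiTranProp} suggests you are aware of this, but the bookkeeping needs to be carried out, not asserted. Separately, the ``decompose into $\sigma$-isotypic components and reduce to a sum of twisted non-equivariant Chern characters'' idea is not what the paper does, is not needed, and is not even consistent with your own Steps 2--3, which continue to work with $\chgBC$ as an equivariant object on a manifold with trivial group action (the group still acts nontrivially on the bundles).

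\textbf{Step 3 glosses over a real computation.} You claim the evaluation of $q_{P,X,*}\,\chgBC(i_{X,P,*}\ox)$ ``follows from the standard identity $\chg(\Lambda^\bullet N^*) = \tdgBC(N)^{-1}\cdot e(N)$''. That identity holds at the level of de Rham (or $K$-theoretic) classes, but the statement you actually need is an equality of Bott-Chern classes after fiber integration along $q_{P,X}$, which is not a formal consequence of the cohomological formula. The paper invokes \cite[Theorem 6.7]{B95}, Bismut's equivariant computation of the Bott-Chern class of the pushforward of the tautological Koszul complex, and this is genuine analytic input (it involves controlling Bott-Chern transgressions of an explicit superconnection on $P$), not an ``iterate Theorem \ref{ThmQuaiChern}'' argument. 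Similarly, in Step 2 the deformation invariance across the family $W\to\PPP^1$ is established in the paper via an explicit $\db\partial$-transgression involving $\log|z|^2$; ``$d^{\PPP^1}$-closed, so the restrictions to $0$ and $\infty$ agree'' is the right slogan but not a proof in Bott-Chern cohomology. In summary, the skeleton of your argument matches the paper, but the two nontrivial ingredients -- the excess-intersection Todd bookkeeping in Step 1 and the Bott-Chern Koszul computation in Step 3 -- are exactly where the proof lives, and both are under-argued.
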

\begin{proof}
The theorem will be proved in two steps.

\textbf{Step 1}. We will prove the theorem when $G$ acts trivially on $X$ and $Y$ in Section \ref{ProofEmbedding}.

\textbf{Step 2}. The general case.

Let us complete \textbf{Step 2} admitting \textbf{Step 1}.
We replace $\br{X,Y,Z,U = X \cap Y,G}$ by $\br{X,Y_g,Y, X_g = X \cap Y_g,\anbr{g}}$ in Proposition \ref{EquiTranProp}. We have an analogue of the diagram (\ref{DiagEmb}) with obvious notations,
\begin{equation}
\begin{tikzcd}
X_g \arrow[r, "{i_{X_g,X}}"] \arrow[d, "{i_{X_g,Y_g}}"'] & X \arrow[d, "{i_{X,Y}}"] \\
Y_g \arrow[r, "{i_{Y_g,Y}}"]                         & Y .                      
\end{tikzcd}	
\end{equation}
 We still denote $\widetilde{N}$ the associated excess normal bundle on $X_g$. 
By Proposition \ref{EquiTranProp}, we have
\begin{align} \label{Equ7-14}
	 L i_{Y_g,Y}^* i_{X,Y,*} \F \simeq i_{X_g, Y_g, *} \br{Li_{X_g, X}^* \F  \wo^L_{\OO_{X_g}} \OO_{X_g} \br{\Lambda  \br{\widetilde{N}^*} }} \quad  \text{in} \; \DbCohYgG.
\end{align}

By Theorems \ref{ThmDefChern}, the equivariant Chern character of the left hand side of (\ref{Equ7-14}) is given by
\begin{align} \label{Equ7-15}
	\chgBC \br{ L i_{Y_g,Y}^* i_{X,Y,*} \F} = \chgBC \br{i_{X,Y,*} \F}   \quad \text{in} \; \HEYgBCC.
\end{align}

Since $\anbr{g}$ acts trivially on $X_g$, $Y_g$, by \textbf{Step 1}, the equivariant Chern character of the right hand side of  (\ref{Equ7-14}) is given by
\begin{multline}
\chgBC 	\br{i_{X_g, Y_g, *} \br{Li_{X_g, X}^* \F  \wo^L_{\OO_{X_g}} \Lambda  \br{\widetilde{N}^*} }} \\
=   i_{X_g, Y_g, *} \frac{\chgBC \br{Li_{X_g, X}^* \F  \wo^L_{\OO_{X_g}} \Lambda  \br{\widetilde{N}^*} }}{\tdgBC \br{N_{X_g/Y_g}}} \quad \text{in} \; \HEYgBCC.
\end{multline}
By Theorems \ref{ThmDefChern} and \ref{ChernTensor}, 
\begin{align}
	\chgBC \br{Li_{X_g, X}^* \F  \wo^L_{\OO_{X_g}} \Lambda  \br{\widetilde{N}^*} } = \chgBC \br{\F} \chgBC \br{ \Lambda  \br{\widetilde{N}^*} }.
\end{align}

We claim
\begin{align} \label{Equ7-17}
	\chgBC \br{\Lambda  \br{\widetilde{N}^*}} = \frac{\tdgBC\br{N_{X_g/Y_g}}}{\tdgBC\br{N_{X/Y}}} \quad \text{in} \; \HEXgBCC.
\end{align}
Indeed,
by Definitions \ref{DefTdg} and \ref{DefChg}, since the fiberwise action of $g$ on $\widetilde{N}_{\mid X_g}$ has no eigenvalue $1$, we have 
\begin{align} \label{Equ7-16}
\chgBC \br{\Lambda \br{\widetilde{N}^*}} = \tdgBC^{-1} \br{\widetilde{N}} \quad \text{in} \; \HEXgBCC.
\end{align}
By (\ref{Equ7-34}), we have an exact sequence of $\anbr{g}$-equivariant holomorphic vector bundles on $X_g$,
\begin{align} \label{Equ7-18}
0 \longrightarrow N_{X_g/Y_g} \longrightarrow N_{X/Y} \longrightarrow \widetilde{N} \longrightarrow 0.	
\end{align}
By (\ref{Equ7-18}) and by multiplicativity of the Todd class, we have 
\begin{align} \label{Equ7-19}
\tdgBC^{-1} \br{\widetilde{N}} = 	\tdgBC^{-1}\br{N_{X/Y}}\tdgBC\br{N_{X_g/Y_g}} \quad \text{in} \; \HEXgBCC.
\end{align}
By (\ref{Equ7-16}), (\ref{Equ7-19}), we get (\ref{Equ7-17}). 

By (\ref{Equ7-15})-(\ref{Equ7-17}), we complete the proof of the theorem.
\end{proof}

\subsection{The case $G$ acts trivially on $X$ and $Y$} \label{ProofEmbedding}
Now we assume that $G$ acts trivially on $X$ and $Y$ and all the manifolds considered in this section.
Let $\widetilde{Y}$ be the blow-up of $Y$ along $X$ with the exceptional divisor $\PPP (N_{X/Y})$.
We have the identity of sets 
\begin{align}
	\widetilde{Y} = Y\backslash X \sqcup \mathbf{P} \br{N_{X/Y}} .
\end{align}

Let $W $ be the blow-up of $Y \times \PPP^1$ along $X \times \infty$. 
The associated exceptional divisor is given by
\begin{align}
P = \PPP \br{N_{X\times \infty/ Y \times \PPP^1}}.	
\end{align}
Put
\begin{align}
A =  N_{X/Y} \boxtimes N^{-1}_{\infty / \PPP^1}.	
\end{align}
Then $A$ is a vector bundle on $X \times \infty$.
So
\begin{align}
P = \PPP (A \oplus \C).
\end{align}

Let $q_{W,Y}: W \rightarrow Y$, $q_{W, \PPP^1}: W \rightarrow \PPP^1$ be the obvious maps. 
For $z \in \PPP^1$, put
\begin{align}
Y_z = q^{-1}_{W, \PPP^1} (z).	
\end{align}
Then
\begin{align}
Y_z = 
\begin{cases}
 Y ,  &\text{if} \; z \neq \infty, \\
P \cup \widetilde{Y},  &\text{if} \; z = \infty.	
\end{cases}
\end{align}
Moreover, $P$ and $\widetilde{Y}$ meet transversally along $\PPP(N_{X/Y})$ (See Figure \ref{Fig1}).

\begin{figure} 
    \includegraphics[width=3in]{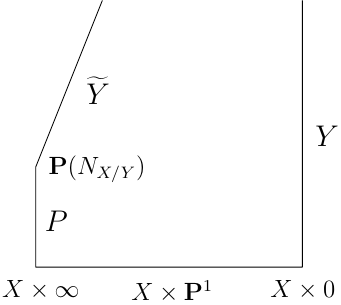}
	\caption{Deformation to the normal cone}
	\label{Fig1}
	\centering
    \end{figure}

Also $X\times \PPP^1$ embeds in $W$ and $X\times \infty $ embeds in $P$. We denote $i_{X\times \infty, P}: X \times \infty \rightarrow P$ the embedding. 
We note here that 
\begin{align} \label{Equ7-35}
N_{X \times \infty /P } \simeq N_{X/Y}.	
\end{align}

\textbf{Step 1}. Let us reduce our embedding problem for $i_{X, Y}$ to $i_{X\times \infty, P}$.

Let 
\begin{align}
&i_{Y,W}: Y = q_{W,\PPP^1}^{-1}(0) \rightarrow	 W, & i_{P,W}:P\rightarrow W, && i_{\widetilde{Y}, W}: \widetilde{Y} \rightarrow W, 
\end{align}
be the obvious embeddings. Other embeddings will be denoted in the same way.
Let $\pi_{X\times \PPP^1, X}: X\times \PPP^1 \rightarrow X$ be the natural projection. 

Let $\F \in \DbCohXG$.
By (\ref{Equ8-3}) and proceeding as in \cite[(9.3.4)-(9.3.6)]{BSW}, we have 
\begin{align} \label{IdenPullback}
\br{Li^*_{Y,W}} i_{X\times \PPP^1,W,*} \br{L \pi^*_{X\times \PPP^1,X} \F} &\simeq i_{X,Y,*} \F \quad \quad \; \text{in} \; \DbCohYG, \\
\br{Li^*_{P,W}} i_{X\times \PPP^1,W,*} \br{L \pi^*_{X\times \PPP^1,X} \F} &\simeq i_{X\times \infty,P,*} \F \quad \text{in} \; \DbCohPG. \nonumber
\end{align}

Taking equivariant Chern character of both sides of (\ref{IdenPullback}), using Theorem \ref{EquiPullbackChern}, and (\ref{IdenPullback}), we get
\begin{align} \label{Equ8-4}
i^*_{Y, W} \chgBC \br{i_{X\times \PPP^1, W, *} \br{L \pi^*_{X\times \PPP^1,X} }\F}	= \chgBC \br{i_{X, Y, *} \F} \quad \text{in} \; H^{(=)}_{\mathrm{BC}} (Y,\C), \\
i^*_{P, W} \chgBC \br{i_{X\times \PPP^1, W, *} \br{L \pi^*_{X\times \PPP^1,X} }\F}	= \chgBC \br{i_{X \times \infty, P, *} \F} \quad \text{in} \; H^{(=)}_{\mathrm{BC}} (P,\C). \nonumber
\end{align}

Let $q_{P,X}: P \rightarrow X$ be the obvious projection. We claim
\begin{align} \label{DeformationOfChern}
\chgBC \br{i_{X,Y,*} \F} =i_{X, Y,*} q_{P, X,*} \br{\chgBC \br{i_{X\times \infty, P, *} \F}}	 \quad \text{in} \; H^{(=)}_{\mathrm{BC}} (Y,\C).
\end{align}
Indeed, the proof is similar to \cite[(9.3.9)-(9.3.17)]{BSW}. Using (\ref{Equ8-4}), the fact that $\br{X \times \PPP^1} \cap \widetilde{Y } = \emptyset$, and $i_{X\times \PPP^1,W,*} \br{L \pi^*_{X\times \PPP^1,X} \F} $ is supported in $X \times \PPP^1$, the term 
\begin{align}
\frac{\db^Y \partial^Y}{2 i \pi} q_{W,Y,*} \brr{\chgBC \br{i_{X\times \PPP^1, W, *} \br{L \pi^*_{X\times \PPP^1,X} }\F} q^*_{W, \PPP^1} \log \br{|z|^2}}
\end{align}
gives the transgression between the two sides of (\ref{DeformationOfChern}).


\textbf{Step 2}. Let us reduce the evaluation of $\chgBC \br{i_{X\times \infty, P, *} \F}$ to $ \chgBC \br{i_{X\times \infty, P, *} \ox}$.

From now on, we identify $X$ with $X \times \infty$.
One feature of the embedding $X\rightarrow P$ compared to the embedding $X \rightarrow Y$ is that we have the following commutative diagram,
\begin{equation} \label{Equ8-5}
\begin{tikzcd}
X \arrow[r, "{i_{X,P}}"] \arrow[rd, "\mathrm{id}"'] & P \arrow[d, "{q_{P,X}}"] \\
                                           & X.                      
\end{tikzcd}
\end{equation}

Using the projection formula \cite[\href{https://stacks.math.columbia.edu/tag/0B54}{Tag0B54}]{stacks-project} for the holomorphic map $i_{X, P}$, and for the objects $\ox$ in $\DbCohXG$ and $Lq^*_{P,X} \F$ in $\DbCohPG$,
we have 
\begin{align} \label{Projection1}
Lq^*_{P,X} \F 	\widehat{\otimes}^{L}_{\OO_P} i_{X, P,*} \ox \simeq i_{X, P,*} \br{L i_{X, P}^* Lq^*_{P,X} \F } \quad \text{in} \; \DbCohPG.
\end{align}
Since the diagram (\ref{Equ8-5}) commutes,  we can rewrite (\ref{Projection1}) as 
\begin{align}  \label{Equ8-6}
	Lq^*_{P,X} \F 	\widehat{\otimes}^{L}_{\OO_P} i_{X, P,*} \ox \simeq i_{X, P,*} \F \quad \text{in} \; \DbCohPG.
\end{align}
Taking equivariant Chern character of both sides of (\ref{Equ8-6}) and using Theorems \ref{ChernPullback}, \ref{ChernTensor}, we obtain
\begin{align} \label{Equ8-7}
q^*_{P,X}\chgBC \br{ \F 	}\chgBC\br{ i_{X, P,*} \ox} = \chgBC \br{i_{X, P,*} \F}	  \quad \text{in} \; H^{(=)}_{\mathrm{BC}} (Y,\C).
\end{align}
By (\ref{DeformationOfChern}), (\ref{Equ8-7}), we have 
\begin{align} \label{Equ7-32}
	\chgBC \br{i_{X,Y,*} \F} = i_{X,Y,*} \brr{ \chgBC \br{\F} q_{P,X,*}\br{ \chgBC \br{i_{X, P, *} \ox} }} \quad \text{in} \; H^{(=)}_{\mathrm{BC}} (Y,\C).
\end{align}



\textbf{Step 3}. Evaluate $q_{P,X,*} \br{\chgBC \br{i_{X, P, *} \ox}}$.

Let $U = \OO_P (-1)$ be the universal line bundle on $P$. We have the exact sequence of $G$-equivariant holomorphic vector bundles on $P$,
\begin{align}
0 \rightarrow U \rightarrow A \oplus \C \rightarrow \br{A \oplus \C}/U \rightarrow 0.	
\end{align}
The image $\sigma$ of $1 \in \br{A \oplus \C}/U$ is a holomorphic section of $\br{A \oplus \C}/U$ that vanishes exactly on $X \subset P$.

The associated Koszul complex $\br{\Lambda \br{\br{A \oplus \C}/U}^*, i_{\sigma}}$ on $P$ is $G$-equivariant and provides a resolution of $i_{X , P, *} \OO_{X }$.


By Theorem \ref{ThmDerivedChern}, we have
\begin{align} \label{Equ7-10}
	\chgBC \br{i_{X, P, *} \ox}	= \chgBC \br{ \Lambda \br{\br{A \oplus \C}/U}^*, i_{\sigma}} \quad \text{in} \; H^{(=)}_{\mathrm{BC}} (P,\C).
\end{align}
By \cite[Theorem 6.7]{B95}, we know that 
\begin{align} \label{Equ7-31}
q_{P,X, *} \br{\chgBC 	\br{\Lambda \br{\br{A \oplus \C}/U}^*, i_\sigma}} = \mathrm{Td}_{g,\mathrm{BC}}^{-1} \br{N_{X / P}} \quad \text{in} \; H^{(=)}_{\mathrm{BC}} (X,\C).
\end{align}
By (\ref{Equ7-10}), (\ref{Equ7-31}), we obtain
\begin{align} \label{Equ7-33}
	q_{P,X,*} \br{\chgBC \br{i_{X, P, *} \ox}} = \mathrm{Td}_{g,\mathrm{BC}}^{-1} \br{N_{X / P}} \quad \text{in} \; H^{(=)}_{\mathrm{BC}} (P,\C).
\end{align}

Using (\ref{Equ7-35}), (\ref{Equ7-32}) and (\ref{Equ7-33}), we complete the proof of the theorem. \qed

\section{The uniqueness of the equivariant Chern character} \label{SectionUnicity}
The purpose of this section is to establish a unicity theorem for the equivariant Chern characters which is inspired by \cite{G10}.

In Section \ref{UniMain}, we state the main theorem of this section.

In Section \ref{PreUni}, we establish some auxiliary results that will be used in the proof of the unicity theorem.

In Section \ref{Section8-3}, following \cite[Theorem 8]{G10}, we prove our unicity theorem.
\subsection{The unicity theorem} \label{UniMain}
Let $G$ be a finite group and $g\in G$. 
Recall  that $\anbr{g} \subset G$ is the cyclic group generated by $g$. 

\begin{thm} \label{Thm8-1}
Assume for each compact complex $G$-manifold $X$,
there is a morphism of groups $\cgBCBar: K(X,G) \rightarrow \HEXgBCC$  such that
\begin{enumerate}  [start=1]
\item \label{Unicity4} The map 
\begin{align}
\cgBCBar: K(X,G) \rightarrow \HEXgBCC 	
\end{align}
factors through  $K(X_g, \anbr{g})$, so that the following diagram \begin{equation}
\begin{tikzcd}
{K(X,G)} \arrow[d] \arrow[rd, "\cgBCBar"] &   \\
{K(X_g,\anbr{g})} \arrow[r, "\cgBCBar"]          & \HEXgBCC
\end{tikzcd}	
\end{equation}
commutes.
Here, the vertical arrow is the obvious restriction.
\item \label{Unicity1}	If $E$ is an equivariant vector bundle, $\cgBCBar$ is defined in usual way.
\item \label{Unicity2} $\cgBCBar$ is functorial under equivariant pullbacks.
\item \label{Unicity3} $\cgBCBar$ verifies Riemann-Roch-Grothendieck for equivariant embeddings.
\end{enumerate}
Then $\cgBCBar = \cgBC$.
\end{thm}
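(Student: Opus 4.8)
The plan is to mimic the classical Grothendieck-style uniqueness argument (as in \cite[Theorem 8]{G10} and \cite[Theorem 9.4.1]{BSW}), adapting each reduction to the equivariant and Bott-Chern setting. The target is to show that the difference $\tau = \cgBCBar - \cgBC$, which by hypothesis \eqref{Unicity1}, \eqref{Unicity2} is a group morphism vanishing on equivariant vector bundles and commuting with equivariant pullbacks, is identically zero on $K(X,G)$ for every compact complex $G$-manifold $X$.

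First I would use hypothesis \eqref{Unicity4} together with the already-established property \eqref{Con0-1} of $\cgBC$ (Theorem \ref{Thm0-7}): both $\cgBCBar$ and $\cgBC$ factor through the restriction $K(X,G) \to K(X_g, \anbr{g})$, so it suffices to prove the equality for the cyclic group $\anbr{g}$ acting on $X_g$, and then (renaming) we may assume $G = \anbr{g}$ acts \emph{trivially} on $X$; note that in that case $X_g = X$ and $\HEXgBCC = \HEXBCC$. Next, since $K(X,G)$ is generated by classes of $G$-coherent sheaves $\F$, and since any $G$-coherent sheaf admits a (possibly non-canonical) filtration whose quotients are again $G$-coherent, it is enough to treat $\tau(\F)$ for a single $G$-coherent sheaf $\F$. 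Here I would invoke Hironaka's flattening theorem together with equivariant resolution of singularities (equivariant because $G$ is finite): there is a $G$-equivariant projective modification $\pi\colon \widetilde{X} \to X$, a composition of blow-ups along smooth $G$-invariant centers, such that $L\pi^*\F$ has a nice structure and $R\pi_* \ox_{\widetilde X} = \ox_X$. Using functoriality \eqref{Unicity2}, \eqref{Unicity3} (Riemann-Roch for the equivariant embeddings occurring as the blow-ups, plus the projection formula to peel off $\F$), one reduces the support of $\F$ to a $G$-invariant normal crossing divisor $D = \sum_i D_i$; this is exactly the reduction summarized in the introduction preceding Proposition \ref{Prop0-3}.

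Then I would apply Proposition \ref{Prop0-3} (Proposition \ref{Prop8-2} in the body): with $\iota_i\colon D_i \to X$ the equivariant embeddings, there exist $G$-coherent sheaves $\xi_i$ on $D_i$ with $\F = \sum_i \iota_{i,*}\xi_i$ in $K(X,G)$. Since $\tau$ is additive and both maps satisfy Riemann-Roch for the equivariant embeddings $\iota_i$ (for $\cgBCBar$ this is \eqref{Unicity3}; for $\cgBC$ it is \eqref{Con0-4}$'$, already proved as Theorem \ref{Thm7-4} specialized to embeddings), we get
\begin{align}
\tdBC(TX)\,\tau(\F) = \sum_i \iota_{i,*}\!\brr{\tdBC(TD_i)\,\tau(\xi_i)} \quad \text{in}\ \HEXBCC.
\end{align}
Because $\tdBC(TX)$ is invertible in $\HEXBCC$, the vanishing of $\tau(\F)$ follows once we know $\tau(\xi_i) = 0$ for each $i$; but $\dim D_i < \dim X$, so this closes an induction on $\dim X$. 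The base case $\dim X = 0$ is immediate: a $G$-coherent sheaf on a point is a finite-dimensional $G$-representation, i.e. an equivariant vector bundle, where $\tau$ vanishes by \eqref{Unicity1}.

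The main obstacle is \textbf{Step 2}, the reduction to normal-crossing support: one must run Hironaka flattening and embedded resolution \emph{equivariantly} (which is standard for finite groups, but requires care to keep all centers $G$-invariant and smooth), and then correctly bookkeep the Riemann-Roch contributions of each blow-up so that the error terms are supported in strictly lower-dimensional $G$-invariant subvarieties and can themselves be fed into the induction. A secondary subtlety is ensuring that every sheaf produced along the way is genuinely $G$-equivariant coherent and that the projection formula used to replace $\F$ by structure sheaves is valid in $\DbCohXG$ — but these are covered by the derived-category formalism of Section \ref{SectionEquDer} and the compatibilities of $\chgBC$ from Section \ref{EquiChernCharacter}. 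Apart from that, each individual reduction is a formal consequence of the four hypotheses plus the already-proven properties of $\cgBC$, so the argument is essentially an orchestration of Propositions \ref{EquiTranProp}, \ref{Prop0-3} and Theorem \ref{Thm7-4}.
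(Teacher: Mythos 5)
Your overall outline -- reduce to trivial $G$-action via hypothesis \eqref{Unicity4}, induct on $\dim X$, use Hironaka (flattening plus embedded resolution) to produce a modification $\sigma\colon\widetilde X\to X$, handle the locally free piece by hypothesis \eqref{Unicity1} and the pieces supported on the normal-crossing divisor by Proposition \ref{Prop8-2} together with \eqref{Unicity3} and the inductive hypothesis -- is the right skeleton and matches the paper.

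However, there is a genuine gap in how you pass from $\widetilde X$ back down to $X$. You write that you will use ``Riemann-Roch for the equivariant embeddings occurring as the blow-ups, plus the projection formula'' to reduce to the normal-crossing case on $X$. But the blow-up map $\sigma\colon\widetilde X\to X$ is a proper birational \emph{projection}, not an embedding, and the only Riemann-Roch formula available from the hypotheses \eqref{Unicity3} (and from \eqref{Con0-4}$'$ for the already-constructed $\cgBC$) is for equivariant \emph{embeddings}. You cannot push the identity on $\widetilde X$ forward to $X$ along $\sigma$ with only that input; the centers of the blow-ups and the exceptional divisors are embeddings, but $\sigma$ itself is not, and there is no axiom allowing you to commute $\tau$ with $\sigma_*$. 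The paper sidesteps this entirely by going in the opposite direction: it pulls back along $\sigma$, invoking the \emph{injectivity} of $\sigma^*\colon\HEXBCC\to H^{(=)}_{\mathrm{BC}}(\widetilde X,\mathbf C)$ (which is \cite[Theorem 9.4.1]{BSW}, a fact about blow-ups and Bott--Chern cohomology, not an axiom on the Chern character), reducing the problem to showing $\cgBC(L\sigma^*\F)=\cgBCBar(L\sigma^*\F)$. That identity is then decomposed as $L\sigma^*\F=\sigma^*\F/(\sigma^*\F)_{\mathrm{tor}}+(\sigma^*\F)_{\mathrm{tor}}+\sum_{i\geq1}(-1)^iL^i\sigma^*\F$, where the first term is locally free (handled by \eqref{Unicity1}) and the remaining terms are supported on the normal-crossing divisor (handled by Proposition \ref{Prop8-2}, \eqref{Unicity3}, and induction). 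Without the injectivity of $\sigma^*$ your argument does not close, since you never actually descend back to $\HEXBCC$.

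A secondary imprecision: you speak of ``reducing the support of $\F$ to a normal-crossing divisor,'' but $\F$ itself is not supported there; it is $(\sigma^*\F)_{\mathrm{tor}}$ and the higher $L^i\sigma^*\F$ (which live on $\widetilde X$) that are supported on $D$, while the quotient $\sigma^*\F/(\sigma^*\F)_{\mathrm{tor}}$ is locally free on all of $\widetilde X$. Your proposed formula $\tdBC(TX)\tau(\F)=\sum_i\iota_{i,*}[\tdBC(TD_i)\tau(\xi_i)]$ would only make sense if $\F$ itself were supported in $D$, which is not the general case you are trying to handle.
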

\begin{proof}
Thanks to (\ref{Unicity4}), we need only prove
\begin{align} \label{UnicityEqu1}
\cgBC = \cgBCBar
\end{align}
for manifolds with trivial $G$-action.
The proof will be given in Section \ref{Section8-3}.
\end{proof}

\subsection{Preliminaries and auxiliary results} \label{PreUni}
In the rest part of this section, we assume that $G$ acts trivially on $X$.
Let $\F$ be a $G$-coherent sheaf on $X$. We denote $\F_{\mathrm{tor}}$ the subsheaf of the torsion elements. 
If $\F^* = \Hom_{\ox} \br{\F, \ox}$ is the dual sheaf of $\F$, if $\sigma : \F \rightarrow \F^{**}$ is the evaluation map, by \cite[Section 5.4]{K14}, $\F_{\mathrm{tor}} = \ker \br{\sigma}$.

Since $\F$ is a $G$-sheaf, we find $\F^*$, $\F^{**}$, and $\F_{\mathrm{tor}}$ are also $G$-sheaves. By \cite[pp.~237, 240]{GR},  $\F^*$, $\F^{**}$, and $\F_{\mathrm{tor}}$ are coherent, so that they are $G$-coherent.

The following proposition is essential to establishing the unicity theorem.
\begin{prop} \label{Prop8-1}
	There is a modification $\sigma: \widetilde{X} \rightarrow X$ which is a finite composition of blow-up with smooth centers such that $\sigma^* \F / (\sigma^* \F)_{\mathrm{tor}}$ is locally free.
Moreover, the exceptional divisor of $\sigma$ and the support of $(\sigma^* \F)_{\mathrm{tor}}$ are both contained in a normal crossing divisor.
\end{prop}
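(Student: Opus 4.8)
The plan is to combine Hironaka's theorems on resolution of singularities and on flattening by blow-ups. First I would recall the classical statement: given the coherent sheaf $\F$ on the compact complex manifold $X$, Hironaka's flattening theorem produces a modification $\sigma_1 : X_1 \to X$, obtained as a finite composition of blow-ups with smooth centers, such that the strict transform of $\F$ — equivalently, the quotient $\sigma_1^* \F / (\sigma_1^* \F)_{\mathrm{tor}}$ — is $\sigma_1$-flat; since the target is a manifold and this sheaf is torsion-free and flat, it is in fact locally free. Thus the sheaf-theoretic part of the conclusion is achieved by a single application of the flattening theorem. The only subtlety is to make sure one passes to the torsion-free quotient at the right moment, using that $(\sigma_1^* \F)_{\mathrm{tor}} = \ker(\sigma : \sigma_1^*\F \to (\sigma_1^*\F)^{**})$ as recalled just before the statement, so that the quotient is genuinely torsion-free and hence (being flat over a smooth base) locally free.

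Next I would arrange the normal-crossing condition. The exceptional divisor of $\sigma_1$ need not be normal crossing a priori, and the support of $(\sigma_1^*\F)_{\mathrm{tor}}$ — a closed analytic subset of $X_1$ — need not be a divisor at all, let alone a normal crossing one. Here I would invoke Hironaka's embedded resolution of singularities: there is a further modification $\sigma_2 : \widetilde X \to X_1$, again a finite composition of blow-ups with smooth centers, such that the total transform of (the exceptional divisor of $\sigma_1$) $\cup$ (the support of $(\sigma_1^*\F)_{\mathrm{tor}}$) becomes a normal crossing divisor. Setting $\sigma = \sigma_1 \circ \sigma_2$, the composite is again a finite sequence of blow-ups with smooth centers. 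One then checks that local freeness is preserved: pulling back the locally free sheaf $\sigma_1^*\F/(\sigma_1^*\F)_{\mathrm{tor}}$ along $\sigma_2$ stays locally free, and the natural map $\sigma^*\F/(\sigma^*\F)_{\mathrm{tor}} \to \sigma_2^*\bigl(\sigma_1^*\F/(\sigma_1^*\F)_{\mathrm{tor}}\bigr)$ is an isomorphism in codimension one, hence — both sides being torsion-free, the left being a quotient and the right locally free — one deduces that $\sigma^*\F/(\sigma^*\F)_{\mathrm{tor}}$ is itself locally free. Finally, the exceptional divisor of $\sigma$ is contained in the total transform of the exceptional divisor of $\sigma_1$ together with the exceptional divisor of $\sigma_2$, which lies in the normal crossing divisor we arranged; and $\mathrm{supp}\,(\sigma^*\F)_{\mathrm{tor}}$ is contained in $\sigma_2^{-1}\bigl(\mathrm{supp}\,(\sigma_1^*\F)_{\mathrm{tor}} \cup \text{Exc}(\sigma_1)\bigr)$, again inside that divisor.

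The main obstacle I anticipate is not the existence of the modifications — these are Hironaka's theorems, which we are entitled to cite — but the bookkeeping around torsion subsheaves under successive pullbacks. Specifically, pullback does not commute with taking the torsion subsheaf: $(\sigma_2^* \G)_{\mathrm{tor}}$ can be strictly larger than $\sigma_2^*(\G_{\mathrm{tor}})$ when $\sigma_2$ has positive-dimensional fibers over the support of $\G_{\mathrm{tor}}$. The clean way around this is to work with the locally free quotient directly: once $\sigma_1^*\F/(\sigma_1^*\F)_{\mathrm{tor}}$ is locally free, I would show that for the further modification, $\sigma^*\F/(\sigma^*\F)_{\mathrm{tor}}$ agrees with $\sigma_2^*$ of that locally free sheaf by comparing both with the reflexive hull and using that they are isomorphic on the dense open set where $\sigma$ is an isomorphism and $\F$ is locally free — a torsion-free sheaf on a manifold that agrees with a locally free sheaf on the complement of an analytic subset of codimension $\ge 1$, and which is a quotient of the pullback, is forced to coincide with it. I would also note that everything here is carried out with no reference to the group action, which is why this proposition is stated before the $G$-equivariant refinement; the equivariance will be restored separately by the functoriality of blow-ups along $G$-invariant centers.
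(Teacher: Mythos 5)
Your proposal follows the same route as the paper: first apply Hironaka's flattening theorem to make $\sigma_1^*\F/(\sigma_1^*\F)_{\mathrm{tor}}$ locally free, then apply Hironaka's embedded resolution to the union of $\mathrm{supp}\,(\sigma_1^*\F)_{\mathrm{tor}}$ and the exceptional divisor of $\sigma_1$, and finally verify that the torsion-free quotient of the total pullback is still locally free. The only difference is cosmetic: where you compare $\sigma^*\F/(\sigma^*\F)_{\mathrm{tor}}$ with $\sigma_2^*(\sigma_1^*\F/(\sigma_1^*\F)_{\mathrm{tor}})$ via reflexive hulls and a dense-open argument, the paper establishes the same identification more directly by the two inclusions $\sigma_2^*(\sigma_1^*\F)_{\mathrm{tor}} \subseteq (\sigma^*\F)_{\mathrm{tor}}$ (the image of a torsion sheaf is torsion) and $(\sigma^*\F)_{\mathrm{tor}} \subseteq \sigma_2^*(\sigma_1^*\F)_{\mathrm{tor}}$ (the quotient by the latter is the pullback of a locally free sheaf, hence torsion-free), which amounts to the same observation that local freeness of the quotient is precisely what makes the torsion subsheaf commute with pullback.
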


To prove Proposition \ref{Prop8-1}, we need the following two theorems.
\begin{thm} \label{Thm8-2}
There is a modification $\sigma_1: \overline{X} \rightarrow X$ which is a finite sequence of blow-up along smooth centers such that $\sigma_1^* \F / \br{\sigma_1^*\F}_{\mathrm{tor}}$ is locally free.
\end{thm}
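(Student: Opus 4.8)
The statement to prove is Theorem \ref{Thm8-2}: given a $G$-coherent sheaf $\F$ on a compact complex manifold $X$ (with $G$ acting, but note that by the reduction in Theorem \ref{Thm8-1} we are in the setting where $G$ acts trivially on $X$), there is a modification $\sigma_1 : \overline{X} \to X$, a finite sequence of blow-ups along smooth centers, such that $\sigma_1^* \F / (\sigma_1^* \F)_{\mathrm{tor}}$ is locally free.

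\textbf{Proof plan.}

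The plan is to run Hironaka's flattening theorem together with equivariant resolution of singularities, and then track the torsion. First I would apply Hironaka's flattening theorem (in the form valid for coherent sheaves on complex analytic spaces, or on compact complex manifolds): there is a finite sequence of blow-ups $\sigma_0 : X' \to X$ along smooth centers such that the strict transform $\sigma_0^* \F / (\sigma_0^* \F)_{\mathrm{tor}}$ is $\sigma_0$-flat over $X$. Actually the cleaner route is: flattening produces a modification after which $\F$ pulled back and modded by torsion becomes flat, hence (being a flat coherent sheaf over a reduced base of a map that is generically an isomorphism) locally free on the open dense locus where things are already nice, and one needs to arrange local freeness everywhere. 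The key point is that a coherent sheaf which is torsion-free and flat over a smooth (or even just reduced) base of the appropriate dimension is locally free; more precisely, after blowing up we want the quotient sheaf to be locally free, and Hironaka-Rossi / Hironaka's flattening theorem for analytic spaces gives exactly a modification after which the pulled-back sheaf modulo torsion is flat over the smooth target, and a flat coherent sheaf on a reduced complex space with locally free restriction to a dense open set, when the space is smooth, is locally free. I would cite Hironaka's flattening theorem as stated in the references (the version for complex manifolds, e.g. via Rossi or Bierstone–Milman) for the existence of $\sigma_1$ as a composition of smooth blow-ups making $\sigma_1^* \F / (\sigma_1^* \F)_{\mathrm{tor}}$ flat over $\overline{X}$, hence locally free since $\overline{X}$ is smooth.

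The equivariance is not an issue here precisely because we have already reduced (via the factorization condition \eqref{Unicity4} in Theorem \ref{Thm8-1}, i.e. the assumption in the current subsection) to the case where $G$ acts trivially on $X$. So Theorem \ref{Thm8-2} is genuinely a non-equivariant statement; the $G$-action on $\F$ is carried along but plays no role in choosing the centers. One should still check that the blow-up centers produced by the flattening/resolution algorithm are canonical enough that the resulting $\overline{X}$ inherits a $G$-action — but since $G$ acts trivially on $X$, $\overline{X}$ is simply the same blow-up and $G$ acts trivially on it too, with $\sigma_1^* \F$ inheriting the $G$-equivariant structure by functoriality of pullback, and $(\sigma_1^* \F)_{\mathrm{tor}}$ being $G$-stable as noted in the discussion preceding Proposition \ref{Prop8-1} (torsion is intrinsic). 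Thus the quotient $\sigma_1^* \F / (\sigma_1^* \F)_{\mathrm{tor}}$ is a $G$-equivariant locally free sheaf.

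The main obstacle — or rather the main thing to get right — is the passage from "flat" to "locally free" and the bookkeeping of which sheaf is being flattened. Hironaka's flattening theorem flattens a given coherent sheaf by a modification, but the strict/proper transform and the torsion subsheaf must be handled carefully: one flattens $\F$ itself, obtains $\sigma_1^* \F$ which has a torsion part supported on the exceptional locus plus possibly elsewhere, and one must argue that modding out by torsion yields a flat, hence locally free, sheaf. The cleanest formulation is: choose $\sigma_1$ so that the pullback of $\F$ modulo torsion is flat over $\overline{X}$; since $\overline{X}$ is a smooth (hence reduced, locally irreducible) complex manifold, a flat coherent $\OO_{\overline{X}}$-module is locally free (flatness over a regular local ring of a finitely generated module, combined with finite presentation, gives projectivity, hence freeness locally). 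I would record this as a lemma or cite it directly. The remaining claim in the full Proposition \ref{Prop8-1} — that the exceptional divisor and $\mathrm{supp}(\sigma^* \F)_{\mathrm{tor}}$ lie in a normal crossing divisor — would then follow by one further application of resolution/principalization (embedded resolution of the exceptional locus together with the torsion support), but that is the content of Proposition \ref{Prop8-1} rather than Theorem \ref{Thm8-2}, so for Theorem \ref{Thm8-2} alone it suffices to produce $\sigma_1$ and the local freeness, which is exactly Hironaka flattening plus the flat-implies-locally-free lemma on the smooth space $\overline{X}$.
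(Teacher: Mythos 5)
Your proposal takes essentially the same route as the paper: invoke Hironaka's flattening theorem to obtain a modification $\sigma_1$ by smooth blow-ups under which $\sigma_1^* \F / (\sigma_1^* \F)_{\mathrm{tor}}$ becomes $\OO_{\overline{X}}$-flat, and then conclude local freeness since a flat coherent sheaf on the smooth manifold $\overline{X}$ is locally free. The middle of your argument wavers a bit between flatness over $X$ via $\sigma_1$ and flatness as an $\OO_{\overline{X}}$-module, but your final formulation is the correct one and matches the paper exactly.
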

\begin{proof}
This is Hironaka's flattening theorem \cite[Theorem 4.4]{H75}. 	
In the reference, $\sigma_1^* \F / \br{\sigma_1^*\F}_{\mathrm{tor}}$ is $\OO_{\overline{X}}$-flat. However, since $\sigma_1^* \F / \br{\sigma_1^*\F}_{\mathrm{tor}}$ is coherent, and by  \cite[Theorem 7.10]{H86}, it is locally free. 
\end{proof}

\begin{thm} \label{Thm8-3}
Let $Y \subset X$ be a closed nowhere dense analytic subspace.
There exists a modification $\sigma_2: \widetilde{X} \rightarrow X$ which is a composition of a finite sequence of blow-up along smooth centers, such that $\sigma^{-1}_2 Y$ is a divisor with normal crossing.	
\end{thm}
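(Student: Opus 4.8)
The statement to prove is Theorem~\ref{Thm8-3}, which is a standard consequence of Hironaka's resolution of singularities, adapted to the analytic category. The plan is to invoke the embedded resolution of singularities theorem in the complex analytic setting, which is available by work of Hironaka (and in the analytic category by Aroca--Hironaka--Vicente or Bierstone--Milman).

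First I would reduce to the case where $Y$ is a closed analytic subspace defined by a coherent ideal sheaf $\mathcal{I}_Y \subset \mathcal{O}_X$. Since $Y$ is nowhere dense, $\mathcal{I}_Y$ is nonzero on each connected component of $X$. By the theorem on embedded resolution of singularities (principalization of ideals), there is a finite sequence of blow-ups $\sigma_2 : \widetilde{X} \to X$ along smooth centers such that $\sigma_2^{-1}(\mathcal{I}_Y) \cdot \mathcal{O}_{\widetilde{X}}$ is the ideal sheaf of a simple normal crossing divisor. In particular, the set-theoretic preimage $\sigma_2^{-1}(Y)$, which is the support of $\mathcal{O}_{\widetilde X}/\sigma_2^{-1}(\mathcal I_Y)\mathcal O_{\widetilde X}$, is a normal crossing divisor. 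One has to be slightly careful about the distinction between $\sigma_2^{-1}(Y)$ as a set and the total transform as a divisor, but for the statement as written — $\sigma_2^{-1}(Y)$ is a divisor with normal crossings — the principalization statement suffices, since the reduced structure on the support of a simple normal crossing divisor is a normal crossing divisor.

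Since the ambient manifolds here are compact, I do not need to worry about the local-versus-global subtleties that sometimes arise in the analytic category: Hironaka's theorem applies to compact (indeed to arbitrary) complex manifolds, and the sequence of blow-ups is finite. I would also remark that the centers of the blow-ups may be taken to lie over (a neighborhood of) $Y$, so that $\sigma_2$ is an isomorphism over $X \setminus Y$; this is part of the standard statement but is not strictly needed for Theorem~\ref{Thm8-3} as stated. The main (and essentially only) obstacle is simply citing the correct form of resolution of singularities in the complex analytic category; there is no genuine mathematical content to be developed here beyond the reduction to principalization of the ideal $\mathcal I_Y$. I expect the proof in the paper to be a one-line citation to Hironaka \cite{H75} or to a standard reference on analytic resolution of singularities, possibly combined with Theorem~\ref{Thm8-2} to note that the two modifications can be performed in sequence.
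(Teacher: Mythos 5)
Your proposal is correct and matches the paper's approach exactly: the paper's proof is a one-line citation to Hironaka's desingularization (\cite{H64} rather than \cite{H75}, together with \cite[Theorem 5.4.2]{AHV} for the analytic category), and your reduction to principalization of $\mathcal{I}_Y$ is the standard way to read off the normal-crossing conclusion from that theorem. No gaps.
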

\begin{proof}
This is Hironaka's desingularization	, see \cite{H64}, and also \cite[Theorem 5.4.2]{AHV}.
\end{proof}


\begin{proof} [Proof of Proposition \ref{Prop8-1}]
Take $\sigma_1: \overline{X} \rightarrow X$ in Theorem \ref{Thm8-2}, so that $\sigma_1^* \F / \br{\sigma_1^*\F}_{\mathrm{tor}}$ is locally free.

Recall that $\mathrm{supp}(\sigma_1^*\F)_\mathrm{tor}$ is a closed nowhere dense analytic subspace of $\overline{X}$ \cite[Theorem 5.5.8]{K14}.
Take $Y$ to be the union of $\mathrm{supp}(\sigma_1^*\F)_\mathrm{tor}$ and the exceptional divisor of $\sigma_1$. 
Applying Theorem \ref{Thm8-3} for $Y \subset \overline{X}$, there exists a modification $\sigma_2: \widetilde{X} \rightarrow \overline{X}$ which is a composition of a finite sequence of blow-up along smooth centers such that $\sigma_2^{-1} Y$ is a divisor with normal crossing.

Set $\sigma = \sigma_2 \sigma_1$. It remains to prove that $\sigma^* \F / \br{\sigma^* \F}_{\mathrm{tor}}$ is locally free.
We have
\begin{align} \label{Equ8-10}
\sigma_2^* \br{\sigma_1^* \F}_{\mathrm{tor}} \subseteq 	\br{\sigma^* \F}_{\mathrm{tor}}.
\end{align}
Since $\sigma_1^* \F / \br{\sigma_1^*\F}_{\mathrm{tor}}$ is locally free, $\sigma_2^* \br{\sigma_1^* \F / \br{\sigma_1^*\F}_{\mathrm{tor}}}$ is also locally free.
Therefore 
\begin{align} \label{Equ8-11}
  \sigma_2^* \br{\sigma_1^* \F}_{\mathrm{tor}} \supseteq 	\br{\sigma^* \F}_{\mathrm{tor}}.
\end{align}
By \eqref{Equ8-10},  \eqref{Equ8-11}, we have $\sigma_2^* \br{\sigma_1^* \F}_{\mathrm{tor}} =	\br{\sigma^* \F}_{\mathrm{tor}}$ so that  $\sigma^* \F / (\sigma^* \F)_{\mathrm{tor}} = \sigma_2^* \br{\sigma_1^* \F / \br{\sigma_1^*\F}_{\mathrm{tor}}}$ is locally free.
\end{proof}

If $k \in \N^*$, let $D = \sum_{i=1}^k D_i$ be a divisor of $X$ with normal crossing.
 We denote $\mathcal{I}_D$ the sheaf of ideals of $\OO_X$ consisting of germs of holomorphic functions on $X$ which vanish on $D$.
Let $\F$ be a $G$-coherent sheaf on $X$.
If $\mathrm{supp} \F \subset D$, by R\"uckert Nullstellensatz \cite[p.~67]{GR} and by the fact that $X$ is compact,  there exists $n \in \N^*$ such that
\begin{align} \label{Equ8-12}
\mathcal{I}_D^n \F = 0.
\end{align}
If $\mathcal{I}_D \F = 0$, then $\F$ is an $\ox/\mathcal{I}_D$-module, so it is the direct image of a $G$-coherent sheaf on $D$.
Let $\iota_i: D_i \rightarrow X$ be the embedding.
\begin{prop} \label{Prop8-2}
If $\mathrm{supp}\F \subset D $, then there exist $G$-coherent sheaves $\xi_i $ on $ D_i$ such that
\begin{align}
  \sum_{i=1}^{k} \iota_{i, *} \xi_i = \F  \quad \text{in} \; K(X,G).
\end{align}

\end{prop}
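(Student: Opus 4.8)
The plan is to argue by induction on the integer $n \in \N^*$ for which $\mathcal{I}_D^n \F = 0$, as provided by \eqref{Equ8-12}. This reduces the statement to the case of a sheaf annihilated by $\mathcal{I}_D$, which is precisely the direct image of a $G$-coherent sheaf on $D$; however, $D = \sum_i D_i$ is itself singular along the pairwise intersections, so even this base case is not immediate and a second induction on $k$ (the number of components of $D$) will be needed.

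First I would set up the dévissage. Consider the $\ox$-submodules $\mathcal{I}_D^j \F$ for $0 \le j \le n$; each successive quotient $\mathrm{Gr}^j \F = \mathcal{I}_D^j \F / \mathcal{I}_D^{j+1} \F$ is a $G$-coherent sheaf annihilated by $\mathcal{I}_D$ (the $G$-action on each $\mathcal{I}_D^j \F$ is inherited from $\F$ since $D$, hence $\mathcal{I}_D$, is $G$-invariant — recall $G$ acts trivially here, but the argument works equivariantly in any case). From the short exact sequences $0 \to \mathcal{I}_D^{j+1}\F \to \mathcal{I}_D^j \F \to \mathrm{Gr}^j \F \to 0$ one gets, in $K(X,G)$,
\begin{align}
\F = \sum_{j=0}^{n-1} \mathrm{Gr}^j \F \quad \text{in} \; K(X,G).
\end{align}
So it suffices to treat a $G$-coherent sheaf $\G$ with $\mathcal{I}_D \G = 0$, i.e. an $\ox/\mathcal{I}_D$-module. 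Now I would induct on $k$. Write $D' = \sum_{i=1}^{k-1} D_i$, so $D = D' \cup D_k$ with $D' \cap D_k$ a normal crossing divisor inside $D_k$ (and inside $D'$). Filtering $\G$ by the subsheaf $\mathcal{J}$ of sections supported on $D_k$ — or, more cleanly, using the two ideal sheaves $\mathcal{I}_{D'}$ and $\mathcal{I}_{D_k}$ with $\mathcal{I}_{D'} \cdot \mathcal{I}_{D_k} \subseteq \mathcal{I}_D = 0$ on $\G$ — one obtains an exact sequence $0 \to \mathcal{I}_{D_k}\G \to \G \to \G/\mathcal{I}_{D_k}\G \to 0$ where $\G/\mathcal{I}_{D_k}\G$ is an $\ox/\mathcal{I}_{D_k}$-module, hence $= \iota_{k,*}\xi_k$ for a $G$-coherent sheaf $\xi_k$ on $D_k$, while $\mathcal{I}_{D_k}\G$ is annihilated by $\mathcal{I}_{D'}$ and supported in $D'$, so by the inductive hypothesis on $k$ (after re-applying \eqref{Equ8-12} if necessary, since $\mathcal{I}_{D_k}\G$ need not be literally $\mathcal{I}_{D'}$-annihilated — one may need the outer induction on $n$ running simultaneously) it equals $\sum_{i=1}^{k-1}\iota_{i,*}\xi_i$ in $K(X,G)$. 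Adding the two contributions gives the claim.

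The cleanest bookkeeping is to run a single induction on the pair $(k, n)$ ordered lexicographically, or simply on $n \cdot k$: at each stage one either lowers $n$ (by passing to the associated graded of the $\mathcal{I}_D$-adic filtration) or lowers $k$ (by splitting off the $D_k$-component). I expect the main obstacle to be precisely this simultaneous descent — making sure that when one peels off an $\ox/\mathcal{I}_{D_k}$-quotient the remaining piece genuinely has smaller invariant with respect to the normal-crossing divisor $D'$, and that the $G$-equivariant structure is preserved at every splitting (the subsheaves $\mathcal{I}_{D'}\G$, $\mathcal{I}_{D_k}\G$, and the supported-sections subsheaf are all $G$-stable because the $D_i$ are permuted — here fixed — by $G$ and hence the ideals are $G$-equivariant). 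The coherence of all intermediate sheaves is automatic since $X$ is compact and kernels, cokernels, and images of morphisms of coherent $G$-sheaves are $G$-coherent, as recorded in Section~\ref{Section2-1}.
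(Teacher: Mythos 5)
Your proposal is correct, and it is a slightly different organization of essentially the same dévissage idea as the paper's. The paper runs a single induction on $k$: for each $k$ it filters $\F$ directly by powers of $\mathcal{I}_{D_k}$, writing
\begin{align}
\F = \F/\mathcal{I}_{D_k}\F + \cdots + \mathcal{I}_{D_k}^{n-1}\F/\mathcal{I}_{D_k}^n\F + \mathcal{I}_{D_k}^n\F \quad\text{in } K(X,G),
\end{align}
where the graded pieces are $\OO_{D_k}$-modules and the tail $\mathcal{I}_{D_k}^n\F$ (for $n$ large) is supported in $D'=\sum_{i<k}D_i$, handled by the inductive hypothesis. You instead first pass to the associated graded of the $\mathcal{I}_D$-adic filtration to reduce to $\mathcal{I}_D\G=0$, and only then induct on $k$ via $0\to\mathcal{I}_{D_k}\G\to\G\to\G/\mathcal{I}_{D_k}\G\to 0$. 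Both work; the paper's is a touch more economical because it avoids the separate reduction step.

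One remark: the hedging in your last paragraph about $\mathcal{I}_{D_k}\G$ possibly not being $\mathcal{I}_{D'}$-annihilated, and hence the need for a simultaneous induction on $(k,n)$, is unnecessary. Once $\mathcal{I}_D\G=0$ one has $\mathcal{I}_{D'}\cdot(\mathcal{I}_{D_k}\G)=(\mathcal{I}_{D'}\mathcal{I}_{D_k})\G\subseteq\mathcal{I}_D\G=0$ because $\mathcal{I}_{D'}\mathcal{I}_{D_k}\subseteq\mathcal{I}_{D'}\cap\mathcal{I}_{D_k}=\mathcal{I}_D$, so $\mathcal{I}_{D_k}\G$ is genuinely $\mathcal{I}_{D'}$-annihilated (in particular supported in $D'$) and the induction on $k$ alone suffices after your initial dévissage. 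Your note on $G$-equivariance of the filtrations is correct and matches what the paper implicitly uses.
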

\begin{proof}
We prove the proposition by induction on $k \in \N^*$.
Assume $k=1$, by \eqref{Equ8-12}, we have
\begin{align} \label{Equ8-8}
\F = \F/ \mathcal{I}_{D_1} \F  + \mathcal{I}_{D_1} \F/\mathcal{I}_{D_1}^2 \F   + \cdots + \mathcal{I}_{D_1}^{n-1} \F/\mathcal{I}_{D_1}^n \F     \quad \text{in} \; K(X, G).
\end{align}
Since for $1\leq p \leq n$, $\mathcal{I}_{D_1} \br{\mathcal{I}_{D_1}^{p-1} \F/\mathcal{I}_{D_1}^p \F} = 0$, by the consideration below \eqref{Equ8-12},
we see that each of the terms in (\ref{Equ8-8}) are direct images of $G$-coherent sheaves on $D_1$,  which gives the proposition when $k=1$.

Assume $k\geq 2$ and our proposition holds for $k-1$. 

We have  
\begin{align} \label{Equ8-9}
  \F = \F/ \mathcal{I}_{D_k} \F  + \cdots + \mathcal{I}_{D_k}^{n-1} \F/\mathcal{I}_{D_k}^n \F   + \mathcal{I}_{D_k}^n \F  \quad \text{in} \; K(X, G).
 \end{align}
  
For $n$ large enough, $\mathcal{I}_{D_k}^n \F$ is supported in $\sum_{i=1}^{k-1} D_i $. 
By the induction assumption, it is the direct image of $G$-coherent sheaves on the disjoint union $\sqcup_{i=1}^{k-1} D_i$.
As before,  the other terms are direct images of $G$-coherent sheaves on $D_k$.
The induction argument is complete.
 \end{proof}

\subsection{Proof of Theorem \ref{Thm8-1}} \label{Section8-3}
We proceed as in \cite[Theorem 8]{G10} by induction on the dimension of $X$. If $\mathrm{dim}  X = 0$, then $\cgBC = \cgBCBar$ is clear by condition (\ref{Unicity1}) of the theorem.
We assume that  $\mathrm{dim} X \geq 1$. We also assume that for any compact complex manifold $X^\prime$ with $\mathrm{dim} X^\prime \leq \mathrm{dim} X-1$, $\cgBC = \cgBCBar$ on $K \br{X^\prime, G}$.

Let $\F$ be a $G$-equivariant coherent sheaf on $X$.
By Proposition \ref{Prop8-1}, we can find a modification $\sigma: \widetilde{X} \rightarrow X$ which is a finite composition of blow-ups with smooth centers such that $\sigma^* \F / (\sigma^*  \F)_{\mathrm{tor}}$ is locally free.
Moreover, the exceptional divisor of $\sigma$ and the support of $(\sigma^* \F)_{\mathrm{tor}}$ are both contained in a normal crossing divisor $D = \sum_i D_i$ of $\widetilde{X}$.

By \cite[Theorem 9.4.1]{BSW}, $\sigma^*: \HEXBCC \rightarrow H^{(=)}_\mathrm{BC} \br{\widetilde{X}, \C}$ is injective, so we only need to prove 
\begin{align} \label{Equ8-13}
\sigma^* \cgBC \br{ \F} = \sigma^* \cgBCBar \br{ \F}.
\end{align}
By Theorem \ref{ChernPullback} and condition (\ref{Unicity2}) in the theorem, we know that \eqref{Equ8-13} is equivalent to
\begin{align} \label{Equ8-1}
	\cgBC \br{L\sigma^* \F} =  \cgBCBar \br{ L \sigma^* \F}.
\end{align}

By Theorem \ref{DbcohToKXG}, we have 
\begin{align}  \label{Equ8-2}
	\cgBC \br{L\sigma^* \F} &=  \chgBC \br{\sigma^* \F}  + \sum_{i\geq 1} (-1)^i \chgBC \br{L^i \sigma^* \F}. \\
	&= \chgBC \br{\sigma^* \F / \br{\sigma^* \F}_{\mathrm{tor}}} + \chgBC \br{\br{\sigma^* \F}_{\mathrm{tor}}} + \sum_{i\geq 1} (-1)^i \chgBC \br{L^i \sigma^* \F} \nonumber .
\end{align}
Similar identities hold for $\cgBCBar$.

By condition (\ref{Unicity1}) in the theorem and since $\sigma^* \F / \br{\sigma^* \F}_{\mathrm{tor}}$ is locally free, we obatin
\begin{align} \label{Equ8-14}
  \chgBC \br{\sigma^* \F / \br{\sigma^* \F}_{\mathrm{tor}}} = \cgBCBar \br{\sigma^* \F / \br{\sigma^* \F}_{\mathrm{tor}}}.
\end{align}

Since $\br{\sigma^* \F}_{\mathrm{tor}}$ is supported in $D$ and since $\chgBC, \cgBCBar$ satisfy RRG for embeddings, by Proposition \ref{Prop8-2} and induction assumption, we have 
\begin{align}
  \chgBC \br{\br{\sigma^* \F}_{\mathrm{tor}}} &= \cgBCBar \br{\br{\sigma^* \F}_{\mathrm{tor}}}  .
\end{align}

Since $\sigma$ is a biholomorphism on $\widetilde{X} \backslash D$, we know that $L^{\geq 1} \sigma^* \F$  are also supported in $D$.
As before, we get 
\begin{align} \label{Equ8-15}
\begin{aligned}
\chgBC \br{L^{\geq 1} \sigma^* \F} &= \cgBCBar \br{L^{\geq 1} \sigma^* \F} .
\end{aligned}
\end{align} 
By \eqref{Equ8-2}-\eqref{Equ8-15}, the proof of the theorem is complete.
\qed

\section{Submersions and elliptic superconnection forms} \label{SectionSubmersion1}
The purpose of this section is to state our main result in the case of an equivariant projection $p: M = X \times S \to S$, and to define infinite-dimensional equivariant elliptic superconnection forms on $S$ which will be used to establish this main result.

In Section \ref{Section91}, we state the main theorem for $\F \in \DbCohMG$ and prove that we can reduce the problem to the case when $G$ acts trivially on the base manifold $S$.

In Section \ref{Section92}, using the result of Section \ref{SectionEquivalence}, we show that in the proof, $\F$ can be replaced by an antiholomorphic $G$-superconnection $\E = \br{E, \AEpp}$. We will then view $p_* \E$ as equipped with an infinite dimensional antiholomorphic $G$-superconnection $\ApEzeropp$. 

In Section \ref{Section93}, given a splitting $\E \simeq \E_0$, and $G$-invariant Hermitian metrics $g^{TX}$, $g^D$ on $TX$, $D$, we obtain the adjoint $\ApEzerop$ of $\ApEzeropp$.

In Section \ref{ElliCherForm}, given $g\in G$ we construct infinite-dimensional Chern character forms $\chg \br{\ApEzeropp, \omega^X, g^D}$ and establish some basic properties similar to those in Section \ref{EquiCherCharForm}. These forms are called the equivariant elliptic Chern character forms.

In Section \ref{ChernDirectImage}, using the results in \cite[Section 11]{BSW}, we prove that the elliptic Chern character $\chgBC \br{\ApEzeropp}$ coincide with $\chgBC \br{Rp_* \E}$.
\subsection{A Riemann-Roch-Grothendieck theorem for submersions} \label{Section91}
Let $X$, $S$ be compact complex $G$-manifolds of dimension $n$, $n^\prime$.
Put
\begin{align}
M = X \times S.
\end{align}
Then, $M$ is a $G$-manifold.

Let $p: M \rightarrow S, q: M \rightarrow X$ be the projections.
Then,
\begin{align} \label{MDecSX}
TM = q^* TX  \oplus  p^* TS .	
\end{align}
Let $\F$ be an object in $\DbCohMG$. As in Section \ref{DerivedDirect}, $Rp_* \F \in \DbCohSG$.

Let $g\in G$, let $M_g \subset M, X_g \subset X$ and $S_g \subset S$ be the fixed points. Then 
\begin{align}
  M_g = X_g \times S_g.
\end{align}
Let $p_g: M_g \rightarrow S_g, q_g: M_g \rightarrow X_g$ be the corresponding map on the fixed points. 

The purpose of the remaining parts of the paper is to prove the following theorem.
\begin{thm} \label{Thm9-1}
The following identity holds for $g\in G$,
\begin{align} \label{Equ9-5}
\chgBC \br{Rp_*\F} = p_{g, *} \brr{q_g^* \tdgBC \br{TX} \chgBC \br{\F}}	 \quad \text{in} \;\HESgBCC.
\end{align}
\end{thm}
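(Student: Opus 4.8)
The plan is to prove Theorem \ref{Thm9-1} by reducing it, through a sequence of standard manipulations, to the case of an elliptic fibration with a genuine holomorphic vector bundle and then invoking Bismut's equivariant families index theorem together with the hypoelliptic deformation. First I would carry out \textbf{Step 1} (the base-change reduction): using the equivariant base change theorem \cite[Proposition 3.1.0]{SGA6} and the functoriality of $\chgBC$ under equivariant pullback (Theorem \ref{ChernPullback}), we may restrict to $S_g$ and hence assume that $G$ acts trivially on $S$. Concretely, $\br{Rp_* \F}|_{S_g}$ is computed by the direct image along $M_g \to S_g$ of $\F|_{M_g}$, and since both sides of \eqref{Equ9-5} depend only on the restrictions to the fixed-point sets (Remark \ref{rmk61}, Theorem \ref{ThmDerivedChern}), replacing $S$ by $S_g$ changes nothing in the statement while making the $G$-action on the base trivial.

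Next I would carry out \textbf{Step 2}: by Theorem \ref{ThmEquivalenceCategory}, the object $\F \in \DbCohMG$ is isomorphic to $\underline{F}_M\br{E, \AEpp}$ for some antiholomorphic $G$-superconnection $\br{E, \AEpp}$ on $M$; set $\E = \br{\om^\infty\br{E}, \AEpp}$. Since $\E$ is a bounded complex of soft $\om$-modules, $Rp_* \E = p_* \E$ by \eqref{SoftDirect}, and by Grauert's theorem $p_* \E$ is an object of $\DbCohSG$. The point is that $p_* \E$ is infinite-dimensional, so it is not literally an object of $\BUXG[S,G]$; instead, following \cite[Section 10]{BSW} and the constructions to be made in Section \ref{SectionSubmersion1}, one endows $p_* \E$ with an infinite-dimensional antiholomorphic $G$-superconnection $\ApEzeropp$, whose adjoint (built from $G$-invariant metrics $g^{TX}$, $g^D$ on $TX$ and $D$) gives an elliptic superconnection in the sense of \cite{BGS2}. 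Using the spectral truncation technique of \cite[Sections 8.10, 11.1]{BSW}, adapted equivariantly, one shows in Section \ref{ChernDirectImage} that the resulting equivariant elliptic Chern character forms $\chg\br{\ApEzeropp, \omega^X, g^D}$ have Bott-Chern class equal to $\chgBC\br{Rp_* \F}$. This reduces \eqref{Equ9-5} to the computation of the equivariant elliptic Chern character of the infinite-dimensional $G$-superconnection on $p_* \E$.

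Then \textbf{Step 3} is the analytic heart: one must prove
\begin{align}
\chgBC\br{\ApEzeropp} = p_{g,*}\brr{q_g^* \tdgBC\br{TX}\, \chgBC\br{\AEpp}} \quad \text{in}\; \HESgBCC.
\end{align}
This is done by local index theory. When $\E$ is an equivariant holomorphic vector bundle and $Rp_* \E$ is locally free, this is exactly the equivariant extension of the Bismut-Gillet-Soulé family index theorem for Kähler fibrations \cite[Theorem 2.11]{BGS2}, handled in full generality (arbitrary compact complex fiber $X$) by Bismut in \cite{B13} via an exotic superconnection with hypoelliptic curvature on the total space $TX$ of the tangent bundle. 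For general antiholomorphic $G$-superconnections with trivial $G$ this was done in \cite{BSW}. The plan for Section \ref{SectionHypo1} is to combine these two: construct the new fibration $\M \to S$ with fiber $\X = TX$, the tautological Koszul complex for $M \to \M$, extend the $G$-action to $G \times \Z_2$, build a $G \times \Z_2$-invariant non-degenerate Hermitian form $\epsilon_{X,\theta}$ depending on parameters $c,d$, form the hypoelliptic adjoint of $\AAAA_Y''$, and prove via a deformation/adiabatic-limit argument that the hypoelliptic superconnection forms coincide with the elliptic ones of Section \ref{SectionSubmersion1}. The asymptotics of the hypoelliptic curvature as the parameters degenerate then yield the right-hand side of \eqref{Equ9-5}, with $q_g^*\tdgBC\br{TX}$ emerging from the local index density along the fiber and $p_{g,*}$ from fiber integration over $X_g$. \textbf{The main obstacle} I expect is precisely this last step: establishing the equivariant local index theorem for the hypoelliptic Laplacian — controlling the uniform convergence of the hypoelliptic heat kernel near the fixed-point set $M_g$ as $c,d$ vary, and matching the localized contributions. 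This requires carefully transplanting Bismut's hypoelliptic estimates from \cite{B13} into the $G$-equivariant, antiholomorphic-superconnection setting of \cite{BSW} simultaneously, which is where the bulk of the technical work lies.
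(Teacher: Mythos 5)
Your plan follows the paper's own proof: use the equivariant base-change theorem to reduce to $G$ acting trivially on the base $S$, replace $\F$ by an antiholomorphic $G$-superconnection $\E$ via the category equivalence, view $p_*\E$ as an infinite-dimensional $G$-superconnection whose elliptic Chern character form is shown (via spectral truncation) to represent $\chgBC\br{Rp_*\E}$, and finally compute that class by a hypoelliptic deformation on the fibration with fiber the total space of $TX$, together with local index theory. This matches Sections~\ref{Section91}--\ref{SectionSubmersion2}.

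One misstatement in your Step~1, however: base change over $j\colon S_g\to S$ gives $Lj^*Rp_*\F\simeq Rp'_*Lj'^*\F$ with $p'\colon X\times S_g\to S_g$, so the fiber remains $X$, not $X_g$. Your parenthetical claim that $\br{Rp_*\F}|_{S_g}$ is computed by the direct image of $\F|_{M_g}$ along $M_g=X_g\times S_g\to S_g$ is therefore false; what follows from base change and Theorem~\ref{ThmDerivedChern} is only that $\chgBC\br{Rp_*\F}=\chgBC\br{Rp'_*Lj'^*\F}$. The reduction leaves the full fiber $X$ with its $G$-action intact, and the passage to $X_g$ happens only in the small-time localization of the hypoelliptic heat kernel in Step~3, which is precisely where the factor $q_g^*\tdgBC\br{TX}$ — containing the normal-bundle denominators of the equivariant Todd class — arises. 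Your Step~3 actually describes this correctly, so the slip does not propagate, but as written the Step~1 sentence is wrong and would give $\tdgBC\br{TX_g}$ rather than $q_g^*\tdgBC\br{TX}$ if taken literally.
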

\begin{proof} [First part of the proof]
We have the following commutative diagram with obvious embeddings $j, j^\prime$ and projections $p, p^\prime$,
\begin{equation} \label{Equ9-12}
\begin{tikzcd}
X\times S_g \arrow[r, "j^\prime"] \arrow[d, "p^\prime"] & M \arrow[d, "p"] \\
S_g \arrow[r, "j"]                                      & S .              
\end{tikzcd}	
\end{equation}
Let us reduce the proof of our theorem to the projection $p^\prime$.
Using the base change theorem \cite[Proposition 3.1.0]{SGA6} to \eqref{Equ9-12}, we have the canonical isomorphism,
\begin{align} \label{Equ9-3}
 Lj^* Rp_* \F \simeq  Rp^{\prime}_* Lj^{\prime*} \F \quad \text{in} \;  \DbCohSgG. 
\end{align}
Applying $\chgBC$ to both sides of \eqref{Equ9-3}, we obtain
\begin{align} \label{Equ9-6}
  \chgBC \br{ Lj^* Rp_* \F } = \chgBC \br{ Rp^{\prime}_* Lj^{\prime*} \F } \quad \text{in} \; H^{(=)}_{\mathrm{BC}}(S_g, \mathbf{C}).
\end{align}

By Theorem \ref{ThmDerivedChern}, we have 
\begin{align}  \label{Equ9-13}
  \chgBC \br{ Lj^* Rp_* \F } = \chgBC \br{ Rp_* \F } \quad \text{in} \; H^{(=)}_{\mathrm{BC}}(S_g, \mathbf{C}).
\end{align}
By \eqref{Equ9-6}, \eqref{Equ9-13}, we get 
\begin{align}  \label{Equ9-14}
  \chgBC \br{ Rp_* \F }=   \chgBC \br{ Rp^\prime_* Lj^{\prime *} \F } \quad \text{in} \; H^{(=)}_{\mathrm{BC}}(S_g, \mathbf{C}).
\end{align}
Assume at the moment that the RRG theorem holds for $p^\prime: X \times S_g \rightarrow S_g$ and the object $Lj^{\prime *} \F$ in $\mathrm{D}^{\mathrm{b}}_{\mathrm{coh}} \br{X \times S_g, \anbr{g}}$, then
\begin{align} \label{Equ9-7}
	\chgBC \br{Rp^\prime_* Lj^{\prime *}\F} = p_{g,*} \brr{q_g^{*} \tdgBC \br{TX} \chgBC \br{Lj^{\prime *} \F}}	 \quad \text{in} \;\HESgBCC.
\end{align}

As in \eqref{Equ9-13}, we have 
\begin{align} \label{Equ9-2}
\chgBC \br{L j^{\prime *} \F} =  \chgBC \br{\F}  \quad \text{in} \; H^{(=)}_{\mathrm{BC}}(M_g, \mathbf{C}).	
\end{align}

By \eqref{Equ9-2}, we can rewrite \eqref{Equ9-7} as
\begin{align} \label{Equ9-11}
 \chgBC \br{Rp^\prime_* Lj^{\prime *}\F} =  p_{g, *} \brr{q_g^* \tdgBC \br{TX} \chgBC \br{\F}} \quad \text{in} \;\HESgBCC.
\end{align}
By \eqref{Equ9-14}, \eqref{Equ9-11}, we get \eqref{Equ9-5}.

In the rest parts of the paper, we will prove \eqref{Equ9-7} or the case when $G$ acts trivially on $S$.
\end{proof}
In the sequel, we will assume that $G$ acts trivially on $S$.

\subsection{Replacing $\F$ by $\E$} \label{Section92}
Let $\F \in \DbCohMG$. By Theorem \ref{EssSurB} and Definition \ref{ChernDbCoh}, we may replace $\F$ by some $\br{E, \AEpp} \in \mathrm{B}(M,G)$, and we also denote $\E = \br{\om^\infty \br{E}, \AEpp}$.

We will use the notations of Sections \ref{EquiAntiSupe} and \ref{EquiGeneMetr}.

Now $p_* \E$ is an $\OO_S$-module.
It defines by the infinite-dimensional vector bundle $C^\infty \br{X, E|_X}$.
It is a $\lsb$-module and $\AEpp$ induces an antiholomorphic $G$-superconnection $\ApEpp$ on $p_* \E$. We will not distinguish the sheaf $p_* \E$ and the vector bundle.


By \cite[Theorem 9.5]{Bor87} and by \cite[Proposition IV.4.14]{demailly1997complex}, and since $\E$ is a bounded complex of soft $\om$-modules, we have
\begin{align}
Rp_* \E = p_* \E.	
\end{align}
Therefore the $G$-equivariant $\OO_S$-complex $\br{p_* \E, \ApEpp}$ defines an object in $\DbCohSG$.

Let $i$ be the embedding of the fibers $X$ into $M$.
Then the pullback $\br{i^*_b E, A^{i_b^* E \prime\prime}}$ is a family of antiholomorphic $G$-superconnections along the fibers $X$, whose corresponding associated element in $\DbCohXG$ is just $Li^* \E$.

Let $\DD$ be the diagonal bundle associated with $p_* \E$. Then
\begin{align} \label{InfiDiag}
\DD = p_* i_b^* \E,	
\end{align}
and the corresponding $A^{\DD\prime\prime}$ (which is an analogue of $v_0$ for $p_* \E$) is given by 
\begin{align}
A^{\DD\prime\prime}	= A^{i_b^* E \prime\prime}.
\end{align}

By (\ref{MDecSX}), we have,
\begin{align} \label{MDecSX2}
\lmb = p^* \lsb \widehat{\otimes} q^* \lxb.	
\end{align}
By (\ref{DefE0}) and (\ref{MDecSX2}), we obtain,
\begin{align} \label{E0M}
E_0 = p^* \lsb \widehat{\otimes} q^* \lxb \widehat{\otimes} D.	
\end{align}
Note that 
\begin{align} \label{E0X}
i^*_b E_0 = q^* \lxb \widehat{\otimes} D.	
\end{align}

Over $X \times S$, we fix splitting as in (\ref{EIsoE0}) and (\ref{EIsoE02}), so that 
\begin{align}
E \simeq E_0.
\end{align}
These splittings induce corresponding splittings of $i_b^* E$, and we have corresponding isomorphism,
\begin{align} \label{ibEIsoibE0}
i_b^* E \simeq	i_b^* E_0.
\end{align}
By (\ref{ibEIsoibE0}), we get the non-canonical isomorphism
\begin{align} \label{Equ9-50}
\DD = \Omega^{0,\bullet} \br{X, D|_X}.	
\end{align}

Put 
\begin{align}
p_* \E_0 = \lsb \widehat{\otimes} p_* i^*_b \E.
\end{align}

By (\ref{InfiDiag}), (\ref{E0M}) and (\ref{E0X}), we have the identification,
\begin{align} \label{pEIsopE0}
p_* \E \simeq p_* \E_0.	
\end{align}
To keep in line with the previous notation, we will denote $\ApEzeropp$ the operator correpsonding to $\ApEpp$ via the isomorphism (\ref{pEIsopE0}).

\subsection{The adjoint of $\ApEzeropp$} \label{Section93}

Let $g^{TX}$ be a $G$-invariant Hermitian metric on $TX$, and let $\omega^X$ denote the corresponding fundamental (1,1)-form along the fibres $X$.
Let $g^{T_{\R}X}$, $g^{T^*_{\R}X}$ be the induced metrics on $T_{\R}X$, $T^*_{\R}X$.
We denote by $\langle  \;\rangle$ the corresponding scalar product on $T_{\R}X$.
If $J^{T_\R X}$ denote the complex structure of $T_\R X$, then
\begin{align} \label{omega}
\omega^X \br{\cdot,\cdot} = \anbr{\cdot, J^{T_\R X} \cdot}.	
\end{align}
Let $g^{\lxb}$ denote the metric induced by $g^{TX}$ on $\lxb$. 
Let $dx$ be the volume form on $X$ which is induced by $g^{TX}$, then 
\begin{align}
  dx = \frac{(- \omega^X)^n}{n!}.
\end{align}

Let $g^D$ be a $G$-invariant Hermitian metric on the $\Z$-graded vector bundle $D$.

We equip $\lxb \widehat{\otimes} D$ with the metric $g^{\lxb} \widehat{\otimes}g^D$.

\begin{defn}
If $s$, $s^\prime \in \DD = \Omega^{0,\bullet} \br{X, D|_X}$, put
\begin{align}
\alpha \br{s, s^\prime} = \br{2 \pi}^{-n} \int_X \anbr{s, s^\prime}_{g^{\lxb} \otimes g^D} dx.	
\end{align}
	
\end{defn}
Then $\alpha$ is a Hermitian product on $\DD$, that defines an invariant pure metric $g^\DD$ in $\McalD$.

Here we use the notation of Section \ref{EquiGeneMetr} where the adjoint of an antiholomorphic $G$-superconnection with respect to a $G$-invariant Hermitian metric was defined.
\begin{defn}
Let $\ApEzerop$	denote the adjoint of $\ApEzeropp$ with respect to $g^\DD$.
\end{defn}
\begin{defn}
Put
\begin{align}
\ApEzero = \ApEzeropp + \ApEzerop.
\end{align}
\end{defn}
Since $\omega^X$, $g^D$ are $G$-invariant, $\ApEzero$ is a $G$-equivariant superconnection on $p_* \E$.
\subsection{The elliptic superconnection forms} \label{ElliCherForm}
By \cite[(10.6.2)]{BSW}, the operator $\ApEzerosq$ is a fiberwise elliptic operator, so $\exp \br{- \ApEzerosq}$ exists and is a fiberwise trace class operator.

We will now imitate the construction of Section \ref{EquiCherCharForm} to an infinite-dimensional context.
\begin{defn} \label{Defn9-1}
Put
\begin{align}
\chg \br{\ApEzeropp, \omega^X, g^D} = \varphi \Trs \brr{g \exp \br{-\ApEzerosq}}.	
\end{align}
	
\end{defn}

Then $\chg \br{\ApEzeropp, \omega^X, g^D}$ is a smooth even form on $S$. 

Let $\mathscr{P}_G$ be the collection of $G$-invariant parameters $\omega^X$, $g^D$.
Let $\mathbf{d}^{\mathscr{P}_G}$ be the de Rham operator on $\mathscr{P}_G$.

The metric $\alpha$ depends smoothly on $\omega^X$, $g^D$.
Then $\alpha^{-1} \mathbf{d}^{\mathscr{P}_G} \alpha$ is a 1-form with values in $\alpha$-self-adjoint endomorphisms of $\DD$.

Now we have an obvious analogue of Theorem \ref{ThmChern-Equ1}.
\begin{thm}
The form $\chg \br{\ApEzeropp, \omega^X, g^D}$ lies in $\Omega^{(=)} \br{S, \C}$, it is closed, and its Bott-Chern cohomology class does not depend on $\omega^X$, $g^D$ or on the splitting of $E$.
Also $\varphi \Trs \brr{ \br{\alpha^{-1} \mathbf{d}^{\mathscr{P}_G} \alpha} g \exp \br{-\ApEzerosq}}$ is a 1-form on $\mathscr{P}_G$ with values in $\Omega^{(=)}\br{S, \C}$, and moreover,
\begin{align}
\mathbf{d}^{\mathscr{P}_G} \chg \br{\ApEzeropp, \omega^X, g^D} = - \frac{\dbs \ds}{2 i \pi} \varphi \Trs \brr{ \br{\alpha^{-1} \mathbf{d}^{\mathscr{P}_G} \alpha } g \exp\br{-\ApEzerosq}}.	
\end{align}
	
\end{thm}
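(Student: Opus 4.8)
The plan is to adapt, essentially verbatim, the finite-dimensional argument from Theorem \ref{ThmDefChern} (which in turn follows \cite[Section 8.1]{BSW}) to the infinite-dimensional elliptic setting, using the elliptic estimates from \cite[Section 10]{BSW} to justify that all the traces, exponentials, and their derivatives make sense. First I would record that by \cite[(10.6.2)]{BSW} the curvature $\ApEzerosq$ is a fiberwise elliptic second-order operator whose principal symbol is (a multiple of) the fiberwise Laplacian tensored with $1$, so $\exp(-\ApEzerosq)$ is fiberwise trace class and depends smoothly on all parameters; likewise any expression of the form $Q\exp(-\ApEzerosq)$ with $Q$ a fiberwise differential operator of order $\le 1$ with coefficients in $\Omega(S,\C)\wo\End(\DD)$ is fiberwise trace class. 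This lets us define $\varphi\Trs[g\exp(-\ApEzerosq)]$ and $\varphi\Trs[(\alpha^{-1}\mathbf{d}^{\mathscr{P}_G}\alpha)\,g\exp(-\ApEzerosq)]$ as smooth forms on $S$ and on $\mathscr{P}_G$ respectively.

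The bidegree statement (that $\chg(\ApEzeropp,\omega^X,g^D)$ lies in $\Omega^{(=)}(S,\C)$) I would deduce, exactly as in the proof of Theorem \ref{ThmDefChern}, from the fact that $\ApEzero$ has total degree $0$ in the grading induced by $\mathrm{deg}_-$ on $\lsb$: thus $\exp(-\ApEzerosq)\in\bigoplus_{p,q}\Omega^{p,q}(S,\End^{p-q}(\DD))$, and since $g$ preserves the degree, so does $g\exp(-\ApEzerosq)$; applying $\Trs$ kills the $\End$-degree and leaves a sum of $(p,p)$-forms. Closedness of $\chg(\ApEzeropp,\omega^X,g^D)$ follows from the Bianchi identity (the infinite-dimensional analogue of (\ref{Bianchi1}), $[\ApEzeropp,\ApEzerosq]=[\ApEzerop,\ApEzerosq]=0$), the fact that $\ApEzeropp|$ commutes with $g$, and the vanishing of supertraces on supercommutators.

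For the variational formula, I would reproduce (\ref{ThmChern-Equ4})--(\ref{ThmChern-Equ1}): differentiating the curvature along $\mathscr{P}_G$ gives $\mathbf{d}^{\mathscr{P}_G}[\ApEzeropp,\ApEzerop]=[\ApEzeropp,[\ApEzerop,\alpha^{-1}\mathbf{d}^{\mathscr{P}_G}\alpha]]$ (note $\ApEzeropp$ is independent of the parameters $\omega^X,g^D$, only $\ApEzerop$ moves, and its variation is the $\alpha$-adjoint of the corresponding statement), and then using Duhamel's formula together with the Bianchi identities and vanishing of supertraces on commutators yields
\begin{align}
\mathbf{d}^{\mathscr{P}_G}\chg(\ApEzeropp,\omega^X,g^D)=-\frac{\dbs\ds}{2i\pi}\varphi\Trs\brr{(\alpha^{-1}\mathbf{d}^{\mathscr{P}_G}\alpha)\,g\exp(-\ApEzerosq)}.
\end{align}
Independence of the Bott-Chern class from $\omega^X$, $g^D$ is then immediate. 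Independence from the splitting $E\simeq E_0$ is handled as in the last part of Theorem \ref{ThmDefChern}: a change of splitting conjugates $\ApEzeropp$ by a $G$-invariant automorphism $\sigma\in\Aut^0(E_0)$, which one absorbs by replacing the metric $g^\DD$ with $\sigma^*g^\DD\sigma$, leaving the class unchanged.

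The main obstacle is not any of these formal manipulations but the \emph{analytic} point: in infinite dimensions one must check that $\exp(-\ApEzerosq)$ and all the intermediate Duhamel integrands are genuinely trace class, that $\Trs$ still vanishes on the relevant supercommutators (these involve unbounded fiberwise operators, so one needs a regularization/heat-kernel argument rather than the naive algebraic identity), and that differentiation under the trace and under the integral over the fiber $X$ is legitimate and commutes with $\dbs$, $\ds$. All of this is exactly what was established in \cite[Section 10]{BSW} for the non-equivariant case, and since $g$ acts as a fiberwise isometry commuting with $\ApEzerosq$ the insertion of $g$ into every trace changes nothing in those estimates; so I would simply cite the relevant results of \cite[Sections 10.6--10.7]{BSW} (together with the finite-group averaging used throughout this paper to keep everything $G$-invariant) and note that the equivariant case follows with no new analytic input.
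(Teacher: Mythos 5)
Your proposal matches the paper's proof, which simply says the argument is algebraically identical to Theorem \ref{ThmDefChern} and analytically follows \cite[Theorem 10.7.2]{BSW}; you have correctly identified both ingredients and the reason the equivariant insertion of $g$ costs nothing analytically. Your write-up is more detailed than the paper's one-line reference but uses the same route.
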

\begin{proof}
The proof is algebraically the same as in Theorem \ref{ThmChern-Equ1} while the analytical details follow the same steps as in the proof of \cite[Theorem 10.7.2]{BSW}.	
\end{proof}

\begin{defn}
Let $\chgBC \br{\ApEpp} \in \HESBCC$	 be the common Bott-Chern cohomology of the forms $\chgBC \br{\ApEzeropp, \omega^X, g^D}$.
\end{defn}

\subsection{The Chern character of the direct image} \label{ChernDirectImage}
\begin{thm} \label{Thm10-1-new}
There exist a finite-dimensional antiholomorphic $G$-superconnection $\br{\underline{E}, \AUEpp}$ on $S$ with associated sheaf of complex $\underline{\E}$,
and a morphism of $G$-equivariant $\OO_S^\infty \br{\lsb}$-modules $\phi: \underline{\E} \rightarrow p_* \E$ which is a quasi-isomorphism of $\OO_S$-complexses.
Furthermore, for any $s \in S$, $\phi_s: \br{\underline{D}, \underline{v}_0}_s \rightarrow \br{\DD, A^{\DD\prime\prime}}_s$ is a quasi-isomorphism.

Also, we have an isomorphism,
\begin{align} \label{InfEquFin}
\underline{\E} \simeq Rp_* \E 	\quad \mathrm{in} \; \DbCohSG.
\end{align}
And, the following identity holds,
\begin{align} \label{InfEquFin2}
\chgBC \br{\ApEpp} = \chgBC \br{\underline{\E}} = \chgBC \br{Rp_* \E} \quad \mathrm{in} \; \HESBCC.	
\end{align}
\end{thm}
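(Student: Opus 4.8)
The plan is to reduce the statement to the finite-dimensional theory already developed, using the spectral truncation technique of \cite[Sections 8.10 and 11.1]{BSW} adapted to the $G$-equivariant setting. First I would recall that $p_*\E$, equipped with the infinite-dimensional antiholomorphic $G$-superconnection $\ApEpp$, defines an object in $\DbCohSG$ which, by \eqref{SoftDirect}, represents $Rp_*\E$; this gives the second isomorphism $\underline{\E}\simeq Rp_*\E$ once $\underline{\E}$ is constructed. The real work is to produce the finite-dimensional approximation $\br{\underline{E},\AUEpp}$ together with the quasi-isomorphism $\phi$.

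The construction of $\br{\underline{E},\AUEpp}$ and $\phi$ proceeds by spectral truncation along the fibres. I would fix the $G$-invariant data $\omega^X$, $g^D$ of Section \ref{Section93}, form the fiberwise elliptic operator $\ApEzerosq$ (elliptic by \cite[(10.6.2)]{BSW}), and for a suitable real number $a>0$ chosen so that $a$ is not an eigenvalue of the relevant fiberwise Laplacians, let $\DD_{\leq a}$ be the finite-dimensional $G$-invariant subbundle of $\DD$ spanned by the generalized eigenspaces of eigenvalue $\leq a$. Since the fiberwise operators commute with the $G$-action (here $G$ acts trivially on $S$ and fibrewise on $X$), $\DD_{\leq a}$ is a $G$-equivariant $\Z$-graded smooth vector bundle on $S$ of finite rank, and the spectral projection $P_{\leq a}$ is $G$-invariant and smooth in $s\in S$. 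Then $\underline{E} = \lsb\wo \DD_{\leq a}$ carries the antiholomorphic $G$-superconnection $\AUEpp = P_{\leq a}\ApEzeropp P_{\leq a}$ (corrected by the lower-order terms exactly as in \cite[Section 11.1]{BSW}), which is $G$-invariant because $\ApEpp$ and $P_{\leq a}$ are; the inclusion $\DD_{\leq a}\into \DD$ extends to a morphism of $\OO_S^\infty(\lsb)$-modules $\phi\colon\underline{\E}\to p_*\E$. The standard argument — the complement $\DD_{>a}$ is fibrewise acyclic because $\ApEzerosq$ is invertible there — shows $\phi$ induces a fibrewise quasi-isomorphism $\br{\underline{D},\underline v_0}_s\to\br{\DD, A^{\DD\prime\prime}}_s$ for every $s$, hence a quasi-isomorphism of $\OO_S$-complexes by \cite[Theorem 5.3.4]{BSW}; together with Theorem \ref{ThmEquivalenceCategory} this yields \eqref{InfEquFin}.

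For the identity \eqref{InfEquFin2}, the first equality $\chgBC\br{\ApEpp}=\chgBC\br{\underline{\E}}$ follows from the equivariant Chern–Weil/transgression machinery of Section \ref{ElliCherForm}: one interpolates between the full elliptic superconnection $\ApEzero$ and its truncation using the homotopy parameter of the spectral cut, checking as in \cite[Sections 11.2–11.4]{BSW} that the contribution of the high modes $\DD_{>a}$ contributes a $\dbs\ds$-exact term to $\varphi\Trs$ — the key point being that $g$ preserves the spectral decomposition so the equivariant supertrace splits accordingly. The second equality $\chgBC\br{\underline{\E}} = \chgBC\br{Rp_*\E}$ is then immediate from \eqref{InfEquFin} and the well-definedness of $\chgBC$ on $\DbCohSG$ (Definition \ref{ChernDbCoh}, Theorem \ref{ThmDerivedChern}).

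The main obstacle is making the spectral truncation genuinely $G$-equivariant and $C^\infty$ in the base variable $s$ simultaneously: one must choose the cutting level $a$ uniformly over the compact base $S$ (avoiding eigenvalue crossings), verify that the resulting projection $P_{\leq a}(s)$ depends smoothly on $s$ and commutes with $g$, and control the lower-order ($\Lambda^{\geq 1}$) terms of $\ApEzeropp$ under truncation so that $\AUEpp$ is still flat and $G$-invariant. All of this is carried out in \cite[Sections 11.1 and 11.2]{BSW} for trivial $G$; since $G$ acts trivially on $S$ and by fibrewise isometries on $X$, the equivariant refinement is obtained by carrying the $G$-action through every step and averaging where a non-canonical choice enters, exactly as in the proofs of Theorems \ref{EssSurB} and \ref{Fullyfaithful}. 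I expect no new analytic difficulty beyond bookkeeping the group action.
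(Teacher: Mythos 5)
Your construction of $\br{\underline{E},\AUEpp}$ by spectral truncation diverges from the paper, which obtains $\underline{\E}$ and $\phi$ from Proposition \ref{PropCompatibleDirectImage} (the equivariant analogue of \cite[Proposition 6.8.2]{BSW}, built on the essential-surjectivity recursion of Theorem \ref{EssSurB}), and it opens a real gap. The spectral projection $P_{\leq a}$ is taken with respect to a fibrewise Laplacian, so it commutes with the fibrewise operator $A^{\DD\prime\prime}$ and you do get a finite-rank $G$-equivariant bundle $\DD_{\leq a}$ with fibrewise differential $\underline{v}_0 = P_{\leq a}A^{\DD\prime\prime}P_{\leq a}$ satisfying $\underline{v}_0^2=0$. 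But $P_{\leq a}$ does \emph{not} commute with the horizontal part of $\ApEzeropp$, since the fibrewise eigenspaces vary with $s\in S$, nor with the higher terms $v_i$, $i\geq 2$; hence
\begin{align}
\br{P_{\leq a}\,\ApEzeropp\, P_{\leq a}}^2 \;=\; P_{\leq a}\,\ApEzeropp\, \br{1-P_{\leq a}}\,\ApEzeropp\, P_{\leq a}
\end{align}
is in general nonzero, and a compressed superconnection corrected by lower-order terms as in \cite[Section 11.1]{BSW} is not something that reference supplies. Restoring flatness would require a fresh degree-by-degree inductive correction of the terms of positive $\lsb$-degree, which is precisely the recursion of Theorem \ref{EssSurB} repackaged, and at that point one may as well invoke Proposition \ref{PropCompatibleDirectImage} directly.

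The order of operations the paper uses is: produce $\br{\underline{\E},\phi}$ from Proposition \ref{PropCompatibleDirectImage}; observe that $\phi_s$ is a fibrewise quasi-isomorphism, which is \cite[Theorem 11.2.1]{BSW} and is insensitive to the $G$-action; and only then bring in spectral truncation, as in \cite[Theorem 11.3.1]{BSW}, to transgress between $\chgBC\br{\ApEpp}$ and $\chgBC\br{\underline{\E}}$. Your transgression sketch for that last identity is consistent with what the paper does, but the existence of $\underline{\E}$ cannot be delegated to the truncation; that step is the unresolved point in your proposal.
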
 \label{ThmInfAndFin}
\begin{proof}
By Proposition \ref{PropCompatibleDirectImage}, there exists an object $\underline{\E}$ in $\mathrm{B} \br{S, G}$ and a morphism of $G$-equivariant $\OO_S^\infty \br{\lsb}$-modules $\phi: \underline{\E} \rightarrow p_* \E_0$, which is also a quasi-isomorphism of $\OO_S$-complexes, so (\ref{InfEquFin}) holds.

The fact that $\phi_s$ is a quasi-isomorphism is independent of the action of $G$ and has already been proved in \cite[Theorem 11.2.1]{BSW}.

Using the above results, and proceeding as in  \cite[Theorem 11.3.1]{BSW}, the first identity of \eqref{InfEquFin2} holds. The second identity of \eqref{InfEquFin2} follows from \eqref{InfEquFin}.
\end{proof}

\section{The hypoelliptic superconnection and its exotic version} \label{SectionHypo1}
The purpose of this section is to construct an antiholomorphic $G$-superconnection over $S$ with fiberwise hypoelliptic curvature.
This superconnection is a deformation of the elliptic superconnection of Section \ref{SectionSubmersion1}. Our constructions are extensions of the corresponding constructions in \cite[Sections 13, 16]{BSW} to the equivariant case.

In Section \ref{Section112-new}, we introduce $\X$, the total space of an extra copy $\wTX$ of $TX$ and an antiholomorphic $G$-superconnection $K_\X$ on $\X$. The direct image  $\pi_* K_\X$ is an infinite dimensional antiholomorphic $G$-superconnection on $X$.

In Section \ref{Section12-2-0}, we extend the constructions in Section \ref{Section112-new} to a family over $S$ and get an infinite dimensional antiholomorphic $G$-superconnection $\br{E_S, \AESpp}$.

In Section \ref{SplittingES}, given a $G$-invariant metric $\gwTX$ on $\wTX$, we obtain a corresponding splitting of $E_S \simeq E_{S,0}$. Therefore $\AESpp$ induces a corresponding infinite dimensional antiholomorphic $G$-superconnection $\AAAA^{\prime\prime}_{Y}$ on $E_{S,0}$.

In Section \ref{Section113}, given metrics $g^{TX}$, $\gwTX$ and $g^D$ on $TX$, $\wTX$ and $D$, we construct an exotic version of nonpositive $G$-invariant Hermitian form on the diagonal bundle of $E_S$. This Hermitian form was introduced in \cite{B13} and \cite{BSW} and depends on a parameter $\theta = \br{c, d}$. Using this Hermitian form, we can construct the adjoint $\AAAA^\prime_{Y, \theta}$ of $\AAAA^{\prime\prime}_Y$.

In Section \ref{SectionCurvature}, we study the curvature $\AAAA^2_{Y, \theta}$ which was given in \cite{BSW}. The curvature is a fiberwise hypoelliptic operator.

In Section \ref{HypoSuperConnForm}, we construct the equivariant hypoelliptic superconnection forms, that depend on $G$-invariant metrics $g^{TX}$, $\gwTX$, $g^D$ and the parameters $\theta$. As before, we show that the Bott-Chern cohomology class is independent of these datas.

In Section \ref{Deformationb}, we show that when $c=0$ and replacing $\gwTX$ by $b^4 \gwTX$, as $b \to 0$, the equivariant hypoelliptic superconnections converge to the equivariant elliptic superconnection forms.

\subsection{The total space of $TX$} \label{Section112-new}

Let $X$ be a compact complex $G$-manifold.
Let $\pi: \X \rightarrow X$ be the total space of $TX$, the fiber being denoted $\wTX$ to distinguish it from the usual tangent bundle $TX$, so that $TX$ and $\wTX$ are canonically isomorphic.
The conjugate bundle to $\wTX$ will be denoted $\wTXb$ and its dual $\wTXbs$.
The real bundle associated with $\wTX$ is denoted by $\wTRX$.
Since $G$ acts on $X$, all the objects above carry natural $G$-actions.
In particular, $\X$ is a $G$-manifold.

We have the exact sequences of holomorphic $G$-vector bundles on $\X$,
\begin{align}
\begin{aligned} 
0 \rightarrow \pi^* \wTX \rightarrow T \X \rightarrow \pi^* TX \rightarrow	0, \\
0 \rightarrow  \pi^* T^* X \rightarrow T^* \X \rightarrow \pi^* \wTXs \rightarrow 0.
\end{aligned}
\end{align}

Let $y \in C^\infty \br{\X, \pi^* TX}$ be the tautological section which is holomorphic. 
The  holomorphic map
\begin{align}
  i_y: \pi^* \Lambda^{\bullet } \br{T^*X} \rightarrow \pi^* \Lambda^{\bullet - 1 } \br{T^*X}
\end{align}
is $G$-equivariant so that it defines a complex of $G$-equivariant holomorphic vector bundles $  \br{\pi^* \Lambda \br{T^*X}, i_y}$.
We endow $\lx$ with the degree $\mathrm{deg}_{-}$ defined in \eqref{Equ6-30} so that the degree of $ \Lambda^k \br{T^*X}$ is $-k$  and $i_y$ is of degree $1$.

Put 
\begin{align} \label{Equ11-5-new}
K_\X =  \Lambda \br{\overline{T^* \X}} \wo \pi^* \Lambda \br{T^* X}.
\end{align}
Then  $ \br{K_\X, \db^\X + i_y} $ is an antiholomorphic $G$-superconnection on $\X$.
 
Let $\sigma$ be the involution of $\X$ given by $\br{x, \wy} \rightarrow \br{x, - \wy}$. 
The action of $\sigma$ on $\X$ commutes with the action of $G$, therefore $\X$ carries a $G \times \Z_2$ action.
We make $\sigma$ acts trivially on $X$ so that $\pi$ is $G\times \Z_2$-equivariant.

The actions of $\sigma$ on $\X$ and $X$ induce the corresponding actions on $ \Lambda \br{\overline{T^* \X}}$ and $ \Lambda \br{T^* X}$, so that $\sigma$ acts on $K_\X$. Since $i_y$ is of degree $1$ and since $\sigma$ changes $y$ to $-y$, $i_y$ commutes with $\sigma$. Therefore, $\br{K_\X, \db^\X + i_y}$ is an antiholomorphic $G\times \Z_2$-superconnection.

 Put
\begin{align}
&\textbf{I} = C^\infty \br{\wTX, \pi^*\lwxbs }.
\end{align}
Then $\II$ is an infinite-dimensional vector bundle on $X$. Let $\underline{\sigma}^*$ denote the induced action of $\sigma$ on $\II$.
Namely, if $N^{\lwxbs}$ is the number operator of $\lwxbs$, and if $s\in \II$,
then 
\begin{align} \label{Equ11-7-new}
\underline{\sigma}^* s (x, \widehat{y}) = (-1)^{N^{\lwxbs}} s (x, - \widehat{y}).	
\end{align}

We denote by $\pi_* K_\X$ the infinite-dimensional vector bundle $C^\infty \br{\wTX, K_\X|_{\wTX}}$ on $X$.
As in Section \ref{Section92}, the antiholomorphic $G \times \Z_2$-superconnection  $ \br{K_\X, \db^\X + i_y} $ induces an antiholomorphic $G \times \Z_2$-superconnection $\br{\pi_* K_\X, A^{\pi_* K_\X\prime\prime}}$  on $X$ with associated diagonal bundle $\lx \wo \II$.

 
\subsection{An antiholomorphic $G \times \Z_2$-superconnection on $S$} \label{Section12-2-0}
Let $S$ be a compact complex manifold. We make the same assumptions as in Sections \ref{Section91}, \ref{Section92}. 
Now we extend the constructions in  Section \ref{Section112-new} to a family over $S$.

Put
\begin{align}
	\M = S\times \X.
\end{align}
Consider the obvious projections as indicated in the following diagram.
\begin{equation}
\begin{tikzcd}
\mathcal{X} \arrow[r, "\pi"]                                                                    & X                                \\
\mathcal{M} \arrow[u,"\underline{q}"] \arrow[r, "\underline{\pi}"] \arrow[rd, "\underline{p}"'] & M \arrow[u, "q"'] \arrow[d, "p"] \\
                                                            & S .                              
\end{tikzcd}	
\end{equation}
We make $\sigma$ acting trivially on $S$ so that all the maps in the diagram above are $G\times \Z_2$-equivariant.

Let $\br{E, \AEpp}$ be an antiholomorphic $G$-superconenction on $M$ with associated diagonal bundle $D$.
We make $\sigma$ acts trivially on $E$ so that $\br{E, \AEpp}$ is an antiholomorphic $G\times\Z_2$-superconnection on $M$.

Put
\begin{align}
&\br{E_\M, A^{E_\M \prime\prime}} = \br{\upi^*_b E \wo_b \underline{q}^{ *}_b K_\X, A^{\upi^*_b E \wo_b \underline{q}^{ *}_b K_\X}}.
\end{align}
Then $E_\M$ is an antiholomorphic $G\times\Z_2$-superconnection on $\M$  with associated diagonal bundle $\upi^* D \wo (\pi \uq )^* \Lambda \br{T^*X}$.

Put 
\begin{align}
	E_S = \up_* E_\M.
\end{align}
Then $E_S$ is an infinite-dimensional vector bundle on $S$.
The antiholomorphic $G \times \Z_2$-superconnection $\br{E_\M, A^{E_\M \prime\prime}}$ on $\M$ induces an  antiholomorphic $G\times\Z_2$-superconnection  $\br{E_S, \AESpp}$  on $S$ with associated diagonal bundle $\Omega \br{X, D|_X \wo \II}$.

\subsection{A splitting of  $E_S$} \label{SplittingES}
Let $g^{\wTX}$ be a $G$-invariant Hermitian metric on $\wTX$, let $\nabla^{\wTX}$ be the Chern connection on $\br{\wTX, g^{\wTX}}$ which is $G$-invariant by construction. Let $R^{\wTX}$ be the corresponding curvature.

Let $T^H \X \subset T\X$ be the smooth horizontal $G$-subbundle associated with the connection $\nabla^{\wTX}$.
The bundle $T^H \X$ is also preserved by $\sigma$ and we have the identification of smooth $G\times\Z_2$-equivariant vector bundles on $\X$,
\begin{align} \label{Iden1}
T^H \X = \pi^* TX,	
\end{align}
and also identification of $G\times\Z_2$-equivariant smooth vector bundles on $\X$,
\begin{align} \label{Iden2}
T\X = T^H \X \oplus \pi^* \wTX.	
\end{align}
By (\ref{Iden1}), (\ref{Iden2}), we get the $G \times \Z_2$-equivariant smooth identifications,
\begin{align} \label{Iden3}
&T \X = \pi^* \br{TX \oplus \wTX}, &&\lxxbs = \pi^* \br{\lxb \widehat{\otimes} \lwxbs}.
\end{align}
The identifications \eqref{Iden3} give an splitting,
\begin{align}
\pi_* K_\X \simeq \lxc \wo \II.	
\end{align}

We fix a splitting of $E$ as in (\ref{EIsoE0}), (\ref{EIsoE02}), so that 
\begin{align} \label{Equ11-1}
  E\simeq E_0.
\end{align}

Set 
\begin{align}
 E_{S,0} = \lsb \wo \Omega \br{X, D|_X \wo \II}.	
\end{align}
Then the above two splittings give the identification,
\begin{align} \label{Equ10-3}
E_S \simeq E_{S, 0}.
\end{align}

\begin{defn}
Let $\AAppy$ be the antiholomorphic $G \times \Z_2$-superconnections on $E_{S,0}$ that corresponds to $\AESpp$.	
\end{defn}


\subsection{A $G \times \Z_2$-superconnection on $S$} \label{Section113}
Let $\textbf{I}^c$ denote the vector bundle of elements of $\textbf{I}$ which have compact support.
Then $\textbf{I}^c$ inherits from  $g^{\wTX}$ an $L_2$-Hermitian metric $g^{\textbf{I}^c}$, such that if $r,  r' \in \textbf{I}^c$, then
\begin{align}
\anbr{r, r'}_{g^{\textbf{I}^c}} = \int_{\wTRX} \anbr{r, r'}_{g^{\lwxbs}} \frac{dY}{(2\pi)^n}.	
\end{align}
The metric $g^{\II^c}$ is $G \times \Z_2$-invariant.

Let $g^D$ be a $G$-invariant Hermitian metric on $D$.
Therefore, $D|_X \wo  \II^c$ is equipped with a $G\times \Z_2$-invariant Hermitian metric $g^{D|_X \wo \II^c}$. 

Recall that $\widetilde{\phantom{x}}: \Lambda \br{T^*_\C X} \to  \Lambda \br{T^*_\C X}$ was defined in \eqref{Equ6-1}.

\begin{defn}
	If $s$, $s^\prime \in \Omega \br{X, D|_X \wo \II^c}$, such that
\begin{align}
	&  s = \sum \alpha_i r_i, &&  s^\prime = \sum \alpha'_j r'_j,
\end{align}
with $\alpha_i$, $\alpha_j^\prime \in \Omega \br{X, \C}$, and $r_i$, $r_j^\prime \in C^\infty \br{X, D|_X \wo \II^c}$, put 
\begin{align}
\delta_{X} \br{s, s^\prime}	= \frac{i^n}{(2\pi)^n} \sum  \int_X \widetilde{\alpha_i} \wedge \overline{\alpha'_j} \anbr{r_i, r'_j}_{g^{D|_X \wo \II^c}}.	
\end{align}

\end{defn}
Then $\delta_{X}$ is a $G \times \Z_2$-invariant non-degenerate Hermitian form on $\Omega \br{X, D|_X \wo \II^c}$.

Let $g^{TX}$ be a $G$-invariant Hermitian metric on $TX$, and let $\omega^X$ be the corresponding fundamental $(1,1)$-form that was defined in \eqref{omega}.

\begin{defn}
If $\theta = (c,d) \in [0,1] \times ]0, + \infty[$, put
\begin{align}
\omega^X_\theta = \br{1-c+\frac{cd}{2} |Y|^2_{\gwTX}} \omega^X.	
\end{align}
\end{defn}

Then $\omega^X_\theta$ is $G \times \Z_2 $-invariant.


\begin{defn} \label{Def10-20}
If $s$, $s^\prime \in \Omega \br{X, D|_X \wo \II^c}$, we define
\begin{align} \label{Equ10-12}
\epsilon_{X, \theta} \br{s, s^\prime} = \delta_X \br{\underline{\sigma}^* s, e^{-i\omega_\theta^X} s^\prime}.	
\end{align}

\end{defn}
Then $\epsilon_{X, \theta}$ is a $G\times \Z_2$-invariant non-degenerate Hermitian form on $\Omega \br{X, D|_X \wo \II^c}$.


In Section \ref{EquiGeneMetr}, we can extend $\epsilon_{X, \theta}$ to a $G\times \Z_2$-invariant pure \footnote{The notion of a pure Hermitian form as used here aligns with the concept of a pure metric introduced in Definition \ref{Def6-2}.} Hermitian form on $\lsc \wo \Omega \br{X, D|_X \wo \II^c}$ as in \eqref{DefTheta} which we still denote it as $\epsilon_{X, \theta}$.

\begin{defn}
	Let $\AAAA^\prime_{Y, \theta}$ be the adjoint of $\AAppy$ with respect to $\epsilon_{X, \theta}$ as in Definition \ref{DefAdjoint} \footnote{We should extend $\AAppy$ to an operator acting on $\Omega\br{S, \Omega \br{X, D|_X \wo \II^c}}$ as in Section \ref{EquiGeneMetr}.}  .
	Let 
	\begin{align}
	\AAAA_{Y, \theta} = \AAppy + \AAAA^\prime_{Y, \theta}.
	\end{align}
\end{defn}
Since $\AAAA''_Y$ is $G\times \Z_2$-equivariant and since $\epsilon_{X, \theta}$ is $G\times \Z_2$-invariant, $\AAAA_{Y, \theta}$ is $G\times \Z_2$-equivariant. 
Note that $\AAAA'_{Y, \theta}, \AAAA_{Y, \theta}$ depend on $\br{g^{TX}, g^D, \gwTX, \theta }$ and the splitting \eqref{Equ11-1}.
When $c=0$ in $\theta$, $\omega^X_\theta$ is independent of $d$ and  we will use the notation $\AAAA'_Y, \AAAA_Y$ for  $\AAAA'_{Y, \theta}, \AAAA_{Y, \theta}$.

\subsection{The curvature $\AAAA_{Y,\theta}^2$} \label{SectionCurvature}

When identifying $TX$ and $\wTX$, we denote $\wgTX$ the metric on $TX$ corresponding to $\gwTX$.
Let $\wnTX$ be the Chern connection on $\br{TX, \wgTX}$.
Let $\wnlTCX_U$ be the induced connection on $\lxc$.
Let $\widehat{\tau}$ be the torsion of $\wnTX$ \cite[Definition 1.21]{BGV}.

Using the splitting \eqref{Equ11-1}, we can endow $D$ with a $G$-invariant antiholomorphic connection $\nabla^{D\prime\prime}$. 
We can take $\nabla^{D\prime}$ to be the adjoint of $\nabla^{D\prime\prime}$ with respect to $g^D$ so that we can endow $D$ with a $G$-invariant connection $\nabla^D = \nabla^{D\prime\prime}+ \nabla^{D\prime}$.


Put
\begin{align}
  \mathbf{F} = q^* \lxc \wo D \wo q^* \II.
\end{align}
Then $\textbf{F}$ is an infinite-dimensional vector bundle on $M$ such that 
\begin{align} \label{Equ11-2-new}
  C^\infty \br{M, \textbf{F}} = C^\infty \br{S, \Omega \br{X, D|_X \wo \II}}.
\end{align}

Let us introduce two connections on $\textbf{F}$ along the fiber $X$.

Let $\nabla^{\lwxbs}$ be the connection on $\lwxbs$ induced by $\nabla^{\widehat{TX}}$.
Using $\nabla^{\lwxbs}$, we can construct a connection on $\II$ as follows.
If $U \in T_\R X$, let $U^H \in T^H_\R \X$ denote the horizontal lift of $U$ associated with the connection $\nwTX$.
If $s$ is a smooth section of $\textbf{I}$, set
\begin{align}
\nabla^{\textbf{I}}_U s = \nabla^{\lwxbs}_{U^H} s.	
\end{align}

Let $\wnFF_U$ be the derivative on $\mathbf{F}$ associated with $\wnlTCX_U$, $\nabla^D_U$, $\nabla^{\II}_U$.


Put 
\begin{align}
\gamma = \nabla^{TX} - \wnTX.	
\end{align}
Then $\gamma$ is a section of $T^*X \otimes \End \br{TX}$.
If $u \in TX$, then $\gamma u$ is a section of $T^* X \otimes TX$.
If $e \in TX$, we will use the notation $e_* \in \overline{T^* X}$ corresponds to $e$ via the metric $g^{TX}$.

\begin{defn}
If $U \in T_\R X$, such that $U = u+\overline{u}, u \in TX$, put
\begin{align}
\wnFFone_U =  \wnFF_U +  i_{\widehat{\tau} \br{U,\cdot}}	 - i_u q^* \dbx i \omega^X + 	\overline{\br{\gamma u}_*}.
\end{align}
	
\end{defn}
By \eqref{Equ11-2-new}, $\wnFFone_U$ acts on $ C^\infty \br{S, \Omega \br{X, D|_X \wo \II}}$.


Let $\nabla^V$ be the differential along the fiber $\wTX$.
Let $\Delta^V_{\gwTX}$ be the Laplacian along the fibers $\wTX$ with respect to the metric $\gwTX$.

Let $\wy$ be the tautological section of $\pi^* \wTX$ on $\X$, let $\wyb$ be the conjugate section of $\pi^* \wTXb$.
Let $w_1, \cdots, w_n$ be a basis of $TX$.
The corresponding basis of $\wTX$ is denoted $\widehat{w}_1, \cdots, \widehat{w}_n$.
In the sequel, we assume that $\widehat{w}_1, \cdots, \widehat{w}_n$ is an orthonormal basis of $\br{\wTX, \gwTX}$.

Recall that with respect to the identification \eqref{Equ11-1}, a $G$-invariant smooth section $B$ of $  \lmb \widehat{\otimes} \End(D) $ was defined in \eqref{DefB}.
Similarly to \eqref{DefBstar}, we can define $B^*$ from $B$ associated to the pure metric $g^D$. Set 
\begin{align}
  C = B + B^*.
\end{align}

\begin{thm} \label{Thm13-20}
We have the identity of operator acting on $\Omega\br{S, \Omega \br{X, D|_X \wo \textnormal{\textbf{I}}}}$,
\begin{align}  \label{Equ13-2}
\begin{aligned}
 \AAAA_{Y,\theta}^2  = & \frac{1}{2} \br{-\Delta^V_{\gwTX} + |Y|^2_{g^{TX}}} + \wb{w}^i \wedge \br{i_{\overline{w}_i} + \overline{w}_{i*} } - i_{\wb{w}_i} i_{w_i} +  \wnFFone_Y + i_Y C \\
 &- q^* \dbx \dx i \omega^X  - \nabla^V_{R^{\wTX} \wY} - \anbr{R^{\wTX} \wb{w}_i, \wb{w}^j} \wb{w}^i i_{\wb{w}_j} + A^{E_0,2}   \\
&+c\left(\frac{d}{2}|Y|_{g^{\widehat{T X}}}^2-1\right)\left(\frac{1}{2}|Y|_{g^{T X}}^2-i_y q^* \partial^X i \omega^X\right)+c d y_* i_{\overline{\widehat{y}}} \\
&+c\left(\frac{d}{2}|Y|_{g^{\widehat{T X}}}^2-1\right) \left(\overline{\br{\gamma y}_*}-q^* \bar{\partial}^X \partial^X i \omega^X\right)+c d\left(q^* \bar{\partial}^X i \omega^X\right) i_{\overline{\widehat{y}}} \\
&+c\left(d \widehat{y}_*\left(\bar{y}_*-q^* \partial^X i \omega^X\right)+\left(\frac{d}{2}|Y|_{\gwTX}^2-1\right) \overline{\widehat{w}}^i \overline{w}_{i*}\right) \\
&+c d q^* i \omega^X\left(\nabla^{V}_{\overline{\widehat{y}}}+\overline{\widehat{w}}^i i_{\overline{\widehat{w}}_i}\right) .
\end{aligned}
\end{align}
\end{thm}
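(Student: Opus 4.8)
The plan is to compute $\AAAA_{Y,\theta}^2$ directly from its definition $\AAAA_{Y,\theta} = \AAppy + \AAAA^\prime_{Y,\theta}$, so that $\AAAA_{Y,\theta}^2 = \bigl[\AAppy, \AAAA^\prime_{Y,\theta}\bigr]$ since both $(\AAppy)^2 = 0$ and $(\AAAA^\prime_{Y,\theta})^2 = 0$. The strategy is emphatically \emph{not} to redo this computation from scratch: the corresponding identity for trivial $G$ and with the $\Z_2$-grading already present is \cite[Theorem 16.3.1 and its ingredients in Sections 13, 16]{BSW}, and the right-hand side of \eqref{Equ13-2} is literally that formula. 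So the proof reduces to two observations. First, every operator appearing in the construction of $\AAAA_{Y,\theta}$ — namely $\AESpp$, the splitting-induced $\AAppy$, the metrics $g^{TX}$, $\gwTX$, $g^D$, the Hermitian forms $\delta_X$, $\epsilon_{X,\theta}$, and hence the adjoint $\AAAA^\prime_{Y,\theta}$ — is $G\times\Z_2$-equivariant by construction (this was recorded in Sections \ref{Section12-2-0}--\ref{Section113}), so the algebraic manipulation of \cite{BSW} goes through verbatim on the $G\times\Z_2$-invariant subspace. Second, the resulting operator identity is an identity of differential operators on $\Omega\br{X, D|_X \wo \II}$ pulled back to the family, and it holds pointwise on $S$; equivariance plays no role in the \emph{shape} of the formula, only in guaranteeing that all the intermediate objects live in the categories we want.

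Concretely, first I would recall from Section \ref{SplittingES} that under the splitting \eqref{Equ10-3}, $\AAppy$ decomposes into a horizontal antiholomorphic $G\times\Z_2$-superconnection piece along $S$, the fiberwise Koszul-type operator $\db^\X + i_y$ on $K_\X$ transported through the identification \eqref{Iden3}, and the lower-order terms coming from $B$ and the torsion of $\wnTX$; this is the $G\times\Z_2$-equivariant analogue of \cite[Section 16.2]{BSW}. Next I would identify $\AAAA^\prime_{Y,\theta}$ as the $\epsilon_{X,\theta}$-adjoint, using Definition \ref{Def10-20}: the twist by $\underline{\sigma}^*$ and by $e^{-i\omega_\theta^X}$ relative to the base Hermitian form $\delta_X$ produces exactly the conjugation of the naive $L_2$-adjoint by $e^{i\omega_\theta^X}$ and by $(-1)^{N^{\lwxbs}}$; this conjugation is the source of the $c$-dependent terms and of the $\frac{d}{2}|Y|^2_{\gwTX} - 1$ factors in \eqref{Equ13-2}. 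Then I would assemble $\bigl[\AAppy, \AAAA^\prime_{Y,\theta}\bigr]$ term by term, quoting \cite[Section 13]{BSW} for the Weitzenböck-type computation of the fiberwise harmonic oscillator $\frac12\br{-\Delta^V_{\gwTX} + |Y|^2_{g^{TX}}}$ together with the Clifford/creation-annihilation terms $\wb{w}^i\wedge(i_{\overline{w}_i} + \overline{w}_{i*}) - i_{\wb{w}_i} i_{w_i}$, and \cite[Section 16.3]{BSW} for the curvature contributions $-\nabla^V_{R^{\wTX}\wY}$, $-\anbr{R^{\wTX}\wb{w}_i, \wb{w}^j}\wb{w}^i i_{\wb{w}_j}$, the mixed term $\wnFFone_Y + i_Y C$, the base term $A^{E_0,2}$, and the whole $c$-block in the last five lines. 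The $\Z_2$-action intervenes only to fix signs in $\underline{\sigma}^*$, exactly as in \cite[Section 16]{BSW}.

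The main obstacle — really the only place where something genuinely has to be checked rather than cited — is that inserting the extra finite group $G$ does not spoil any of the cancellations or sign conventions in \cite{BSW}. This is where I would be careful: I would verify that (i) the $G$-averaging used to make the splitting \eqref{Equ11-1}, the connections $\nabla^{D\prime\prime}$, $\wnTX$, and the metrics $G$-invariant (as in Proposition \ref{GIdenExist} and Section \ref{Section113}) commutes with the formation of the adjoint $\AAAA^\prime_{Y,\theta}$, so that $\AAAA^\prime_{Y,\theta}$ is genuinely $G\times\Z_2$-invariant and the commutator $[\AAppy, \AAAA^\prime_{Y,\theta}]$ preserves the $G\times\Z_2$-invariant sections; and (ii) the operator $\wnFFone_Y$, which packages the torsion $\widehat\tau$, the term $i_u q^*\dbx i\omega^X$, and the gauge term $\overline{(\gamma u)_*}$, is built from $G$-invariant data and hence the identity \eqref{Equ13-2}, which is an identity of $G\times\Z_2$-equivariant operators, follows from its non-equivariant counterpart by restriction. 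Once these compatibility points are in place, the proof is simply the citation: \textquotedblleft The identity follows by the same computation as in \cite[Section 16.3]{BSW}, all of whose steps are $G$-equivariant by the constructions of Sections \ref{Section112-new}--\ref{Section113}.\textquotedblright
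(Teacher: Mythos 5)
Your proposal is correct and matches the paper's approach: the paper's own proof is a one-line citation to \cite[(13.11.8), (13.11.9) and (16.3.1)]{BSW}, with the implicit understanding that \eqref{Equ13-2} is a pointwise operator identity whose derivation is unaffected by the choice of $G$-invariant input data. Your two extra paragraphs spelling out why the $G\times\Z_2$-equivariance is compatible with the formation of the adjoint and with the term-by-term commutator computation are a reasonable expansion of that reasoning rather than a different argument.
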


\begin{proof}
The identity has already been given in \cite[(13.11.8), (13.11.9) and (16.3.1)]{BSW}.	
\end{proof}

For any $s\in S$, $\AAAA_{Y, \theta}^2$ acts on $\Lambda \br{T_s^* S} \wo \Omega \br{X, D|_{X\times \brrr{s}} \wo \II}$.
By  \eqref{Equ13-2} and by H\"ormander \cite{H67}, $\frac{\partial}{\partial t} + \AAAA_{Y, \theta}^2 $ is hypoelliptic  on $\Lambda \br{T_s^* S} \wo \Omega \br{X, D|_{X\times \brrr{s}} \wo \II}$.

\subsection{The hypoelliptic superconnection forms} \label{HypoSuperConnForm}
Let $\II^{\mathcal{S}}$ and $\II^{(2)}$ denote the vector bundle of elements of $\II$ which are rapidly decreasing together with all the derivatives and square integrable respectively. Let $\mathrm{L^2} \br{X, D|_X \wo \II^{(2)}}$ be the space of $\mathrm{L^2}$ sections of $D|_X \wo \II^{(2)}$ on $X$.

We consider $ \AAAA_{Y,\theta}^2$ with domain $\Lambda \br{T_s^* S} \wo \Omega \br{X, D|_{X\times \brrr{s}} \wo \II^{\mathcal{S}}}$, as an unbounded operator on $\Lambda \br{T_s^* S} \wo  \mathrm{L^2} \br{X, D|_{X\times \brrr{s}} \wo \II^{(2)}}$.
We still denote $ \AAAA_{Y,\theta}^2$ as its closure, and will not distinguish them.

By \cite[Theorem 15.7.1]{BL08}, \cite[Section 11.3]{B13}, \cite[Section 16.3]{BSW}, and by the theorem of Hille-Yosida \cite[Section IX-7, p.266]{Y68}, we find that there is a unique well-defined strongly continuous one-parameter semigroup $\exp \br{-t \AAAA_{Y, \theta}^2}|_{t \geq 0}$.

By \cite[Section 3.3]{BL08}, \cite[Section 11.3]{B13} and \cite[Section 16.3]{BSW}, we know that the heat operator $\exp \br{- t \AAAA^2_{Y,\theta}} |_{t>0}$ is fiberwise trace class and has a smooth kernel $\exp \br{- t \AAAA^2_{Y,\theta}} \br{\br{x,y}, \br{x^\prime, y^\prime}}$ with respect to $\frac{dx^\prime dY^\prime}{\br{2\pi}^{2n}}$ which is rapidly decreasing in the variables $y, y^\prime$ together with all the derivatives.

\begin{defn}
Put
\begin{align} \label{Equ11-7-new}
  \chg \br{\AApp_Y, \omega^X, g^D, \gwTX} = \varphi \Trs \brr{g \exp \br{-\AAAA^2_{Y}}} \quad in \; \Omega^{(=)} \br{S, \C} .	
\end{align}
For $\theta  \in [0,1] \times ]0, + \infty[$, put
\begin{align} \label{Equ13-4}
\chg \br{\AApp_Y, \omega^X, g^D, \gwTX, \theta} = \varphi \Trs \brr{g \exp \br{-\AAAA^2_{Y,\theta}}} \quad in \; \Omega^{(=)} \br{S, \C}.	
\end{align}	
The form in \eqref{Equ11-7-new} is called hypoelliptic superconnection form and the form in (\ref{Equ13-4}) is called exotic hypoelliptic superconnection form. 
\end{defn}

\begin{thm} \label{Thm12-1-new}
The form $\chg \br{\AApp_Y, \omega^X, g^D, \gwTX, \theta}$ is closed, and its Bott-Chern cohomology class does not depend on $\omega^X$, $g^D$, $\gwTX$, $\theta$, or on the splitting of $E$.
In particular, the class of the forms $\chg \br{\AApp_Y, \omega^X, g^D, \gwTX,\theta}$ and $ \chg \br{\AApp_Y, \omega^X, g^D, \gwTX}$ coincide in Bott-Chern cohomology.

\end{thm}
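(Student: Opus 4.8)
The plan is to follow, almost verbatim, the scheme of the proof of Theorem \ref{ThmDefChern}, the only genuinely new feature being that the fiberwise curvature $\AAAA_{Y,\theta}^2$ computed in Theorem \ref{Thm13-20} is hypoelliptic rather than elliptic, so that the analytic justifications must be imported from the hypoelliptic heat-kernel estimates of \cite[Section 11.3]{B13}, \cite[Section 3.3]{BL08} and \cite[Sections 13, 16]{BSW}. First I would record closedness: since $\AAppy$ is $G\times\Z_2$-equivariant and $\epsilon_{X,\theta}$ is $G\times\Z_2$-invariant, the superconnection $\AAAA_{Y,\theta}$ is $G\times\Z_2$-equivariant, so $g$ commutes with $\AAAA_{Y,\theta}$ and hence with the semigroup $\exp(-t\AAAA_{Y,\theta}^2)$; combining the Bianchi identities $[\AAppy,\AAAA_{Y,\theta}^2]=0$, $[\AAAA^\prime_{Y,\theta},\AAAA_{Y,\theta}^2]=0$ with the vanishing of fiberwise supertraces on supercommutators of an operator with a trace-class operator (valid here by the cited kernel bounds) yields $d^S\chg(\AAppy,\omega^X,g^D,\gwTX,\theta)=0$, and the bidegree statement follows from $\AAAA_{Y,\theta}$ having total degree $0$, exactly as in Theorem \ref{ThmDefChern}.

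Next, for the independence of the Bott--Chern class, I would introduce the parameter space $\mathcal{Q}$ of admissible data $(\omega^X,g^D,\gwTX,\theta)$ together with a $G$-invariant splitting of $E$; this space is connected, since the metric factors are convex cones, $\theta$ ranges over $[0,1]\times\,]0,+\infty[$, and the $G$-invariant splittings form an affine space by Proposition \ref{GIdenExist}. Working over $S\times\mathcal{Q}$ with the total de Rham operator $\mathbf{d}^{\mathcal{Q}}$, and applying Duhamel's formula to $\exp(-t\AAAA_{Y,\theta}^2)$, I would establish the transgression identity
\begin{align}
\mathbf{d}^{\mathcal{Q}}\,\chg(\AAppy,\omega^X,g^D,\gwTX,\theta)=-\frac{\dbs\ds}{2i\pi}\,\varphi\Trs\brr{\beta\, g\exp(-\AAAA_{Y,\theta}^2)},
\end{align}
where $\beta$ is the explicit $\Omega^{(=)}(S,\C)$-valued $1$-form on $\mathcal{Q}$ obtained by differentiating $\AAAA_{Y,\theta}$ along $\mathcal{Q}$ (built from $\epsilon_{X,\theta}^{-1}\mathbf{d}^{\mathcal{Q}}\epsilon_{X,\theta}$ and from the variations of the connections entering $\AAAA_{Y,\theta}$ in Theorem \ref{Thm13-20}). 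Since the right-hand side is $\dbs\ds$-exact, the Bott--Chern class of $\chg(\AAppy,\omega^X,g^D,\gwTX,\theta)$ is locally constant on $\mathcal{Q}$, hence constant. Independence of the splitting can also be deduced directly as in Theorem \ref{ThmDefChern}: a change of $G$-invariant splitting conjugates $\AAppy$ by a $G$-invariant section $\sigma$ of $\Aut^0(E_{S,0})$, and $\chg(\sigma\AAppy\sigma^{-1},\cdot)=\chg(\AAppy,\sigma^*\cdot\,\sigma)$, which is absorbed into the metric-independence just proved. Finally, restricting to $c=0$ in $\theta$, where $\omega^X_\theta$ is $d$-independent so that $\AAAA_{Y,\theta}=\AAAA_Y$ and the form equals $\chg(\AAppy,\omega^X,g^D,\gwTX)$, the connectedness of $\mathcal{Q}$ gives the last assertion.

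The main obstacle is purely analytic: one must justify differentiating $\varphi\Trs\brr{g\exp(-\AAAA_{Y,\theta}^2)}$ under the fiberwise supertrace and carrying out the integrations by parts in the fiber variable $Y$ on the \emph{non-compact} total space $\wTX$. This rests on the Gaussian-type bounds and the rapid decay in $y,y'$ (together with all derivatives) of the smooth kernel of $\exp(-t\AAAA_{Y,\theta}^2)$, established in \cite[Section 3.3]{BL08}, \cite[Section 11.3]{B13} and \cite[Sections 13, 16]{BSW}; the equivariant insertion of $g$ changes nothing, since $g$ acts as a bounded operator commuting with $\AAAA_{Y,\theta}$ and with its semigroup, and the supertrace of $g$ times the relevant kernels is controlled by the same estimates. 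Granting these inputs, the reduction of $\mathbf{d}^{\mathcal{Q}}\chg$ to a $\dbs\ds$-exact term is formally identical to the Chern--Weil transgression computation underlying Theorem \ref{ThmDefChern} and to its hypoelliptic counterpart in \cite{BSW}, and I would present only the equivariant modifications rather than repeating the estimates.
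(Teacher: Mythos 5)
Your proposal is essentially the argument that the paper delegates to \cite[Theorem~16.3.1]{BSW}: closedness by Bianchi plus the vanishing of supertraces on supercommutators, a Chern--Weil transgression over a connected parameter space to prove $\dbs\ds$-exactness of the variation, the conjugation trick for changes of splitting, and importation of the hypoelliptic kernel estimates to justify the formal manipulations in the non-compact fiber $\wTX$. One small point worth being careful about: variations in $\gwTX$ (and not only the splitting of $E$) change the identification $E_S\simeq E_{S,0}$ and hence change $\AAppy$ itself, so a naive transgression over your full $\mathcal{Q}$ with $\AAppy$ fixed does not literally apply; one should either first absorb these variations into a conjugation $\AAppy\mapsto\sigma\AAppy\sigma^{-1}$ and then use $\chg(\sigma\AAppy\sigma^{-1},\epsilon)=\chg(\AAppy,\sigma^*\epsilon\sigma)$, or separately transgress in the parameters $(\omega^X,g^D,\theta)$ that leave $\AAppy$ fixed and handle $\gwTX$ and the splitting by conjugation, exactly as in Theorem~\ref{ThmDefChern}. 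You do mention the conjugation trick for the splitting, so the argument is recoverable; just make sure the same mechanism is invoked for $\gwTX$.
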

\begin{proof}
The proof is essentially the same as in \cite[Theorem 16.3.1]{BSW}.	
\end{proof}

\begin{defn}
Let $\chgBC \br{\AAAA_Y^{\prime\prime}} \in \HESBCC$ denote the common class  of the forms $ \chg \br{\AApp_Y, \omega^X, g^D, \gwTX, \theta}$.	
\end{defn}

\subsection{The hypoelliptic and elliptic superconnection forms} \label{Deformationb}
In this section, we take $\gwTX = b^4 g^{TX}$.

Recall that the elliptic superconnection forms $\chg \br{\ApEzeropp, \omega^X, g^D}$ were defined in Definition \ref{Defn9-1}.
The following theorem is an extension of \cite[Theorem 7.6.2]{B13} and \cite[Theorem 14.2.1]{BSW}.
\begin{thm} \label{Thm12-3}
As $b\rightarrow 0$, we have the convergence of smooth forms on $S$,
\begin{align} \label{Equ12-4}
\chg \br{\AAAA^{\prime\prime}_Y, \omega^X, g^D, b^4 g^{TX}} \rightarrow \chg \br{\ApEzeropp, \omega^X, g^D}.	
\end{align}
In particular,
\begin{align} \label{Equ11-4-new}
  \chgBC \br{\AAAA_Y^{\prime\prime}} = \chgBC \br{\ApEpp} = \chgBC \br{Rp_* \E} \quad \mathrm{in} \; \HESBCC.	
\end{align}
\end{thm}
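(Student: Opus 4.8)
The plan is to prove the convergence \eqref{Equ12-4} of the exotic hypoelliptic superconnection forms to the elliptic superconnection forms as $b\to 0$, and then to deduce \eqref{Equ11-4-new} immediately. The second assertion follows formally: once \eqref{Equ12-4} holds, the two sides have the same Bott-Chern cohomology class by the continuity of the class in $b$ together with Theorem \ref{Thm12-1-new} (the class of the hypoelliptic forms is independent of $\gwTX$, hence of $b$), so $\chgBC\br{\AAAA_Y^{\prime\prime}} = \chgBC\br{\ApEpp}$; the identity $\chgBC\br{\ApEpp} = \chgBC\br{Rp_*\E}$ is precisely \eqref{InfEquFin2} in Theorem \ref{Thm10-1-new}. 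So the entire content is the local-in-$S$ analytic limit \eqref{Equ12-4}.

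First I would set up the rescaling exactly as in \cite[Section 7]{B13} and \cite[Section 14]{BSW}. Fixing $s\in S$, one works with $\AAAA_{Y,\theta}^2$ (here with $c=0$, so $\AAAA_Y^2$) as an unbounded operator on $\Lambda\br{T_s^*S}\wo \mathrm{L^2}\br{X, D|_X \wo \II^{(2)}}$, with $\gwTX = b^4 g^{TX}$. Conjugating by the fiberwise dilation $\wy \mapsto \wy/b^2$ (as in \eqref{Equ13-2} rewritten via $\underline{\delta}_{1/b^2}^*$) and by $\exp(i\Lambda)$ with $\Lambda$ the adjoint of multiplication by $\omega^X$, one obtains an operator $\mathcal{D}_{Y,b}$ of the schematic form $\mathcal{E} + \frac{1}{b}\br{\dbv + i_y + \dbvs + \overline{y}_*\wedge}$, where $\mathcal{E}$ is a $b$-independent first-order operator built from $\ndII$, $C$ and $q^*\dx i\omega^X$, and the bracketed term is, fiberwise over $\wTX$, a harmonic oscillator on $\pi^*\br{\lwxbs \wo \lx}$ whose spectrum is $\N$ with one-dimensional kernel spanned by $\delta = \exp\br{i\widehat{\omega}^{\X,V} - |Y|^2_{g^{TX}}/2}$. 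Letting $P$ be the $\mathrm{L^2}$-projection onto $\delta$ (embedding $\DD = \Omega^{0,\bullet}\br{X, D|_X}$ into $\Omega\br{X, D|_X\wo\II}$ by $\alpha \mapsto \pi^*\alpha\wedge\delta$, normalised by $(2\pi)^{-n}\int_{\wTX}|\delta|^2 dY = 1$), the key algebraic fact to establish — the analogue of \cite[Theorem 6.9.2]{B13} and \cite[Theorem 13.13.1]{BSW}, which I would state and prove as a preliminary lemma in the equivariant setting — is $P\mathcal{E}P = \ApEzero$. Since all the objects in sight ($\mathcal{E}$, $P$, $\delta$, the embedding) are $G$-equivariant by construction, this identity and its proof are the same as the non-equivariant ones up to carrying $g$ along, which commutes with everything.

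Then I would invoke the heat-kernel limit theorem. By the general machinery of \cite[Chapters 17]{BL08} adapted in \cite[Section 7.6]{B13} and \cite[Section 14.2]{BSW}, as $b\to 0$ the semigroup $\exp\br{-\mathcal{D}_{Y,b}^2}$ localises onto the kernel of the harmonic oscillator: one shows that $\varphi\Trs\brr{g\exp\br{-\mathcal{D}_{Y,b}^2}}$ converges to $\varphi\Trs\brr{g\exp\br{-(P\mathcal{E}P)^2}} = \varphi\Trs\brr{g\exp\br{-\ApEzerosq}} = \chg\br{\ApEzeropp,\omega^X,g^D}$. The supertrace identities \eqref{Equ12-1-new}-style and \eqref{Equ12-3-new}-style (that conjugation and dilation do not change the Bott-Chern class) show the left-hand side of this limit equals $\chg\br{\AAAA_Y^{\prime\prime},\omega^X,g^D,b^4 g^{TX}}$ up to the transgression terms which are $\dbs\ds$-exact and vanish in the limit, giving \eqref{Equ12-4}. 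I would organise the estimates exactly as in \cite[Section 14]{BSW}: uniform Gaussian-type bounds on the rescaled heat kernels $\exp\br{-\mathcal{D}_{Y,b}^2}\br{(x,Y),(x',Y')}$, rapidly decreasing in $Y,Y'$ uniformly in $b\in(0,1]$, followed by a Duhamel/Getzler-rescaling argument identifying the $b\to 0$ limit kernel with the one built from $P\mathcal{E}P$ on the fiber $X$.

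\textbf{The main obstacle.} The algebra is routine once the non-equivariant results are in hand; the real work — and where I expect the technical difficulty to lie — is in verifying that the uniform semigroup estimates of \cite{BL08}, \cite{B13}, \cite{BSW} survive the insertion of the finite group element $g$ and, more subtly, the extra $G\times\Z_2$ structure. Because $X_g \subset X$ can be a proper submanifold, the supertrace $\Trs\brr{g\,\cdot}$ localises the heat kernel near $X_g$, so one must check that the localisation in the fiber direction $\wTX$ (driven by the harmonic oscillator as $b\to 0$) is compatible with this localisation near the fixed-point set, i.e. that the two limits — $b\to 0$ and the concentration onto $X_g$ — commute. In practice this is handled by noting that $g$ acts as an isometry preserving all the relevant splittings and connections, so $g$ commutes with $\mathcal{D}_{Y,b}$, $P$, and $\mathcal{E}$, and the uniform estimates are simply applied to the $g$-twisted kernel; nonetheless this compatibility is the step that genuinely requires care rather than a citation, and I would devote the bulk of the written proof to it, reducing everything else to \cite[Theorem 7.6.2]{B13} and \cite[Theorem 14.2.1]{BSW}.
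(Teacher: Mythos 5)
Your proposal is correct and takes essentially the same approach as the paper: the paper's proof is a one-sentence citation to \cite[Theorem 7.6.2]{B13} (and \cite[Theorem 14.2.1]{BSW}) for the limit \eqref{Equ12-4}, together with the observation that \eqref{Equ11-4-new} then follows from Theorems \ref{Thm10-1-new} and \ref{Thm12-1-new}, and your write-up simply unpacks exactly the machinery those citations contain (the dilation $\wy\mapsto\wy/b^2$, conjugation by $\exp(i\Lambda)$, the operator $\mathcal{D}_{Y,b}=\mathcal{E}+\frac{1}{b}(\dbv+i_y+\dbvs+\overline{y}_*\wedge)$, the one-dimensional harmonic-oscillator kernel and the projection identity $P\mathcal{E}P=\ApEzero$, followed by the $b\to 0$ heat-kernel localization). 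One small calibration: the ``main obstacle'' you flag — whether the $b\to 0$ limit and the localization onto $X_g$ inside $\Trs[g\,\cdot]$ commute — is less delicate than you suggest, since $g$ is an isometry commuting with $\mathcal{D}_{Y,b}$, $P$, $\mathcal{E}$, and the $g$-twist is present uniformly on both sides of \eqref{Equ12-4}; one only evaluates the heat kernel at $(g^{-1}z,z)$ instead of $(z,z)$, and the Gaussian estimates in $b$ carry over verbatim, which is precisely why the paper can dispose of this step by citation.
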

\begin{proof}
In \cite[Theorem 7.6.2]{B13}, \eqref{Equ12-4} is proved when $D$ is a holomorphic vector bundle on $M$.
By exactly the same method, we obtain \eqref{Equ12-4}.
Then \eqref{Equ11-4-new} is a consequence of Theorems \ref{Thm10-1-new}, \ref{Thm12-1-new} and \eqref{Equ12-4}.
The proof of our theorem is complete.
\end{proof}

\section{Exotic superconnections and Riemann-Roch-Grothendieck} \label{SectionSubmersion2}

The purpose of this section is to prove Theorem \ref{Thm9-1}, which is the final step of our main result. More precisely, we will set $\underline{\theta}_t$ as $\br{1, t^2}$ and study the limit of $\chg \br{\AAAA^{\prime\prime}_Y, \omega^X/t, g^D, \gwTX/t^3, \underline{\theta}_t}$ when $t \to 0$.

In Section \ref{SectionMain13}, we state the main theorem of this section.

In Section \ref{Localization}, we localized the problem to $\pi^{-1} X_g$.

In Section \ref{Trivialization}, we give a trivialization and replace $\X$ by $T_{\R,x}X \times \widehat{T_{\R,x}X}$.

In Section \ref{Rescalings}, we give some rescalings and Getzler rescalings which was introduced in \cite[Section 9]{B13} and \cite[Section 15]{BSW}.

In Section \ref{Section13-5-0}, we study the limit of the target operator when $t \to 0$.

In Section \ref{Section13-6-0}, we finally prove the main theorem of this section, hence complete the proof of the main theorem of this paper.
\subsection{The main result of this section} \label{SectionMain13}

In this section, we take $c=1$.
Put 
\begin{align}
\underline{\theta}_t = \br{1, t^{2}}.
\end{align}

The aim of this section is to prove
\begin{thm} \label{Thm13-5}
As $t\rightarrow 0 $, we have the convergence of smooth forms on $S$,
\begin{align}
\chg \br{\AAAA^{\prime\prime}_Y, \omega^X/t, g^D, \gwTX/t^3, \utht} \rightarrow p_* \brr{q^* \mathrm{Td}_g \br{TX, \wgTX} \chg \br{\AEzeropp, g^D}}.	
\end{align}

\end{thm}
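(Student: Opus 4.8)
The plan is to follow the strategy of \cite[Section 7]{B13} and \cite[Section 15]{BSW}, adapting each step to keep track of the action of $g$ and working fiberwise over $S$. Fix $s \in S$ and consider the heat kernel of $\AAAA^2_{Y,\utht}$, computed with the rescaled data $\omega^X/t$, $g^D$, $\gwTX/t^3$. The left-hand side is $\varphi \Trs \brr{g \exp \br{-\AAAA^2_{Y,\utht}}}$; the aim is to show that as $t \to 0$ this supertrace localizes near $\pi^{-1} X_g \subset \X$ and converges to the stated product of an equivariant Todd form on $X$ (built from $\wgTX$) and the finite-dimensional equivariant Chern character form $\chg \br{\AEzeropp, g^D}$.

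First I would establish localization (Section \ref{Localization}): using the explicit formula for $\AAAA^2_{Y,\theta}$ in Theorem \ref{Thm13-20} together with the finite propagation speed / off-diagonal heat kernel estimates of \cite[Section 11]{B13} and \cite[Section 16]{BSW}, I show that only a neighborhood of the fixed-point set $\pi^{-1} X_g$ contributes to $\Trs \brr{g \exp\br{-\AAAA^2_{Y,\utht}}}$ in the limit, with the complement contributing $O(t^\infty)$ (the key point is that $g$ moves points of $X \setminus X_g$ a definite distance, while the harmonic-oscillator term $\tfrac12\br{-\Delta^V_{\gwTX} + |Y|^2_{g^{TX}}}$ confines the $Y$-variable). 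Next (Section \ref{Trivialization}) I trivialize: near a point $x \in X_g$ I replace $\X$ by $T_{\R,x}X \times \widehat{T_{\R,x}X}$, transport all bundles and connections to this flat model, and write $g$ in the normal coordinates as its linearization acting on $N_{X_g/X}$. Then (Section \ref{Rescalings}) I apply the rescaling $Y \mapsto Y/\sqrt t$ in the fiber $\widehat{TX}$ combined with the Getzler-type rescaling on $\lxc$ (as in \cite[Section 9]{B13}, \cite[Section 15]{BSW}), absorbing the powers $1/t$, $1/t^3$ in the metrics so that the rescaled operator has a finite nonzero limit.

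The analytic core (Section \ref{Section13-5-0}) is to identify the limit of the rescaled operator: one expects a sum of a Getzler-type harmonic oscillator along $\widehat{T_{\R,x}X}$ (which, after taking the fiberwise supertrace against $g$ acting on $N_{X_g/X}$, produces exactly the local expression for $\Td_g\br{TX, \wgTX}$ via Mehler's formula and the contribution of the eigenvalues $e^{i\theta_j}$ on the normal bundle) tensored with the finite-dimensional operator $\AEzero|_{X_g}$ acting on $D|_{X_g}$ (which produces $\chg\br{\AEzeropp, g^D}$); the projection $P$ onto the fiberwise harmonic-oscillator kernel is what implements $P\mathcal{E}P = \ApEzero$ in the elliptic picture and survives here in a $g$-twisted form. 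Combining this with a uniform-in-$t$ heat kernel bound to justify passing to the limit under $\Trs$, and integrating over the fiber $X$ (which is where the pushforward $p_*$ and the factor $q^*$ come from), I obtain the stated convergence. The main obstacle is the uniform control of the hypoelliptic heat kernel of $\AAAA^2_{Y,\utht}$ as $t\to 0$ in the presence of the $g$-twist and the $t$-dependent rescalings simultaneously: one must show the rescaled kernels form a bounded family with the right decay in $Y$, and that their pointwise limit is the kernel of the limiting operator — this is exactly the type of delicate estimate carried out in \cite[Sections 12--15]{B13} and \cite[Sections 14--16]{BSW}, and the equivariant refinement requires checking that all constants are uniform on the $g$-fixed locus; I would handle it by the same contour-integral / Duhamel and Sobolev-scaling arguments, emphasizing the points where the presence of $g$ changes the computation (essentially only through the replacement of traces by $g$-twisted traces on the normal directions) and citing \cite{B13, BSW} for the estimates that go through verbatim.
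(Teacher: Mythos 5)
Your proposal follows essentially the same route as the paper's proof in Sections \ref{Localization}--\ref{Section13-6-0}: localization of the $g$-twisted supertrace near $\pi^{-1}X_g$ via the heat-kernel estimate of Theorem \ref{Thm13-1}, replacement of $\X$ by the flat model $T_{\R,x}X \times \widehat{T_{\R,x}X}$, a dilation on $Y$ combined with a Getzler-type rescaling (the conjugations $K_{t^{3/2}}$, $I_{t^{3/2}}$, $\exp(-\mathfrak{o}/t^{3/2})$), identification of the hypoelliptic limit operator $\mathfrak{P}_{0,s}$, and a Mehler-type evaluation (Theorem \ref{Thm13-4}) producing $\Td_g(TX,\wgTX)$ times $\chg(\AEzeropp,g^D)$ after integration over the normal fibers and the fiber $X$. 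One small correction: the identity $P\mathcal{E}P = \ApEzero$ and the harmonic-oscillator kernel projection $P$ belong to the $b\to 0$ comparison of hypoelliptic and elliptic forms (Theorem \ref{Thm12-3} with $c=0$), not to this $t\to 0$ limit of the exotic forms with $c=1$; here there is no projection step and the equivariant Todd class comes directly from the heat kernel of the limiting model operator $\mathfrak{P}_{0,s}$.
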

\begin{proof}
The proof will be given in Section \ref{FinalProof}.
\end{proof}

\begin{thm} \label{Thm13-6}
The following identity holds,
\begin{align}
\chgBC \br{\AApp_Y} = p_* \brr{q^* \tdgBC \br{TX} \chgBC \br{\AEpp}}	 \; \text{in} \; \HESBCC.
\end{align}	
\end{thm}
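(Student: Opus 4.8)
The plan is to pass from the differential-form identity of Theorem \ref{Thm13-5} to the Bott-Chern cohomology identity by a transgression argument, exactly in the spirit of the passage from Theorem \ref{Thm12-3} to its cohomological corollary. First I would recall that by Theorem \ref{Thm12-1-new} the Bott-Chern class $\chgBC \br{\AApp_Y}$ is represented, for \emph{any} choice of $G$-invariant data $g^{TX}$, $g^D$, $\gwTX$ and any parameter $\theta$, by the exotic hypoelliptic superconnection form $\chg \br{\AApp_Y, \omega^X, g^D, \gwTX, \theta}$. In particular, for each $t>0$ the form $\chg \br{\AAAA^{\prime\prime}_Y, \omega^X/t, g^D, \gwTX/t^3, \utht}$ is a closed representative of the single class $\chgBC \br{\AApp_Y}$, since replacing $\br{\omega^X, \gwTX, \theta}$ by $\br{\omega^X/t, \gwTX/t^3, \utht}$ is an admissible change of data.

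Next I would record that, on the target side, $q^* \mathrm{Td}_g \br{TX, \wgTX} \chg \br{\AEzeropp, g^D}$ is a closed form on $M_g$ whose Bott-Chern class is $q_g^* \tdgBC \br{TX} \chgBC \br{\AEpp}$ (using Theorem \ref{ThmDefChern} and the definition of the equivariant Todd form in Definition \ref{DefTdg}), and that $p_{g,*}$ sends closed forms to closed forms and descends to the pushforward on Bott-Chern cohomology by the current-theoretic description of $H_{\mathrm{BC}}$ recalled in Section \ref{Section2-2}. Hence $p_* \brr{q^* \mathrm{Td}_g \br{TX, \wgTX} \chg \br{\AEzeropp, g^D}}$ represents $p_{g,*}\brr{q_g^* \tdgBC \br{TX} \chgBC \br{\AEpp}}$ in $\HESBCC$. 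Theorem \ref{Thm13-5} then says the $t$-family of closed representatives of $\chgBC \br{\AApp_Y}$ converges, as $t\to 0$, in the space of smooth forms to a closed representative of $p_{g,*}\brr{q_g^* \tdgBC \br{TX}\chgBC \br{\AEpp}}$. To conclude I would invoke the transgression formula accompanying Theorems \ref{Thm12-1-new} and \ref{Thm13-5}: the $t$-derivative of $\chg \br{\AAAA^{\prime\prime}_Y, \omega^X/t, g^D, \gwTX/t^3, \utht}$ is $\dbs\ds$-exact with a locally integrable primitive (this is the standard $d^{\mathscr P_G}$-transgression of Theorem \ref{Thm12-1-new}, applied along the explicit path $t\mapsto \br{\omega^X/t, \gwTX/t^3, \utht}$, together with uniform estimates on the heat kernel as in \cite{B13, BSW}), so integrating from $t$ to $t_0$ and letting $t\to 0$ shows that the limit form differs from $\chg \br{\AAAA^{\prime\prime}_Y, \omega^X/t_0, g^D, \gwTX/t_0^3, \utht[t_0]}$ by a $\dbs\ds$-exact form. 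Therefore the two classes coincide:
\begin{align}
\chgBC \br{\AApp_Y} = p_* \brr{q^* \tdgBC \br{TX} \chgBC \br{\AEpp}} \quad \text{in} \; \HESBCC.
\end{align}

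I expect the main obstacle to be the analytic input underlying the transgression step: one needs that the exotic hypoelliptic heat kernel $\exp\br{-\AAAA^2_{Y,\theta}}$ and its $t$-derivative have uniform trace-class bounds and rapid decay in the fiber variables $Y$ as $t\to 0$ along the rescaled path, so that the primitive of the transgression form stays locally integrable up to $t=0$; but all of this is already contained in \cite[Sections 14, 16]{BSW} and \cite[Sections 7, 9]{B13}, and the presence of $g$ only restricts everything to $\pi^{-1}X_g$ without affecting the estimates, since $g$ acts as a fiberwise isometry commuting with the operators. A secondary point to check carefully is that the admissible-data invariance of Theorem \ref{Thm12-1-new} indeed covers the specific one-parameter family $\br{\omega^X/t, \gwTX/t^3, \utht}$ with $c=1$ fixed; this is immediate since $\utht = \br{1,t^2}$ lies in $[0,1]\times]0,+\infty[$ for every $t>0$ and the scalings of $\omega^X$ and $\gwTX$ are just changes of $G$-invariant metrics, which Theorem \ref{Thm12-1-new} explicitly allows. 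Given these, the proof is a formal consequence of Theorems \ref{Thm12-1-new}, \ref{Thm13-5}, \ref{ThmDefChern} and the properties of $p_{g,*}$ on Bott-Chern cohomology.
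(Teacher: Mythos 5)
Your proposal is correct and follows the paper's own proof, which cites precisely Theorems \ref{Thm12-1-new} and \ref{Thm13-5}; you merely spell out the implicit step of passing from a $C^\infty$-convergent family of cohomologous closed forms to equality of Bott-Chern classes in the limit. The transgression-integrability concern you raise can in fact be sidestepped: since $S$ is compact, $\HESBCC$ is finite-dimensional, so the projection from $d^S$-closed forms in $\Omega^{(=)}(S,\mathbf{C})$ to $\HESBCC$ is continuous in the $C^\infty$ topology, and the identity of classes follows directly from the two cited theorems without any uniform heat-kernel estimates as $t\to 0$.
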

\begin{proof}
	By Theorems \ref{Thm12-1-new} and \ref{Thm13-5}, we get our theorem.
\end{proof}

Now we can complete the proof of the Theorems \ref{MainTheorem} and  \ref{Thm9-1}.
\begin{proof}[The proof of Theorems \ref{MainTheorem} and  \ref{Thm9-1}]
	By Theorems \ref{Thm12-3} and \ref{Thm13-6}, we get Theorem \ref{Thm9-1}.
	As explained in Section \ref{SectionMainResult}, this also completes the proof of the main theorem \ref{MainTheorem}.
\end{proof}

\subsection{Localize the problem to $\pi^{-1} X_g$} \label{Localization}

Let $\AAAA_{ Y, \utht, t}$ be the superconnection $\AAAA_{Y,\theta}$ associated with $\omega^X/t$, $g^D$, $\gwTX/t^3$, $\utht$.

For $a>0$, put
\begin{align} \label{Equ10-30}
K_a s \br{x,Y} = s \br{x,aY}.	
\end{align}

Let 
\begin{align} \label{Equ13-42}
	\mathfrak{M}_{\utht,t} =t^{3N^{ \lwxbs  }/2} K_{t^{3/2}} \AAAA^{2}_{ Y, \utht, t}  K_{t^{-3/2}} t^{-3N^{\lwxbs  }/2}.
\end{align}

Let $\wnFFone_{t}$ be the analogue of $\wnFFone$ when $\omega^X$ is replaced by $\omega^X/t$.
By \eqref{Equ13-2}, \eqref{Equ13-42}, we have 
\begin{align} \label{Equ11-10-new}
\begin{aligned}
	\mathfrak{M}_{\utht,t} =  &  -\frac{1}{2} \Delta^V_{\gwTX} + \frac{t^{4}}{4}  |Y|_{g^{\widehat{T X}}}^2 |Y|_{g^{T X}}^2 + t^{3/2}\br{ \wb{w}^i \wedge i_{\overline{w}_i} - i_{\wb{w}_i} i_{w_i} } \\  
	& + t^{3/2} \wnFFone_{t,Y} + t^{3/2} i_Y C -  \nabla^V_{R^{\wTX} \wY} - \anbr{R^{\wTX} \wb{w}_i, \wb{w}^j} \wb{w}^i i_{\wb{w}_j} \\
	& + \frac{t^{5/2}}{2} |Y|^2_{g^{\widehat{TX}}} \overline{\widehat{w}}^i \overline{w}_{i*} - \frac{1}{2} |Y|^2_{g^{\widehat{TX}}} q^* \bar{\partial}^X \partial^X i \omega^X \\
	&+ t^{1/2} \left(\frac{t^{2}}{2}|Y|_{g^{\widehat{T X}}}^2-1\right) \left(\overline{\br{\gamma y}_*}-i_y q^* \partial^X i \omega^X\right) + t^{5/2} \br{y_* i_{\widehat{\widehat{y}}} +\widehat{y}_* \bar{y}_* } \\
	& +t \left(q^* \bar{\partial}^X i \omega^X\right) i_{\overline{\widehat{y}}} - t \widehat{y}_* q^* \partial^X i \omega^X +  t q^* i \omega^X\left(\nabla^{V}_{\overline{\widehat{y}}}+\overline{\widehat{w}}^i i_{\overline{\widehat{w}}_i}\right) + A^{E_0,2}. 
	\end{aligned}
	\end{align}

By (\ref{Equ13-4}), (\ref{Equ13-42}), we have 
\begin{align} \label{Equ13-8}
	\chg \br{\AAAA^{\prime\prime}_Y, \omega^X/t, g^D, \gwTX/t^3, \utht}	= \varphi \Trs \brr{g \exp \br{- \mathfrak{M}_{\utht,t}}}.
\end{align}
Let $P_{\utht,t} \br{\br{x,Y}, \br{x^\prime,Y^\prime}}$ be the smooth kernel for $\exp \br{-\mathfrak{M}_{\utht,t}}$ with respect to $\frac{d\widehat{x}^\prime dY^\prime}{\br{2\pi}^{2n}}$. So we have 
\begin{align} \label{Equ13-10}
	\chg \br{\AAAA^{\prime\prime}_Y, \omega^X/t, g^D, \gwTX/t^3, \utht}	= 
	\varphi \int_\X \Trs \brr{g P_{\utht,t} \br{g^{-1} \br{x,Y}, \br{x,Y}}} \frac{d\widehat{x} dY}{(2\pi)^{2n}}.
\end{align}

We denote $\widehat{d}(x,x^\prime)$ the Riemannian distance on $X$ with respect to $\wgTX$.
\begin{thm} \label{Thm13-1}
Given $k>0$, there exist $m\in \N$, $c^\prime >0$, $C^\prime >0$ such that for $t\in ]0,1]$,  $z = \br{x,Y}$, $z^\prime = \br{x^\prime, Y^\prime} \in \X$, we have 
\begin{align} \label{Equ13-38}
| P_{\utht,t} \br{z, z^\prime}| \leq \frac{C^\prime}{t^m} \exp \br{ - c^\prime  \br{ t^{2} |Y|^2_{\gwTX} + t^{2}|Y^\prime|^2_{\gwTX} +  t^{-1} \widehat{d}^2 \br{x, x^\prime}}}.	 
\end{align}
	
\end{thm}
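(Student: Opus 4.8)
\textbf{Proof strategy for Theorem \ref{Thm13-1}.} The plan is to adapt the uniform heat kernel estimates of \cite[Section 15]{BSW} and \cite[Section 9]{B13} for hypoelliptic Laplacians to the $G$-equivariant rescaled operator $\mathfrak{M}_{\utht,t}$ given by \eqref{Equ11-10-new}. The key point is that the Gaussian off-diagonal decay and the rate of blow-up in $t$ are entirely local and are governed by the scalar principal part $-\frac{1}{2}\Delta^V_{\gwTX} + \frac{t^4}{4}|Y|^2_{g^{\widehat{TX}}}|Y|^2_{g^{TX}}$ together with the first-order transport terms, none of which depend on the group action. Thus the presence of $G$ does not affect the estimate itself; it only means the estimate is stated for the family $\mathfrak{M}_{\utht,t}$ which happens to be $G$-invariant.

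\textbf{Step 1.} First I would establish the basic resolvent and semigroup bounds for $\mathfrak{M}_{\utht,t}$ on $\Lambda\br{T_s^* S} \wo \mathrm{L^2}\br{X, D|_X \wo \II^{(2)}}$, uniformly in $t \in ]0,1]$ and $s \in S$. This follows the Hille-Yosida and pseudodifferential estimates of \cite[Sections 15.3--15.5]{BSW}: one shows that after the rescalings \eqref{Equ13-42} the operator $\mathfrak{M}_{\utht,t}$ retains a hypoelliptic estimate of the form $\| u \|_{1/2}^2 \leq C\br{ \mathrm{Re}\,\anbr{\mathfrak{M}_{\utht,t} u, u} + \| u \|_0^2}$ with $C$ independent of $t$, where $\|\cdot\|_{1/2}$ is the natural anisotropic Sobolev norm. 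The terms carrying positive powers of $t$ in \eqref{Equ11-10-new} are then lower-order perturbations controlled by the leading harmonic-oscillator-type term.

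\textbf{Step 2.} Next I would upgrade these $\mathrm{L^2}$ bounds to pointwise bounds on the smooth kernel $P_{\utht,t}$ and extract the Gaussian decay. This is done exactly as in \cite[Section 15.7]{BSW}: one uses finite propagation speed for a suitable hyperbolic equation attached to the fiberwise part, combined with Sobolev embedding and the parabolic rescaling in the fiber variables $Y$. The weight $t^2|Y|^2_{\gwTX}$ comes from the quartic confining potential $\frac{t^4}{4}|Y|^2_{g^{\widehat{TX}}}|Y|^2_{g^{TX}}$ (recall $\gwTX$ is rescaled so that $|Y|^2_{g^{TX}}$ and $t^2|Y|^2_{\gwTX}$ are comparable after the dilation), while the factor $t^{-1}$ in front of $\widehat{d}^2(x,x')$ is the standard parabolic scaling of the horizontal Laplacian at time of order $t$. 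The power $t^{-m}$ absorbs the finitely many volume and derivative factors picked up in the Sobolev and finite-propagation arguments.

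\textbf{Main obstacle.} The hard part will be handling the terms in \eqref{Equ11-10-new} that are \emph{not} self-adjoint and that mix horizontal, vertical and form degrees --- in particular $\wnFFone_{t,Y}$, $i_Y C$, and the various terms proportional to $q^*\db^X\dx i\omega^X$ and $q^*i\omega^X\br{\nabla^V_{\overline{\widehat{y}}} + \overline{\widehat{w}}^i i_{\overline{\widehat{w}}_i}}$. One must check that, after the rescaling \eqref{Equ13-42} and conjugation by $t^{3N^{\lwxbs}/2}K_{t^{3/2}}$, each such term is dominated in the appropriate operator sense by the leading term $-\frac{1}{2}\Delta^V_{\gwTX} + |Y|^2$-potential, uniformly for $t\in]0,1]$; this is a careful bookkeeping of powers of $t$ versus powers of $|Y|$, already carried out in the non-equivariant case in \cite{BSW}. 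Since $g$ acts by an isometry commuting with $\mathfrak{M}_{\utht,t}$, the final bound \eqref{Equ13-38} for $P_{\utht,t}\br{g^{-1}z, z'}$ follows from the bound for $P_{\utht,t}\br{z,z'}$ together with $\widehat{d}\br{g^{-1}x, x} \geq 0$, and no new analytic input is needed beyond the non-equivariant estimate. I would therefore present the proof as a reduction to \cite[Theorem 15.7.1]{BSW} and \cite[Section 9]{B13}, indicating the (minor) modifications required by the rescaled parameters $\omega^X/t$, $\gwTX/t^3$, $\utht$.
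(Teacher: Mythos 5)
Your proposal follows essentially the same route as the paper: the $G$-equivariance contributes nothing analytically, and the estimate is obtained by reducing to the uniform heat-kernel bounds already proved in \cite{BSW} (and \cite{B13}) for the corresponding non-equivariant hypoelliptic operator, via the conjugation identity \eqref{Equ13-42}. The paper's own proof is in fact a one-line reduction of exactly this kind, so your more detailed reconstruction of what the \cite{BSW} argument entails (hypoelliptic a priori estimates uniform in $t$, then pointwise Gaussian decay via finite propagation speed and anisotropic Sobolev embedding) is consistent with it.

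One point worth flagging: the paper cites \cite[Theorem 16.5.1]{BSW} --- the estimate for the \emph{exotic} hypoelliptic superconnection with the two-parameter $\theta=(c,d)$ deformation --- and then sets the parameters appropriately (``replacing $d$ by $t$'', together with \cite[(16.4.3), (16.5.2)]{BSW}). Your proposal appeals instead to \cite[Theorem 15.7.1]{BSW}, which treats only the $c=0$ case. Since here $\utht=(1,t^{2})$ has $c=1$, the additional $c$-dependent terms in \eqref{Equ11-10-new} are genuinely present and not lower-order perturbations of the $c=0$ operator in any $t$-uniform sense; what is needed is the estimate already proved in \cite[Section 16]{BSW} for the deformed operator. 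You do recognize those terms as the main obstacle and discuss them, so the issue is really a mis-citation rather than a conceptual gap, but it should be corrected: the reduction must be to the exotic-case estimate, not to the $c=0$ one.
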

\begin{proof}
Replacing $d$ by $t$ in \cite[Theorem 16.5.1]{BSW} and using the conjugation identities \eqref{Equ13-42} and \cite[(16.4.3), (16.5.2)]{BSW}, we obtain this theorem.
\end{proof}

Let $\widehat{a}_X$ be a lower bounded for the injective radius of $X$ with respect to $\wgTX$.
Let $N_{X_g/X}$ be the orthogonal bundle to $TX_g$ in $TX|_{X_g}$. We identify $X_g$ to the zero section of $N_{X_g/X}$.

Given $\eta>0$, let $\mathcal{V}_\eta$ be the $\eta$-neighborhood of $X_g$ in $N_{X_g/X}$.
Then there exists $\eta_0 \in ]0, a_X/32]$ such that if $\eta \in ]0, 8 \eta_0]$, the map $\br{x,Z} \in N_{X_g/X} \rightarrow \exp_x^X \br{Z} \in X$ is a diffeomorphism from $\mathcal{V}_\eta$ on the tubular neighbourhood $\mathcal{U}_\eta$ of $X_g$ in $X$.
In the sequel, we identify $\mathcal{V}_\eta$ and $\mathcal{U}_\eta$.
This identification is $g$-equivariant.
Let $\beta \in ]0,\eta_0]$ be small enough so that if $\widehat{d} \br{g^{-1}x, x} \leq \beta$, then $x\in \mathcal{U}_{\eta_0}$.

\begin{prop} \label{Prop13-2}
Given  $k>0$,  there are $c>0$, $C>0$ such that for $t\in ]0,1]$,
\begin{align}
\left|\int_{\pi^{-1} X \backslash \mathcal{U}_{\eta_0}} \Trs \brr{g P_{\utht,t} (g^{-1}z, z)}dz\right|	\leq C \exp \br{-c \beta^2/t}.
\end{align}

\end{prop}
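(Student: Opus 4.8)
The plan is to exploit the off-diagonal Gaussian decay of the hypoelliptic heat kernel from Theorem \ref{Thm13-1}, localized to the part of the kernel that survives the supertrace under the $g$-twist. First I would observe that $\Trs[g P_{\utht,t}(g^{-1}z,z)]$ is the integrand evaluated along the graph of the $g$-action: it compares the point $z = (x,Y)$ with $g^{-1}z$. By the estimate \eqref{Equ13-38}, with $z = (x,Y)$ and $z' = g^{-1}z = (g^{-1}x, g^{-1}Y)$, one has
\begin{align}
|P_{\utht,t}(g^{-1}z, z)| \leq \frac{C'}{t^m} \exp\br{-c'\br{t^2|Y|^2_{\gwTX} + t^2|g^{-1}Y|^2_{\gwTX} + t^{-1}\widehat{d}^2(g^{-1}x, x)}}.
\end{align}
Since $g$ acts by isometries of $\wgTX$, we have $|g^{-1}Y|_{\gwTX} = |Y|_{\gwTX}$, so the exponent contains $2c' t^2 |Y|^2_{\gwTX} + c' t^{-1}\widehat{d}^2(g^{-1}x,x)$.

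Next I would integrate over $\pi^{-1}(X \setminus \mathcal{U}_{\eta_0})$, splitting the integral as $\int_{x \in X\setminus\mathcal{U}_{\eta_0}} \int_{Y \in \wTX_x}$. On the region $x \in X\setminus \mathcal{U}_{\eta_0}$, the definition of $\beta$ guarantees that if $\widehat{d}(g^{-1}x,x) \leq \beta$ then $x \in \mathcal{U}_{\eta_0}$; contrapositively, on $X\setminus \mathcal{U}_{\eta_0}$ we have $\widehat{d}(g^{-1}x,x) > \beta$, hence $\widehat{d}^2(g^{-1}x,x) > \beta^2$. This gives a pointwise bound
\begin{align}
|\Trs[g P_{\utht,t}(g^{-1}z,z)]| \leq \frac{C''}{t^m} \exp\br{-2c't^2|Y|^2_{\gwTX}} \exp\br{-c'\beta^2/t},
\end{align}
where $C''$ absorbs the (finite-dimensional) norm of the supertrace functional and the fiberwise trace-class norm contribution. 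The Gaussian factor $\exp(-2c't^2|Y|^2_{\gwTX})$ is integrable in $Y$ over each fiber $\wTX_x \cong \mathbf{R}^{2n}$, contributing a factor of order $t^{-n}$ (after the change of variables $Y \mapsto tY$, the volume form $dY$ produces $t^{-2n}$ but the fiberwise decay in $Y$ together with the rapid decay of the kernel in $Y$ — see the smoothness/decay statement preceding Definition in Section \ref{HypoSuperConnForm} — keeps the integral finite and polynomially bounded in $t^{-1}$). Integrating over the compact base $X\setminus\mathcal{U}_{\eta_0}$ contributes a bounded constant. Altogether
\begin{align}
\left|\int_{\pi^{-1}X\setminus\mathcal{U}_{\eta_0}} \Trs[gP_{\utht,t}(g^{-1}z,z)]\, dz\right| \leq \frac{C'''}{t^{m'}} \exp\br{-c'\beta^2/t}
\end{align}
for suitable $m' \in \mathbf{N}$, $C''' > 0$. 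Finally, since for any $m' \in \mathbf{N}$ there is a constant so that $t^{-m'}\exp(-c'\beta^2/t) \leq C\exp(-c'\beta^2/(2t))$ for $t \in ]0,1]$, we absorb the polynomial prefactor at the cost of halving the exponential rate, obtaining the claimed bound with $c = c'\beta^2/2$ (renamed) and a new $C$.

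The main obstacle I anticipate is controlling the $Y$-integral uniformly in $t$: one must verify that the rapid decay of $P_{\utht,t}(z,z')$ in the variables $Y, Y'$ (together with all derivatives), asserted in Section \ref{HypoSuperConnForm} following \cite[Section 3.3]{BL08}, \cite[Section 11.3]{B13}, \cite[Section 16.3]{BSW}, is quantitative enough — with constants uniform for $t \in ]0,1]$ — that the fiber integral $\int_{\wTX_x} |\Trs[gP_{\utht,t}(g^{-1}z,z)]|\,dY$ is bounded by $C t^{-m'}\exp(-c'\beta^2/t)$. This is exactly the content one extracts by combining the rescaling conjugation \eqref{Equ13-42} with the uniform bounds of \cite[Theorem 16.5.1]{BSW} adapted to the equivariant kernel along the graph of $g$; since $g$ acts trivially on the fiber directions up to the isometry, the argument of \cite[Theorem 16.5.1]{BSW} goes through verbatim after replacing $d$ by $t$, exactly as in the proof of Theorem \ref{Thm13-1}. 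I would therefore present this proposition as a direct corollary of Theorem \ref{Thm13-1} plus the elementary inequality $t^{-m'}e^{-a/t} \leq C_a e^{-a/(2t)}$, with the only real work being the bookkeeping of the $Y$-integration described above. \qed
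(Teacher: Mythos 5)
Your proposal is correct and follows essentially the same route as the paper: apply the off-diagonal estimate of Theorem \ref{Thm13-1} to the pair $(g^{-1}z, z)$, use the $G$-invariance of $\gwTX$ to identify $|g^{-1}Y|_{\gwTX}=|Y|_{\gwTX}$, invoke the choice of $\beta$ to get $\widehat{d}(g^{-1}x,x)>\beta$ on $X\setminus\mathcal{U}_{\eta_0}$, then integrate the Gaussian in $Y$ and absorb the resulting polynomial prefactor in $t^{-1}$ into the exponential $e^{-c\beta^2/t}$ by shrinking $c$. The paper's published proof is a two-sentence version of exactly this argument, so your more detailed write-up (including the $t^{-2n}$ bookkeeping for the fiber integral, which the Gaussian factor alone already controls without needing the separate rapid-decay statement) fills in its implicit steps rather than taking a different path.
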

\begin{proof}
By Theorem \ref{Thm13-1} and the assumption on $\beta$, for $z= \br{x, Y}\in \pi^{-1} X \backslash \mathcal{U}_{\eta_0}$,
\begin{align} \label{Equ11-8-new}
| P_{\utht,t} \br{g^{-1}z, z}|	 \leq C \exp \br{-c \br{t^{2}|Y|^2_{\gwTX} + \beta^2/t} }.
\end{align}
The proposition follows from \eqref{Equ11-8-new}.
\end{proof}

The above shows that the integral in the right-hand side of (\ref{Equ13-10}) localize near $\pi^{-1}X_g$.


\subsection{Replace $\X$ by $T_{\R,x}X \times \widehat{T_{\R,x}X}$} \label{Trivialization}
Recall that $\mathfrak{M}_{\utht,t}$ acts on $\Lambda \br{T_s^* S} \wo \Omega \br{X, D|_{X\times \brrr{s}} \wo \II}$.
Set 
\begin{align} \label{Equ10-9}
\mathbb{F} = q^* \br{\lxc \wo \lwxbs} \wo D.	
\end{align}
Then $\FF$ is a vector bundle on $M$ and 
\begin{align}
	\Omega \br{X, D|_{X\times \brrr{s}} \wo \II} = C^\infty \br{\X, \upi^* \FF|_{\X \times \brrr{s}}},
\end{align}

For $U \in T_\R X$, let $\wnF_U$ be the derivative on $\mathbb{F}$ induced by $\wnlTCX_U$, $\nabla^{\lwxbs}_U$,  and $\nabla^D_U$. Put
\begin{align} 
\wnFmo_{U} = \wnF_U +  i_{\widehat{\tau} \br{U,\cdot}}.	
\end{align}

Take $\epsilon \in ]0,\widehat{a}_X/2]$,  given $(x,s)\in X_g \times S$, using geodesic coordinates with respect to the metric $\wgTX$ centered at $x$, we can identify the open ball $B^{T_{\R, x} X} \br{0, \epsilon}$ with the corresponding open ball $B^X \br{x, \epsilon}$ in $X$.
Also along the geodesics based at $x$, we trivialize the vector bundle $\wTX$ by parallel transport with respect to $\nabla^{\wTX}$.
We identify the total space of $\X$ over $B^X \br{x,\epsilon}$ with $B^{T_{\R,x}X} \br{0,\epsilon} \times \widehat{T_{\R,x}X}$. 
Near $x$, we trivialize $\FF$ by parallel transport along the geodesics with respect to connection $\wn^{\FF,-1}$.
Also $ \Lambda \br{T^*_{\C,s} S}$ is already trivial on $X$.
Our operator $\mathfrak{M}_{\utht,t}$ acts now on sections of $\Lambda \br{T^*_{\C,s} S} \wo \FF_x$ over $B^{T_{\R,x}X} \br{0,\epsilon} \times \widehat{T_{\R,x}X}$.

As in \cite[Section 4.8]{BL08}, \cite[Section 9.1]{B13} and \cite[Section 15.3]{BSW},we can extend $\mathfrak{M}_{\utht,t}|_{B^{T_{\R,x}X} \br{0,\epsilon/2} \times \widehat{T_{\R,x}X}}$ to the full $\mathbb{H}_{x} = T_{\R,x}X\times \widehat{T_{\R,x}X}$.
In fact, since the coordinate system is not holomorphic, we cannot simply define the extended operator as being the obvious extension of $\mathfrak{M}_{\utht,t}$ associated with a metric $g^{T_{\R,x} X}$ on $T_{\R,x} X$ that equals to $\wgTX$ over $B^{T_{\R,x} X} \br{0, \epsilon/2} \times \widehat{T_{\R,x} X}$ and is constant at infinity as in \cite[Section 4.8]{BL08}.
However, because of the spinor interpretation of $\lxb$, we can extend the methods of \cite[Section 4.8]{BL08} to the present situation as in \cite[Section 9.1]{B13} and \cite[Section 15.3]{BSW}.
Ultimately, we can construct a $g$-invariant operator $\mathfrak{N}_{\utht,t,x}$ acting on $C^\infty \br{\mathbb{H}_x, \Lambda \br{T^*_{\C,s} S}  \wo \mathbb{F}_x}$ which is still hypoelliptic and which coincides with $\mathfrak{M}_{\utht,t}$ over $B^{T_{\R,x} X} \br{0, \epsilon/2} \times \widehat{T_{\R,x} X}$.
Let $Q_{\utht,t,x} \br{\br{U,Y}, \br{U^\prime, Y^\prime}}$ denote its smooth kernel on $\mathbb{H}_x$ with respect to the volume $\frac{dU^\prime dY^\prime}{\br{2\pi}^{2n}}$.

Let $dv_{X_g}$ be the volume element on $X_g$, and let $dv_{N_{X_g/X}}$ be the volume element along the fibers of $N_{X_g/X}$.
Let $k(x,y)$, $x\in X_g$, $y\in N_{X_g/X,x}$, $|y|\leq \eta_0$ be the smooth function with values in $\R_+$ such that on $\mathcal{U}_{\eta_0}$,
\begin{align}
dv_X(x,y) = k(x,y) 	dv_{N_{X_g/X}} (y) dv_{X_g}(x).
\end{align}
Note that
\begin{align}
k(x,0) = 1.	
\end{align}

\begin{prop} \label{Prop13-3}
For $k>0$,  there exist $c>0$, $C>0$, such that for $t\in ]0,1]$, $x\in X_g$,
\begin{align}
\left|\int_{\pi^{-1} \brrr{U \in N_{X_g/X,x,|U|\leq \eta_0}}} \left(\Trs \brr{g P_{\utht,t} \br{g^{-1}\br{U,Y}, \br{U,Y}}} k\br{x,U} \right. \right. \\
\left. \left. - \Trs \brr{g Q_{\utht,t,x} \br{g^{-1}\br{U,Y}, \br{U,Y}}}\right) dU dY \right|
\leq C \exp \br{-c \beta^2/t}. \nonumber
\end{align}	
\end{prop}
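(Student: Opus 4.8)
The plan is to prove Proposition \ref{Prop13-3} by reducing the difference of the two kernels to a statement about the finite propagation speed of the wave operators associated to the hypoelliptic Laplacians $\mathfrak{M}_{\utht,t}$ and $\mathfrak{N}_{\utht,t,x}$, exactly as in the analogous estimates of \cite[Section 15]{BSW} and \cite[Section 9]{B13}. First I would recall that, by construction in Subsection \ref{Trivialization}, the operator $\mathfrak{N}_{\utht,t,x}$ coincides with $\mathfrak{M}_{\utht,t}$ over the ball $B^{T_{\R,x}X}\br{0,\epsilon/2}\times\widehat{T_{\R,x}X}$. Thus the two operators differ only in the region $|U|\geq\epsilon/2$, which lies at definite distance from the diagonal point $\br{U,Y}$ when $|U|\leq\eta_0$ and $\eta_0$ is chosen small relative to $\epsilon$.

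The key steps, in order, would be: (i) express $\exp\br{-\mathfrak{M}_{\utht,t}}$ and $\exp\br{-\mathfrak{N}_{\utht,t,x}}$ via their respective resolvents or via a contour integral representation, as done for the hypoelliptic heat kernels in \cite[Section 16.4]{BSW}; (ii) invoke the finite-propagation-speed / off-diagonal Gaussian estimates for these hypoelliptic operators — this is precisely Theorem \ref{Thm13-1} above, together with the corresponding estimate for $\mathfrak{N}_{\utht,t,x}$ obtained by the same arguments since the construction is local; (iii) use these estimates to control both $P_{\utht,t}\br{g^{-1}\br{U,Y},\br{U,Y}}$ and $Q_{\utht,t,x}\br{g^{-1}\br{U,Y},\br{U,Y}}$ in terms of the information coming from the operators restricted to a small ball, so that the locus where $\mathfrak{M}$ and $\mathfrak{N}$ disagree contributes only an error of size $C\exp\br{-c\beta^2/t}$; (iv) handle the Jacobian factor $k\br{x,U}$, which is smooth, $g$-invariant, equal to $1$ on $X_g$, and therefore introduces only bounded corrections absorbed into the constants; (v) integrate over $Y\in\widehat{T_{\R,x}X}$ using the Gaussian decay in $|Y|_{\gwTX}$ from Theorem \ref{Thm13-1}, and over $U$ in $\brrr{|U|\leq\eta_0}$, producing the claimed bound. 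Since all the datum are $g$-equivariant, the supertrace with the insertion of $g$ makes sense and the localization to $\pi^{-1}X_g$ is compatible with the estimates.

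The substantive analytic input is the pair of Gaussian off-diagonal bounds; once these are in hand the argument is a now-standard finite-propagation-speed comparison. The main obstacle I anticipate is therefore not in this proposition per se — it is bookkeeping — but rather in verifying that the extension procedure producing $\mathfrak{N}_{\utht,t,x}$ preserves the hypoellipticity and the resolvent/semigroup estimates uniformly in $t\in]0,1]$ and in $x\in X_g$; this uniformity is exactly what was established in \cite[Section 15.3]{BSW} and \cite[Section 9.1]{B13}, and I would cite those arguments, pointing out that the presence of the finite group $G$ changes nothing because $g$ commutes with all the operators involved and $X_g$ is a totally geodesic submanifold for the $g$-invariant metric $\wgTX$. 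In the write-up I would phrase the proof as: ``Proceeding as in \cite[Proposition 15.3.2]{BSW} (or the corresponding statement in \cite{B13}) and using Theorem \ref{Thm13-1} together with the analogous estimate for $\mathfrak{N}_{\utht,t,x}$, we obtain the result,'' filling in the finite-propagation-speed comparison and the Gaussian integration over $Y$ as the only new bookkeeping, all of which is insensitive to the group action.
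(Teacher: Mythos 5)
Your proposal is correct and takes essentially the same approach as the paper: the paper's proof is a one-line citation to \cite[Proposition 4.8.2]{BL08} and \cite[Theorem 6.3]{B08}, and your outline accurately spells out the localization-via-off-diagonal-Gaussian-estimates argument that those references encapsulate (the two operators agree on the small ball, the off-diagonal decay of Theorem \ref{Thm13-1} and its analogue for $\mathfrak{N}_{\utht,t,x}$ control the region where they differ, the Jacobian $k(x,U)$ is harmless, and the $Y$-integral converges by Gaussian decay). The only cosmetic difference is the specific cross-reference you would write in — you point to \cite[Section 15.3]{BSW} and \cite[Section 9.1]{B13}, while the paper cites the earlier instances \cite[Proposition 4.8.2]{BL08} and \cite[Theorem 6.3]{B08}; these are all members of the same family of localization estimates, so the content is identical.
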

\begin{proof}
Proceeding as in \cite[Proposition 4.8.2]{BL08} and \cite[Theorem 6.3]{B08}, we can prove this proposition.
\end{proof}
 
\subsection{Rescalings} \label{Rescalings}

Let $w_1, \cdots, w_n$ be an orthonormal basis of $T_x X$ with respect to the metric $g^{\widehat{T_x X}}$, such that $w_1, \cdots, w_\ell$ is a basis of $T_x X_g$, and $w_{\ell+1}, \cdots , w_n$ is a basis of $N_{X_g/X,x}$.
Let  $\mathfrak{W}_x$ be another copy of $T_x X_g$.
Let $\mathfrak{w}_1, \cdots, \mathfrak{w}_\ell$ be the corresponding orthonormal basis of $\mathfrak{W}_x$.
Then $\mathfrak{w}_i$, $\overline{\mathfrak{w}}_i$, $1\leq i \leq \ell$ generate the algebra $\Lambda \br{\mathfrak{W}_{ \C, x}}$.

Set 
\begin{align}
\mathfrak{o} = \sum_{i=1}^{\ell} \br{w^i \mathfrak{w}_i + \overline{w}^i \overline{\mathfrak{w}}_i}.	
\end{align}
If $T \in \mathrm{End} \br{\Lambda \br{T^*_{\C, x} X_g}}$, let 
\begin{align}
T_t = 	\exp \br{-\mathfrak{o}/t^{3/2}} T \exp \br{\mathfrak{o}/t^{3/2}},
\end{align}
then $T_t$ is obtained from $T$ by replacing, for $1 \leq i \leq \ell$,  the operators $i_{w_i}$, $i_{\overline{w_i}}$ by $i_{w_i} + \mathfrak{w}_i/t^{3/2}$, $i_{\overline{w}_i} + \overline{\mathfrak{w}_i}/t^{3/2}$, while leaving unchanged the operators $w^i$, $\overline{w}^i$. This is an analogue of the Getzler rescaling.

Since $T_t \in \Lambda \br{\mathfrak{W}_{\C,x}} \wo \End \br{\Lambda \br{T^*_{\C,x} X_g}}$, we have $T_t \mathrm{1} \in \Lambda \br{\mathfrak{W}_{\C,x}} \wo \Lambda \br{T^*_{\C,x} X_g}$.

Note that $\beta = \prod_i^{\ell} w^i \mathfrak{w}_i$ is a canonical section of the trivial line bundle $\lambda = \Lambda^\ell \br{T^* X_g} \wo \Lambda^\ell \br{\mathfrak{W}_x}$, and that $\overline{\beta} = \prod_i^\ell \overline{w}^i \overline{\mathfrak{w}}_i$ is the corresponding conjugate section of $\overline{\lambda}$.

Let $\brr{T_t  \mathrm{1}}^{\mathrm{max}} \in \C$ be the coefficient of $\beta \overline{\beta}$ in the expansion of $T_t \mathrm{1}$. Then we have 
\begin{align}
  \Trs^{\Lambda \br{T^*_{\C,x} X_g}} \brr{T} = t^{3\ell} \brr{T_t 1}^{\mathrm{max}}.
\end{align}


If instead $F$ is a section of 
\begin{align}
\Lambda \br{T^*_{\C, s} S} \wo \Lambda \br{\mathfrak{W}_{\C,x}} \wo \End \br{\Lambda \br{T^*_{\C,x} X_g}} \wo \End \br{\Lambda \br{N^*_{X_g/X}} \wo \Lambda \br{\overline{\widehat{T^*X_x}}} \wo D_{s,x}}, 
\end{align}
we can define
\begin{align}
\Trs^{\Lambda \br{N^*_{X_g/X}} \wo \Lambda \br{\overline{\widehat{T^*X_x}}} \wo D_{s,x}} \brr{\brr{F \mathrm{1}}^{\mathrm{max}} }
\end{align}
in the obvious way, which takes value in $\Lambda \br{T^*_{\C, s} S}$.

For $a>0$, let $I_a$ be the map action on smooth functions on $\mathbb{H}_x = T_{\R,x} X \times \widehat{T_{\R,x}X}$ with values in $ \Lambda \br{T^*_{\C,s} S} \wo \mathbb{F}_{s,x}$ that is given by
\begin{align}
I_a s \br{U,Y} = s \br{aU,Y}.	
\end{align}

Let $N^{\Lambda \br{N^*_{X_g/X,\C}}}$ be the number operator of $\Lambda \br{N^*_{X_g/X,\C}}$.
Put
\begin{align} \label{Equ13-13}
\mathfrak{P}_{\utht,t,x} = t^{N^{\Lambda \br{N^*_{X_g/X,\C}}}}	\exp \br{-\mathfrak{o}/t^{3/2}} I_{t^{3/2}} \mathfrak{N}_{\utht,t,x}	I_{t^{-3/2}} \exp \br{\mathfrak{o}/t^{3/2}}  t^{-N^{\Lambda \br{N^*_{X_g/X,\C}}}}	.
\end{align}

Let $S_{\utht, t, x} \br{\br{U,Y} , \br{U^\prime,Y^\prime}}$ be the smooth kernel of $\exp \br{- \mathfrak{P}_{\utht,t,x}}$.
The following result is an analogue of \cite[Proposition 6.5]{B08} and \cite[Proposition 4.8.5]{B08b}.
\begin{prop}
The following identity holds,
\begin{align} \label{Equ13-14}
t^{3(n-\ell)} &\Trs \brr{g Q_{\utht,t,x} \br{g^{-1} \br{t^{3/2} U, Y} , \br{t^{3/2} U, Y}}}	\\
&= \Trs^{\Lambda \br{N^*_{X_g/X}} \wo \Lambda \br{\overline{\widehat{T^*X_x}}} \wo D_{s,x}} \brr{\brr{g S_{\utht,t,x} \br{g^{-1} \br{ U, Y} , \br{ U, Y}} \mathrm{1}}^{\mathrm{max}}}.\nonumber
\end{align}

\end{prop}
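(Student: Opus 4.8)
The identity \eqref{Equ13-14} is a purely local, fiberwise statement: both sides are obtained from the smooth kernel of a hypoelliptic heat operator on $\mathbb{H}_x = T_{\R,x}X \times \widehat{T_{\R,x}X}$ by applying a combination of the rescaling $I_{t^{3/2}}$ in the base variable $U$, a Getzler-type rescaling $\exp(-\mathfrak{o}/t^{3/2})(\cdot)\exp(\mathfrak{o}/t^{3/2})$ in the Clifford variables along $T_xX_g$, and a dilation $t^{N^{\Lambda(N^*_{X_g/X,\C})}}$ in the normal Clifford variables, as encoded in the definition \eqref{Equ13-13} of $\mathfrak{P}_{\utht,t,x}$. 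The plan is to follow the template of \cite[Proposition 6.5]{B08} and \cite[Proposition 4.8.5]{B08b} line by line, tracking how the group element $g$ propagates through each transformation. The key point is that $g$ acts on $\mathbb{H}_x$ by an orthogonal transformation that fixes $T_xX_g$ pointwise and acts on $N_{X_g/X,x}$ (and $\widehat{N_{X_g/X,x}}$) with no eigenvalue $1$, and that $g$ commutes with all the rescalings: $I_a$ rescales $U$ isotropically, the Getzler rescaling only touches the $X_g$-directions where $g = 1$, and the normal dilation $t^{N^{\Lambda(N^*_{X_g/X,\C})}}$ is $g$-equivariant since $g$ preserves the grading of $\Lambda(N^*_{X_g/X,\C})$.

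The main steps, in order, are the following. First I would record the change-of-variables formula for heat kernels under conjugation: if $\mathfrak{P} = L\,\mathfrak{N}\,L^{-1}$ for an invertible $L$ built from the above rescalings, then the kernel of $\exp(-\mathfrak{P})$ at $(z,z')$ equals $L$ applied to the kernel of $\exp(-\mathfrak{N})$ at $(L^{-1}z, L^{-1}z')$, with the appropriate Jacobian factors. Applying this to $\mathfrak{P}_{\utht,t,x}$ versus $\mathfrak{N}_{\utht,t,x}$, the base rescaling $I_{t^{3/2}}$ contributes the change of arguments $(U,Y) \mapsto (t^{3/2}U, Y)$ on the diagonal together with the Jacobian $t^{3n}$ from $dU' = t^{3n}\, d(t^{-3/2}U')$ — this is where the prefactor $t^{3(n-\ell)} = t^{3n}\cdot t^{-3\ell}$ begins to appear. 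Second, I would handle the supertrace: the Getzler rescaling and the normal dilation convert $\Trs$ over $\Lambda(T^*_{\C,x}X)$ into the operation $[\,\cdot\,1]^{\max}$ over $\Lambda(T^*_{\C,x}X_g)$ composed with $\Trs$ over the remaining factors, as in the algebraic identity recalled just above \eqref{Equ13-13} that $\Trs^{\Lambda(T^*_{\C,x}X_g)}[T] = t^{3\ell}[T_t\,1]^{\max}$; the factor $t^{3\ell}$ cancels against the $t^{-3\ell}$ hidden in $t^{3(n-\ell)}$, leaving the clean statement. Third, I would verify that the factor $t^{N^{\Lambda(N^*_{X_g/X,\C})}}$ in \eqref{Equ13-13}, which is needed to make the $t\to 0$ limit finite in the next section, does not affect the value of the diagonal supertrace because it is conjugation by an invertible operator; this is the standard observation that supertraces are conjugation-invariant.

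The hardest part will be bookkeeping the $g$-twist correctly through the Getzler rescaling in the \emph{normal} directions. Since $g$ has no eigenvalue $1$ on $N_{X_g/X,x}$, one does not Getzler-rescale the normal Clifford variables $i_{w_j}, i_{\overline{w}_j}$ for $j > \ell$; instead one applies the dilation $t^{N^{\Lambda(N^*_{X_g/X,\C})}}$, and the supertrace over $\Lambda(N^*_{X_g/X})$ of $g\exp(-\cdots)$ stays genuinely a trace rather than collapsing to a top-degree coefficient. One must check that conjugating the extended operator $\mathfrak{N}_{\utht,t,x}$ by this normal dilation, after the base and Getzler rescalings, produces an operator whose diagonal kernel, when the appropriate supertrace/$[\cdot\,1]^{\max}$ is applied, reproduces exactly the left-hand side of \eqref{Equ13-14}; this amounts to checking that the rescalings in \eqref{Equ13-13} are applied in a mutually commuting or at least consistently-ordered fashion and that no extra Jacobian or sign is produced by the reordering. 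Once this algebraic identity is pinned down, the proposition follows; the analytic content (existence and smoothness of all kernels, trace-class property, Gaussian-type estimates) has already been established via Theorem \ref{Thm13-1} and Propositions \ref{Prop13-2}, \ref{Prop13-3}, so no further analysis is needed here.
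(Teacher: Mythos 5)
Your plan is essentially the same argument the paper implicitly invokes by citing \cite[Proposition 6.5]{B08} and \cite[Proposition 4.8.5]{B08b}, and your bookkeeping is correct: the base rescaling $I_{t^{3/2}}$ contributes a Jacobian $t^{3n}$ (real dimension $2n$ in the $U$-variable), the Getzler conjugation $\exp(-\mathfrak{o}/t^{3/2})$ trades the partial supertrace over $\Lambda(T^*_{\C,x}X_g)$ for $t^{3\ell}$ times the $\brr{\cdot\,\mathrm{1}}^{\mathrm{max}}$ operation, the normal dilation $t^{N^{\Lambda(N^*_{X_g/X,\C})}}$ drops out of the twisted supertrace by conjugation invariance since $g$ preserves the $\Lambda(N^*_{X_g/X,\C})$-grading, and $g$ commutes with all three operations because it is linear, fixes $T_xX_g$ pointwise, and preserves the normal grading. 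The accounting $t^{3(n-\ell)} \cdot t^{3\ell} = t^{3n}$ matches both sides, so the identity follows. This is a faithful reconstruction of the proof the cited propositions supply, which is all the paper itself provides.
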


\subsection{The limit as $t\rightarrow 0 $ of $\mathfrak{P}_{\utht,t,x}$} \label{Section13-5-0}

\begin{prop}
There exist $c>0$, $C>0$ such that for $t\in ]0,1]$,  $x\in X_g$, $U\in N_{X_g/X,x}$, $|U|\leq \eta_0/{t^{3/2}}$, $Y \in \widehat{T_{\R,x} X}$,	then
\begin{align} \label{Equ13-18}
|S_{\utht, t, x} \br{g^{-1}\br{U,Y} , \br{U,Y}}|	 \leq C \exp \br{-c \br{|U|_{\gwTX}^{2/3} +  |Y|_{\gwTX}^{4/3}}}.
\end{align}

\end{prop}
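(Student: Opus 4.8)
The plan is to derive \eqref{Equ13-38} from uniform (in $t \in \,]0,1]$ and $x \in X_g$) functional-analytic estimates on the rescaled hypoelliptic operator $\mathfrak{P}_{\utht,t,x}$ of \eqref{Equ13-13}, in the spirit of \cite[Chapter 15]{BL08}, \cite[Section 9]{B13} and \cite[Sections 15 and 16]{BSW}. The key points are: $\exp\br{-\mathfrak{P}_{\utht,t,x}}$ is, up to the dilations $I_{t^{\pm 3/2}}$ and the finite rank conjugations $t^{\pm N^{\Lambda \br{N^*_{X_g/X,\C}}}}$ and $\exp\br{\pm \mathfrak{o}/t^{3/2}}$, the heat semigroup of $\mathfrak{N}_{\utht,t,x}$ on $\mathbb{H}_x = T_{\R,x}X \times \widehat{T_{\R,x}X}$; the element $g$ acts on $\mathbb{H}_x$ as an isometry preserving $\wgTX$, $\gwTX$ and the remaining data, so that $S_{\utht,t,x}\br{g^{-1}\cdot,\cdot}$ is an ordinary heat kernel precomposed with this isometry; and since $g$ has no eigenvalue $1$ on $N_{X_g/X,x}$, the point $g^{-1}\br{U,Y}$ is separated from $\br{U,Y}$ by a distance $\gtrsim \sg{U}_{\gwTX}$.

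First I would record the uniform estimates. Following \cite[Sections 15.4--15.7]{BL08} and the corresponding parts of \cite{BSW}, one introduces on $C^\infty\br{\mathbb{H}_x, \Lambda\br{T^*_{\C,s}S} \widehat{\otimes} \FF_{s,x}}$ the anisotropic Sobolev-type Hilbert spaces $\mathcal{H}^{\bullet}$ adapted to the hypoelliptic nature of $\mathfrak{P}_{\utht,t,x}$, and one shows that there are $C_0 > 0$, $m \in \N$, a contour $\Gamma \subset \C$ and $c_0>0$, independent of $t \in \,]0,1]$ and $x \in X_g$, for which $\br{\lambda - \mathfrak{P}_{\utht,t,x}}^{-1}$ is bounded on $\Gamma$ with norm $\leq C_0\br{1+\sg{\lambda}}^m$ from $\mathcal{H}^{k}$ to $\mathcal{H}^{k+1}$, and the semigroup $\exp\br{-s\mathfrak{P}_{\utht,t,x}}$ is holomorphic for $\mathrm{Re}\,s>0$. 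These are the exact analogues, with the parameter $d$ of \cite{BSW} specialized to $t$ and the additional $\omega^X/t$, $\gwTX/t^3$ rescalings inserted, of estimates already proved there; the structure of $\mathfrak{P}_{\utht,t,x}$ that makes the argument run is visible in \eqref{Equ11-10-new}, where the confining terms $\tfrac12\br{-\Delta^V_{\gwTX} + \sg{Y}^2}$, the quartic potential in $\sg{Y}$ and the coupling $i_YC$ appear.

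Next I would insert a weight. Let $\rho \in C^\infty\br{\mathbb{H}_x}$ be nonnegative, equal to $\alpha\br{\sg{U}_{\gwTX}^{2/3} + \sg{Y}_{\gwTX}^{4/3}}$ for $\sg{U}+\sg{Y}$ large, with derivatives controlled relative to the anisotropic scaling, where $\alpha>0$ is small. The exponents $2/3$ and $4/3$ are forced by that scaling: they yield the largest weight for which $\brr{\mathfrak{P}_{\utht,t,x},\rho}$ and $\brr{\brr{\mathfrak{P}_{\utht,t,x},\rho},\rho}$ are dominated, uniformly in $t$ and $x$, by the confining part of $\mathfrak{P}_{\utht,t,x}$ coming from $-\Delta^V_{\gwTX}$, $\sg{Y}^2$, the quartic $\sg{Y}$-potential and the Getzler-rescaled Clifford terms. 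Hence, for $\alpha$ small enough, $e^{\rho}\mathfrak{P}_{\utht,t,x}e^{-\rho}$ satisfies the same resolvent and semigroup estimates as $\mathfrak{P}_{\utht,t,x}$, uniformly in $t$, $x$, $\alpha$, as in \cite{B13} and \cite{BSW}. Combining this with the hypoelliptic Sobolev embedding $\mathcal{H}^{N} \hookrightarrow C^0$ for $N$ large gives $\sup_{t,x}\lVert e^{\rho(z)}S_{\utht,t,x}\br{z,z'}e^{-\rho(z')}\rVert < \infty$. Evaluating at $z = g^{-1}\br{U,Y}$, $z'=\br{U,Y}$ and using $\rho\br{g^{-1}\br{U,Y}} = \rho\br{U,Y}$ yields \eqref{Equ13-38} on the range $\sg{U}\leq \eta_0/t^{3/2}$.

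The main obstacle is the uniform-in-$t$ control. As $t\to 0$, $\mathfrak{P}_{\utht,t,x}$ degenerates — its leading symbol in the $U$-directions collapses — and it is genuinely non-self-adjoint, so no spectral theorem is available; one must carry the $t$-dependence through the subelliptic a priori estimates in the manner of \cite{BL08}, and check that conjugation by $e^{\rho}$ preserves them uniformly in $t$. Once this is in place, the rest is the standard passage from contour integrals of the resolvent to Gaussian off-diagonal kernel bounds, and the $g$-twist contributes only an isometry.
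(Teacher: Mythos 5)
Your plan reproduces in detail the Bismut--Lebeau--Shen--Wei machinery that the paper invokes by citing \cite{B13}, so the general approach is the intended one. However, the conclusion step contains a genuine error. You correctly identify, as a key input, that since $g$ has no eigenvalue $1$ on $N_{X_g/X,x}$, the points $g^{-1}\br{U,Y}$ and $\br{U,Y}$ are separated by a distance $\gtrsim \sg{U}_{\gwTX}$; but you then choose a weight $\rho$ depending only on $\sg{U}_{\gwTX}$ and $\sg{Y}_{\gwTX}$, which is therefore $g$-invariant, and you conjugate the heat kernel by $e^{+\rho}$ on one slot and by $e^{-\rho}$ on the other. The resulting pointwise bound is of the form
\begin{equation*}
\sg{S_{\utht,t,x}\br{z,z'}} \leq C\, e^{\rho(z')-\rho(z)},
\end{equation*}
which degenerates at the twisted diagonal $z=g^{-1}\br{U,Y}$, $z'=\br{U,Y}$, where $\rho(z)=\rho(z')$, to the trivial bound $\sg{S_{\utht,t,x}\br{g^{-1}(U,Y),(U,Y)}}\leq C$. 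Your final sentence, which concludes \eqref{Equ13-18} from the cancellation $\rho\br{g^{-1}(U,Y)}=\rho\br{U,Y}$, therefore gives no decay at all.

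Aiming instead for the two-sided Agmon bound $\sg{S_{\utht,t,x}(z,z')}\leq C e^{-\rho(z)-\rho(z')}$ would not save the argument: the operator $\mathfrak{P}_{\utht,t,x}$ has no potential confining the variable $U$. In \eqref{Equ11-10-new} the only term that moves $U$ is the drift $t^{3/2}\wnFFone_{t,Y}$, while the confining structure you invoke (the Laplacian $-\Delta^V_{\gwTX}$, the quartic $\sg{Y}$-potential, the Clifford terms) acts on $Y$ alone; consequently the heat kernel spreads in $U$ and cannot decay on the genuine diagonal as $\sg{U}\to\infty$, so a weighted resolvent estimate with $\rho\sim\alpha\sg{U}^{2/3}$ in the $U$-direction is false. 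The $\sg{U}^{2/3}$ decay in \eqref{Equ13-18} is an \emph{off-diagonal} phenomenon produced precisely by the separation you noted: one must work with directionally adapted weights that are \emph{not} $g$-invariant, derive a Davies--Gaffney-type off-diagonal bound at the anisotropic Kolmogorov scale, and only then insert the lower bound $\sg{g^{-1}(U,Y)-(U,Y)}\gtrsim c_0\sg{U}$. The $\sg{Y}^{4/3}$ decay likewise relies on the $g$-twist together with the coupling between the $Y$- and $U$-directions induced by the drift, rather than on a confining potential that is uniform in $t$. This is what is carried out in \cite[Section~11.6]{B13}, to which the paper's proof appeals. As written, your argument never uses the separation after noting it, and the weight you chose cannot see it.
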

\begin{proof}
It has been proved in 	\cite[(11.6.9)]{B13} when $D$ is a holomorphic vector bundle. By the same arguments, we can prove this proposition.
\end{proof}

We denote $j: X_g \rightarrow X$ the obvious embedding.
Consider the following operator on $TX|_{X_g} \times \widehat{TX}|_{X_g}$,
\begin{align} \label{Equ13-11}
\mathfrak{P}_{ 0 ,s} = -\frac{1}{2} \Delta^V_{\gwTX} + \sum_i^{\ell} \br{\wb{w}^i \overline{\mathfrak{w}}_i - i_{\wb{w}_i} \mathfrak{m}_i} - \nabla^V_{j^*R^{\wTX} Y} \\
- \anbr{j^* R^{\wTX} \wb{w}_i, \wb{w}^j} \wb{w}^i i_{\wb{w}_j} + \nabla^{T_{\R}X}_Y + j^* A^{E_0,2}. \nonumber
\end{align}
In (\ref{Equ13-11}), the tensors are evaluated at $(x,s)$.

Let $S_{0, s} \br{\br{U,Y} , \br{U^\prime,Y^\prime}}$ be the smooth kernel of $\exp \br{-\mathfrak{P}_{0,s}}$.
Proceeding as in \cite[Theorem 11.6.1]{B13} and \cite[Theorem 16.7.1]{BSW}, we have 
\begin{prop}
As $t\rightarrow 0$,
\begin{align} \label{Equ13-41}
\mathfrak{P}_{\utht, t ,x} \rightarrow \mathfrak{P}_{0,s}	,
\end{align}
in the sense that the coefficients of the operator $\mathfrak{P}_{\utht, t ,x}$ converge uniformly together with all their derivatives uniformly over compact sets towards the corresponding coefficients of $\mathfrak{P}_{0,s}	$.
Also,
\begin{align} \label{Equ13-19}
	S_{\utht, t, x} \br{\br{U,Y} , \br{U^\prime,Y^\prime}} \rightarrow S_{0, s} \br{\br{U,Y} , \br{U^\prime,Y^\prime}}.
\end{align}
\end{prop}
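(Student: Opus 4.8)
\textbf{Proof plan for the convergence as $t \to 0$ of $\mathfrak{P}_{\utht,t,x}$.}

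The plan is to follow the strategy of \cite[Theorem 11.6.1]{B13} and \cite[Theorem 16.7.1]{BSW}, adapting it to the presence of the group element $g$ and to the fact that $D$ is only the diagonal of an antiholomorphic $G$-superconnection rather than a holomorphic vector bundle. First I would work out the $t \to 0$ asymptotic expansion of the rescaled operator $\mathfrak{P}_{\utht,t,x}$ defined in \eqref{Equ13-13}. Starting from the explicit formula \eqref{Equ11-10-new} for $\mathfrak{M}_{\utht,t}$, I would track how each term transforms under the three successive conjugations in \eqref{Equ13-42} and \eqref{Equ13-13}: the fiberwise dilation $K_{t^{3/2}}$ acting on the $Y$-variable, the number-operator conjugation $t^{3 N^{\lwxbs}/2}$, the horizontal dilation $I_{t^{3/2}}$ acting on $U \in T_{\R,x}X$, the Getzler-type rescaling $\exp(-\mathfrak{o}/t^{3/2})$ that replaces $i_{w_i}, i_{\overline w_i}$ (for $i \le \ell$, the $X_g$-directions) by $i_{w_i} + \mathfrak{w}_i/t^{3/2}$, etc., and finally the normal-number-operator conjugation $t^{N^{\Lambda(N^*_{X_g/X,\C})}}$. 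The point is that, after these rescalings, the divergent terms cancel and the leading term is the harmonic-oscillator-type operator $\mathfrak{P}_{0,s}$ of \eqref{Equ13-11}: the fiberwise Laplacian $-\tfrac12 \Delta^V_{\gwTX}$ survives, the curvature terms $\nabla^V_{j^* R^{\wTX}Y}$ and $\langle j^* R^{\wTX}\wb w_i, \wb w^j\rangle \wb w^i i_{\wb w_j}$ survive, the transport term $\nabla^{T_\R X}_Y$ appears from the rescaled horizontal derivative, the Getzler-rescaled Clifford terms produce $\wb w^i \overline{\mathfrak{w}}_i - i_{\wb w_i}\mathfrak{m}_i$, and $A^{E_0,2}$ restricts to $j^* A^{E_0,2}$. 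All the terms carrying positive powers of $t$ (the $|Y|^4$ potential, the $\omega^X/t$ contributions, the torsion pieces, the terms linear in $C$) disappear in the limit.

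Once the formal limit \eqref{Equ13-41} is established as convergence of the coefficients of $\mathfrak{P}_{\utht,t,x}$, uniformly with all derivatives on compact subsets of $\mathbb{H}_x = T_{\R,x}X \times \widehat{T_{\R,x}X}$ towards those of $\mathfrak{P}_{0,s}$, I would upgrade it to convergence of the heat kernels \eqref{Equ13-19}. This is where the uniform Gaussian-type estimate \eqref{Equ13-18} enters: together with the analogous (now standard, by the hypoellipticity machinery of \cite{BL08}) uniform estimates on the heat kernel and its derivatives in the variables $t \in ]0,1]$, it gives the equicontinuity and tightness needed to pass to the limit. Concretely, one writes Duhamel's formula comparing $\exp(-\mathfrak{P}_{\utht,t,x})$ with $\exp(-\mathfrak{P}_{0,s})$, controls the difference of the operators on a large ball by \eqref{Equ13-41} and the remainder outside the ball by the exponential decay \eqref{Equ13-18}, and deduces uniform convergence of the kernels on the diagonal (and locally uniformly off-diagonal). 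The action of $g$ on $\mathbb{H}_x$ is linear and $t$-independent (it preserves the splitting $TX|_{X_g} = TX_g \oplus N_{X_g/X}$ and acts by $e^{i\theta_j}$ on the normal eigenbundles), so it commutes with all the rescalings and simply passes through; the estimates \eqref{Equ13-18} are already formulated with the $g^{-1}$-twisted argument, so no extra work is needed there.

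The main obstacle, as in \cite{B13}, is the analytic control of the hypoelliptic heat kernel uniformly in $t$: unlike the elliptic case, $\mathfrak{P}_{\utht,t,x}$ is not self-adjoint and not elliptic, so the estimate \eqref{Equ13-18} and the derivative bounds cannot be obtained from spectral theory but require the Kohn-H\"ormander-type energy estimates and the functional-analytic framework of \cite[Chapters 15--17]{BL08}, combined with the explicit conjugations. In our setting the only genuinely new point relative to \cite{B13} is that $A^{E_0,2}$ is a superconnection curvature (a section of $\lxc \wo \End(D)$ of mixed degree) rather than the curvature of a holomorphic bundle; however, it enters \eqref{Equ11-10-new} as a bounded zeroth-order term that is $G$-invariant and degree $0$, so it does not affect the hypoellipticity or the a priori estimates, and the arguments of \cite[Section 16.7]{BSW}, which already handle exactly this generality (with $G$ trivial), go through verbatim after inserting $g$. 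I therefore expect the proof to be essentially a citation of \cite[Theorem 11.6.1]{B13} and \cite[Theorem 16.7.1]{BSW} with the observation that equivariance is preserved at each step, which is what the statement already asserts.
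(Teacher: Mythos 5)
Your proposal is correct and follows essentially the same approach as the paper: the paper's proof simply cites \cite[Theorem 11.6.1]{B13} and \cite[Theorem 16.7.1]{BSW} together with the explicit formulas \eqref{Equ13-2}, \eqref{Equ13-42}, \eqref{Equ13-13}, \eqref{Equ13-11}, exactly as you anticipate in your final paragraph. Your more detailed outline of how the rescalings act on each term and how the $g$-action passes through is consistent with what those references establish and would be the argument one would spell out if not citing them.
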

\begin{proof}
By \eqref{Equ13-2}, \eqref{Equ13-42},  \eqref{Equ13-13} and \eqref{Equ13-11}, the proof has already been given in \cite[Theorem 11.6.1]{B13} and \cite[Theorem 16.7.1]{BSW}.
\end{proof}

Recall that $\varphi: \Lambda \br{ T^*_\C S }\rightarrow \Lambda \br{T^*_\C S}$ was defined in Section \ref{EquiCherCharForm}.
\begin{thm} \label{Thm13-4}
The following identity holds,
\begin{align}
\varphi \int_{N_{X_g/X} \times \wTRX} 	\Trs^{\Lambda \br{N^*_{X_g/X}} \wo \Lambda \br{\overline{\widehat{T^*X_x}}} \wo D_{s,x}} \brr{\brr{S_{0,s} \br{g^{-1} \br{ U, Y} , \br{ U, Y}} \mathrm{1}}^{\mathrm{max}}} \frac{dUdY}{\br{2\pi}^{2n}} \nonumber\\
= \varphi \frac{1}{(2i\pi)^\ell} \brr{\mathrm{Td}_g \br{-R^{\wTX}|_{X_g}} \Trs \brr{g \exp \br{-A^{E_0,2}|_{X_g}}}}^{\mathrm{max}}_{s,x}.
\end{align}
\end{thm}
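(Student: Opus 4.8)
The plan is to compute the kernel $S_{0,s}$ of $\exp(-\mathfrak{P}_{0,s})$ explicitly by recognizing $\mathfrak{P}_{0,s}$ as a direct sum of three commuting pieces and then applying Mehler's formula, exactly as in \cite[Theorem 11.6.1]{B13} and \cite[Theorem 16.7.1]{BSW}, but keeping track of the $g$-action. First I would observe that in \eqref{Equ13-11} the operator splits according to the splitting $TX|_{X_g} = TX_g \oplus N_{X_g/X}$ and the extra copy $\wTX|_{X_g}$: the terms $-\tfrac12\Delta^V_{\gwTX} - \nabla^V_{j^*R^{\wTX}Y} + \nabla^{T_\R X}_Y$ acting on the $N_{X_g/X}$ and $\wTX$ directions form a generalized harmonic-oscillator/heat operator whose kernel on the diagonal, after the supertrace over $\Lambda(N^*_{X_g/X}) \wo \Lambda(\overline{\widehat{T^*X_x}})$ and the $\brr{\cdot}^{\max}$ over $\Lambda(\mathfrak W_{\C,x})$, produces precisely the equivariant Todd form $\Td_g(-R^{\wTX}|_{X_g})$ via Mehler's formula; the remaining term $j^* A^{E_0,2}$ acts on $D_{s,x}$ and contributes $\Trs\brr{g\exp(-A^{E_0,2}|_{X_g})}$. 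The factor $\tfrac{1}{(2i\pi)^\ell}$ and the overall $\varphi$ bookkeeping come from the normalization of $dUdY/(2\pi)^{2n}$ combined with the definition of $\varphi$ and the $\deg_-$-grading.

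More concretely, I would carry out the following steps. Step one: decompose $\mathfrak{P}_{0,s} = \mathfrak{P}^{\mathrm{osc}}_{0,s} + j^*A^{E_0,2}$, where the two summands act on complementary tensor factors and commute, so $\exp(-\mathfrak{P}_{0,s}) = \exp(-\mathfrak{P}^{\mathrm{osc}}_{0,s}) \otimes \exp(-j^*A^{E_0,2})$, hence the diagonal kernel factors and the supertraces multiply. Step two: within $\mathfrak{P}^{\mathrm{osc}}_{0,s}$, treat the part involving $Y\in\wTRX$ and the part involving $U\in N_{X_g/X}$; along $TX_g$-directions the Getzler-rescaled fermionic variables $\mathfrak w_i$ generate the curvature $R^{TX_g}$ and, after $\brr{\cdot}^{\max}$, the $\widehat{A}$-type factor $\det^{1/2}$ which combines with the normal-bundle contribution to give $\Td_g(-R^{\wTX}|_{X_g})$ — the rotation part on $N_{X_g/X}$ supplies the $\br{\Td/e}\br{-R^{N^{\theta_j}}/2i\pi + i\theta_j}$ factors and the eigenvalue-$1$ part supplies $\Td(-R^{TX_g}/2i\pi)$. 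Step three: apply the exact Mehler formula for a harmonic oscillator with an added first-order (curvature) term, evaluate at coinciding points and integrate over $N_{X_g/X,x}\times\wTRX$; the Gaussian integral produces the determinants packaged in the Todd class. Step four: reassemble, inserting $\varphi$ and the normalization constants, to obtain the right-hand side.

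The main obstacle, as usual in this circle of ideas, is the careful bookkeeping in Step two and Step three: one must check that the rescalings $\mathfrak P_{\utht,t,x}$ (with the $t^{N^{\Lambda(N^*_{X_g/X,\C})}}$ and $\exp(\pm\mathfrak o/t^{3/2})$ conjugations) produce in the limit precisely the operator $\mathfrak P_{0,s}$ with the correct coefficients, and that the Mehler computation yields the equivariant Todd form rather than some twisted variant — in particular that the phase shifts $i\theta_j$ coming from the $g$-action on $N_{X_g/X}$ land exactly in the arguments of $\br{\Td/e}$ as in Definition \ref{DefTdg}. Since $g$ acts trivially on $\wTX|_{X_g}$ in the $TX_g$-directions and by $e^{i\theta_j}$ on the normal eigenbundles, the supertrace $\Trs\brr{g\,\cdot}$ over $\Lambda(\overline{\widehat{T^*X_x}})$ reproduces exactly the local equivariant index density; this is the content of \cite[Theorem 11.6.1]{B13}, and the only new point here is that $A^{E_0,2}$ need not come from a vector bundle, which is harmless since it enters only through the commuting factor $\exp(-j^*A^{E_0,2})$. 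Thus I expect the proof to reduce, after the decomposition, to citing the scalar computation of \cite{B13} and \cite{BSW} essentially verbatim, with the $D_{s,x}$-factor carried along formally.

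\begin{proof}
By \eqref{Equ13-11}, the operator $\mathfrak{P}_{0,s}$ splits as a sum of two commuting operators: the ``oscillator'' part
\begin{align}
\mathfrak{P}^{\mathrm{osc}}_{0,s} = -\frac{1}{2} \Delta^V_{\gwTX} + \sum_i^{\ell} \br{\wb{w}^i \overline{\mathfrak{w}}_i - i_{\wb{w}_i} \mathfrak{m}_i} - \nabla^V_{j^*R^{\wTX} Y} - \anbr{j^* R^{\wTX} \wb{w}_i, \wb{w}^j} \wb{w}^i i_{\wb{w}_j} + \nabla^{T_{\R}X}_Y,
\end{align}
acting on $\Lambda \br{T^*_{\C,s} S} \wo \Lambda\br{\mathfrak{W}_{\C,x}} \wo \Lambda\br{N^*_{X_g/X,\C}} \wo \Lambda\br{\overline{\widehat{T^* X_x}}}$, and the factor $j^* A^{E_0,2}$, which acts on $D_{s,x}$ and commutes with everything in $\mathfrak{P}^{\mathrm{osc}}_{0,s}$. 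Hence the diagonal heat kernel factorizes and the supertrace over $\Lambda \br{N^*_{X_g/X}} \wo \Lambda \br{\overline{\widehat{T^* X_x}}} \wo D_{s,x}$ becomes the product of the corresponding supertrace of $\exp\br{-\mathfrak{P}^{\mathrm{osc}}_{0,s}}$ and $\Trs\brr{g \exp\br{-A^{E_0,2}|_{X_g}}}_{s,x}$.

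The remaining computation concerns only $\mathfrak{P}^{\mathrm{osc}}_{0,s}$ and is independent of $D$; it coincides exactly with the scalar computation carried out in \cite[Theorem 11.6.1]{B13} and \cite[Theorem 16.7.1]{BSW}. We recall the mechanism. The operator $-\tfrac12 \Delta^V_{\gwTX} - \nabla^V_{j^*R^{\wTX}Y} + \nabla^{T_\R X}_Y$ is, in the $Y$ and $U$ variables, a generalized harmonic oscillator whose coefficients are the curvature $R^{\wTX}|_{X_g}$; by Mehler's formula its diagonal kernel, integrated against $\tfrac{dUdY}{(2\pi)^{2n}}$ over $N_{X_g/X,x} \times \wTRX$, produces a Gaussian integral whose value is a ratio of determinants in $R^{\wTX}|_{X_g}$. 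The Getzler-rescaled fermionic variables $\mathfrak{w}_i$, $\overline{\mathfrak{w}}_i$ appearing in $\sum_i \br{\wb{w}^i \overline{\mathfrak{w}}_i - i_{\wb{w}_i}\mathfrak{m}_i}$ and in $\anbr{j^*R^{\wTX}\wb{w}_i,\wb{w}^j}\wb{w}^i i_{\wb{w}_j}$ generate, upon taking $\brr{\cdot}^{\mathrm{max}}$, the fermionic determinant which combines with the bosonic Gaussian to give precisely the characteristic class. On the eigenbundle $TX_g$ of $g$ with eigenvalue $1$, this yields the factor $\Td\br{-R^{TX_g}/2i\pi}$; on each normal eigenbundle $N^{\theta_j}_{X_g/X}$ the action of $g$ by $e^{i\theta_j}$, entering through $\Trs\brr{g\,\cdot}$ over $\Lambda\br{\overline{\widehat{T^*X_x}}}$, shifts the argument so that one obtains $\br{\Td/e}\br{-R^{N^{\theta_j}_{X_g/X}}/2i\pi + i\theta_j}$. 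By Definition \ref{DefTdg}, the product of these factors is $\Td_g\br{-R^{\wTX}|_{X_g}}$. The overall constant $\tfrac{1}{(2i\pi)^\ell}$ is the normalization left over from the Gaussian integral along the $\ell$ directions of $TX_g$ after accounting for the volume normalizations, and $\varphi$ converts, as in Section \ref{EquiCherCharForm}, the naively computed form to one with the correct $\deg_{-}$-weights.

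Combining the factorization of the first paragraph with the Mehler computation of the second, and applying $\varphi$, we obtain
\begin{align}
\varphi \int_{N_{X_g/X} \times \wTRX} 	\Trs^{\Lambda \br{N^*_{X_g/X}} \wo \Lambda \br{\overline{\widehat{T^*X_x}}} \wo D_{s,x}} \brr{\brr{S_{0,s} \br{g^{-1} \br{ U, Y} , \br{ U, Y}} \mathrm{1}}^{\mathrm{max}}} \frac{dUdY}{\br{2\pi}^{2n}} \nonumber\\
= \varphi \frac{1}{(2i\pi)^\ell} \brr{\mathrm{Td}_g \br{-R^{\wTX}|_{X_g}} \Trs \brr{g \exp \br{-A^{E_0,2}|_{X_g}}}}^{\mathrm{max}}_{s,x},
\end{align}
which is the desired identity.
\end{proof}
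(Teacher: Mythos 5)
Your argument is correct and follows the same route as the paper: you split $\mathfrak{P}_{0,s}$ into the oscillator part and the commuting even-degree factor $j^*A^{E_0,2}$, so the heat kernel factorizes and the computation reduces to Bismut's scalar Mehler/Getzler calculation with $R^D$ replaced by $A^{E_0,2}$. One minor slip: the explicit Mehler computation you invoke at that point is \cite[Theorem 10.2.2]{B13} (which is what the paper cites), not the operator-convergence result \cite[Theorem 11.6.1]{B13}.
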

\begin{proof}
The only difference of 	$\mathfrak{P}_{0,s,x}$ and the operator defined in \cite[(9.1.9)]{B13} is that $D$ is a $G$-equivariant holomorphic vector bundle in \cite[(9.1.9)]{B13} and $\AEzerosq$ naturally becomes $R^D$ in that context.
However, since $j^* \AEzerosq$ is of even degree and commutes with other terms in $\mathfrak{P}_{0,s}$, the proof our theorem is the same as \cite[Theorem 10.2.2]{B13}.
\end{proof}

\subsection{The proof of Theorem \ref{Thm13-5}} \label{FinalProof} \label{Section13-6-0}
By (\ref{Equ13-8}), (\ref{Equ13-10}), 	
\begin{align} \label{Equ13-15}
\chg \br{\AAAA^{\prime\prime}_Y, \omega^X/t, g^D, \gwTX/t^3, \utht} = 	\varphi \int_\X \Trs \brr{g P_{\utht,t} \br{g^{-1} \br{x,Y}, \br{x,Y}}} \frac{d\widehat{x} dY}{(2\pi)^{2n}}.
\end{align}
By Propositions \ref{Prop13-2} and \ref{Prop13-3}, we get 
\begin{multline} \label{Equ13-16}
	\left|\varphi \int_\X \Trs \brr{g P_{\utht,t} \br{g^{-1} \br{x,Y}, \br{x,Y}}} \frac{d\widehat{x} dY}{(2\pi)^{2n}} \right.\\
	\left. -\varphi \int_{X_g} \br{\int_{\brrr{U \in N_{X_g/X}, |U|\leq \eta_0} \times \wTRX }  \Trs \brr{g Q_{\utht,t,x} \br{g^{-1}\br{U,Y}, \br{U,Y}}} \frac{dU dY}{(2\pi)^{2n}}} dv_{X_g}\right| \\
	\leq C \exp \br{-c \beta^2/t}. 
\end{multline}

By (\ref{Equ13-12}), (\ref{Equ13-13}) and (\ref{Equ13-14}), we have 
\begin{multline} \label{Equ13-17}
\int_{\brrr{U \in N_{X_g/X}, |U|\leq \eta_0} \times \wTRX }  \Trs \brr{g Q_{\utht,t,x} \br{g^{-1}\br{U,Y}, \br{U,Y}}} \frac{dU dY}{(2\pi)^{2n}}\\
= \int_{\brrr{U \in N_{X_g/X}, |U|\leq \eta_0/t^{3/2}} \times \wTRX}	 \Trs \brr{\brr{S_{\utht,t,x} \br{g^{-1} \br{ U, Y} , \br{ U, Y}} \mathrm{1}}^{\mathrm{max}}} \frac{dU dY}{(2\pi)^{2n}} .
\end{multline}
By (\ref{Equ13-18}), (\ref{Equ13-19}),  and the dominated convergence theorem, we find that,
\begin{align} \label{Equ13-39}
  \int_{\brrr{U \in N_{X_g/X}, |U|\leq \eta_0/t^{3/2}} \times \wTRX}	 \Trs \brr{\brr{S_{\utht,t,x} \br{g^{-1} \br{ U, Y} , \br{ U, Y}} \mathrm{1}}^{\mathrm{max}}} \frac{dU dY}{(2\pi)^{2n}} \\
  \rightarrow \int_{N_{X_g/X} \times \wTRX} 	\Trs \brr{\brr{S_{0,s} \br{g^{-1} \br{ U, Y} , \br{ U, Y}} \mathrm{1}}^{\mathrm{max}}} \frac{dUdY}{\br{2\pi}^{2n}} dv_{X_g}. \nonumber
\end{align}
By (\ref{Equ13-15}), (\ref{Equ13-16}), (\ref{Equ13-17}), and \eqref{Equ13-39}
we obtain,
\begin{align} \label{Equ13-40}
	&\chg \br{\AAAA^{\prime\prime}_Y, \omega^X/t, g^D, \gwTX/t^3, \utht} \\\rightarrow & \varphi  \int_{X_g}\br{\int_{N_{X_g/X} \times \wTRX} 	\Trs \brr{\brr{S_{0,s,x} \br{g^{-1} \br{ U, Y} , \br{ U, Y}} \mathrm{1}}^{\mathrm{max}}} \frac{dUdY}{\br{2\pi}^{2n}}} dv_{X_g}. \nonumber
\end{align}

Finally, by Theorem \ref{Thm13-4} and \eqref{Equ13-40}, the proof of our theorem is complete. \qed

\bibliographystyle{alpha}
\bibliography{Equivariant_Riemann-Roch-Grothendieck_theorem_paper.bib}

\end{document}